\newcommand{\abb}[5]{%
\setlength{\arraycolsep}{0.4ex}%
\begin{array}{rcccc}%
#1 &:\,& #2 & \,\,\longrightarrow\,\, & #3 \\[0.5ex]%
     & & #4 & \longmapsto & #5%
\end{array}%
}
   \edef\Gin@extensions{\Gin@extensions,.mps}
\numberwithin{equation}{section}
\newtheorem{thm}{Theorem}
\newtheorem{prop}[thm]{Proposition}
\newtheorem{lemma}[thm]{Lemma}
\newtheorem{cor}[thm]{Corollary}
\newtheorem{definition}[thm]{Definition}
\newtheorem{remark}[thm]{Remark}
\theoremstyle{definition}
\newtheorem{example}[thm]{Example}
\definecolor{RED}{rgb}{0.6,0,0}
\newcommand{\N}{\mathbb{N}}
\newcommand{\R}{\mathbb{R}}
\newcommand{\K}{\mathbb{K}}
\DeclareMathOperator{\Tr}{Tr}
\DeclareMathOperator{\Hom}{Hom}
\DeclareMathOperator{\Gl}{Gl}
\DeclareMathOperator{\ch}{ch}
\DeclareMathOperator{\SYT}{SYT}
\DeclareMathOperator{\YT}{YT}
\DeclareMathOperator{\ev}{ev}
\DeclareMathOperator{\cone}{cone}
\numberwithin{thm}{section}
\newtheoremstyle{break}  
  {3pt}   
  {5pt}   
  {\normalfont}  
  {0pt}       
  {\scshape} 
  {}         
  {4pt}  
  {}          
\theoremstyle{break}
\numberwithin{subcase}{case}
\begin{document}
\title{Reflection groups and cones of sums of squares}

\author{Sebastian Debus}
\address{Department of Mathematics and Statistics, 
UiT -- The Arctic University of Norway, 9037 Troms\o}

\author{Cordian  Riener}
\address{Department of Mathematics and Statistics, 
UiT -- The Arctic University of Norway, 9037 Troms\o}

\thanks{This work has been supported by European Union's Horizon 2020 research and innovation program under the Marie Sk\l{}odowska-Curie grant agreement 813211 (POEMA) and the Troms\o~ Reserach Foundation grant agreement 17matteCR
 }

\begin{abstract}
We consider cones of real forms which are sums of squares and  invariant under a (finite) reflection group. We show how the representation theory of these groups allows to use the symmetry  inherent in these cones to give more efficient descriptions.  We focus especially on the $A_{n}$, $B_n$, and $D_n$ case where we use so-called higher Specht polynomials \cite{ariki1997higher} to give a uniform description of these cones. These descriptions allow us, for example, to study the connection of these cones to non-negative forms. In particular, we give a new proof of a result by Harris \cite{harris1999real} who showed that every non-negative ternary  even symmetric octic form is a sum of squares. 
\end{abstract}
\maketitle
\section{Introduction}
A real form  (homogeneous polynomial) $f\in\R[X_1,\ldots,X_n]$ is called 
a \emph{sum of squares} if it admits  a representation in the form $f=f_1^2+\ldots+ f_m^2$ for some real forms $f_1,\ldots,f_m\in\R[X_1,\ldots, X_n]$ and it is called \emph{positive semidefinite} or \emph{non-negative} if it assumes only non-negative values on $\R^n$.
We will denote  by $\Sigma_{n,2d}$ the cone of sums of squares forms in $n$ variables of degree $2d$ and by $\mathcal{P}_{n,2d}$ the corresponding  cone of non-negative forms. Clearly, every sum of squares is also non-negative, and we therefore have the inclusion $\Sigma_{n,2d}\subset\mathcal{P}_{n,2d}$. Hilbert \cite{hilbert1888darstellung} addressed and solved the question to characterize the cases, when the two cones coincide. As it turns out this only seldom happens, namely only in the case of bivariate forms $(n=2)$, quadratic forms $(2d=2)$, and ternary quartics $(n=3,2d=4)$. Sums of squares play a fundamental role in real algebraic geometry and have in the last two decades become also a very important tool for polynomial optimisation (see for example \cite{scheiderer2009positivity}). Several authors have considered  situations in which one supposes that the forms are invariant under the action of a group: For a group $G\subset \Gl_n(\R)$ we denote by $\mathcal{P}_{n,2d}^G$ and $\Sigma_{n,2d}^G$ the invariant forms in the respective cones. Since this additional requirement can shrink the dimensions of the cones, their study may become more tractable. Furthermore, as presented in \cite{gatermann2004symmetry}, representation theory of groups can be  particularly used to simplify the sums of squares decomposition. Building on this, it was found in \cite{riener2013exploiting,phdthesis} that sums of squares invariant under the symmetric group are highly structured, and the complexity of a sum of squares decomposition in this case stabilizes with $n>2d$.  Furthermore, symmetric sums of squares appear quite naturally in various contexts (for example \cite{raymond2018symmetric}). This makes these cones an interesting object of study. Choi and Lam \cite{choi1977old} initiated a systematic study of Hilbert's classification restricted to the case of symmetric forms, and in a collaboration  with Reznick they further provided a complete study of the cone of  even symmetric sextics \cite{choi1987even}. Whereas they could show that in the sextic case there exists a form which is non-negative but not a sum of squares Harris \cite{harris1999real}, who studied the case of even symmetric octics, was able to show that the cones of even symmetric octics that are sums of squares coincides with the non-negative cone. Recently, Goel, Kuhlmann and Reznick \cite{goel2017analogue} constructed even symmetric polynomials of every degree $2d>8$ and every number of variables $n>3$ which are non-negative but not a sum of squares, so for even symmetric forms Harris' example remains the only exceptional case compared to Hilbert's classification. Despite the classical case analysis done by Hilbert, it can also be interesting to study the quantitative comparison of sums of squares on non-negative polynomials in an  asymptotic situation, i.e., when the number of variables grows to infinity.  In contrary to the general situation, where for large numbers of variables almost every non-negative form is not a sum of squares (see \cite{blekherman2006there}) a detailed analysis of the symmetric sum of squares cone and symmetric non-negative cone in \cite{blekrie} showed that this is not the case in the symmetric case and that in particular in the quartic case the two cones coincide in the limit.

In this article, we study further the previously mentioned lines of research by focusing on the situation of sums of squares invariant under some families of finite real reflection groups $G\subset \Gl_n(\R)$. Such  groups  are generated by a set of orthogonal reflections across hyperplanes passing through the origin. The invariant theory of these groups is well understood and  generalizes the theory of symmetric polynomials. Therefore, our setup provides a natural unification and extension to the previously mentioned works on symmetric and even symmetric forms.  

\smallskip
\paragraph{\bf Outline of the article and contributions:} The beginning of the next section gives a short general introduction to the machinery of symmetry reduction for sums of squares  based on linear representation theory. In the case of finite reflection groups these techniques combined with results from invariant theory, and in particular the coinvariant algebra and harmonic polynomials,  allow for a concrete description of the qudratic module of invariant sums of squares in Theorem \ref{thm:sums of squares description}. The results we give in this second section are similar to previous works, notably \cite{blekrie,valentin,gatermann2004symmetry,vallentin2009symmetry}. 

Section 3 then turns to the special situation of the three infinite families $A_n$, $B_n$ and $D_n$ of irreducible reflection groups for which we can integrate the notion of the higher Specht polynomials \cite{ariki1997higher} with the previously  mentioned techniques. These polynomial allow for  a convenient way to combinatorially describe an isotypic decomposition  of the coinvariant algebra in the case of finite reflection groups whose irreducible components fall to the classes $A_{n},B_n,D_n$ (see Theorem \ref{thm:irreducible reps}). As we show in Theorem \ref{thm:general sums of squares repr} this combinatorial description then in turn implies a concrete characterization of  the cone of invariant sums of squares. In particular, we show in Theorem \ref{thm:ModulesStabilize} that if the degree $2d$ is fixed and the number of variables $n$ is growing, a stabilization of the isotypic decomposition and a resulting combinatorial stabilization of the structure of the cone of invariant sums of squares is happening in the case of all three families.   

Building on these general results, we study the cone of even symmetric (i.e., $B_n$-invariant) forms of degree 8 in more detail in section \ref{sec:even}. In Theorem \ref{Thm:DualEvenSymOct} we obtain an explicit description of the dual cone of even symmetric octics, which we can use to revisit the remarkable findings of Harris, which follow immediately from our description. Furthermore, we provide a complete description of the cone of even symmetric quartic sums of squares for all number of variables in Theorem \ref{thm:EvenSymSOSOctics}. Following our discussion of even symmetric forms, we turn to forms that are $D_n$-invariant in subsection \ref{secdn}. We first show that  Harris' remarkable equality for even symmetric ternary quartics  remains valid for forms invariant under the slightly smaller group $D_3$ (see Theorem \ref{thm:harrisd3}).  We then examine the dual cone of $D_4$ invariant quartic sums of squares in Theorem \ref{thm:D4simplicialCone}, which turns out to be simplicial. Similarly to our approach in the even-symmetric case, this yields in particular that every $D_4$-invariant quarternary quartic non-negative form is a sum of squares. These results allow to completely characterize the cases in which for $D_n$ invariant forms we have an equality between the cones of  sums of squares and non-negative forms (see Theorem \ref{thm:chardn}). To conclude our considerations, we highlight some connections to non-negativity testing of forms with the help of semidefinite programming in the last subsection. It follows from recent works of Scheiderer \cite{scheiderer} that the cone of non-negative forms in general is not a so called spectrahedral shadow, i.e., it can in general not be represented by projections of feasibility sets of semidefinite programming. In contrast to this result, we observe that additionally to the cases where the cone of invariant sums of squares coincides with the corresponding cone of non-negatives, there are cases where we can represent the cone of non-negative forms by linear matrix inequalities.
\section{Invariant sums of squares}

\subsection{General symmetry reduction}
Let $\underline{X}:=(X_1,\ldots,X_n)$ always denote a tuple of variables and write $\R[\underline{X}]:=\R[X_1,\ldots,X_n]=\bigoplus_{d \in \N_0}H_{n,d}$ for the polynomial ring in these variables, where $H_{n,d}$ denotes the subspace of forms of degree $d$. Let $G\subset \Gl_n(\R)$ be a finite group acting linearly on $\R^n$. This action then naturally gives rise to an action of $G$ on the polynomial ring $\R[X_1,\ldots X_n]$ and thus we can view this $\R$-vector space  as a $G$-module. It follows from Maschke's theorem that this $G$-module is completely reducible, and thus for any degree $d$ there exists an isotypic decomposition, i.e., the $G$-module $H_{n,d}$ decomposes into a direct sum of the form
\begin{eqnarray}\label{eq:decomp}
H_{n,d} \ = \ V^{(1)} \oplus V^{(2)} \oplus \cdots \oplus V^{(h)} \,
\end{eqnarray}
with $V^{(j)} = \theta^{(j)}_1 \oplus \cdots \oplus \theta^{(j)}_{\eta_j}$ and $\vartheta_j := \dim \theta^{(j)}_i$, where $\theta^{(u)}_{i_1}, \theta^{(v)}_{i_2}$ are $G$-isomorphic if and only if $u = v$ i.e., we denote by $\eta_j$ the \emph{multiplicity} of an irreducible $G$-module and by $\vartheta_j$ its \emph{dimension}. 
Here, the $\theta^{(j)}_i$ are the \emph{irreducible components} and the $V^{(j)}$ are the \emph{isotypic components}, i.e., the direct sum of isomorphic irreducible components. The component with respect to the trivial irreducible
representation in $\R[\underline{X}]$ is the invariant ring $\R[\underline{X}]^G$. In general, an irreducible representation $\theta_i^{(j)}$ will occur with infinite multiplicity in $\R[\underline{X}]$. Any irreducible representation $\theta$ occurs $\dim \theta$ many times in the regular representation $\R[G]$ of $G$, i.e., $\vartheta = \eta$ for a representation $\theta$ in $\R[G]$. For $f \in \R[\underline{X}]$ we write $\langle f \rangle_G$ for the $G$-module which is the linear span of $\{ \sigma f : \sigma \in G\}$.

It is classically known that $\R[\underline{X}]^G$ is a finitely generated $\R$-algebra, and furthermore each isotypic component in $\R[\underline{X}]$ is a finitely generated $\R[\underline{X}]^{G}$-module (see \cite[Theorem 1.3]{stanley}). These properties follow for finite groups from the existence of a linear projection onto $\R[\underline{X}]^G$, called the \emph{Reynolds-Operator}.
\begin{definition}
For a finite group $G$ the linear map $$\abb{\mathcal{R}_G}{H_{n,d}}{H_{n,d}^G}{f}{\frac{1}{|G|}\sum_{\sigma\in G}\sigma(f)}$$ is called the Reynolds operator of $G$. 
\end{definition}
\begin{remark}
Although we restrict to finite groups, most of the theory presented in this section can be directly translated  to the  more generally setup of  compact and reductive groups.  
\end{remark}

An important tool for the study of invariant sums of squares is Schur's lemma, which we include for the convenience of the reader.
\begin{lemma}[Schur's lemma] \label{le:schur}
Let $\mathbb{K}$ be a field which is algebraically closed and $V$ a $G$-module defined over $\mathbb{K}$. Further, let $\mathcal{V},\mathcal{W}$ denote two irreducible $G$-submodules of $V$. Then the $G$-module $\Hom_G(\mathcal{V},\mathcal{W})$ of $G$-homomorphisms between $\mathcal{V}$ and $\mathcal{W}$ satisfies    
\begin{align*}
    \Hom_G(\mathcal{V},\mathcal{W}) \cong \K
\end{align*}
if and only if $\mathcal{V}$ and $\mathcal{W}$ are $G$-isomorphic. Otherwise $\Hom_G(\mathcal{V},\mathcal{W})=0$.
\end{lemma}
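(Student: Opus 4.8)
The plan is to treat the two implications separately, using throughout the standard fact that for any $G$-homomorphism $\varphi\colon\mathcal{V}\to\mathcal{W}$ between $G$-modules, the kernel $\ker\varphi\subset\mathcal{V}$ and the image $\operatorname{im}\varphi\subset\mathcal{W}$ are $G$-submodules. First I would handle the case where $\mathcal{V}$ and $\mathcal{W}$ are \emph{not} $G$-isomorphic. Let $\varphi\in\Hom_G(\mathcal{V},\mathcal{W})$. Since $\mathcal{V}$ is irreducible, $\ker\varphi$ is either $0$ or all of $\mathcal{V}$; in the latter case $\varphi=0$ and we are done. If $\ker\varphi=0$, then $\varphi$ is injective, so $\operatorname{im}\varphi\neq 0$; since $\mathcal{W}$ is irreducible, $\operatorname{im}\varphi=\mathcal{W}$, hence $\varphi$ is an isomorphism of $G$-modules, contradicting the assumption. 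Therefore $\Hom_G(\mathcal{V},\mathcal{W})=0$.

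Next I would treat the case where $\mathcal{V}$ and $\mathcal{W}$ are $G$-isomorphic, say via a fixed isomorphism $\psi\colon\mathcal{V}\xrightarrow{\ \sim\ }\mathcal{W}$. Composition with $\psi$ gives a $\K$-linear bijection $\Hom_G(\mathcal{V},\mathcal{V})\to\Hom_G(\mathcal{V},\mathcal{W})$, $\chi\mapsto\psi\circ\chi$, so it suffices to show $\Hom_G(\mathcal{V},\mathcal{V})\cong\K$, i.e.\ that every $G$-endomorphism of the irreducible module $\mathcal{V}$ is a scalar multiple of the identity. Here is the one place the hypothesis that $\K$ is algebraically closed is essential: given $\chi\in\Hom_G(\mathcal{V},\mathcal{V})$, since $\K$ is algebraically closed and $\mathcal{V}$ is finite-dimensional, $\chi$ has an eigenvalue $\lambda\in\K$. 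Then $\chi-\lambda\,\mathrm{id}_{\mathcal{V}}$ is again a $G$-endomorphism of $\mathcal{V}$ with nontrivial kernel; by the irreducibility of $\mathcal{V}$ (and the first paragraph's kernel argument), this forces $\chi-\lambda\,\mathrm{id}_{\mathcal{V}}=0$, i.e.\ $\chi=\lambda\,\mathrm{id}_{\mathcal{V}}$. Hence $\Hom_G(\mathcal{V},\mathcal{V})=\K\cdot\mathrm{id}_{\mathcal{V}}\cong\K$, and therefore $\Hom_G(\mathcal{V},\mathcal{W})\cong\K$.

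The only genuine subtlety — and the step I would flag as the heart of the argument — is the use of algebraic closedness to produce an eigenvalue; over a non-closed field the endomorphism algebra can be a larger division ring (e.g.\ $\mathbb{H}$ in the quaternionic case), so the isomorphism $\Hom_G(\mathcal{V},\mathcal{W})\cong\K$ genuinely needs $\K=\overline{\K}$. Everything else is formal manipulation with submodules under $G$-maps. One should also note the implicit finite-dimensionality of $\mathcal{V}$, which in the applications in this paper is automatic since the modules in question are finite-dimensional subspaces $H_{n,d}$ of the polynomial ring; this is what guarantees $\chi$ has a characteristic polynomial and hence an eigenvalue. I would close by remarking that the two implications together give the stated biconditional, and that over $\K=\C$ (the relevant case for the real representation theory used later, after complexification) the lemma applies directly.
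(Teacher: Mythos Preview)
Your proof is correct and is the standard textbook argument for Schur's lemma. Note, however, that the paper does not supply its own proof of this statement: Lemma~\ref{le:schur} is stated as a classical background result and then immediately used, so there is nothing to compare against. Your write-up would serve perfectly well as a self-contained proof, and your remarks about the necessity of algebraic closedness and the implicit finite-dimensionality assumption are apt and match how the lemma is applied later in the paper.
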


\begin{remark}
In the sequel, we will mostly work with $G$-modules defined over the real numbers. In this setup, one devotes some care to the fact that irreducible representations defined over the reals may be reducible over the complex numbers. This additional difficulty is in fact not hard to overcome and, in particular, in the case of real reflection groups, which are the main focus of this work, all complexifications of real irreducible $G$-modules remain irreducible \cite{humphreys1990reflection}.
\end{remark}

 Let $\mathcal{V} = \langle f_1 \rangle_G$ be irreducible. As a consequence of Schur's lemma, we obtain that any $G$-homomorphism $\phi \in \Hom_G(\mathcal{V},\mathcal{W})$ is uniquely defined by $f_2 : = \phi (f_1).$ If further $\phi \neq 0$ then for any $\psi \in \Hom_G(\mathcal{V},\mathcal{W})$ it is $\psi = \lambda \phi, \lambda \in \K$. It motivates the following: 

\begin{definition}\label{def:SymmetryAdaptedbasis}
Let $V$ be a finite dimensional $G$-module with isotypic decomposition $$V=\bigoplus_{j=1}^l\bigoplus_{i=1}^{\eta_j} \theta^{(j)}_{i},$$ and $f_{ji} \in  \theta^{(j)}_{i}$ such that for every $j$ each $f_{ji}$ is  the image of one fixed $f_{j1}$ under a $G$-isomorphism (which is unique up to scalar multiplication). Then $\left( f_{11},\ldots,f_{1\eta_1},f_{21},\ldots,f_{l\eta_l}\right)$ is called a symmetry adapted basis of $V$. 
\end{definition}
We point out that while a symmetry adapted basis of a $G$-module is usually not a vector space basis, a basis is given by its $G$-orbit.

Note that an invariant polynomial which can be expressed as a sum of squares in the ring $\R[\underline{X}]$ will not necessarily have a sum of squares decomposition in invariant polynomials, i.e., 
$$\R[\underline{X}]^G\bigcap\sum \R[\underline{X}]^2\neq \sum(\R[\underline{X}]^G)^2.$$

By integrating the idea of a symmetry adapted basis together with Schur's lemma, one arrives at the following observation more or less directly (see also \cite{blekrie,cimprivc2009sums,gatermann2004symmetry,riener2013exploiting} for more details on  the following statement). \\ \\
For a $\R$-vector space $W$ we write $\sum W^2$ for the sums of squares of elements in $V$.
\begin{thm}\label{THM Decomp}
Let $\{f_{11},f_{12},\ldots,f_{l \eta_l}\}$ be a symmetry adapted basis for the $G$-module $H_{n,d}$ of forms of degree $d$. Then any $G$-invariant sum of squares form in $H_{n,2d}^G$ is contained in the set \begin{align*}
   \sum_{j=1}^l \mathcal{R}_G \left( \langle f_{j1},\ldots,f_{j\eta_j} \rangle_\R^2 \right)
    \end{align*}
\end{thm}

In some situations, it is convenient to formulate Theorem \ref{THM Decomp} in terms of matrix polynomials, i.e., matrices with polynomial entries. Given two $k\times k$ symmetric matrices $A$ and $B$ define their inner product as $\langle A,B \rangle=\operatorname{trace}(AB).$ 
We define a block-diagonal symmetric matrix $B$ with $j$ blocks $B^{(1)},\dots,B^{(j)}$ with the entries of each block given by:
\begin{equation}\label{eq:B}
B^{(j)} = \left( \mathcal{R}_G (f_{ju} \cdot f_{jv} \right))_{u,v} .
\end{equation}
Then Theorem \ref{THM Decomp} is equivalent to the following statement:
\begin{cor} \label{cor:sos}
Let $g \in H_{n,2d}^G$. Then $g \in \Sigma_{n,2d}^G$ if and only if \begin{align*}
    g = \langle A_1 \cdot B^{(1)} \rangle + \ldots +  \langle A_l \cdot B^{(l)} \rangle,
\end{align*}
for some $A_j \in \emph{Mat}_{\eta_j \times \eta_j}(\R)$ symmetric and positive semidefinite matrices.
\end{cor}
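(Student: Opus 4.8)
The plan is to show that Corollary~\ref{cor:sos} is just a reformulation of Theorem~\ref{THM Decomp} in matrix language, so the whole argument amounts to rewriting the sum of squares appearing in the theorem as a sum of traces. First I would fix a symmetry adapted basis $\{f_{11},\dots,f_{l\eta_l}\}$ of $H_{n,d}$ as in the theorem, and for each isotypic block $j$ collect the $\eta_j$ basis elements into a vector $F_j=(f_{j1},\dots,f_{j\eta_j})^t$. Then the inner expression $\sum\R\{f_{j1},\dots,f_{j\eta_j}\}^2$ from Theorem~\ref{THM Decomp}, being a non-negative combination of squares of real linear forms in the $f_{ji}$, is exactly $F_j^t A_j F_j$ for some symmetric positive semidefinite $A_j\in\mathrm{Mat}_{\eta_j\times\eta_j}(\R)$; conversely every such $A_j$ arises this way by diagonalizing $A_j$. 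So Theorem~\ref{THM Decomp} says precisely that $g\in\Sigma_{n,2d}^G$ iff $g=\mathcal{R}_G\!\left(\sum_{j=1}^l F_j^t A_j F_j\right)$ with each $A_j\succeq 0$.

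Next I would push the Reynolds operator inside, using its linearity, to get $g=\sum_{j=1}^l \sum_{k,l} (A_j)_{kl}\,\mathcal{R}_G(f_{jk}f_{jl})$, and recognize the matrix with entries $\mathcal{R}_G(f_{jk}f_{jl})$ as the block $B^{(j)}$ defined in~\eqref{eq:B}. Since $\sum_{k,l}(A_j)_{kl}(B^{(j)})_{kl}=\mathrm{Tr}(A_j B^{(j)})$ by symmetry of both matrices, this is exactly $\sum_{j=1}^l \mathrm{Tr}(A_j\cdot B^{(j)})$, which is the claimed identity. Equivalently, writing $B$ as the single block-diagonal matrix and $A$ as the block-diagonal matrix with blocks $A_j$, this is $\langle A,B\rangle$ with $A\succeq 0$.

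For both directions I should be a little careful. For the ``only if'' direction I use Theorem~\ref{THM Decomp} to get the decomposition into $\R\{\cdot\}^2$ terms and translate each into a positive semidefinite $A_j$; I would note that the map $(c_{ji})_i\mapsto \sum_i c_{ji}f_{ji}$ and a choice of $c_j\ge 0$ corresponds, via spectral decomposition, to an arbitrary positive semidefinite quadratic form on the coefficient space, so the correspondence between $\{A_j\succeq 0\}$ and expressions $\sum\R\{f_{j1},\dots,f_{j\eta_j}\}^2$ is onto. For the ``if'' direction, given $A_j\succeq 0$ I factor $A_j=\sum_s \lambda_s v_s v_s^t$ with $\lambda_s\ge 0$, so $F_j^t A_j F_j=\sum_s\lambda_s(v_s^t F_j)^2$ is of the required form and $\mathcal{R}_G$ of it is a $G$-invariant sum of squares by Theorem~\ref{THM Decomp}; the fact that $\mathcal{R}_G$ applied to a sum of squares of forms lands in $\Sigma_{n,2d}^G$ (which already underlies Theorem~\ref{THM Decomp}) is what guarantees $g\in\Sigma_{n,2d}^G$.

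Honestly there is no real obstacle here; the statement is an equivalent repackaging and the only thing to get right is the bookkeeping that $\mathrm{Tr}(A_jB^{(j)})=\sum_{k,l}(A_j)_{kl}\mathcal{R}_G(f_{jk}f_{jl})$ and that ranging over all $A_j\succeq 0$ is the same as ranging over all the $\R\{\cdot\}^2$-expressions in Theorem~\ref{THM Decomp}. One point worth a sentence is that nothing forces the $f_{ji}$ within a fixed block to be linearly independent or even nonzero as polynomials, but this causes no problem: the equivalence holds at the level of the (possibly non-injective) linear map from coefficient space to $H_{n,2d}^G$, so a given $g$ may have many representing $A$'s, which is fine for an ``if and only if'' statement.
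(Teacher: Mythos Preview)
Your proposal is correct and matches the paper's approach: the paper states the corollary as an immediate reformulation of Theorem~\ref{THM Decomp} without giving a separate proof, and your argument spells out exactly the bookkeeping (identifying $\sum\R\{f_{j1},\dots,f_{j\eta_j}\}^2$ with $F_j^tA_jF_j$ for $A_j\succeq 0$, pushing $\mathcal{R}_G$ inside by linearity, and recognizing $\sum_{k,l}(A_j)_{kl}\mathcal{R}_G(f_{jk}f_{jl})=\Tr(A_jB^{(j)})$) that makes this equivalence explicit.
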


\subsection{Representation theory of finite reflection groups}
The aim of this subsection is to provide an introduction to the representation theory of finite real reflection groups and how their symmetry can be exploited to reduce complexity in calculations. The presented material is mainly based on work in \cite{blekrie, valentin, gatermann2004symmetry,riener2013exploiting}.
\begin{definition}
A real reflection group is a pair $(G,\rho)$, where $G$ is a finite group, $V$ a finite dimensional $\R$-vector space and $\rho : G \rightarrow \Gl (V)$ a linear representation of $G$ such that $\rho(G)$ is generated by a set of reflections. A reflection group is called essential, if the action of $G$ on $V$ does not contain a non-trivial $G$-submodule.
\end{definition}
Usually, we just say that a group $G$ is a reflection group and the relevant linear map $\rho$ should be understood from the context. An action of $G$ on $\R^n$ induces naturally an action on the polynomial ring in $n$ variables. 
\begin{example}
\begin{enumerate}
\item[(i)] The symmetric group $\mathfrak{S}_n$ on $n$ letters is a reflection group acting via coordinate permutation on $\R^n$. The action of $\mathfrak{S}_n$ on $\R^n$ is not essential, as the linear subspace $\R \cdot (1,\ldots,1)$ is fixed point wise. The induced action of $\mathfrak{S}_n$ on $\R^n / \R \cdot (1,\ldots,1)$ is known as the reflection group of type $A_{n-1}$ and is essential.
\item[(ii)]The symmetry group of the regular $m$-gon is a reflection group and called the dihedral group and denoted by $I_2(m)$.
\end{enumerate}
\end{example}
\begin{remark}
Any real reflection group can be identified with a direct product of essential reflection groups. The essential real reflection groups have been classified and are precisely the infinite series $A_{n-1},B_n,D_n,I_2(m)$ and the six exceptional reflection groups $E_6,E_7,E_8,F_4,H_3,H_4$ (see e.g., \cite{humphreys1990reflection}).
\end{remark}
The reflection group of type $B_n$ can be identified with the hyperoctahedral group $\mathfrak{S}_2 \wr \mathfrak{S}_n$ acting on $\R^n$ via sign changing and permutation of coordinates. Then $B_n$ is generated by the reflections at $\{X_i=\pm X_j\}$, for $1 \leq i \leq j \leq n$. Furthermore, $D_n$ can be identified with the subgroup of $B_n$ of index $2$, generated by the reflections at $\{X_i=\pm X_j\}$, for $1 \leq i < j \leq n$. 

\begin{thm}[Chevalley-Shephard-Todd theorem]\label{Thm:Chevalley-Shephard-Todd theorem}
Let $G$ be a finite group and let $G$ act linearly on $\R^n$. Then the invariant ring $\R[\underline{X}]^G$ is as $\R$-algebra isomorphic to a polynomial ring if and only if $G$ is a real reflection group. Moreover, in this case $\R[\underline{X}]^G$ is generated by $n$ algebraically independent forms $\psi_1,\ldots,\psi_n$, i.e., $$\R[\underline{X}]^G = \R[\psi_1,\ldots,\psi_n].$$ 
\end{thm}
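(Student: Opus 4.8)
This is the classical Chevalley--Shephard--Todd theorem, and the plan is to prove the two implications separately.

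\emph{Preliminaries.} Since $G$ is finite, each $X_i$ is a root of $\prod_{\sigma\in G}(T-\sigma X_i)\in\R[\underline{X}]^G[T]$, so $\R[\underline{X}]$ is a finite module over $\R[\underline{X}]^G$ and in particular $\R[\underline{X}]^G$ has Krull dimension $n$. Hence if $\R[\underline{X}]^G$ is abstractly a polynomial ring it is one on exactly $n$ variables, and being a graded ring with degree-$0$ part $\R$ its generators may be taken homogeneous (a minimal homogeneous generating set of the graded maximal ideal has cardinality equal to the Krull dimension and is automatically algebraically independent); this also shows that the ``moreover'' clause follows directly from the forward implication. I will use two classical facts: Molien's series
\[
\sum_{d\ge 0}\dim(H_{n,d}^G)\,t^d \;=\; \frac{1}{|G|}\sum_{\sigma\in G}\frac{1}{\det(1-t\sigma)},
\]
and the Hilbert--Serre theorem that a finitely generated graded module over a polynomial ring $\R[\psi_1,\dots,\psi_n]$ with $\deg\psi_i=d_i$ has Hilbert series $N(t)/\prod_i(1-t^{d_i})$ for some $N\in\mathbb{Z}[t]$.

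\emph{Reflection group $\Rightarrow$ polynomial invariant ring.} Let $I\subset\R[\underline{X}]$ be the ideal generated by all homogeneous invariants of positive degree, and fix a minimal homogeneous generating set $\psi_1,\dots,\psi_m$ of $I$, with $\deg\psi_i=d_i$. The technical core is Chevalley's lemma: if $G$ is generated by reflections and $g_1,\dots,g_m\in\R[\underline{X}]$ are homogeneous with $\sum_i g_i\psi_i=0$ and $g_1\notin(\psi_2,\dots,\psi_m)$, then $g_1\in I$. I would prove this by induction on $\deg g_1$: subtracting from the relation its image under a reflection $\sigma$ fixing the hyperplane $\{\ell_\sigma=0\}$, and using both that $\psi_i$ is invariant and the elementary divisibility $\ell_\sigma\mid(p-\sigma p)$, produces a lower-degree instance, and averaging over $G$ with the Reynolds operator $\mathcal{R}_G$ puts $g_1$ into $I$. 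Granting the lemma, the $\psi_i$ are algebraically independent: otherwise pick $h\neq 0$ of least degree (weighting $\psi_i$ by $d_i$) with $h(\psi_1,\dots,\psi_m)=0$; differentiating in each $X_k$ gives relations $\sum_i(\partial_{y_i}h)(\psi)\,\partial_{X_k}\psi_i=0$ whose invariant coefficients are not all zero by minimality and $\operatorname{char}\R=0$; arranging these coefficients into a minimal generating set of the ideal they span and applying Chevalley's lemma, followed by Euler's identity $\sum_k X_k\,\partial_{X_k}\psi_1=d_1\psi_1$ and a comparison of degree-$d_1$ components, forces $\psi_1\in(\psi_2,\dots,\psi_m)$, contradicting minimality. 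Finally the $\psi_i$ generate $\R[\underline{X}]^G$ as an $\R$-algebra: a positive-degree homogeneous invariant $f$ lies in $I$, so $f=\sum_i g_i\psi_i$, and applying $\mathcal{R}_G$ replaces each $g_i$ by an invariant of strictly smaller degree (since $\mathcal{R}_G$ is $\R[\underline{X}]^G$-linear), so induction on $\deg f$ finishes. Algebraic independence together with the Krull-dimension count forces $m=n$.

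\emph{Polynomial invariant ring $\Rightarrow$ reflection group.} Write $\R[\underline{X}]^G=\R[\psi_1,\dots,\psi_n]$ with $\deg\psi_i=d_i$, so its Hilbert series is $\prod_i(1-t^{d_i})^{-1}$. Expanding Molien's series at $t=1$ — the identity contributes the pole of order $n$, the reflections the term of order $n-1$, all other elements lower order (over $\R$ a non-identity element fixing a hyperplane pointwise has remaining eigenvalue a real root of unity $\neq 1$, hence $-1$, so it is an honest reflection of order $2$ and ``pseudo-reflections'' cause no trouble) — produces the classical identities $\prod_i d_i=|G|$ and $\sum_i(d_i-1)=r$, where $r$ is the number of reflections in $G$. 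Let $H\le G$ be the subgroup generated by all reflections of $G$; by the forward implication $\R[\underline{X}]^H=\R[\varphi_1,\dots,\varphi_n]$ with $\deg\varphi_i=e_i$, so likewise $\prod_i e_i=|H|$ and $\sum_i(e_i-1)=r$ with the same $r$, whence $\sum_i d_i=\sum_i e_i$. Since $\R[\underline{X}]^H$ is a finitely generated graded $\R[\underline{X}]^G$-module, Hilbert--Serre gives $\prod_i(1-t^{e_i})^{-1}=N(t)\prod_i(1-t^{d_i})^{-1}$ for some $N\in\mathbb{Z}[t]$, i.e. $N(t)=\prod_i(1-t^{d_i})/\prod_i(1-t^{e_i})$, a polynomial with $N(0)=1$ and $\deg N=\sum_i d_i-\sum_i e_i=0$; hence $N\equiv 1$. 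Therefore $\R[\underline{X}]^G$ and $\R[\underline{X}]^H$ have equal Hilbert series, so $|G|=\prod_i d_i=\prod_i e_i=|H|$, and as $H\le G$ we conclude $G=H$ is a reflection group.

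\emph{Main obstacle.} The real work is the forward implication, and within it Chevalley's lemma together with the ensuing algebraic-independence argument: this is the only place where the reflection hypothesis is genuinely used, through $\ell_\sigma\mid(p-\sigma p)$, and making the induction and the minimal-generating-set bookkeeping precise is the delicate point. The converse is then a careful but essentially routine residue computation with Molien's formula combined with Hilbert--Serre divisibility.
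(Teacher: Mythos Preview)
Your proof is correct and follows the classical Chevalley/Shephard--Todd argument essentially as in Humphreys or Bourbaki: Chevalley's lemma for the forward implication, and the Molien--Hilbert--Serre residue comparison for the converse. There is nothing to compare against, however, because the paper does not prove this theorem at all --- it is stated as a named classical result and the reader is referred to the literature (e.g.\ \cite{humphreys1990reflection}) for details. So your write-up is not an alternative to the paper's proof but rather a self-contained sketch of the standard proof that the paper simply cites.
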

While the generators are not unique but well explored (e.g., the elementary symmetric polynomials or the power sums are generators for the symmetric group), the multisets of their degrees $\{d_1,\ldots,d_n\}$ are unique and $\prod_i d_i = |G|$ (consult e.g., \cite{humphreys1990reflection} for further details). 
\begin{definition}\label{def:ng}
Let $G$ be a finite reflection group and $(d_1,\ldots,d_n)$ the sequence of degrees of the fundamental invariants. Then, we define
$$N_G(k):=|\{ (\alpha_1,\ldots,\alpha_n)\in\N_0^n\,:\, \alpha_1 d_1+\ldots+\alpha_n d_n=k\}|.$$
\end{definition}
With this definition the following is a direct consequence of Theorem \ref{Thm:Chevalley-Shephard-Todd theorem}.
\begin{cor}\label{cor:1}
Let $G$ be a finite reflection group. Then 
the dimension of the vector space of $G$-invariant forms of degree $d$ equals $N_G(d)$, i.e., $\dim H_{n,d}^G=N_G(d)$. 
\end{cor}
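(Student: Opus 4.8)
The plan is to read off everything from the Chevalley--Shephard--Todd theorem (Theorem \ref{Thm:Chevalley-Shephard-Todd theorem}) together with the observation that the $G$-action on $\R[\underline{X}]$ preserves degree, so that $\R[\underline{X}]^G$ is a graded subring with $d$-th graded piece exactly $H_{n,d}^G$.

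First I would invoke Theorem \ref{Thm:Chevalley-Shephard-Todd theorem}: since $G$ is a finite reflection group, $\R[\underline{X}]^G=\R[\psi_1,\ldots,\psi_n]$ for algebraically independent homogeneous forms $\psi_1,\ldots,\psi_n$ of degrees $d_1,\ldots,d_n$. Algebraic independence of the $\psi_i$ means precisely that the monomials $\psi^{\alpha}:=\psi_1^{\alpha_1}\cdots\psi_n^{\alpha_n}$, ranging over $\alpha=(\alpha_1,\ldots,\alpha_n)\in\N_0^n$, form an $\R$-vector space basis of $\R[\underline{X}]^G$.

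Next I would note that each $\psi_i$ being homogeneous of degree $d_i$ forces $\psi^{\alpha}$ to be homogeneous of degree $\alpha_1 d_1+\ldots+\alpha_n d_n$. Hence the basis $\{\psi^{\alpha}\}_{\alpha}$ is a homogeneous basis, and the subset consisting of those $\psi^{\alpha}$ with $\alpha_1 d_1+\ldots+\alpha_n d_n=d$ is a basis of the degree-$d$ graded component of $\R[\underline{X}]^G$. Because the linear $G$-action on $\R^n$ induces a degree-preserving action on $\R[\underline{X}]$, this graded component is exactly $H_{n,d}^G=H_{n,d}\cap\R[\underline{X}]^G$. Counting the basis elements gives $\dim H_{n,d}^G=|\{\alpha\in\N_0^n:\alpha_1 d_1+\ldots+\alpha_n d_n=d\}|=N_G(d)$ by Definition \ref{def:ng}.

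There is really no serious obstacle here; the statement is a direct bookkeeping consequence of Chevalley--Shephard--Todd. The only point deserving a word of care is the identification of the degree-$d$ piece of the abstract polynomial ring $\R[\psi_1,\ldots,\psi_n]$ (graded by $\sum\alpha_i d_i$) with $H_{n,d}^G$ (graded by total degree in the $X_i$), which follows since the $\psi_i$ are homogeneous of the stated degrees and the $G$-action is linear, hence degree preserving.
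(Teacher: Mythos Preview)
Your proof is correct and follows exactly the approach the paper intends: the paper states this corollary as ``a direct consequence'' of the Chevalley--Shephard--Todd theorem without spelling out the details, and your argument is precisely the standard unpacking of that consequence via the homogeneous monomial basis $\{\psi^{\alpha}\}$ of $\R[\underline{X}]^G$.
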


\begin{example}
\begin{itemize}
    \item[(i)] $\R[\underline{X}]^{\mathfrak{S}_n} = \R[e_1,e_2,\ldots,e_n]=\R[p_1,p_2,\ldots,p_n],$ where \\
    $e_j (\underline{X}) := \sum_{I \subset [n] : |I|=j}\prod_{i \in I} X_i$ are the elementary symmetric and $p_j(\underline{X}):=\sum_{i=1}^nX_i^j$ are the power sum polynomials.
    \item[(ii)] $\R[\underline{X}]^{B_n}=\R[e_1(\underline{X}^2),e_2(\underline{X}^2),\ldots,e_n(\underline{X}^2)]=\R[p_2,p_4,\ldots,p_{2n}],$ where $\underline{X}^2:=(X_1^2,\ldots,X_n^2)$.
    \item[(iii)]  $
     \R[\underline{X}]^{D_n} = \R[p_2,p_4,\ldots,p_{2n-2},e_n]$. \item[(iv)] $\R[\underline{X}]^{I_2(m)} = \R[X_1^2+X_2^2,  (X_1+\sqrt{-1}X_2)^m+(X_1-\sqrt{-1}X_2)^m]$.
\end{itemize}
\end{example}
\begin{remark}
For $\lambda := (\lambda_1,\ldots,\lambda_l) \in \N^l$ we often write $p_{\lambda} := p_{\lambda_1}\cdots p_{\lambda_l}$ for the $l$ products of the power sums $p_{\lambda_i}$.
\end{remark}
From a computational perspective, invariant theory as outlined above can be used to reduce computations for polynomials in $\R[\underline{X}]$ to the smaller ring $\R[\underline{X}]^G$.
Since $\R[\underline{X}]$ is in general a finite $\R[\underline{X}]^G$- module, the quadratic module $\R[\underline{X}]^G\bigcap\sum \R[\underline{X}]^2$ can be described quite conveniently. We outline this in the case of reflection groups below, using the coinvariant algebra and a theorem of Chevalley. 
\begin{definition}
Let $G$ be a reflection group acting linear on $\R^n$ and $\R[\underline{X}]^G = \R[\psi_1,\ldots,\psi_n]$. We call the forms $\psi_1,\ldots,\psi_n$ the fundamental invariants of $G$. The quotient $\R$-algebra of the polynomial ring modulo the ideal generated by the non-constant elements of the invariant ring is called the coinvariant algebra of $G$ and denoted by $\R[\underline{X}]_G$, i.e., $$\R[\underline{X}]_G := \R[\underline{X}] / \left(\psi_1,\ldots,\psi_n \right)_{\R[\underline{X}]}.$$
\end{definition}
The coinvariant algebra of $G$ has the structure of a $G$-module.

\begin{thm}\cite{lehrer2009unitary} \label{thm:coinvariant algebra}
Let $G$ be a real reflection group acting linear on $\R^n$. Then the coinvariant algebra $\R[\underline{X}]_G$ is as $G$-module isomorphic to the regular representation $\R[G]$ and $$\R[\underline{X}] \cong \R[\underline{X}]^G \otimes_\R \R[\underline{X}]_G$$ as graded $\R$-algebras.  
\end{thm}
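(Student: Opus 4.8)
The plan is to deduce both assertions from the Chevalley--Shephard--Todd Theorem~\ref{Thm:Chevalley-Shephard-Todd theorem}. The first and decisive step is to show that $\R[\underline{X}]$ is a free graded module over the invariant ring. By Theorem~\ref{Thm:Chevalley-Shephard-Todd theorem} we have $\R[\underline{X}]^G=\R[\psi_1,\ldots,\psi_n]$ with the $\psi_i$ algebraically independent forms of degrees $d_1,\ldots,d_n$ and $\prod_i d_i=|G|$. Since $\R[\underline{X}]$ is module-finite over $\R[\underline{X}]^G$ (as recalled above), the extension is integral, so the common zero locus of $\psi_1,\ldots,\psi_n$ over $\C$ is finite; being a cone it is the origin, whence $\psi_1,\ldots,\psi_n$ is a homogeneous system of parameters for $\R[\underline{X}]$. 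As $\R[\underline{X}]$ is a polynomial ring, hence Cohen--Macaulay, such a system is a regular sequence, and therefore $\R[\underline{X}]$ is a free graded $\R[\underline{X}]^G$-module. (One can bypass the Cohen--Macaulay input with Chevalley's original argument, in which a relation $\sum_i f_ig_i=0$ with $0\neq f_i\in\R[\underline{X}]^G$ of minimal degree is shown, by induction on degree, to force $g_1\in\R[\underline{X}]^G_{+}\cdot\R[\underline{X}]$, using that partial differentiation is compatible with the reflections generating $G$.) I expect this freeness to be the main obstacle; the remaining steps are comparatively routine.

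Granting freeness, I would pick a homogeneous $\R[\underline{X}]^G$-basis $\omega_1,\ldots,\omega_N$ of $\R[\underline{X}]$. If $\sum_i c_i\omega_i$ lies in the Hilbert ideal $(\psi_1,\ldots,\psi_n)_{\R[\underline{X}]}$ with scalars $c_i\in\R$, then writing out the coefficients in the $\omega_i$-basis and invoking freeness forces every $c_i=0$; hence the residues $\bar\omega_1,\ldots,\bar\omega_N$ form an $\R$-basis of the coinvariant algebra $\R[\underline{X}]_G$, so $N=\dim_\R\R[\underline{X}]_G$. As $G$ fixes $\R[\underline{X}]^G$ pointwise and acts $\R[\underline{X}]^G$-linearly on $\R[\underline{X}]$, averaging the section $\bar\omega_i\mapsto\omega_i$ over $G$ yields a $G$-equivariant graded $\R$-linear section $s\colon\R[\underline{X}]_G\to\R[\underline{X}]$ of the quotient map; since the residues of the $s(\bar\omega_i)$ are the basis $\bar\omega_i$ of $\R[\underline{X}]_G=\R[\underline{X}]/(\R[\underline{X}]^G_{+}\cdot\R[\underline{X}])$, graded Nakayama shows the $s(\bar\omega_i)$ are again a free $\R[\underline{X}]^G$-basis of $\R[\underline{X}]$. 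Consequently the map
\begin{equation*}
\R[\underline{X}]^G\otimes_\R\R[\underline{X}]_G\ \longrightarrow\ \R[\underline{X}],\qquad f\otimes w\ \longmapsto\ f\cdot s(w),
\end{equation*}
is an isomorphism of graded $\R[\underline{X}]^G$-modules and of $G$-modules ($G$ acting trivially on the factor $\R[\underline{X}]^G$), which realizes the asserted decomposition.

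It remains to identify $\R[\underline{X}]_G$ with the regular representation, which I would do by a graded character computation. For $g\in G$ acting on $H_{n,1}\cong\R^n$ one has $\sum_{d\ge 0}\operatorname{tr}(g\vert_{H_{n,d}})\,t^{d}=1/\det(1-tg)$ (the eigenvalues of $g$ are roots of unity, so $g$ and $g^{-1}$ share a characteristic polynomial), while on the invariant ring the trace reduces to the Hilbert series $\prod_{i=1}^n(1-t^{d_i})^{-1}$ since the $G$-action there is trivial. By the decomposition just obtained, the graded character of the finite-dimensional $G$-module $\R[\underline{X}]_G$ is therefore the polynomial
\begin{equation*}
\chi_g(t)\ =\ \frac{\prod_{i=1}^{n}\bigl(1-t^{d_i}\bigr)}{\det\bigl(1-tg\bigr)}.
\end{equation*}
Evaluating at $t=1$ recovers the ordinary character of $\R[\underline{X}]_G$: the numerator vanishes to order $n$ at $t=1$, whereas $\det(1-tg)$ vanishes only to order $\dim(\R^n)^g$, which equals $n$ exactly when $g=\mathrm{id}$ and is at most $n-1$ otherwise (here we use that $G$ acts faithfully on $\R^n$). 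Hence $\chi_g(1)=0$ for $g\neq\mathrm{id}$, while $\chi_{\mathrm{id}}(1)=\prod_i(1+t+\cdots+t^{d_i-1})\big|_{t=1}=\prod_i d_i=|G|$. This is precisely the character of the regular representation $\R[G]$, and since a representation over $\R$ is determined up to isomorphism by its character, $\R[\underline{X}]_G\cong\R[G]$ as $G$-modules, as claimed.
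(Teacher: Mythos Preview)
The paper does not supply its own proof of this theorem; it is quoted from \cite{lehrer2009unitary} and used as a black box. Your argument is a correct and essentially standard proof of the result: you first deduce from the Chevalley--Shephard--Todd theorem that the fundamental invariants form a homogeneous system of parameters, invoke Cohen--Macaulayness of the polynomial ring to get freeness of $\R[\underline{X}]$ over $\R[\underline{X}]^G$, then average a linear section with the Reynolds operator to obtain a $G$-equivariant graded splitting, and finally compute the ordinary character of the coinvariant algebra by specialising the graded character $\prod_i(1-t^{d_i})/\det(1-tg)$ at $t=1$. Each of these steps is sound; in particular your use of graded Nakayama to see that the averaged lifts remain a free basis, and the order-of-vanishing argument at $t=1$ using faithfulness of the $G$-action, are handled cleanly.

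One small remark: the isomorphism you construct, $f\otimes w\mapsto f\cdot s(w)$, is an isomorphism of graded $\R[\underline{X}]^G$-modules and of $G$-modules, but it is not in general multiplicative, since the equivariant section $s$ need not satisfy $s(w_1w_2)=s(w_1)s(w_2)$. So strictly speaking you have not established an isomorphism ``as graded $\R$-algebras'' as the paper's statement is phrased. This is a looseness in the paper's formulation rather than a defect in your proof: the content actually used downstream (e.g.\ in Corollary~\ref{cor:deomposition of polynomial ring} and Theorem~\ref{thm:sums of squares description}) is precisely the free-module decomposition and the identification of $\R[\underline{X}]_G$ with the regular representation, both of which you prove.
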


\begin{cor} \label{cor:deomposition of polynomial ring}
Let $\R[\underline{X}]^G= \R[\psi_1,\ldots,\psi_n]$ be a polynomial ring in the fundamental invariants $\psi_1,\ldots,\psi_n$. Let $\R[\underline{X}]_G = \bigoplus_{j=1}^l \eta_j \theta^{(j)}$ be the isotypic decomposition of the coinvariant algebra. Then there exists a symmetry adapted basis $f_{11},\ldots,f_{l\eta_l} \in \R[\underline{X}]$ of $\R[\underline{X}]_G$ such that any $f \in \R[\underline{X}]$ can be written as $$f =  \sum_{j=1}^l \sum_{i=1}^{\eta_j} \sum_{\sigma \in G} g_{ji,\sigma} \sigma f_{ji}, $$ where $g_{ji,\sigma} \in \R[\underline{X}]^G$.
\end{cor}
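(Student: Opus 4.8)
The plan is to combine the Chevalley-type decomposition $\R[\underline{X}]\cong\R[\underline{X}]^G\otimes_\R\R[\underline{X}]_G$ of Theorem~\ref{thm:coinvariant algebra} with the existence of a symmetry adapted basis of the $G$-module $\R[\underline{X}]_G$ in the sense of Definition~\ref{def:SymmetryAdaptedbasis}, and then to propagate such a basis by the $G$-action in order to obtain a free module basis of $\R[\underline{X}]$ over the invariant ring. Throughout, elements of $\R[\underline{X}]_G$ are identified with their reduced representatives in $\R[\underline{X}]$.

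First I would construct a symmetry adapted basis of the coinvariant algebra. Since the $G$-action on $\R[\underline{X}]_G$ preserves the grading, the isotypic decomposition can be refined degree by degree, so I may choose a decomposition $\R[\underline{X}]_G=\bigoplus_{j=1}^{l}\bigoplus_{i=1}^{\eta_j}\mathcal{W}_{j,i}$ into irreducible components with each $\mathcal{W}_{j,i}$ homogeneous, i.e.\ contained in a single degree. As $\mathcal{W}_{j,1}$ is irreducible, any nonzero $f_{j1}\in\mathcal{W}_{j,1}$ $G$-generates it; for $2\le i\le\eta_j$ I fix a $G$-isomorphism $\phi_{ji}\colon\mathcal{W}_{j,1}\to\mathcal{W}_{j,i}$ (which exists because the $\mathcal{W}_{j,i}$ are pairwise isomorphic, and is unique up to a scalar by Schur's lemma, Lemma~\ref{le:schur}) and set $f_{ji}:=\phi_{ji}(f_{j1})$. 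Then $(f_{11},\ldots,f_{l\eta_l})$ is a symmetry adapted basis of $\R[\underline{X}]_G$, and each $f_{ji}$ is homogeneous since $\mathcal{W}_{j,i}$ is, with $\langle f_{ji}\rangle_G=\mathcal{W}_{j,i}$.

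Next I would pass to a free module basis of $\R[\underline{X}]$. For fixed $j,i$ the $G$-translates $\{\sigma f_{ji}:\sigma\in G\}$ span $\langle f_{ji}\rangle_G=\mathcal{W}_{j,i}$, so I can select among them a homogeneous $\R$-basis of $\mathcal{W}_{j,i}$; taking the union over all $j,i$ and using $\R[\underline{X}]_G=\bigoplus_{j,i}\mathcal{W}_{j,i}$ produces a homogeneous $\R$-basis $b_1,\ldots,b_{|G|}$ of the coinvariant algebra in which each $b_k$ is of the form $\sigma_k f_{j_k i_k}$ for suitable $\sigma_k\in G$. By Theorem~\ref{thm:coinvariant algebra} the ring $\R[\underline{X}]$ is a free $\R[\underline{X}]^G$-module, and by the graded Nakayama lemma any family of homogeneous polynomials whose residues form an $\R$-basis of $\R[\underline{X}]/(\psi_1,\ldots,\psi_n)=\R[\underline{X}]_G$ is a free basis of $\R[\underline{X}]$ over $\R[\underline{X}]^G$. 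Hence every $f\in\R[\underline{X}]$ has an expansion $f=\sum_k g_k b_k=\sum_k g_k\,\sigma_k f_{j_k i_k}$ with $g_k\in\R[\underline{X}]^G$; regrouping the terms according to the triple $(j,i,\sigma)$ and declaring the coefficient of every translate $\sigma f_{ji}$ that was not selected to be $0$ yields the asserted formula $f=\sum_{j=1}^l\sum_{i=1}^{\eta_j}\sum_{\sigma\in G}g_{ji,\sigma}\,\sigma f_{ji}$ with $g_{ji,\sigma}\in\R[\underline{X}]^G$.

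I expect the point requiring the most care to be the transition from a spanning set built out of the $G$-orbits of a symmetry adapted basis to an actual free $\R[\underline{X}]^G$-module basis of $\R[\underline{X}]$: this is precisely where the Chevalley freeness of Theorem~\ref{thm:coinvariant algebra} (together with the graded Nakayama lemma) is essential, and it is also the reason for insisting that the symmetry adapted basis be chosen homogeneous, i.e.\ for refining the isotypic decomposition degree by degree. By contrast, the construction of the symmetry adapted basis via Schur's lemma and the final regrouping of the coefficients are routine.
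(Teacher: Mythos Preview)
Your argument is correct and follows essentially the same approach as the paper: the paper's proof is a one-line reference to Schur's lemma, the notion of a symmetry adapted basis, and the graded tensor decomposition of Theorem~\ref{thm:coinvariant algebra}, and your proposal is a careful elaboration of precisely these three ingredients. Your explicit invocation of the graded Nakayama lemma to lift a homogeneous $\R$-basis of $\R[\underline{X}]_G$ to a free $\R[\underline{X}]^G$-module basis is a welcome clarification that the paper leaves implicit in the phrase ``graded tensor decomposition''.
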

\begin{proof}
The existence of the symmetry adapted basis $(f_{11},\ldots,f_{l\eta_l})$ of $\R[\underline{X}]_G$ follows by Schur's lemma \ref{le:schur}. Further, by definition, the $G$-orbit of $(f_{11},\ldots,f_{l\eta_l})$ is a vector space basis of the coinvariant algebra. The claim follows from the graded tensor decomposition by Theorem \ref{thm:coinvariant algebra}.
\end{proof}
The second sum in the representation of a polynomial in Corollary \ref{cor:deomposition of polynomial ring} goes up to $\eta_j$. We recall out that the multiplicity $\eta_j$ of an irreducible representation $\theta^{(j)}$ in the coinvariant algebra equals the dimension $\vartheta_j$.
\begin{remark}
The calculation of one symmetry adapted basis of the coinvariant algebra allows easily the computation of the isotypic composition of the $G$-module $H_{n,d}$ for any degree. As a rough general procedure, one needs to compute the products of elements from the symmetry adapted basis with fundamental invariants of $G$, such that the degree of the obtained homogeneous polynomial equals $d$.
\end{remark}
\begin{definition}
Let $S:=\{s_1,\ldots,s_{|G|}\}$ be a basis of $\R[\underline{X}]_G$. Then we define the matrix polynomial $H^S(\psi_1,\ldots,\psi_n)\in\R[z]^{|G| \times |G|}$ to be
$$H^S_{u,v}:=R_G(s_u\cdot s_v),$$
where we express each entry $R_G(s_u\cdot s_v)$ in terms of the fundamental invariants $\psi_1,\ldots,\psi_n$.
\end{definition}

\begin{lemma}
Let $f\in\R[\underline{X}]$ be $G$-invariant and let $\gamma\in\R[\psi_1,\ldots,\psi_n]$ with $\gamma(\psi_1,\ldots,\psi_n)=f$ then $f$ is a sum of squares if and only if $\gamma(\psi_1,\ldots,\psi_n)$ admits a representation of the form
$$\gamma= \Tr (G\cdot H^S),$$
where $G$ is a sum of squares matrix polynomial, i.e., $G=L^tL$ for some $L(\psi_1,\ldots,\psi_n)\in\R[\psi_1,\ldots,\psi_n]^{n\times m}$ for some $1\leq m\leq n.$
\end{lemma}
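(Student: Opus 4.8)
The plan is to reduce the claimed matrix identity directly to Corollary~\ref{cor:sos} by identifying the matrix $H^S(z)$, expressed in the fundamental invariants, with the block-diagonal matrix $B$ from \eqref{eq:B}. First I would unwind the hypothesis: $f$ is $G$-invariant and $\gamma\in\R[z]$ satisfies $\gamma(\psi_1,\ldots,\psi_n)=f$, so $f$ lies in $H_{n,2d}^G$ for the appropriate $d$. By Theorem~\ref{thm:coinvariant algebra} the polynomial ring decomposes as $\R[\underline{X}]\cong\R[\underline{X}]^G\otimes_\R\R[\underline{X}]_G$, so the chosen basis $S=\{s_1,\ldots,s_{|G|}\}$ of the coinvariant algebra is simultaneously an $\R[\underline{X}]^G$-module basis of $\R[\underline{X}]$. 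Hence every product $s_i s_j$ reduces, after applying the Reynolds operator $\mathcal{R}_G$, to an element of $\R[\underline{X}]^G$, which can then be written as a polynomial in $\psi_1,\ldots,\psi_n$; this is exactly the definition of the entries $H^S_{i,j}$, so $H^S(\psi_1,\ldots,\psi_n)=\bigl(\mathcal{R}_G(s_is_j)\bigr)_{i,j}$ as a matrix over $\R[\underline{X}]^G$.

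**From the symmetry-adapted basis to an arbitrary basis.**

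The second step is to pass from a symmetry adapted basis of $\R[\underline{X}]_G$ (to which Theorem~\ref{THM Decomp} and Corollary~\ref{cor:sos} apply) to the arbitrary basis $S$. If $\{f_{11},\ldots,f_{l\eta_l}\}$ is a symmetry adapted basis of the coinvariant algebra, then $S$ is obtained from it by an invertible constant change of basis $P\in\Gl_{|G|}(\R)$, and the two Gram-type matrices transform as $H^S=P^t\bigl(\mathcal{R}_G(f_{ji}f_{j'i'})\bigr)P$. By Schur's lemma~\ref{le:schur}, the matrix $\bigl(\mathcal{R}_G(f_{ji}f_{j'i'})\bigr)$ is block diagonal — the $(j,i),(j',i')$ entry vanishes unless $j=j'$ — and its blocks are precisely the $B^{(j)}$ of \eqref{eq:B}. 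Conjugation by $P$ preserves the property of being a Gram matrix of a sum of squares: writing $g$ as a trace against the block matrix $B$ with a PSD $A$ is equivalent to writing $g$ as a trace against $H^S$ with the PSD matrix $(P^{-1})^t A P^{-1}$ (suitably block-decomposed and padded by zeros). So the set of $\gamma$ admitting a representation $\gamma=\Tr(G(z)\cdot H^S(z))$ with $G(z)$ an SOS matrix polynomial is independent of which basis $S$ we pick.

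**Closing the equivalence.**

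With these identifications, the forward direction is: if $f=\gamma(\psi)$ is a sum of squares, apply Corollary~\ref{cor:sos} to get $g=f$ written as $\sum_j\Tr(A_j B^{(j)})$ with each $A_j$ symmetric PSD; assembling the $A_j$ into a block-diagonal PSD matrix $A$ and then conjugating by $P$ as above yields $\gamma=\Tr(G(z)H^S(z))$ with $G(z)=L(z)^tL(z)$, where $L(z)$ is the Cholesky-type factor of the (constant, hence trivially polynomial) PSD matrix — here one notes that a constant PSD matrix is in particular an SOS matrix polynomial, and the rank bound $1\le m\le n$ comes from the fact that the relevant $A_j$-block structure has rank at most $\eta_j\le\dim H_{n,d}^G$, which one can arrange to be at most $n$ in the cases of interest, or more simply one truncates $L$ to its nonzero rows. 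Conversely, given such a representation, reading the identity back through $z_i\mapsto\psi_i$ and expanding $G(z)=L(z)^tL(z)$ exhibits $f=\sum_k(\text{entries of }L(\psi)\cdot(\text{coinvariant basis}))^2$ after using $H^S_{i,j}=\mathcal{R}_G(s_is_j)$ and $G$-invariance to drop the Reynolds operator, so $f\in\sum\R[\underline{X}]^2$.

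**The main obstacle.**

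The one genuinely delicate point is the bookkeeping around the rank bound "$1\le m\le n$" and whether one really wants $G(z)$ to be allowed polynomial entries or only constant ones: in the generality stated here the Gram matrix $H^S$ already has polynomial entries (its degree grows with $d$), while the multiplier $G(z)$ in a bona fide SOS certificate of $f$ should be taken with entries in $\R[z]$ of complementary degree — so the claim as phrased is really asserting that the "outer" multiplier can be taken constant-coefficient in the block decomposition but may pick up polynomial dependence when re-expanded in a non-adapted basis $S$. I would therefore spend the bulk of the write-up making precise which degrees live where (so that $\Tr(G(z)H^S(z))$ is genuinely homogeneous of the right degree after substituting $z_i=\psi_i$) and justifying the index range of $m$; the representation-theoretic content is entirely supplied by Theorem~\ref{THM Decomp}, Corollary~\ref{cor:sos}, and Theorem~\ref{thm:coinvariant algebra}, and is routine once the change-of-basis matrix $P$ is introduced.
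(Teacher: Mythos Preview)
Your approach takes an unnecessary detour through Corollary~\ref{cor:sos} and in doing so introduces a genuine confusion that is not resolved. Corollary~\ref{cor:sos} is stated for a symmetry adapted basis of the single graded piece $H_{n,d}$ (of cardinality $\dim H_{n,d}$), with \emph{constant} positive semidefinite multipliers $A_j$. The basis $S=\{s_1,\ldots,s_{|G|}\}$ in the present lemma is a basis of the whole coinvariant algebra, of cardinality $|G|$, whose elements have \emph{different} degrees. These two bases are not related by a constant matrix $P\in\Gl_{|G|}(\R)$ as you claim: to express an element of $H_{n,d}$ in terms of the $s_i$ one must multiply each $s_i$ by an invariant of complementary degree, so the relevant ``change of basis'' has invariant polynomial entries and is not even square. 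This is exactly why $G(z)$ in the lemma is a genuine matrix \emph{polynomial} rather than a constant matrix --- not, as you suggest in your final paragraph, an artifact of ``re-expanding in a non-adapted basis $S$''. Your sentence ``$L(z)$ is the Cholesky-type factor of the (constant, hence trivially polynomial) PSD matrix'' is therefore wrong, and your ``main obstacle'' paragraph identifies the tension without repairing it.

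The paper's proof bypasses all of this and is a direct two-line consequence of Theorem~\ref{thm:coinvariant algebra}. If $f=\sum_k h_k^2$, use the module decomposition $\R[\underline{X}]\cong\R[\underline{X}]^G\otimes_\R\R[\underline{X}]_G$ to write each $h_k=\sum_i L_{ki}(\psi)\,s_i$ with $L_{ki}\in\R[z]$; since $f$ is invariant,
\[
f=\mathcal{R}_G(f)=\sum_{i,j}\Bigl(\sum_k L_{ki}(\psi)L_{kj}(\psi)\Bigr)\mathcal{R}_G(s_is_j)=\Tr\bigl((L^tL)(\psi)\cdot H^S(\psi)\bigr),
\]
so $\gamma=\Tr(G(z)H^S(z))$ with $G=L^tL$. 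Conversely, substituting $z_i\mapsto\psi_i$ in such a representation and reading the trace backwards exhibits $f$ as $\sum_k\bigl(\sum_i L_{ki}(\psi)s_i\bigr)^2$. No symmetry adapted basis, no Schur's lemma, and no appeal to Corollary~\ref{cor:sos} is needed.
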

\begin{proof}
This follows from the decomposition $\R[\underline{X}]\cong \R[\underline{X}]^G \otimes \R[\underline{X}]_G$ in Theorem \ref{thm:coinvariant algebra}.
\end{proof}

Working with a symmetry adapted basis allows the following 

\begin{definition}
For every irreducible representation $\theta^{(j)}$ of $G$ we can construct a matrix polynomial $H^{\vartheta_j}\in\R[\psi_1,\ldots,\psi_n]^{\eta_j\times\eta_j}$ in the following way: 
Let $\R[\underline{X}]_G=\bigoplus_{i=1}^l \R[\underline{X}]_G^{\vartheta_j}$ be the isotypic decomoposition of the coinvariant algebra and further $\{s_{1,1},\ldots,s_{1,\eta_1},s_{2,1},\ldots,s_{l,\eta_l}\}$ be a symmetry adapted basis of $\R[\underline{X}]_G$.
Then we define $$H^{\vartheta_j}_{u,v}=R_G(s_{j,u}\cdot s_{j,v}).$$
\end{definition}
Combining above definition and lemma, and the results from Schur's lemma we immediately get
\begin{thm}\label{thm:sums of squares description}
Let $G$ be a finite reflection group with $\R[\underline{X}]^G =\R[\psi_1,\ldots,\psi_n]$, then we have
$$\Sigma \R[\underline{X}]^2 \cap \R[\underline{X}]^G= \left\{ g\in \R[\psi_1,\ldots,\psi_n]\,:\, g=\sum_{j=1}^l \Tr(H^{\vartheta_j}\cdot A_j)\right\},$$
where $A_j\in{ \R[\psi_1,\ldots,\psi_n]}^{\eta_j\times\eta_j}$ is a sum of squares matrix polynomial.
\end{thm}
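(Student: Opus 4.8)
The plan is to assemble Theorem \ref{thm:sums of squares description} directly from the three ingredients already developed in this subsection: the graded tensor decomposition $\R[\underline{X}]\cong\R[\underline{X}]^G\otimes_\R\R[\underline{X}]_G$ of Theorem \ref{thm:coinvariant algebra}, the existence of a symmetry adapted basis of the coinvariant algebra (which exists because $\R[\underline{X}]_G$ is $G$-isomorphic to the regular representation, so every irreducible $\theta_j$ occurs with multiplicity $\eta_j=\vartheta_j$), and Schur's lemma \ref{le:schur} in the form already packaged as Corollary \ref{cor:sos} and the preceding lemma on $\Tr(G(z)\cdot H^S(z))$. Fix a symmetry adapted basis $\{s_{j,i}\}$ of $\R[\underline{X}]_G$ and the associated matrix polynomials $H^{\vartheta_j}(z)$. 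I would prove the claimed set equality by showing both inclusions.

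For the inclusion $\supseteq$: given $g=\sum_j\Tr(H^{\vartheta_j}\cdot A_j)$ with each $A_j=L_j(z)^tL_j(z)$ a sums of squares matrix polynomial, I substitute $z=(\psi_1,\ldots,\psi_n)$ and expand. Because $H^{\vartheta_j}_{u,v}=\mathcal{R}_G(s_{j,v}s_{j,u})$ and the coefficients of $L_j$ are themselves invariant (polynomials in the $\psi_i$), each summand $\Tr(H^{\vartheta_j}\cdot L_j^tL_j)$ is, after applying the Reynolds operator and using that $\mathcal{R}_G$ is $\R[\underline{X}]^G$-linear, exactly of the form $\mathcal{R}_G\big(\sum\R\{s_{j,1},\ldots,s_{j,\eta_j}\}^2\big)$ with invariant coefficients pulled inside — i.e. it lands in $\Sigma\R[\underline{X}]^2\cap\R[\underline{X}]^G$ by Theorem \ref{THM Decomp} (or equivalently Corollary \ref{cor:sos}) applied not to a single degree but to the full coinvariant basis tensored with invariants. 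The key point here is that multiplying an element of the symmetry adapted basis by an invariant preserves the $G$-module structure and the Schur-orthogonality between distinct isotypic blocks, so the cross terms between different $j$ vanish under $\mathcal{R}_G$ and each block contributes a genuine sum of squares.

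For the inclusion $\subseteq$: let $f\in\Sigma\R[\underline{X}]^2\cap\R[\underline{X}]^G$, say $f=\sum_k h_k^2$. Using Corollary \ref{cor:deomposition of polynomial ring} I expand each $h_k=\sum_{j,i}g_{ji,k}\,s_{j,i}$ (modulo the ideal $(\psi_1,\ldots,\psi_n)$, i.e. working with coinvariant representatives and invariant coefficients $g_{ji,k}\in\R[\underline{X}]^G$) and apply the Reynolds operator to $f=\mathcal{R}_G(f)$. Schur's lemma forces all mixed products $\mathcal{R}_G(s_{j_1,i_1}s_{j_2,i_2})$ with $j_1\neq j_2$ to vanish, and within a fixed isotypic block the symmetry adapted property makes $\mathcal{R}_G(s_{j,i_1}s_{j,i_2})$ independent of the choice of irreducible copy, collapsing the sum to $\sum_j\Tr(H^{\vartheta_j}\cdot A_j)$ where $A_j=\big(\sum_k g_{j i_1,k}\,g_{j i_2,k}\big)_{i_1,i_2}$ is manifestly a Gram matrix of invariant polynomials, hence an SOS matrix polynomial. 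Expressing everything in terms of $z=(\psi_1,\ldots,\psi_n)$ gives the stated form.

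The main obstacle — really the only nontrivial point — is the bookkeeping that makes the two uses of Schur's lemma rigorous over $\R$: one must know that the mixed-block Reynolds averages genuinely vanish and that the within-block averages factor through the $H^{\vartheta_j}$ independently of which copy of the irreducible one picks. This is precisely the content of Corollary \ref{cor:sos} and the symmetry adapted basis formalism, together with the Remark noting that for real reflection groups all irreducibles are absolutely irreducible (so no subtlety about real versus complex Schur endomorphism rings arises, and $\Hom_G(\mathcal{V},\mathcal{V})\cong\R$ rather than a larger division algebra). Once that is invoked, the proof reduces to transporting Theorem \ref{THM Decomp} / Corollary \ref{cor:sos} across the tensor decomposition of Theorem \ref{thm:coinvariant algebra}, replacing "forms of a fixed degree" by "coinvariant basis with invariant coefficients," and rewriting the Reynolds-averaged squares as traces against the $H^{\vartheta_j}$.
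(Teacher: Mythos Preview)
Your argument is correct and follows essentially the same route as the paper: the paper simply states that the theorem follows ``immediately'' from combining the preceding lemma (the $\Tr(G(z)\cdot H^S(z))$ representation coming from the tensor decomposition $\R[\underline{X}]\cong\R[\underline{X}]^G\otimes_\R\R[\underline{X}]_G$) with Schur's lemma applied to a symmetry adapted basis of the coinvariant algebra, which block-diagonalizes $H^S$ into the $H^{\vartheta_j}$. You have spelled out the two inclusions and the bookkeeping in more detail than the paper does, but the underlying mechanism is identical.
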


\begin{example}
Let $f\in\R[X_1,X_2]$ be a homogeneous polynomial of degree $2d$ which is invariant under a dihedral group $I_2{(k)}$. The dihedral group $I_2{(k)}$ has only irreducible representations of dimension $1$ or $2$. In fact, if $k$ is odd (resp. even), then $2$ (resp. $4$) representations of dimension one and $\frac{k-1}{2}$ (resp. $\frac{k-2}{2}$) representations of dimension two. By block-diagonalisation we end up with $H^S(z)$ and $G(z)$ having $2$ (resp. 4) $1 \times 1$ blocks $H^{\theta_1},H^{\theta_2}$ (resp. $H^{\theta_1},\ldots,H^{\theta_4}$) and $\frac{k-1}{2}$ (resp. $\frac{k-2}{2}$) $2 \times 2$ blocks $H^{\theta_3},\ldots,H^{\theta_{\frac{k+3}{2}}}$ (resp. $H^{\theta_5},\ldots,H^{\theta_\frac{k+6}{2}}$). Then for $n$ odd (resp. even) $f\geq 0$ if and only if there exist sums of squares matrix polynomials $A_i\in{ \R[X_1^2+X_2^2, (X_1+\sqrt{-1}X_2)^k+(X_1-\sqrt{-1}X_2)^k]}^{\dim \theta_j\times\dim\theta_j}$  
such that \begin{align*}
    f & = \sum_{j=1}^{m} \Tr \left( H^{\theta_j} \cdot A_j \right),
\end{align*}
where $m = \frac{k+3}{2}$ (resp. $m =\frac{k+6}{2}$).

For $k = 3$ the coinvariant algebra $\R[x,y]_{I_{2}(3)}$ decomposes into $$\theta^{(1)} = \langle 1 \rangle, \theta^{(2)} = \langle -x^3+3xy^2 \rangle, \theta^{(3)}_1 = \langle x,y\rangle, \theta^{(3)}_2 = \langle xy,x^2-y^2 \rangle,$$
where $\theta^{(3)}_1$ and $\theta^{(3)}_2$ are $I_{2}(3)$-isomorphic via $x \mapsto xy$. Then $$H^{\theta^{(1)}} = (1), H^{\theta^{(2)}} = \left( \mathcal{R}_{I_2(3)} (3xy^2-x^3)^2\right), H^{\theta^{(3)}} = \left(\begin{array}{cc}
 \mathcal{R}_{I_2(3)} (x^2)    & \mathcal{R}_{I_2(3)}(x^2y) \\
  \mathcal{R}_{I_2(3)} (x^2y)   & \mathcal{R}_{I_2(3)} (x^2y^2) 
\end{array}\right). $$
\end{example}

\begin{definition}
Let $G$ be a finite reflection group and $\theta$ an irreducible representation. We write $h_k^{\vartheta}$ for the multiplicity of $\theta$ in $(\R[\underline{X}]_G^{\theta})_k$, i.e., the multiplicity of $\theta$ in the isotypic decomposition of the subspace of forms of degree $k$ in the coinvariant algebra.
\end{definition}
We recall that $N_G(d)$ denotes the vector space dimension of $G$-invariant forms of degree $d$  \ref{cor:1}.
\begin{cor}\label{cor:bound}
Let $G$ be a finite reflection group and $\theta$ be an irreducible representation. Then the multiplicity of the corresponding irreducible representation in the $G$-module $H_{n,d}$ equals  
$$\sum_{k=0}^dN_G(d-k)\cdot h^{\vartheta}_k.$$
\end{cor}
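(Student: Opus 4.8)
The plan is to combine the tensor decomposition of the polynomial ring from Theorem~\ref{thm:coinvariant algebra} with the definition of $N_G$ from Definition~\ref{def:ng}, tracking multiplicities of the irreducible representation $\theta$ degree by degree. The starting point is the graded $G$-module (indeed graded $\R$-algebra) isomorphism $\R[\underline{X}]\cong\R[\underline{X}]^G\otimes_\R\R[\underline{X}]_G$, which in each fixed degree $d$ reads
$$H_{n,d}\;\cong\;\bigoplus_{k=0}^d \left(\R[\underline{X}]^G\right)_{d-k}\otimes_\R\left(\R[\underline{X}]_G\right)_k$$
as $G$-modules, where $G$ acts trivially on the first tensor factor (the invariant ring) and in the usual way on the coinvariant algebra. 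This decomposition is the whole engine of the proof.

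**Next I would** compute the multiplicity of $\theta$ on both sides. On the left it is, by definition, the quantity we want. On the right, since $\left(\R[\underline{X}]^G\right)_{d-k}$ is a trivial $G$-module of dimension $\dim\left(\R[\underline{X}]^G\right)_{d-k}=N_G(d-k)$ by the Corollary following Theorem~\ref{Thm:Chevalley-Shephard-Todd theorem}, tensoring with it simply replaces each copy of $\theta$ appearing in $\left(\R[\underline{X}]_G\right)_k$ by $N_G(d-k)$ copies. By the Definition preceding this Corollary, the multiplicity of $\theta$ in $\left(\R[\underline{X}]_G\right)_k$ is exactly $h^\vartheta_k$. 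Hence the multiplicity of $\theta$ in the $k$-th summand on the right is $N_G(d-k)\cdot h^\vartheta_k$, and summing over $k$ from $0$ to $d$ (the grading of the coinvariant algebra is supported in finitely many degrees, so only finitely many terms are nonzero anyway) yields
$$\sum_{k=0}^d N_G(d-k)\cdot h^\vartheta_k,$$
which is the claimed formula. One small point to record carefully is that multiplicities add over direct sums of $G$-modules and that $\dim\Hom_G(\theta,U\otimes W)=\dim U\cdot\dim\Hom_G(\theta,W)$ when $G$ acts trivially on $U$; both are immediate, the latter from Schur's lemma~\ref{le:schur} applied over a splitting field (and the Remark after Definition~\ref{def:SymmetryAdaptedbasis} guarantees that passing to $\C$ causes no trouble for reflection groups).

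**The only genuine obstacle** is bookkeeping rather than conceptual: one must be sure the graded isomorphism of Theorem~\ref{thm:coinvariant algebra} is an isomorphism of \emph{graded} $G$-modules, not merely of ungraded ones, so that the degree-$d$ piece really decomposes as the sum over $k$ of $\left(\R[\underline{X}]^G\right)_{d-k}\otimes\left(\R[\underline{X}]_G\right)_k$ with $G$ acting correctly on each factor. This is standard (it is how the theorem is usually proved, via a homogeneous $\R[\underline{X}]^G$-module basis of $\R[\underline{X}]$ lifting a homogeneous $\R$-basis of $\R[\underline{X}]_G$), and I would simply cite it. With that in hand the proof is a two-line computation, so I would present it tersely.
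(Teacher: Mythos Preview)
Your argument is correct and is exactly the approach the paper has in mind: the corollary is stated without proof because it follows immediately from the graded tensor decomposition $\R[\underline{X}]\cong\R[\underline{X}]^G\otimes_\R\R[\underline{X}]_G$ of Theorem~\ref{thm:coinvariant algebra} together with $\dim H_{n,d-k}^G=N_G(d-k)$, which is precisely the computation you carry out. The paper later spells out the same reasoning in the proof of Theorem~\ref{thm:general sums of squares repr} and in the Remark preceding it, so there is nothing to add.
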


\subsection{G-harmonic polynomials}
In this subsection we present a specific basis of the coinvariant algebra for reflection groups which can be simply computed.

\begin{definition}
For a polynomial $f=\sum_\alpha c_\alpha \underline{X}^\alpha  \in \R[\underline{X}]$ we denote by $f(\partial)$ the linear operator $$ \abb{f(\partial )}{\R[\underline{X}]}{\R[\underline{X}]}{g}{\sum_\alpha c_\alpha \frac{\partial^\alpha}{ (\partial \underline{X})^\alpha }g,}$$ i.e., $f(\partial)$ is the formal sum of scaled partial derivatives considered as a linear map. 
\begin{example}
Let $f = X_1^2+X_1X_2 \in \R[X_1,X_2,X_3]$, then $f(\partial) = \frac{\partial^2}{\partial X_1 \partial X_1} + \frac{\partial^2}{\partial X_1 \partial X_2}$ and $$f(\partial) \left( X_1^2+X_2^2+X_3^2 +X_1X_2X_3 \right) = 1 +X_3.$$
\end{example}
\end{definition}
\begin{definition}
Let $G$ be a real reflection group and $\R[\underline{X}]^G = \R[\psi_1,\psi_2,\ldots,\psi_n]$. We define the $\R$-vector space of harmonic polynomials $\mathcal{H}_G := \left( \R[\underline{X}]^G\right)^\perp$, with respect to the scalar product on $\R[\underline{X}]$ given by $$ \abb{\langle \cdot , \cdot \rangle}{\R[\underline{X}] \times \R[\underline{X}]}{\R[\underline{X}]}{(f,g)}{ \ev_{(0,\ldots,0)}\left( f(\partial) g(\underline{X}) \right).}$$
\end{definition}
\begin{thm}\cite{bergeron2009algebraic}
Let $G$ be a real reflection group and $\Delta:=\prod L_i$, be the product of the linear polynomials defining the reflection hyperplanes. Then, the vector space of $G$-harmonic polynomials $\mathcal{H}_G$ is generated by all partial derivatives of $\Delta$, i.e., $\mathcal{H}_G = \langle \frac{\partial^\alpha}{\partial x^\alpha}\Delta : \alpha \in \N_0^n \rangle_\R$. Furthermore, $\mathcal{H}_G$ is as $G$-module isomorphic to the regular representation of $G$ and $\R[\underline{X}] = \R[\psi_1,\ldots,\psi_n]\otimes_\R \mathcal{H}_G.$ 
\end{thm}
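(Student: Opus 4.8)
The strategy is to bootstrap from the Chevalley-type factorisation already recorded in Theorem~\ref{thm:coinvariant algebra} by identifying the concrete space $\mathcal{H}_G\subseteq\R[\underline{X}]$ with the abstract coinvariant algebra, and then to pin down $\mathcal{H}_G$ using that $\Delta$ is the minimal relative invariant for the sign character. First I would make the harmonicity condition precise: for homogeneous $h$, orthogonality to $\R[\underline{X}]^G$ in each degree is, by nondegeneracy of the apolar pairing $\langle f,g\rangle=\ev_0(f(\partial)g)$ on each $H_{n,k}$ together with the adjunction $\langle \psi f,g\rangle=\langle f,\psi(\partial)g\rangle$, equivalent to $\psi_i(\partial)h=0$ for $i=1,\dots,n$, i.e.\ to $h$ being orthogonal to the whole ideal $(\psi_1,\dots,\psi_n)$. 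Since this pairing is $G$-invariant and nondegenerate on each $H_{n,k}$ and the ideal $(\psi_1,\dots,\psi_n)$ is $G$-stable, we obtain a $G$-equivariant graded splitting $H_{n,k}=(\psi_1,\dots,\psi_n)_k\oplus(\mathcal{H}_G)_k$. Hence $\mathcal{H}_G$ maps isomorphically, as graded vector space and $G$-module, onto $\R[\underline{X}]_G=\R[\underline{X}]/(\psi_1,\dots,\psi_n)$; combined with Theorem~\ref{thm:coinvariant algebra} this yields $\R[\underline{X}]\cong\R[\underline{X}]^G\otimes_\R\mathcal{H}_G$ and $\mathcal{H}_G\cong\R[\underline{X}]_G\cong\R[G]$, which is the regular-representation assertion; in particular $\dim_\R\mathcal{H}_G=|G|$.

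It remains to show $\mathcal{H}_G=D:=\langle\partial^\alpha\Delta:\alpha\in\N_0^n\rangle_\R$. For the inclusion $D\subseteq\mathcal{H}_G$ I would use that $\Delta=\prod_iL_i$ is, up to scalar, the unique relative invariant of minimal degree $N$ (the number of reflections in $G$) for the character $\varepsilon=\det\rho$: any $\varepsilon$-relative invariant vanishes on each reflection hyperplane, hence is divisible by every $L_i$ and so by $\Delta$, and $\Delta\cdot\R[\underline{X}]^G$ is exactly the module of $\varepsilon$-relative invariants. Since $G$ acts orthogonally, each $\psi_i(\partial)$ commutes with the $G$-action, so $\psi_i(\partial)\Delta$ is again $\varepsilon$-relative invariant but of degree $N-d_i<N$, hence $0$; thus $\Delta\in\mathcal{H}_G$, and since the $\psi_i(\partial)$ commute with all constant-coefficient operators, $\partial^\alpha\Delta\in\mathcal{H}_G$, giving $D\subseteq\mathcal{H}_G$. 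The same commutation shows $D=\R[\partial]\Delta$ is a $G$-submodule.

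For the reverse inclusion I would dualise with respect to the apolar pairing: from $\langle f,h(\partial)\Delta\rangle=\langle hf,\Delta\rangle$ one sees that the orthogonal complement of $D$ equals the annihilator $\operatorname{Ann}(\Delta)=\{f:f(\partial)\Delta=0\}$, so $D=\mathcal{H}_G$ is equivalent to $\operatorname{Ann}(\Delta)=(\psi_1,\dots,\psi_n)$. The inclusion $\supseteq$ was just shown; for equality it suffices to prove that the natural surjection $\R[\underline{X}]_G\twoheadrightarrow D$, $\bar f\mapsto f(\partial)\Delta$ (well defined since $\psi_i(\partial)\Delta=0$), is injective, i.e.\ $\dim_\R D=|G|$. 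Here I would invoke that the coinvariant algebra, being a graded Artinian complete intersection, is a Poincar\'e-duality (Gorenstein) algebra with top degree $N=\sum_i(d_i-1)$ and one-dimensional top part; since $\bar\Delta\neq0$ (no positive-degree combination $\sum_i\psi_ig_i$ can be an $\varepsilon$-relative invariant of degree $N$, as its $\varepsilon$-isotypic projection would be a relative invariant of degree $<N$) and the functional $g\mapsto\langle g,\Delta\rangle$ is nonzero on $\bar\Delta$ by positive definiteness of the apolar pairing over $\R$, the perfect top-degree multiplication pairing of $\R[\underline{X}]_G$ forces this map to be injective, whence $\dim_\R D=|G|=\dim_\R\mathcal{H}_G$ and $D=\mathcal{H}_G$.

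The main obstacle is precisely this last step: establishing $\dim_\R D=|G|$, equivalently that $\Delta$ is a socle generator of the coinvariant algebra and that its apolar ideal is exactly $(\psi_1,\dots,\psi_n)$. Everything else is formal bookkeeping with the Chevalley decomposition and the apolar pairing, but this equality genuinely uses the Gorenstein/complete-intersection structure of $\R[\underline{X}]_G$; I would either quote it as a standard fact about graded complete intersections or prove it via the Hilbert series $\prod_i(1+t+\dots+t^{d_i-1})$ and the resulting perfect pairing between degrees $k$ and $N-k$.
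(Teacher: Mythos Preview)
The paper does not prove this theorem at all: it is stated with the citation \cite{bergeron2009algebraic} and used as a black box, so there is no ``paper's own proof'' to compare against. Your outline is essentially the standard argument one finds in Bergeron's book or in Kane/Humphreys: identify $\mathcal{H}_G$ with the orthogonal complement of the Hilbert ideal under the apolar pairing, transport the Chevalley decomposition of Theorem~\ref{thm:coinvariant algebra} to $\mathcal{H}_G$, check $\Delta\in\mathcal{H}_G$ via minimality of the $\det$-relative invariant, and then use the Gorenstein/Poincar\'e-duality structure of the coinvariant algebra to get $\dim D=|G|$.

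The sketch is correct in spirit and the ingredients are right. Two places deserve a little more care if you want this to stand as a proof rather than a plan. First, the claim that ``no positive-degree combination $\sum_i\psi_ig_i$ can be an $\varepsilon$-relative invariant of degree $N$'' as stated is not quite the argument you want: the isotypic projection of $\psi_ig_i$ is $\psi_i$ times the $\varepsilon$-projection of $g_i$, which need not have degree $<N$; the clean way is to note that the $\varepsilon$-isotypic component of $\R[\underline{X}]$ is exactly $\Delta\cdot\R[\underline{X}]^G$, so the $\varepsilon$-part of the ideal $(\psi_1,\dots,\psi_n)$ lies in $\Delta\cdot(\psi_1,\dots,\psi_n)^G$, which starts in degree $>N$. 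Second, you correctly flag the Gorenstein step as the real content; quoting that a graded Artinian complete intersection has a one-dimensional socle (e.g.\ via the Koszul resolution or the Hilbert series $\prod_i(1-t^{d_i})/(1-t)$) is perfectly acceptable here, but you should say explicitly which fact you are invoking rather than leave it as ``I would either quote\ldots or prove\ldots''.
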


\begin{remark}
Let $G$ be a finite reflection group and $\psi_1,\ldots,\psi_n$ generators of the invariant ring.  Consider the map $$\abb{\Psi}{\R^n}{\R^n}{\underline{X}}{(\psi_1(\underline{X}),\ldots,\psi_n(\underline{X})).}$$
Then, thanks to a statement of Steinberg in \cite{steinberg1960invariants} we have $$\Delta = c\cdot \emph{Jac } \Psi,$$ where $c \in \R \setminus \{0\}$ and $\emph{Jac } \Psi$ denotes the Jacobian matrix of $\Psi$. The choice of fundamental invariants $\psi_1,\ldots,\psi_n$ does not matter. 
\end{remark}
\begin{example}
For $\mathfrak{S}_n$ the symmetric group acting on $\R^n$ via coordinate permutation and $\psi_i := p_i = \sum_{j=1}^n X_j^i$ the power sums, we obtain $\Delta = \prod_{i < j}(x_i-x_j)$ equals the determinant of the Vandermonde matrix. $\Delta$ is the Jacobian of $\Psi$, which is precisely the product over all reflections $\{ X_i = X_j\}$ of $\mathfrak{S}_n$.
\end{example}

\begin{remark}
Computing a basis of the coinvariant algebra $\R[\underline{X}]_G = \R[\underline{X}] / \R[\underline{X}]^G_{> 0}$, that is defined as a quotient space, is highly complex and involves the calculation of a Groebner basis. However, the approach using harmonic polynomials is more efficient because it is based on linear algebra for given fundamental invariants. As the fundamental invariants of real reflection groups are well-known, one can calculate the polynomial $\Delta$ and all its partial derivatives explicitly. 
\end{remark}

\subsection{Convex geometric properties of $\Sigma^G,\mathcal{P}^G$}
An interesting and highly useful feature of $\Sigma_{n,2d}^G$ and $\mathcal{P}_{n,2d}^G$ is their \emph{convex geometry}, which enables the use of convex geometric techniques to study these sets. In the research on non-negativity versus sums of squares have the convex cones and their dual cones been studied intensively (see e.g., Blekherman's work in \cite{blekherman2012nonnegative} on Hilbert's inequality cases or \cite{blekherman2012semidefinite}). In this subsection, we present known and adapted knowledge on the convex geometric properties of $\Sigma_{n,2d}^G$ and $\mathcal{P}_{n,2d}^G$. We refer to subsection $4.5$ in \cite{blekrie} for more details.

\begin{remark}
\begin{itemize}
    \item $\Sigma_{n,2d}^G$ and $\mathcal{P}_{n,2d}^G$ are \emph{convex cones}, i.e., they are convex sets which are closed under scalar multiplication by non-negative scalars. Moreover, these sets are closed and \emph{pointed} (i.e., they do not contain a non-trivial linear subspace), see e.g., \cite{blekherman2006there}. Such convex cones are called \emph{proper}.
    \item Let $K\subset\R^N$ be a proper convex cone. The dual cone $K^\ast$ is defined as $$K^\ast := \left\{ \ell \in \Hom \left( \R^N,\R\right) : \ell(P)\subseteq \R_{\geq 0}\right\}.$$
    \item We associate a linear functional $\ell \in \left( H_{n,2d}^{G}\right)^\ast$ with a quadratic form $Q_{\ell}$ defined by $$\abb{Q_\ell}{H_{n,d}}{\R}{f}{\ell \left( \mathcal{R}_G(f^2)\right).}$$
\end{itemize}
\end{remark}

Since we are in the homogeneous case we have the following description of the dual cone of invariant non-negative forms:
\begin{prop}\label{pro:DualOfNonneg} \cite{blekherman2006there}
The dual cone of the non-negative invariant forms is the convex cone that is generated by all point-evaluations, i.e., $\left( \mathcal{P}_{n,2d}^G \right)^\ast  = \cone\{\ev_a : a \in \mathbb{S}^{n-1} \}$   where $$\abb{\ev_a}{\R[\underline{X}]}{\R}{f(\underline{X})}{f(a).}$$  
 \end{prop}
By duality any $f \in \mathcal{P}_{n,2d}^G$ contained in the boundary has a real projective zero.

We formulate the dual version of Theorem \ref{THM Decomp}.
\begin{lemma}\label{le:psd}\label{cor:DualSosCone}
Let $\ell \in \left(H_{n,2d}^G\right)^\ast$ and $\{f_{11},\ldots,f_{1\eta_1},f_{21},\ldots,f_{l \eta_l}\}$ be a symmetry adapted basis for the space $H_{n,d}$ of forms of degree $d$ and $B^{(j)} = \left( \mathcal{R}_G (f_{ju}\cdot f_{jv} \right))_{u,v}$. Then $\ell$ is contained in $\left( \Sigma_{n,2d}^G\right)^\ast$ if and only if $\ell (B_j)$ is positive semidefinite for all $j=1,\ldots,l$.
\end{lemma}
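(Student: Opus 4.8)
The plan is to dualize Theorem~\ref{THM Decomp} (equivalently Corollary~\ref{cor:sos}) directly, using the fact that for a proper convex cone $K$ with a description $K = \{\sum_j \operatorname{Tr}(A_j B^{(j)}) : A_j \succeq 0\}$ the dual cone is obtained by transposing the linear map $A \mapsto \sum_j \operatorname{Tr}(A_j B^{(j)})$ and intersecting with the self-dual cone of positive semidefinite matrices. Concretely, recall from Corollary~\ref{cor:sos} that $g \in \Sigma_{n,2d}^G$ if and only if $g = \sum_{j=1}^l \operatorname{Tr}(A_j B^{(j)})$ for symmetric positive semidefinite $A_j \in \operatorname{Mat}_{\eta_j\times\eta_j}(\R)$; here each entry of $B^{(j)}$ is $\mathcal{R}_G(f_{jk}f_{jl}) \in H_{n,2d}^G$, so $B^{(j)}$ is a matrix with entries in $H_{n,2d}^G$ and applying a linear functional $\ell \in (H_{n,2d}^G)^\ast$ entrywise yields a genuine real symmetric matrix $\ell(B^{(j)})$.

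First I would fix $\ell \in (H_{n,2d}^G)^\ast$ and compute, for any symmetric positive semidefinite $A_j$,
\[
\ell(g) \;=\; \ell\!\left(\sum_{j=1}^l \operatorname{Tr}(A_j B^{(j)})\right) \;=\; \sum_{j=1}^l \operatorname{Tr}\!\left(A_j\, \ell(B^{(j)})\right),
\]
where the last equality uses linearity of $\ell$ and of the trace (the trace is a finite $\R$-linear combination of the entries of $A_j B^{(j)}$, and each such entry is an $\R$-linear combination of the entries of $B^{(j)}$). Then $\ell \in (\Sigma_{n,2d}^G)^\ast$ means exactly that $\sum_{j} \operatorname{Tr}(A_j\,\ell(B^{(j)})) \ge 0$ for all choices of symmetric positive semidefinite $A_1,\dots,A_l$. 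Taking all $A_j = 0$ except one chosen to be an arbitrary rank-one matrix $vv^t$ shows $v^t \ell(B^{(j)}) v \ge 0$ for every $j$ and every $v$, i.e.\ each $\ell(B^{(j)})$ is positive semidefinite; conversely, if every $\ell(B^{(j)}) \succeq 0$ then $\operatorname{Tr}(A_j\,\ell(B^{(j)})) \ge 0$ for each $j$ because the trace pairing of two positive semidefinite matrices is non-negative, so the sum is non-negative. This establishes the equivalence.

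The only genuinely substantive point is that $\Sigma_{n,2d}^G$ is a full-dimensional (or at least the appropriate ambient) cone inside $H_{n,2d}^G$ so that $(\Sigma_{n,2d}^G)^\ast$ is taken in $(H_{n,2d}^G)^\ast$ and the biduality/closedness needed to make ``$\ell \in K^\ast \iff \ell \ge 0$ on all generators of $K$'' legitimate holds — but this is immediate since $K = \Sigma_{n,2d}^G$ is closed and the description of Corollary~\ref{cor:sos} exhibits it as the image of the positive semidefinite cone under a linear map, hence its dual is cut out by the adjoint map as above (one may also simply note that a functional is non-negative on a cone iff it is non-negative on a generating set, and the matrices $\operatorname{Tr}(A_j B^{(j)})$ with $A_j \succeq 0$ generate $\Sigma_{n,2d}^G$ by Corollary~\ref{cor:sos}). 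I do not anticipate a real obstacle here; the proof is essentially the formal transpose of Theorem~\ref{THM Decomp}, and the statement can legitimately be called ``the dual version'' of it. The one bookkeeping item to be careful about is that $B^{(j)}$ has polynomial (invariant-form) entries, so one must say explicitly that $\ell$ is applied entrywise to produce the real matrix $\ell(B^{(j)})$ — which the statement already does by writing $\ell(B_j)$.
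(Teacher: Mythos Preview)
Your proof is correct and follows essentially the same approach as the paper's own proof: both directions are handled by testing against rank-one matrices $vv^t$ via Corollary~\ref{cor:sos}, and the converse uses that the trace pairing of two positive semidefinite matrices is non-negative (the paper phrases this by decomposing $A^{(j)}$ into a sum of rank-one terms, which is the same content). The extra discussion of closedness and biduality is unnecessary here, since you argue directly that $\ell$ is non-negative on the cone iff it is non-negative on the generating set from Corollary~\ref{cor:sos}.
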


The following lemma enables the characterisation of extremal elements in via their kernels.
\begin{lemma} \cite[Lemma 2.2]{blekherman2012nonnegative} \label{le:MaxOfKernel}
Let $V$ be a $\R$-vector space, $\mathcal{A}$ the vector space of quadratic forms on $V$ and $\mathcal{A}^+\subset \mathcal{A}$ the cone of positive semidefinite quadratic forms. Let $L$ be a linear subspace of $\mathcal{A}$ and $K$ be the section of $\mathcal{A}^+$ with $L$, i.e., $K:=\mathcal{A}^+\cap L.$
Then a quadratic form $Q \in K$ spans an extreme ray of $K$ if and only if its kernel is maximal for all forms in $L$, i.e., if $\ker Q\subseteq \ker P$ for a $P \in L$, it is
$P=\lambda Q$ for some $\lambda\in\R$.
\end{lemma}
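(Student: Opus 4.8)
The statement to prove is Lemma~\ref{le:MaxOfKernel} (cited from Blekherman), characterizing extreme rays of $K=\mathcal{A}^+\cap L$ by maximality of kernels. The plan is to prove both implications by a standard perturbation argument, using the fact that a positive semidefinite quadratic form $Q$ with a given kernel can always be perturbed within the subspace $\{P : \ker Q\subseteq\ker P\}$ while staying positive semidefinite, whereas any perturbation enlarging the kernel forces the perturbed form out of $\mathcal{A}^+$ unless it is a nonnegative multiple of $Q$.

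First I would prove the easy direction: if $Q$ spans an extreme ray of $K$, then $\ker Q$ is maximal in $L$. Suppose $P\in L$ satisfies $\ker Q\subseteq\ker P$. Since $\ker Q\subseteq\ker P$, the form $P$ descends to a quadratic form on the quotient $V/\ker Q$, on which $Q$ is positive \emph{definite}. Hence for $\varepsilon>0$ small enough, $Q\pm\varepsilon P$ remains positive definite on $V/\ker Q$ and vanishes on $\ker Q$, so $Q\pm\varepsilon P\in\mathcal{A}^+$; both clearly lie in $L$, hence in $K$. Then $Q=\tfrac12\big((Q+\varepsilon P)+(Q-\varepsilon P)\big)$ exhibits $Q$ as a midpoint of two elements of $K$, and extremality forces $Q\pm\varepsilon P=\lambda_\pm Q$, whence $P$ is a scalar multiple of $Q$. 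This gives the maximality of $\ker Q$.

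For the converse, assume $\ker Q$ is maximal among kernels of forms in $L$, and suppose $Q=P_1+P_2$ with $P_1,P_2\in K$; I must show each $P_i$ is a nonnegative multiple of $Q$. The key observation is that for positive semidefinite forms, $\ker(P_1+P_2)=\ker P_1\cap\ker P_2$ (a vector $v$ kills the sum iff $P_1(v,v)+P_2(v,v)=0$ with both terms $\ge 0$, iff it kills each). Hence $\ker Q\subseteq\ker P_1$ and $\ker Q\subseteq\ker P_2$. By maximality, neither kernel can be strictly larger, but more is needed: I would argue that $P_1$ must have $\ker P_1=\ker Q$ exactly, because $P_1\in L$ and $\ker Q\subseteq\ker P_1$ already forces (by the maximality hypothesis applied carefully, or by the perturbation trick above applied to $P_1$ in place of $P$) that $P_1$ is proportional to $Q$ on the quotient; more precisely, restricting to $V/\ker Q$, the form $Q$ is positive definite, and $P_1$ is a positive semidefinite form on this quotient satisfying $0\le P_1\le Q$ in the Loewner order. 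I would then invoke the elementary fact that if $0\preceq P_1\preceq Q$ with $Q$ positive definite and the pair is such that $Q-P_1$ also lies in the same one-parameter family forced by maximality — here one uses that $\ker(Q-P_1)=\ker P_2\supseteq\ker Q$, and maximality of $\ker Q$ combined with the perturbation argument shows $Q - P_1 = \mu Q$ — to conclude $P_1 = (1-\mu)Q$ with $0\le\mu\le 1$. This yields $P_1,P_2\in\R_{\ge0}Q$, i.e., $Q$ spans an extreme ray.

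The main obstacle is organizing the converse direction cleanly: one must be careful that the maximality hypothesis is about \emph{all} forms $P\in L$, and translate the decomposition $Q=P_1+P_2$ into a statement purely on the quotient $V/\ker Q$ where $Q$ is positive definite, then feed $P_1$ (and $Q-P_1$) back into the maximality/perturbation machinery to pin down proportionality. Once the reduction to the positive-definite quotient is in place the linear algebra is routine, so I expect the write-up to hinge on that reduction step and on the identity $\ker(P_1+P_2)=\ker P_1\cap\ker P_2$ for positive semidefinite forms.
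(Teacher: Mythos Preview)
The paper does not supply its own proof of this lemma; it is simply cited from \cite[Lemma~2.2]{blekherman2012nonnegative}. So there is nothing in the paper to compare your argument against, and your proposal stands on its own.

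Your forward direction is clean and correct: the perturbation $Q\pm\varepsilon P$ stays in $K$ for small $\varepsilon$ (using that $Q$ is positive definite on the finite-dimensional quotient $V/\ker Q$), and extremality forces $P$ to be a scalar multiple of $Q$.

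Your converse direction is correct but needlessly convoluted. Once you observe that $\ker Q=\ker P_1\cap\ker P_2\subseteq\ker P_1$ and that $P_1\in K\subseteq L$, the maximality hypothesis applies \emph{directly} to $P_1$ and yields $P_1=\lambda_1 Q$ for some $\lambda_1\in\R$; since $P_1\in\mathcal{A}^+$ and $Q\neq 0$, necessarily $\lambda_1\ge 0$. The same works for $P_2$. You do not need to argue about equality of kernels, pass to the Loewner order on the quotient, or route through $Q-P_1$: the hypothesis already says that any $P\in L$ whose kernel contains $\ker Q$ is a scalar multiple of $Q$, not merely that its kernel cannot be strictly larger. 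Your paragraph beginning ``By maximality, neither kernel can be strictly larger, but more is needed'' misreads the strength of the hypothesis and then recovers the conclusion by a detour.

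One small point worth making explicit in the write-up: the perturbation step in the forward direction tacitly uses that $V/\ker Q$ is finite-dimensional (so that positive definiteness is an open condition). In the paper's applications $V=H_{n,d}$ is finite-dimensional, so this is harmless, but the lemma as stated does not include that assumption.
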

In order to examine the kernels of quadratic forms, we use the following construction. For a linear subspace $W\subset H_{n,d}$, we define its quadratic symmetrization w.r.t. $G$ as 
$$W^{<2>}:=\left\{ h\in H_{n,2d}^G\,:\;h=\mathcal{R}_G \left(\sum f_i g_i\right)\;\text{ for } f_i \in W \text{ and  }g_i \in H_{n,d}\right\}.$$

In order to characterize the extreme rays of $\left( \Sigma_{n,2d}^G\right)^\ast$ we use Lemma \ref{le:psd} to identify the dual cone $\left( \Sigma_{n,2d}^G\right)^\ast$ with a linear section of the cone of positive semidefinite forms with the subspace of $G$-invariant quadratic forms on $H_{n,d}$.

\begin{prop}\cite{blekrie} \label{prop:W2}
An element $\ell \in \left( \Sigma_{n,2d}^G \right)^\ast$ is extremal if and only if $\ker Q_\ell$ is maximal among all kernels in $\left( \Sigma_{n,2d}^G \right)^\ast$. Let $W := \ker {Q}_\ell$, then $W^{<2>}$ is equal to the kernel of $\ell$. Moreover, if $\left(f_{11},\ldots,f_{l\eta_l}\right)$ is a symmetry adapted basis of $H_{n,d}$ and $\left(g_{11},\ldots,g_{l\eta^\prime_l}\right)$ is a symmetry adapted basis of $W$ such that $g_{ji_1}$ and $f_{ji_2}$ span $G$-isomorphic irreducible $G$-modules, and $f_{ji_2} \mapsto g_{ji_1}$ defines the unique $G$-isomorphism, then $$W^{\langle 2 \rangle} = \langle \mathcal{R}_G( g_{ji_1}\cdot f_{ji_2}) : 1 \leq j \leq l, 1 \leq i_2\leq \eta_j, 1\leq i_1 \leq \eta_j^\prime \rangle_\R.$$
\end{prop}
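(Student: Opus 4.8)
The plan is to prove the three assertions in order, each building on the previous one, using Lemma~\ref{le:psd}, Proposition~\ref{pro:kern}, and the symmetry-adapted-basis machinery from Definition~\ref{def:SymmetryAdaptedbasis} and Corollary~\ref{cor:sos}.

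\emph{Step 1: extremality $\Leftrightarrow$ $\ker\ell$ a hyperplane.} First I would recall that $\Sigma_{n,2d}^G$ is a proper convex cone, so its dual $(\Sigma_{n,2d}^G)^\ast$ is also proper and $\ell$ spans an extreme ray if and only if the face $\{f\in\Sigma_{n,2d}^G : \ell(f)=0\}$ it exposes is a facet, equivalently $\ker\ell\cap H_{n,2d}^G$ has codimension one. This is the standard duality between extreme rays of $K^\ast$ and facets of $K$; here $\Sigma_{n,2d}^G$ spans $H_{n,2d}^G$ (it has nonempty interior there), so $\ker\ell$ being a hyperplane in $H_{n,2d}^G$ is the right condition. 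Alternatively, via Lemma~\ref{Rmk:BoundaryOfPSD} one identifies $(\Sigma_{n,2d}^G)^\ast$ with a linear section $K$ of $\mathcal{A}^{G,+}$, applies Lemma~\ref{le:MaxOfKernel} to get that $\ell$ is extremal iff $\ker Q_\ell$ is maximal among kernels of forms in $\mathcal{A}^G$, and then translates ``maximal kernel of $Q_\ell$'' into ``hyperplane kernel of $\ell$'' using the $W^{\langle 2\rangle}$ description from Step~2.

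\emph{Step 2: $W^{\langle 2\rangle}=\ker\ell$.} The inclusion $W^{\langle 2\rangle}\subseteq\ker\ell$ is exactly the last sentence of Proposition~\ref{pro:kern}: if $f\in W=\ker Q_\ell$ and $g\in H_{n,d}$ then $\ell(\mathcal{R}_G(fg))=0$. For the reverse inclusion, I would use Corollary~\ref{cor:sos} (or Theorem~\ref{THM Decomp}): every $h\in H_{n,2d}^G$ is of the form $\sum_j\Tr(A_jB^{(j)})$ with $A_j$ symmetric (not necessarily positive semidefinite), i.e. $H_{n,2d}^G$ is spanned by the entries $\mathcal{R}_G(f_{jk}f_{jl})$ of the $B^{(j)}$. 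By Lemma~\ref{le:psd}, $\ell\in(\Sigma_{n,2d}^G)^\ast$ means each matrix $\ell(B^{(j)})$ is positive semidefinite; its kernel, as a subspace of the span of the $f_{j\bullet}$, is precisely (the relevant piece of) $W$. So $\ell(\mathcal{R}_G(f_{jk}f_{jl}))=0$ whenever one of $f_{jk},f_{jl}$ lies in $W$, and since the $\mathcal{R}_G(f_{jk}f_{jl})$ span $H_{n,2d}^G$, any $h$ with $\ell(h)=0$ can be written using only such ``mixed'' products modulo $W^{\langle 2\rangle}$ — more carefully, decompose $h=\sum_j\Tr(A_jB^{(j)})$; by a change of basis in each isotypic block adapting to $W$, the positive semidefinite matrix $\ell(B^{(j)})$ block-decomposes so that $\ell(h)$ only sees the complement-of-$W$ part, and the $W$-part of $h$ lands in $W^{\langle 2\rangle}$. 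This gives $\ker\ell\subseteq W^{\langle 2\rangle}$.

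\emph{Step 3: the explicit spanning set.} Finally I would make the generators of $W^{\langle 2\rangle}$ explicit. Take a symmetry-adapted basis $(g_{11},\ldots,g_{l\eta'_l})$ of $W$ as in the statement, compatible with the symmetry-adapted basis $(f_{11},\ldots,f_{l\eta_l})$ of $H_{n,d}$. By Schur's Lemma (Lemma~\ref{le:schur}), $\mathcal{R}_G(g\cdot g')=0$ unless $g,g'$ generate $G$-isomorphic irreducibles, and for $g=g_{ji_1}$ ranging over $W$ and $g'$ ranging over $H_{n,d}$ it suffices — again by Schur — to let $g'$ range over the matching basis vectors $f_{ji_2}$ in the same isotypic component; expanding an arbitrary $g'\in H_{n,d}$ in the $f$-basis and applying bilinearity of $\mathcal{R}_G(\cdot\,\cdot)$ together with Schur's orthogonality shows every $\mathcal{R}_G(g_{ji_1}q)$ with $q\in H_{n,d}$ is an $\R$-combination of the $\mathcal{R}_G(g_{ji_1}f_{ji_2})$. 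Hence
$$W^{\langle 2\rangle}=\langle \mathcal{R}_G(g_{ji_1}\cdot f_{ji_2}) : 1\le j\le l,\ 1\le i_2\le\eta_j,\ 1\le i_1\le\eta_j' \rangle_\R,$$
as claimed.

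\emph{Main obstacle.} The delicate point is Step~2, the inclusion $\ker\ell\subseteq W^{\langle 2\rangle}$: one must pass from ``$\ell$ annihilates all products with one factor in $W$'' to ``$\ell$ annihilates \emph{every} invariant $h$ that it kills,'' which requires knowing that $H_{n,2d}^G$ is spanned by the pairwise products $\mathcal{R}_G(f_{jk}f_{jl})$ and that the positive semidefiniteness of each $\ell(B^{(j)})$ forces its kernel to coincide with the $W$-part of the corresponding block. Getting the bookkeeping right across isotypic blocks — in particular that a genuine \emph{basis} of $W$ adapted to the $f$-basis exists, so that ``$g_{ji_1}$ and $f_{ji_2}$ span $G$-isomorphic modules with $f_{ji_2}\mapsto g_{ji_1}$ the unique $G$-isomorphism'' makes sense — is the technical heart; everything else is Schur's Lemma plus convex duality.
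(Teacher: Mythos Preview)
Your Step~2 contains a real gap. The reverse inclusion $\ker\ell\subseteq W^{\langle 2\rangle}$ that you try to prove is simply \emph{false} for a general $\ell\in(\Sigma_{n,2d}^G)^\ast$. Take $G$ trivial, $n=2$, $d=1$, and $\ell=\ev_{(1,0)}+\ev_{(0,1)}$: then $Q_\ell(ax+by)=a^2+b^2$ is positive definite, so $W=0$ and $W^{\langle 2\rangle}=0$, whereas $\ker\ell=\{ax^2+bxy+cy^2:a+c=0\}$ is two--dimensional. Your block--diagonal argument only shows that, after adapting the basis to $W$, one has $h\equiv\sum_j\Tr(A_j^{hh}B^{(j)}_{hh})\pmod{W^{\langle 2\rangle}}$ with $\ell$ of the right--hand side equal to zero; it does \emph{not} force that right--hand side to lie in $W^{\langle 2\rangle}$. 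There is nothing to salvage here without using extremality.

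The way the paper actually argues (and what you list as the ``alternative'' in Step~1) is to run Steps~1 and~2 together through Lemma~\ref{le:MaxOfKernel}. One always has $W^{\langle 2\rangle}\subseteq\ker\ell$ by Proposition~\ref{pro:kern}. Under the identification $\iota$ of Lemma~\ref{Rmk:BoundaryOfPSD}, Lemma~\ref{le:MaxOfKernel} says $\ell$ is extremal iff $W=\ker Q_\ell$ is maximal among radicals of the bilinear forms $B_{\ell'}(f,g)=\ell'(\mathcal R_G(fg))$, $\ell'\in(H_{n,2d}^G)^\ast$. But $W\subseteq\mathrm{rad}\,B_{\ell'}$ is equivalent to $W^{\langle 2\rangle}\subseteq\ker\ell'$, so maximality of $W$ is equivalent to $(W^{\langle 2\rangle})^\perp=\R\ell$, i.e.\ to $W^{\langle 2\rangle}$ being a hyperplane. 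In that case $W^{\langle 2\rangle}\subseteq\ker\ell$ with both hyperplanes, hence equality. So the equality $W^{\langle 2\rangle}=\ker\ell$ is a \emph{consequence} of extremality, not an independent fact, and your proposed ordering (prove Step~2 first, then feed it into Step~1) cannot work. Your Step~3 is fine.
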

\begin{proof}
The first claim follows from Lemma \ref{le:MaxOfKernel}.
The second claim follows from the positive semidefiniteness of the quadratic form $Q_\ell$. The complexity reduction gives the above description of $W^{\langle 2 \rangle}$ according to the use of a symmetry adapted basis and applying Schur's lemma. 
\end{proof}
To prove equality or inequality of $\Sigma_{n,2d}^G$ and $\mathcal{P}_{n,2d}^G$ we propose a dual approach. By Minkowski's theorem, any element in a proper convex cone can be written as a conic combination of extremal elements.
\begin{cor}\label{cor:Sigma=P}
The sets of $G$-invariant $n$-ary non-negative and sums of squares forms of degree $2d$ are equal if and only if any extremal ray in $\left( \Sigma_{n,2d}^G\right)^\ast$ is generated by a point-evaluation. 
\end{cor}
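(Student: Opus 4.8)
The plan is to prove both implications by playing off the two dual-cone descriptions we have collected above, namely Proposition~\ref{pro:DualOfNonneg}, which says $\left(\mathcal{P}_{n,2d}^G\right)^\ast=\cone\{\ev_a:a\in\mathbb{S}^{n-1}\}$, against the general inclusion $\Sigma_{n,2d}^G\subseteq\mathcal{P}_{n,2d}^G$, which dualizes to $\left(\mathcal{P}_{n,2d}^G\right)^\ast\subseteq\left(\Sigma_{n,2d}^G\right)^\ast$. Both cones are proper (closed, pointed, full-dimensional) in $H_{n,2d}^G$, so by Minkowski's theorem each is the closed conic hull of its extreme rays, and a proper convex cone $K_1\subseteq K_2$ satisfies $K_1=K_2$ if and only if $K_1^\ast=K_2^\ast$.

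First I would prove the easy direction: if $\Sigma_{n,2d}^G=\mathcal{P}_{n,2d}^G$, then $\left(\Sigma_{n,2d}^G\right)^\ast=\left(\mathcal{P}_{n,2d}^G\right)^\ast=\cone\{\ev_a:a\in\mathbb{S}^{n-1}\}$ by Proposition~\ref{pro:DualOfNonneg}. An extreme ray of a cone that is generated as a conic hull of the $\ev_a$ must itself be (a positive multiple of) one of the generators $\ev_a$: indeed, if $\ell$ spans an extreme ray and $\ell=\sum_i c_i\,\ev_{a_i}$ with $c_i>0$, extremality forces every $\ev_{a_i}$ appearing with nonzero coefficient to lie on the ray $\R_{\geq0}\ell$, so $\ell\in\R_{\geq0}\,\ev_{a}$ for some $a$. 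Hence every extremal ray of $\left(\Sigma_{n,2d}^G\right)^\ast$ is generated by a point-evaluation.

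For the converse, suppose every extremal ray of $\left(\Sigma_{n,2d}^G\right)^\ast$ is generated by some $\ev_a$ (necessarily with $a\in\mathbb{S}^{n-1}$ after scaling, using homogeneity). By Minkowski's theorem $\left(\Sigma_{n,2d}^G\right)^\ast$ is the closed conic hull of these extremal generators, so $\left(\Sigma_{n,2d}^G\right)^\ast\subseteq\overline{\cone}\{\ev_a:a\in\mathbb{S}^{n-1}\}=\left(\mathcal{P}_{n,2d}^G\right)^\ast$, the last equality again by Proposition~\ref{pro:DualOfNonneg} (the cone on the right is already closed, being the continuous image of the compact sphere under $a\mapsto\ev_a$ followed by taking the conic hull, which stays closed here). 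Combined with the reverse inclusion $\left(\mathcal{P}_{n,2d}^G\right)^\ast\subseteq\left(\Sigma_{n,2d}^G\right)^\ast$ coming from $\Sigma_{n,2d}^G\subseteq\mathcal{P}_{n,2d}^G$, we get $\left(\Sigma_{n,2d}^G\right)^\ast=\left(\mathcal{P}_{n,2d}^G\right)^\ast$, and dualizing once more (both cones proper, so bidual equals the cone) yields $\Sigma_{n,2d}^G=\mathcal{P}_{n,2d}^G$.

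The only genuinely delicate point is the closedness bookkeeping: one must make sure that $\cone\{\ev_a:a\in\mathbb{S}^{n-1}\}$ is closed (so that "closed conic hull of extremal rays" and "conic hull" coincide on the $\mathcal{P}$-side) and that the extreme rays of $\left(\Sigma_{n,2d}^G\right)^\ast$ really do recover the whole cone under Minkowski. The former follows because the sphere is compact and the evaluation map is continuous and never sends a nonzero direction to $0$; the latter is exactly Minkowski's theorem for proper (pointed, closed) cones, which applies since $\left(\Sigma_{n,2d}^G\right)^\ast$ is proper. Everything else is the standard duality dictionary $K_1\subseteq K_2\iff K_2^\ast\subseteq K_1^\ast$ together with $K^{\ast\ast}=K$ for proper $K$.
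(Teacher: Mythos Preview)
Your proposal is correct and follows essentially the same route as the paper's proof: dualize the inclusion $\Sigma_{n,2d}^G\subseteq\mathcal{P}_{n,2d}^G$, invoke Proposition~\ref{pro:DualOfNonneg} for the description of $\left(\mathcal{P}_{n,2d}^G\right)^\ast$, and use Minkowski's theorem to pass between a proper cone and its extreme rays. If anything, you are more careful than the paper on two points the paper leaves implicit: you spell out why an extreme ray of a cone generated by the $\ev_a$ must itself be some $\ev_a$, and you address the closedness of $\cone\{\ev_a:a\in\mathbb{S}^{n-1}\}$ needed to make the Minkowski argument go through cleanly.
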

\begin{proof}
The primal cones $\mathcal{P}_{n,2d}^G$ and $\Sigma_{n,2d}^G$ are equal if and only if the dual cones are equal. By Minkowski's theorem, any $ \ell \in \left( \Sigma_{n,2d}^G \right)^\ast$ can be written as a sum of extremal elements. If any extremal ray in $\left( \Sigma_{n,2d}^G\right)^\ast$ is generated by a point-evaluation, then there exists a set $M \subset \R^n$ such that $$ \left( \mathcal{P}_{n,2d}^G \right)^\ast \subseteq \left( \Sigma_{n,2d}^G \right)^\ast= \mbox{cone}\{\ev_{a} : a \in M \subset \R^n\} \subset \mbox{cone}\{ \ev_a : a \in \mathbb{S}^{n-1}\} = \left( \mathcal{P}_{n,2d}^G \right)^\ast,$$ where the last equality follows by Proposition \ref{pro:DualOfNonneg}. Conversely, if $\Sigma_{n,2d}^G = \mathcal{P}_{n,2d}^G$ then also the dual cones are equal. However, $\left( \mathcal{P}_{n,2d}^G \right)^\ast$ is the convex cone that is generated by all point-evaluations. Hence, any extremal ray in $\left( \Sigma_{n,2d}^G\right)^\ast$ is generated by a point-evaluation.
\end{proof}

\section{Sums of squares invariant under $A_{n-1}, B_n$, and $D_n$}
In this section we present an algorithmic approach for calculating a symmetry adapted basis of the coinvariant algebra for reflection groups of type $A_{n-1},B_n$ or $D_n$. This was introduced by the authors in \cite{ariki1997higher,morita1998higher}. Then we prove a stabilization of the isotypic decomposition for fixed degree and large enough number of variables for the series of essential reflection groups.

\subsection{Higher Specht polynomials}
A well known classical construction of the irreducible $\mathfrak{S}_n$-modules in the real polynomial ring is due to Specht \cite{specht}. The $\mathfrak{S}_n$-generators of these representations are called \emph{Specht polynomials}.  However, we are interested in the decomposition of the coinvariant algebra. An elegant combinatorial algorithm to decompose the coinvariant algebra into irreducibles for all complex reflection groups of type $G(m,n,p)$ was introduced in \cite{morita1998higher}. In the following, we briefly present their work. Furthermore, we present a combinatorial description of the invariant sums of squares forms.

We begin by recalling some basic definitions from combinatorics. 
\begin{definition}
A non-increasing sequence of positive integers $\lambda = (\lambda_1,\ldots,\lambda_l)$ is called a partition and $l$ is the length of $\lambda$. We denote by $|\lambda| = \sum_{i=1}^l \lambda_i$ the value of $\lambda$ and, say that $\lambda$ is a partition of $n$ if $|\lambda| = n$ and write $\lambda \vdash n$. For partitions $\lambda^1$ and $\lambda^2$ we call the pair $\Lambda = (\lambda^1,\lambda^2)$ a bipartition (here we also allow that either $\lambda^1 = \emptyset $ or $\lambda^2 = \emptyset$). We say that $|\Lambda| = |\lambda^1|+|\lambda^2| =n$ is the length of $\Lambda$ and write $\Lambda \vdash n$ for $\Lambda$ a bipartition of $n$.  
\end{definition}
We always denote bipartitions by capital letters and partitions by small letters. However, sometimes we write $(\lambda,\emptyset)$ instead of $\lambda$ for a partition $\lambda$. 
\begin{definition} 
The Young diagram associated to a partition $\lambda \vdash n$ is a sequence of ordered boxes starting from the left which $i$-th line contains $\lambda_i$ boxes. If one fills the boxes with all the integers in $[n]$, one calls the obtained object a Young tableau (or just tableau) of shape $\lambda$. If the numbers in all columns and rows are increasing we call it a standard tableau. \\
Bipartitions are associated with their pairs of Young diagrams. A Young bitableau (or just bitableau) is a filling of both Young diagrams with all the numbers in $[n]$ and we call it standard if both Young diagrams are standard. \\ 
We denote by $\YT (\Lambda)$ the set of (bi-)tableaux of shape $\Lambda$ and by $\SYT (\Lambda)$ the subset of standard (bi-)tableaux. 
\end{definition}
The famous Robinson-Schensted correspondence gives a bijection between the standard tableaux of shape $\lambda$ and the elements in the conjugacy class of $\mathfrak{S}_n$ which are labelled by $\lambda$. Hence, this number equals the multiplicity of the Specht module $S^{\lambda}$ in the coinvariant algebra. The correspondence has been adapted to complex reflection groups of type $G(m,n,p)$ and in particular for the contained series of reflection groups of types $B_n=G(2,1,n)$ and $D_n=G(2,2,n)$ (see e.g., section 10 in \cite{caselli2011projective}). \\

Following \cite{ariki1997higher} we introduce the objects arising in their construction of a symmetry adapted basis of the coinvariant algebra. The group $\mathfrak{S}_n$ acts naturally on a Young tableau by replacing the entry $i$ with $\sigma (i)$ for an element $\sigma \in \mathfrak{S}_n$.

\begin{definition}
 Let $T$ be a Young tableau of shape $\lambda \vdash n$. The $\mathfrak{S}_n$-subgroups \begin{align*}
    \mathcal{C}_T & := \left\{ \sigma \in \mathfrak{S}_n : \sigma T \emph{ is obtained by permutation of the columns of } T\right\}  \\
    \mathcal{R}_T & := \left\{ \sigma \in \mathfrak{S}_n : \sigma T \emph{ is obtained by permutation of the rows of } T\right\}
\end{align*}are called the column, resp. the row stabilizer of $T$. We define the formal linear combination $$\epsilon_T := \frac{f^\lambda}{n!}\sum_{\sigma \in \mathcal{C}_T, \tau \in \mathcal{R}_T} \emph{sgn} (\sigma) \sigma \tau \in \R[\mathfrak{S}_n],$$ where $f^\lambda$ is the number of standard tableau of shape $\lambda$. For a bitableau $T = (T^1,T^2)$ we define $\epsilon_{T^1}, \epsilon_{T^2} \in \R[\mathfrak{S}_n]$ analogously and set $\epsilon_T := \epsilon_{T^1}\cdot \epsilon_{T^2}$.
\end{definition}

We associate (pairs of) tableau with sequences, monomials and polynomials:
 \begin{definition}
 Let $T \in \YT (\Lambda)$ a (bi-)tableau. Then we define the word of $T$ as the sequence $w(T) \in \N^
{|\lambda|}$ where we read and notate each column of the tableau $T^1$ from the bottom to the top, starting from the left. We continue with this procedure for the tableau $T^2$. 

We define the index $i(T)$ of $T$ as follows. The number $1$ in the word $w(T)$ has index $0$. If $k$ in the word has index $p$, then $k+1$ has index $p$ or $p+1$ according as it lies to the right or the left of $k$.  We call the sum of the entries of $i(T)$ the charge of $T$ and write $\ch (T)$.

We associate to a tuple of (bi-)tableau $(T,S)$ of the same shape $\Lambda \vdash n$ a monomial in $n$ variables $\underline{X}_T^S := X_{w(T)_1}^{i(w(S))_1} \cdots X_{w(T)_{|\Lambda|}}^{i(w(S))_{|\Lambda|}}.$ Moreover, we define the polynomials associated to the pair $(T,S)$ $$F_{T}^S := \epsilon_T \cdot \underline{X}_{T}^{S} \in \R[\underline{X}] \mbox{ and } \widehat{F}_T^S:= F_{T}^S(\underline{X}^2) \cdot \prod_{j \in T^2} X_j,$$ where $\underline{X}^2:=(X_1^2,\ldots,X_n^2)$.
\end{definition}

\begin{example}
Let $\Lambda = ((2,1),(1)) \vdash 4$ be a bipartition and $S= \left( \ytableausetup{smalltableaux}
  \ytableaushort{
   14,2}\; ,
  \ytableaushort{
   3} \right), T=\left( \ytableausetup{smalltableaux}
  \ytableaushort{
   12,4}\; ,
  \ytableaushort{
   3} \right) \in \SYT (\Lambda) $.
The word of $S$ is $w(S)=(2,1,4,3)$ and the word of $T$ is $w(T) = (4,1,2,3)$. We calculate the indices $i(S)=(1,0,2,1)$ and $i(T)=(1,0,0,0)$ and compute $\underline{X}_T^S=X_4^1X_1^0X_2^2X_3^1=X_2^2X_3X_4$, $F_T^S=X_1^2X_3X_4+X_2^2X_3X_4-X_1X_2^2X_3-X_1X_3X_4^2$.
\end{example}
The authors in \cite{morita1998higher} used the following definition, referring to Specht's polynomial representation of the irreducible $\mathfrak{S}_n$-modules:
\begin{definition}
Let $G \in \{ A_{n-1},B_n,D_n\}$. We call the $G$-generators of the coinvariant algebra $\R[\underline{X}]_G$ higher Specht polynomials. 
\end{definition}
In the following, we will denote an irreducible representation labelled by a (bi-)partition $\Lambda$ by $S^\Lambda$. The underlying group should be clear from the context.

\begin{thm}\label{thm:irreducible reps}\cite{morita1998higher}
For reflection groups of type $A_{n-1},B_n$ or $D_n$ the higher Specht polynomials can be calculated as follows: 
\begin{enumerate}[a)]
\item For $A_{n-1}$ with $\lambda \vdash n$ and $B_n$  with $\Lambda \vdash n$  the higher Specht polynomials are given by the sets of polynomials $\left\{ F_T^S : (T,S) \right\}$ and $\left\{ \widehat{F}_T^S : (T,S)\right\}$ where $(T,S)$ varies over the set of all standard (bi-)tableaux of shape $\lambda$, resp. $\Lambda$.

\item  Let $\mathcal{L} := \{ \Lambda =(\lambda,\mu) \vdash n : \lambda \neq \mu, |\lambda|\geq |\mu|\}$. For $D_n$ the higher Specht polynomials are given as the union of the two sets
 \begin{align*}
     & \left\{ \widehat{F}_T^S : \,  \Lambda \in \mathcal{L}, (T,S) \in \SYT (\Lambda) \times \SYT (\Lambda) \right\} \,\text{, and} \\ 
   &  \left\{ \widehat{F}_{(T^1,T^2)}^S \pm \widehat{F}_{(T^2,T^1)}^S : (\lambda,\lambda) \vdash n, ((T^1,T^2),S) \in \SYT ((\lambda,\lambda)) \times \SYT ((\lambda,\lambda)) \right\}.
 \end{align*}
 Furthermore, for $(\lambda,\mu)$ and $(\mu,\lambda)$ the associated irreducible $B_n$-representations remain $D_n$-irreducible, but are $D_n$-isomorphic. For a pair $((T^1,T^2),S)$ of standard bitableaux of shape $ (\lambda,\lambda) \vdash n$ it is \begin{align*}
        \langle \widehat{F}_{(T^1,T^2)}^S \rangle_{D_n} = \langle \widehat{F}_{(T^1,T^2)}^S + \widehat{F}_{(T^2,T^1)}^S \rangle_{D_n} \oplus \langle \widehat{F}_T^S - \widehat{F}_{(T^2,T^1)}^S \rangle_{D_n},
    \end{align*} where the $D_n$-modules $ \langle \widehat{F}_{(T^1,T^2)}^S + \widehat{F}_{(T^2,T^1)}^S \rangle_{D_n}$ and $\langle \widehat{F}_{(T^1,T^2)}^S - \widehat{F}_{(T^2,T^1)}^S \rangle_{D_n}$ are irreducible and non-isomorphic. 
    \end{enumerate}
\end{thm}
    
Moreover, we find the following as a consequence of Schur's lemma \ref{le:schur} and the statements in \cite{morita1998higher}: For the groups $B_n$ and $D_n$ (resp. $A_{n-1}$) and $T=(T^1,T^2),S_1,S_2$ standard (bi-)tableaux of shape $\Lambda$ (resp. $\lambda$) the maps $$ \widehat{F}_T^{S_1}\mapsto \widehat{F}_T^{S_2} \text{ (resp. } {F}_T^{S_1}\mapsto F_T^{S_2})$$ define the (up to scalar) unique $G$-module isomorphism. In the case that $\Lambda$ has the form $(\lambda, \lambda)$, the unique $D_n$-isomorphism is then $$\widehat{F}_{(T^1,T^2)}^{S_1}\pm \widehat{F}_{(T^2,T^1)}^{S_1} \mapsto \widehat{F}_{(T^1,T^2)}^{S_2}\pm\widehat{F}_{(T^2,T^1)}^{S_2}.$$

\begin{definition}
Let $G \in \{A_{n-1},B_n,D_n\}$ and $\Lambda \vdash n$ a (bi-)partition. We write $q_d^\Lambda$ for the multiplicity of the $G$-module $S^\Lambda$ in $H_{n,d}$.
\end{definition}

\begin{remark} \label{rmk:deg of higher specht}
From Theorem \ref{thm:irreducible reps} we obtain a combinatorial description of $h_k^\theta$, i.e., of the multiplicity of an irreducible representation $\theta$ in the subspace of the coinvariant algebra of forms of degree $k$. Namely, in the case of $A_{n-1}$ $\theta$ is labelled by a partition $\lambda \vdash n$ and $$h_k^{\lambda} = |\{ T \in \SYT (\lambda) : \ch (T) = k \}|. $$ While for $B_n$ and $D_n$ $\theta$ is labelled by a bipartition $\Lambda = (\lambda,\mu) \vdash n$ and $$h_k^\Lambda = |\{ (T,S) \in \SYT (\Lambda) : 2\ch (T,S) + |\mu|=k\}|.$$
In particular, the multiplicity of $S^{\Lambda}$ in $H_{n,d}$ can be described combinatorially via the number of standard (bi-)tableaux and the degrees of $G$ $$q_d^\Lambda = \sum_{k=0}^dN_G(d-k) \cdot h_k^\Lambda.$$
\end{remark}

By integrating the above presented construction with the general setup, the degrees of the considered reflection groups and the standard (bi-)tableaux combinatorially encode the following information about the invariant  sums  of squares.
\begin{thm}\label{thm:general sums of squares repr}
Let $G \in \{ A_{n-1},B_n\}$.
\begin{enumerate}
    \item The isotypic decomposition of $H_{n,d}$ is $$\bigoplus_{\Lambda \vdash n}q_d^\Lambda \cdot S^{\Lambda},$$ where $\Lambda$ ranges over partitions for $A_{n-1}$ and otherwise bipartitions. 
    
\item There exists a symmetry adapted basis of the coinvariant algebra $\R[\underline{X}]_G$ consisting of higher Specht polynomials $(s_{1}^\Lambda,\ldots,s_{\vartheta_\Lambda}^{\Lambda})_{\Lambda \vdash n}$, where $\vartheta_\Lambda$ denotes the dimension of $S^{\Lambda}$. By defining symmetric matrix polynomials $H^\Lambda \in \R[\underline{X}]^{\vartheta_\Lambda \times \vartheta_\Lambda}$ via, $H^\Lambda_{v,u} := \mathcal{R}_G(s_v^\Lambda \cdot s_u^\Lambda)$ we have
$$\Sigma \R[\underline{X}]^2 \cap \R[\underline{X}]^G= \left\{ g\in \R[\psi_1,\ldots,\psi_n]\,:\, g=\sum_{\Lambda \vdash n} \Tr(H^{\vartheta_j}\cdot A_\Lambda)\right\},$$
where $A_\Lambda \in{ \R[\psi_1,\ldots,\psi_n]}^{\vartheta_\Lambda \times \vartheta_\Lambda}$ is a sum of squares matrix polynomial.

\item There exists a symmetry adapted basis of $H_{n,d} = \bigoplus_{\Lambda \vdash n}q_d^\Lambda \cdot S^{\Lambda}$, where the elements $\left(s_{1}^{\Lambda},\ldots,s_{q_d^\Lambda}^\Lambda\right)$ belonging to the isotypic component $q_d^\Lambda \cdot S^{\Lambda}$ are products each of one higher Specht polynomial and a monomial in $\psi_1,\ldots,\psi_n$. By defining matrix polynomials $B^\Lambda \in \left( \R[\underline{X}]^G\right)^{q_d^\Lambda\times q_d^\Lambda}$ via $B^\Lambda_{v,u} := \mathcal{R}_G(s_v^\Lambda \cdot s_u^\Lambda)$ a form $f \in H_{n,2d}^G$ is a sum of squares if and only if $$ f = \sum_\Lambda \Tr(B^{\Lambda} \cdot A_{\Lambda} )$$
for some positive semidefinite matrices $A_{\Lambda} \in \R^{q_d^\Lambda\times q_d^\Lambda}$.
\end{enumerate}
\end{thm}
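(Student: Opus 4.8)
The plan is to obtain all three assertions by feeding the combinatorial description of the coinvariant algebra from Theorem \ref{thm:irreducible reps} into the general symmetry-reduction machinery of Section~2 --- in particular Theorems \ref{THM Decomp}, \ref{thm:coinvariant algebra} and \ref{thm:sums of squares description}, together with Corollaries \ref{cor:sos}, \ref{cor:deomposition of polynomial ring} and \ref{cor:bound}. The only input genuinely specific to the families $A_{n-1},B_n,D_n$ is that the higher Specht polynomials provide an \emph{explicit} symmetry adapted basis; the rest is bookkeeping on top of statements already proved.

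For (1) I would argue as follows. By Theorem \ref{thm:coinvariant algebra} the coinvariant algebra $\R[\underline{X}]_G$ is $G$-isomorphic to the regular representation, so $S^\Lambda$ occurs in it with multiplicity $\vartheta_\Lambda$, and Theorem \ref{thm:irreducible reps} exhibits the higher Specht polynomials of shape $\Lambda$, indexed by pairs of standard Young tableaux of shape $\Lambda$, as generators of these $\vartheta_\Lambda$ copies. Since the degree of a higher Specht polynomial depends only on the charge of the defining tableau-pair, the number of copies of $S^\Lambda$ lying in degree $k$ of $\R[\underline{X}]_G$ is precisely $h_k^\Lambda$, with the combinatorial count given in the remark following Theorem \ref{thm:irreducible reps}. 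Tensoring with $\R[\underline{X}]^G=\R[\psi_1,\dots,\psi_n]$ by Theorem \ref{thm:coinvariant algebra} and sorting by degree then yields $q_d^\Lambda=\sum_{k=0}^d N_G(d-k)h_k^\Lambda$, which is exactly Corollary \ref{cor:bound}, and hence the isotypic decomposition $H_{n,d}=\bigoplus_\Lambda q_d^\Lambda\cdot S^\Lambda$.

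For (2) I would fix, for each shape $\Lambda$, one reference standard Young tableau $T_0$ and take $s^\Lambda_S:=F^S_{T_0}$ (resp.\ $\widehat F^S_{T_0}$) for $S\in\SYT(\Lambda)$; these are $\vartheta_\Lambda$ generators of the $\vartheta_\Lambda$ isomorphic copies of $S^\Lambda$ in $\R[\underline{X}]_G$, and the $G$-isomorphisms $F^{S_1}_T\mapsto F^{S_2}_T$ recorded after Theorem \ref{thm:irreducible reps} send $s^\Lambda_{S_1}$ to $s^\Lambda_{S_2}$, so the family is a symmetry adapted basis of $\R[\underline{X}]_G$ in the sense of Definition \ref{def:SymmetryAdaptedbasis} (its basis property ultimately resting on \cite{ariki1997higher,morita1998higher}). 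Substituting this basis into Theorem \ref{thm:sums of squares description}, with $H^\Lambda_{v,u}=\mathcal R_G(s^\Lambda_v s^\Lambda_u)$, gives the stated description of $\Sigma\R[\underline{X}]^2\cap\R[\underline{X}]^G$.

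For (3) I would assemble a symmetry adapted basis of $H_{n,d}$ from the products of a higher Specht polynomial of shape $\Lambda$ and charge-degree $k$ with a monomial of degree $d-k$ in $\psi_1,\dots,\psi_n$: by the graded tensor decomposition $\R[\underline{X}]\cong\R[\underline{X}]^G\otimes_\R\R[\underline{X}]_G$ of Theorem \ref{thm:coinvariant algebra} these products form a vector-space basis of $H_{n,d}$, and since the $\psi_i$ are $G$-invariant, multiplication by a fixed invariant monomial is $G$-equivariant, so composing it with the $G$-isomorphisms of (2) relates the copies within each isotypic component coherently. Corollary \ref{cor:sos} (equivalently Theorem \ref{THM Decomp}), applied to this basis with $B^\Lambda_{v,u}=\mathcal R_G(s^\Lambda_v s^\Lambda_u)$, then gives the trace characterization of $f\in H_{n,2d}^G$ being a sum of squares with positive semidefinite scalar matrices $A_\Lambda$. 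For $D_n$ the identical scheme applies once Theorem \ref{thm:irreducible reps}(c) is used: the index runs over $\mathcal L$ together with, for each self-conjugate shape $(\lambda,\lambda)$, the two non-isomorphic summands $\langle\widehat F^S_{(T^1,T^2)}\pm\widehat F^S_{(T^2,T^1)}\rangle_{D_n}$, changing the matrix sizes but nothing else. I expect the only delicate point to be the bookkeeping required by Definition \ref{def:SymmetryAdaptedbasis}: one must verify that across the several coinvariant degrees $k$ feeding a single isotypic component of $H_{n,d}$ --- and, for $D_n$, across the $\pm$-split and the identification $S^{(\lambda,\mu)}\cong S^{(\mu,\lambda)}$ --- one can select mutually compatible $G$-isomorphisms, so that the products really assemble into one symmetry adapted basis rather than just a direct sum of such.
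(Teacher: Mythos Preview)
Your proposal is correct and follows essentially the same route as the paper: both derive (1) from Theorems \ref{thm:irreducible reps} and \ref{thm:coinvariant algebra} together with the count $q_d^\Lambda=\sum_k N_G(d-k)h_k^\Lambda$, and then obtain (2) and (3) by plugging the higher Specht symmetry adapted basis into Theorem \ref{thm:sums of squares description} and Corollary \ref{cor:sos}. Your write-up is in fact more detailed than the paper's own proof, and the compatibility bookkeeping you flag at the end is a fair caveat that the paper leaves implicit.
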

\begin{proof}
 The isotypic decomposition of $H_{n,d}$ can be realized through multiplying the higher Specht polynomials of $G$ of degree $\leq d$ with products of fundamental invariants by theorems \ref{thm:irreducible reps} and \ref{thm:coinvariant algebra}. For every $k$ the multiplicity of $G$-modules isomorphic to $S^{\Lambda}$ in the subspace of the coinvariant algebra of degree $k$ is precisely $h_{k}^\Lambda$, while $N_G(d-k)$ gives the dimension of $H_{n,d-k}^{G}$. $(2)$ and $(3)$ follow now from Theorem \ref{thm:sums of squares description} and Corollary \ref{cor:sos}.
\end{proof}
\begin{remark}
For $D_n$ the isotypic decomposition in $(1)$ and the sizes of the matrices in $(2)$ and $(3)$ differ slightly, since then the $D_n$-module $S^{(\lambda,\lambda)}$ decomposes into two irreducible $D_n$-modules and $S^{(\lambda,\mu)}$ is $D_n$-isomorphic to $S^{(\mu,\lambda)}$. 
\end{remark}

We present how the isotypic decomposition of the $D_4$-module $H_{4,2}$ can be calculated using higher Specht polynomials:
\begin{example}
The $D_4$ fundamental invariants are the following:
\begin{align*}
    p_{2}=X_1^2+X_2^2+X_3^2+X_4^2,\;&\; p_{4} = X_1^4+X_2^4+X_3^4+X_4^4,\\
    p_6=X_1^6+X_2^6+X_3^6+X_4^6,\;&\; e_4=X_1X_2X_3X_4,
\end{align*}
i.e., we have $\R[\underline{X}]^{D_4} = \R[p_2,p_4,p_6,e_4]$. By Corollary \ref{cor:deomposition of polynomial ring} and Theorem \ref{thm:general sums of squares repr} the symmetry adapted basis for $H_{4,2}$ can be obtained by multiplication of the fundamental invariants with higher Specht polynomials (such that the degree equals $2$). \\
We apply Theorem \ref{thm:irreducible reps} to calculate the $D_4$ higher Specht polynomials. For a bipartition $\Lambda\vdash 4$ the minimal degree of a higher Specht polynomial associated with $\Lambda$ is given by the smallest number in $\{2\ch( T)+|\lambda^2|  : T \in \SYT (\Lambda)\}$. \\
Since the degrees of the fundamental invariants are at least $2$, we need to compute all higher Specht polynomials of degree $0$ and $2$. Therefore, we only need to consider partitions $(\lambda^1,\lambda^2) \vdash 4$ where $\lambda
^2 \vdash m, \, m \in \{ 0,2\}$ as otherwise the degree is odd. In the case $\lambda^2 \vdash 2$ it must be $\ch (T)=0$. This can only occur for $w(T)=(1,2,3,4)$. Which forces $\Lambda = ((2),(2))$. The possible remaining cases are $\Lambda^1 = ((4),\emptyset), \Lambda^2=((3,1),\emptyset),\Lambda^3=((2,2),\emptyset),\Lambda^4=((2,1,1),\emptyset),\Lambda^5=((1,1,1,1),\emptyset)$. We are looking for a standard (bi-)tableau $T$ of shape $\Lambda
^j, j \in \{1,2,3,4,5\}$ such that $\ch (T) \in \{0,1\}$. The case that it equals $0$ is only possible for $\Lambda^1$. In the remaining cases, $\ch (T)=1$ if and only if $T = \left( \ytableausetup{smalltableaux}
  \ytableaushort{
   123,4}\; , \; \emptyset
  \right)  $. Then the $D_4$-module $S^{((2),(2))}$ decomposes by Theorem \ref{thm:irreducible reps} into two irreducible, non-isomorphic modules $S_1^{((2),(2))}$ and $S_2^{((2),(2))}$. Hence, the $D_4$-module $H_{4,2}$ has the isotypic decomposition \begin{align*}
      H_{4,2} & =  S^{((4),\emptyset)} \oplus S^{((3,1),\emptyset)} \oplus S_1^{((2),(2))} \oplus S_2^{((2),(2))}.
  \end{align*}
  The relevant higher Specht polynomials are $1$ (for $S^{((4),\emptyset)}), X_4^2-X_1^2$ (for $S^{((3,1),\emptyset)}$) and $X_1X_2\pm X_3X_4$ (for $S_i^{((2),(2)}, i \in \{1,2\}$). 
\end{example}

\subsection{Stabilization of the isotypic decompositions}

In the following, we aim to prove a stabilization of the isotypic decompositions for the $Z_n$-modules $H_{n,d}$ for large $n$ and $(Z_n)_n \in \{(A_{n-1})_n,(B_n)_n,(D_n)_n\}$.

\begin{definition}
For a partition $\lambda= (\lambda_1,\lambda_2,\ldots,\lambda_l) \vdash n$ we write $\lambda +1:= (\lambda_1+1,\lambda_2,\ldots,\lambda_l) $ for the partition of $n+1$. For a bipartition $\Lambda \vdash n$ we write $\Lambda + 1:=(\lambda +1,\mu) \vdash n+1$. 
\end{definition}

We use the combinatorial description of the degrees of a symmetry adapted basis of $H_{n,d}$ from Remark \ref{rmk:deg of higher specht}. For $A_{n-1}$ and a standard tableau $T$ we have $\deg F_T^T = \ch (T)$, while for $B_n$ and $(T,S)$ it is $\deg \widehat{F}_{(T,S)}^{(T,S)} = 2 \ch (T,S) + |\mu|.$ Our aim is to identify the relevant standard (bi-)tableaux. 

\begin{lemma} \label{le:lemma1}
Let $\lambda \vdash n=d+k$ be a partition. In the case that the first row of a tableau $T \in \SYT (\lambda)$ does not begin with $1,2,\ldots,k$, then $\deg F_T^T > d$.
\end{lemma}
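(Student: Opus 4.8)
The plan is to trace through the combinatorial definition of the charge in terms of the word $w(T)$ and the index sequence $i(w(T))$, and to exploit the hypothesis that the first row of $T$ is \emph{not} the segment $1,2,\ldots,k$. Recall that $\operatorname{ch}(T)$ is the sum of the entries of $i(w(T))$, where $w(T)$ reads the columns of $T$ from bottom to top, left to right, and $i(w)$ records, for each value $v=1,2,\ldots,n$ in increasing order, the current ``level'' which is incremented by one each time $v$ fails to appear to the right of $v-1$ in $w$. I would first set up this bookkeeping carefully and observe that the contribution to the charge coming from the values $1,\ldots,k$ (placed in the index-construction order) is exactly the number of ``descents'' of this kind among those values.

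\emph{Key steps.} First I would record the positional structure of $w(T)$: since the reading is column-by-column bottom-up, the top row of $T$ contributes the \emph{last} entry of each column-block, and in particular the value $T_{1,1}$ (top-left corner) sits at a distinguished early position, while the remaining first-row entries $T_{1,2},T_{1,3},\ldots$ appear as the final entries of their respective column-blocks, hence relatively far to the right. Second, I would argue that for a standard Young tableaux the only way to keep the running level small while processing $1,2,\ldots,k$ is for these values to be exactly the first-row entries $T_{1,1},T_{1,2},\ldots,T_{1,k}$ in order: if $\{1,\ldots,k\}$ is precisely the set of first-row entries occupying the first $k$ cells of row $1$, one checks that each $v\in\{2,\ldots,k\}$ does lie to the right of $v-1$ in $w(T)$ (because within row $1$ the entries increase left to right, and row-$1$ entries are read last within each column), so those values contribute $0$ to the charge; conversely, if the first row does \emph{not} begin with $1,\ldots,k$, then at least one value $v\le k$ is ``out of place'' and forces the level to climb. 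Third — the quantitative heart — I would show that being out of place is expensive: once the running level has been incremented, it never decreases, so every subsequent value among $\{1,\ldots,n\}$ that we still have to place contributes at least that level to the charge; since $n=d+k$ and at least $d+1$ of the values get processed after the level has reached $1$ (here is where one must count that a mis-start in the first row leaves at least $d+1$ values, including the $d$ ``extra'' ones beyond the $k$ forced ones, contributing at least $1$ each, and at least one more forced increment pushes the total strictly past $d$), one concludes $\operatorname{ch}(T)>d$.

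\emph{Main obstacle.} The delicate point is the exact counting in the third step: I must pin down precisely how many index-entries are guaranteed to be $\ge 1$ once we know the first row is not $1,2,\ldots,k$, and ensure this count is at least $d+1$ (or that the level reaches $2$ early enough on fewer entries), so that the sum is strictly greater than $d$ rather than merely $\ge d$. This requires a careful case analysis of \emph{where} the first deviation in row $1$ occurs and which value first triggers an increment — essentially showing that the ``missing'' small values, which must then appear in lower rows, are read before many of the large first-row entries and therefore each cause or inherit an increment. I expect the cleanest route is to induct on $k$ (or on the position of the first deviation), reducing to the sub-tableaux obtained by deleting the first column or the corner cell, and invoking standardness to control the relative reading order; the bound $n=d+k$ then feeds through as the inductive accounting. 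The remaining steps (the structure of $w(T)$, monotonicity of the level) are routine once the definitions are unwound.
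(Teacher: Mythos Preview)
Your approach is essentially the same as the paper's, but you are overcomplicating the final counting step and manufacturing a ``main obstacle'' that is not there. You correctly note that the level sequence $a_1,a_2,\ldots,a_n$ (the index values assigned to $1,2,\ldots,n$ in order) is weakly increasing, so once it reaches $1$ it stays $\ge 1$. The only thing left is to observe that the first increment happens at some value $j\le k$: if the first row does not begin with $1,\ldots,k$, there is a smallest $j\le k$ with $T_{1,j}\ne j$; by standardness $j$ then sits in position $(2,1)$, hence in $w(T)$ it is read (in column~$1$) before $j-1$ (which sits in column $j-1$), forcing $a_j\ge 1$. Consequently $a_j,a_{j+1},\ldots,a_n\ge 1$, which is $n-j+1\ge n-k+1=d+1$ terms, so $\operatorname{ch}(T)\ge d+1>d$ immediately. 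No induction, no case analysis on ``where the deviation occurs,'' and no separate worry about strict versus weak inequality is needed. The paper's proof is exactly this three-line argument.
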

\begin{proof}
We assume that a standard tableau $T$ of shape $\lambda$ does not contain $1,2,\ldots,k$ in the first row. Let $\tilde{k}$ be the first entry of $T$ in the second row. It must be $\tilde{k}\leq k$ and $i(T)$ does contain at least $n-\tilde{k}+1$ entries which are larger than or equal to $1$. Therefore, $$\deg F_T^T = \ch  (T) \geq n-\tilde{k}+1\geq n-k+1 = d+1.$$ 
\end{proof}
We formulate Lemma \ref{le:lemma1} for bipartitions.
\begin{lemma}\label{le:lemma2}
Let $(\lambda,\mu) \vdash n$ be a bipartition, where $|\mu| \leq d$ and $|\lambda|\geq  \frac{d-1}{2}  +j$. Let $(T,S)$ be a standard bitableau of shape $(\lambda,\mu)$ where $\alpha_1< \ldots < \alpha_{|\lambda|}$ are all the entries in $T$. Assume that the first row of $T$ does not begin with $\alpha_1, \ldots ,\alpha_j$ then $\deg \widehat{F}^{(T,S)}_{(T,S)} > d.$
\end{lemma}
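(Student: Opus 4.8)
The statement is the multipartition analogue of Lemma~\ref{le:lemma1}, so the natural strategy is to reduce the degree estimate for $\widehat{F}^{(T,S)}_{(T,S)}$ to a charge estimate for the pair $(T,S)$ via the defining formula, and then invoke (a suitable version of) Lemma~\ref{le:lemma1} applied to the tableau $T$. Recall that for a multipartition $\Lambda=(\lambda,\mu)$ and a pair of standard Young tableaux $(T,S)$ of shape $\Lambda$, the polynomial $\widehat{F}^{(T,S)}_{(T,S)}$ is obtained from $F^{(T,S)}_{(T,S)}(\underline X^2)$ by multiplying with $\prod_{j\in T^2}X_j$; consequently $\deg\widehat{F}^{(T,S)}_{(T,S)} = 2\,\ch(T,S) + |\mu|$, matching the combinatorial formula $h^\Lambda_k$ recorded after Theorem~\ref{thm:irreducible reps}. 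So it suffices to show that under the hypotheses $2\,\ch(T,S)+|\mu|>d$.

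\textbf{Main steps.} First I would unwind the definition of $\ch(T,S)$ for a multi-tableau as the charge computed from the word $w(T,S)=w(T^1)w(T^2)$, and observe that the charge only increases when one passes from the sub-word coming from $T=T^1$ to the full word, so $\ch(T,S)\geq \ch_{\mathrm{loc}}(T)$, where $\ch_{\mathrm{loc}}(T)$ denotes the charge of the tableau $T$ with entries $\alpha_1<\dots<\alpha_{|\lambda|}$ relabelled order-isomorphically to $1,\dots,|\lambda|$. Second, I would apply the single-partition argument of Lemma~\ref{le:lemma1} to this relabelled tableau $T$ of size $|\lambda|$: since the first row of $T$ does not begin with $\alpha_1,\dots,\alpha_j$, the relabelled tableau does not begin with $1,\dots,j$ in its first row, so the first non-zero entry of its index vector occurs at or before position $j$ in the word, forcing at least $|\lambda|-j+1$ entries of the index vector to be $\geq 1$, hence $\ch_{\mathrm{loc}}(T)\geq |\lambda|-j+1$. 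Third, I would combine these: $2\,\ch(T,S)+|\mu|\geq 2(|\lambda|-j+1)+|\mu|\geq 2(|\lambda|-j+1)$, and then use the hypothesis $|\lambda|\geq \frac{d-1}{2}+j$ to get $2(|\lambda|-j+1)\geq (d-1)+2 = d+1>d$, which finishes the proof. (The constraint $|\mu|\leq d$ is what guarantees that degree-$d$ considerations are the relevant regime; it is not strictly needed for the inequality itself in this form but keeps the lemma inside the range where it will be applied.)

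\textbf{Where the difficulty lies.} The only genuinely delicate point is the monotonicity claim $\ch(T,S)\geq \ch_{\mathrm{loc}}(T)$ and, more precisely, that the contribution of the $T^1$-portion of the word to the charge of the concatenated word $w(T^1)w(T^2)$ is at least the intrinsic charge of $T^1$ after order-preserving relabelling. This requires looking carefully at how the index vector $i(w)$ in the definition is built: the entries of $w(T^1)$ appear in $w(T,S)$ in the same relative order as in $w(T^1)$ alone, so the bookkeeping of ``is the next value to the right of the previous one'' produces, on the positions coming from $T^1$, exactly the index vector of the relabelled $w(T^1)$; the positions coming from $T^2$ only add non-negative contributions. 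Making this precise — essentially that the charge statistic is monotone under taking order-isomorphic sub-words in the prefix — is the technical heart, and I expect it to be a short but careful combinatorial verification rather than a conceptual obstacle. Everything else is a direct transcription of the proof of Lemma~\ref{le:lemma1} together with the arithmetic of the hypothesis $|\lambda|\geq\frac{d-1}{2}+j$.
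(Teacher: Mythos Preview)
Your overall plan is sound and matches the paper's: use $\deg\widehat F^{(T,S)}_{(T,S)}=2\ch(T,S)+|\mu|$ and then show $\ch(T,S)\geq |\lambda|-j+1$, which together with $|\lambda|\geq\frac{d-1}{2}+j$ gives $\deg\geq d+1$.

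However, your justification of the monotonicity $\ch(T,S)\geq\ch_{\mathrm{loc}}(T)$ contains a genuine error. You claim that on the positions of $w(T,S)$ coming from $T$, the index vector coincides \emph{exactly} with that of the relabelled tableau. This is false. For instance, take $T$ of shape $(1,1)$ with entries $\{2,3\}$ and $S$ of shape $(1)$ with entry $\{1\}$: then $w(T,S)=(3,2,1)$ has index vector $(2,1,0)$, so the $T$-positions carry $(2,1)$, whereas the relabelled $T$ has word $(2,1)$ with index vector $(1,0)$. The correct statement is an \emph{inequality}: the index at $\alpha_k$ in $w(T,S)$ is at least the index at $k$ in the relabelled word. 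This is because whenever $\alpha_k>\alpha_{k-1}+1$, the intermediate values lie in $w(S)$, so passing from $\alpha_{k-1}$ (in $w(T)$) to $\alpha_k$ (also in $w(T)$) forces at least one leftward step and hence at least one increment. Proving this carefully is not harder than the direct argument, but it is not what you wrote.

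The paper avoids this detour entirely. It takes the minimal $i\leq j$ for which the first row of $T$ fails to contain $\alpha_i$ in position $i$; standardness forces $\alpha_i$ into position $(2,1)$. Then $\alpha_i$ sits in the first column of $w(T)$, hence to the left of $\alpha_i-1$ (which is either $\alpha_{i-1}$ in a later column of $T$, or an entry of $S$, in either case to the right). Thus the index at $\alpha_i$ is $\geq 1$, and since the index is non-decreasing in the value, all $n-\alpha_i+1$ values from $\alpha_i$ onward have index $\geq 1$. Using $\alpha_i\leq i+|\mu|$ gives $n-\alpha_i+1\geq|\lambda|-i+1\geq|\lambda|-j+1$, and the estimate follows. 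This is both shorter and avoids the relabelling step; you may want to adopt it.
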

\begin{proof}
Assume that for some $i \leq j$ the $i$-th entry in the first row of $T$ is not $\alpha_i$ and let $i$ be minimal with this property. Then $\alpha_i$ must be the first entry in the second row and $|\lambda|-i+1$ entries in $i(T,S)$ are at least $1$. Hence $$ \deg \widehat{F}^{(T,S)}_{(T,S)} = 2 \ch (T,S) + |\mu| \geq 2(|\lambda|-i+1) \geq 2\left(  \frac{d-1}{2}  +j-j+1\right) \geq d+1.$$
\end{proof}

We write $T=(\alpha_{ij})$ for a standard tableau of shape $\lambda$, where $\alpha_{ij}$ denotes the entry in the i-th row and j-th coloumn of $T$, counted from the left to the right and the top to the bottom. Analogously, we write $(T,S)=\left( (\alpha_{ij}), (\beta_{ij})\right)$ for a standard tableau of a bipartition.

\begin{definition}
For a partition $\lambda=(\lambda_1,\ldots,\lambda_l) \vdash n=d+k$ with $\lambda_1 \geq k$, we define 
\begin{align*}
    \Pi_k^\lambda := \{ (\alpha_{ij}) \in \SYT (\lambda) : \alpha_{1j}=j, 1 \leq j \leq k \}.
\end{align*}
For a bipartition $\Lambda \vdash n=d+k$ we define
\begin{align*}
    \Pi_{k}^\Lambda := \{ (T,S) = ((\alpha_{ij}),(\beta_{ij})) \in \SYT ( \Lambda) : T_1\text{ starts with the $k$ smallest integers in } \{\alpha_{ij}\} \},
\end{align*}
where $T_1$ denotes the first row of $T$.
\end{definition}

\begin{example}
$$ \Pi_3^{((3,1),(1))} = \left\{  \left( \ytableausetup{smalltableaux}
  \ytableaushort{
   234,5}\; ,
  \ytableaushort{
   1} \right),
   \left( \ytableausetup{smalltableaux}
  \ytableaushort{
   134,5}\; ,
  \ytableaushort{
   2} \right), 
   \left( \ytableausetup{smalltableaux}
  \ytableaushort{
   124,5}\; ,
  \ytableaushort{
   3} \right),
     \left( \ytableausetup{smalltableaux}
  \ytableaushort{
   123,5}\; ,
  \ytableaushort{
   4} \right),
        \left( \ytableausetup{smalltableaux}
  \ytableaushort{
   123,4}\; ,
  \ytableaushort{
   5} \right)
   \right\}$$
\end{example}

\begin{lemma} \label{le:lemma3}
Let $n=d+k$, $\lambda = (\lambda_1,\ldots,\lambda_l) \vdash n$ be a partition and $$\abb{\rho_{n,n+1}^\lambda}{\Pi_k^\lambda}{\Pi_{k+1}^{\lambda +1}}{S=(\alpha_{ij})}{\widetilde{S}=(\widetilde{\alpha}_{ij})},$$ where $\widetilde{\alpha}_{1i}=i$ for $1 \leq i \leq \alpha_{21}$. Further, $\widetilde{\alpha}_{ji}=\alpha_{ji-1}+1$ for $j=1,i\geq \alpha_{21}+1$ and all $(j,i)$ with $j \geq 2$. \\
Then $ \rho_{n,n+1}^\lambda$ is injective and $i(S),i(\widetilde{S})$ differ only by a zero, i.e., any non-zero entry in $i(S)$ occurs with the same multiplicity in $i(\widetilde{S})$, while $0$ occurs one more time. Furthermore, if $k > d-1$ then any $\widetilde{S} \in \Pi_{k+1}^{\lambda+1} \setminus \rho_{n,n+1}^\lambda (\Pi_k^\lambda)$ has $\ch (\widetilde{S})> d$.
\end{lemma}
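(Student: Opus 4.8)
The plan is to analyze the map $\rho_{n,n+1}^\lambda$ directly from its definition, breaking the statement into its three assertions: injectivity, the effect on the index sequence, and the charge bound for tableaux outside the image.

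\emph{Injectivity.} First I would observe that $\rho_{n,n+1}^\lambda$ has an explicit description: given $S \in \Pi_k^\lambda$, the tableaux $\widetilde S$ is obtained by inserting a new box at the end of the first row, shifting the entries of $S$ so as to make room for the value $\alpha_{21}$ (the top-left entry of the second row of $S$) in the first row and incrementing every entry that is $\geq \alpha_{21}$ by one. This operation is visibly reversible: from $\widetilde S \in \Pi_{k+1}^{\lambda+1}$ one recovers $\alpha_{21}$ as the $(k+1)$-st entry of the first row (or rather as the entry that, when removed, leaves a standard tableau of shape $\lambda$ after decrementing), and then undoes the shift. Hence I would argue injectivity by exhibiting a left inverse, or equivalently by checking that $\widetilde S$ determines $S$ uniquely. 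One must of course verify that $\widetilde S$ is genuinely standard of shape $\lambda+1$ and lies in $\Pi_{k+1}^{\lambda+1}$; this is a routine check using that $S \in \Pi_k^\lambda$ already has $1,\ldots,k$ in its first row and that $\alpha_{21} \geq k+1$.

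\emph{Effect on the index.} For the claim about $i(S)$ versus $i(\widetilde S)$, I would trace through the definition of the index of a word. Reading words column by column, the word $w(\widetilde S)$ is obtained from $w(S)$ by incrementing all letters $\geq \alpha_{21}$ and inserting the letter $\alpha_{21}$ into the first row in its sorted position. Since $\Pi_k$-tableaux have $1,\ldots,k$ at the start of the first row, the letter $\alpha_{21}$, which by standardness of the second row satisfies $\alpha_{21} = k+1$ in fact (it is the smallest entry not in the first $k$ positions of row one — here I should double-check whether $\alpha_{21}$ is forced to equal $k+1$ or merely to be $\geq k+1$; the definition of $\Pi_k^\lambda$ pins the first $k$ entries of row one to be $1,\ldots,k$, but the $(k+1)$-st could be larger, so $\alpha_{21}$ need not be $k+1$). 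The key point is that relabelling by an order-preserving bijection and inserting one new value at the appropriate spot adds exactly one new ``$0$'' to the index sequence (the new value $1$ appears to the right of everything smaller, contributing a $0$) and leaves all other index entries unchanged as a multiset. I would make this precise by the standard fact that the index $i(w)$ of a permutation word depends only on the relative order of its letters, so incrementing a tail of the value-range changes nothing, and the inserted letter lands in a position whose index contribution is $0$ because it is immediately preceded in value by a letter lying to its left in the word (namely $j-1$ in row one). This yields $\operatorname{ch}(\widetilde S) = \operatorname{ch}(S)$ as a byproduct.

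\emph{Charge bound outside the image.} This is the main obstacle. Suppose $\widetilde S = (\widetilde\alpha_{ij}) \in \Pi_{k+1}^{\lambda+1}$ is not in the image of $\rho_{n,n+1}^\lambda$. The image consists precisely of those tableaux whose first row, after the forced initial segment $1,\ldots,k+1$, continues in a way compatible with the preimage construction; failing to be in the image means that the ``extra'' structure in row one beyond position $k+1$ cannot have come from shifting a $\Pi_k^\lambda$-tableau, which forces some entry to be out of the expected place and hence forces a non-zero index contribution to appear early. Concretely I would argue: the first row of $\widetilde S$ begins with $1,\ldots,k+1$ (membership in $\Pi_{k+1}^{\lambda+1}$); if additionally the $(k+2)$-nd entry of row one were the value that the preimage construction predicts, one could invert, so not being in the image means the first row of $\widetilde S$ ``runs out'' of consecutive small integers by position $k+2$ at the latest, i.e. the entry $k+2$ is not in row one. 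By the reasoning of Lemma~\ref{le:lemma1} (applied with the roles of $d$ and $k$ now shifted, since $n = (d-1) + (k+1)$), this forces at least $n - (k+1) + 1 = d+1$... — wait, I need the bound $\operatorname{ch}(\widetilde S) > d$, not $> d-1$; I would redo the counting carefully. If $k+2$ is not in the first row of $\widetilde S$, then $k+2$ sits in row $\geq 2$, so the word $w(\widetilde S)$ has a non-zero index entry at a position no later than that of $k+2$; counting, at least $n - (k+1) = d$ of the entries $k+2, \ldots, n$ contribute at least $1$ to the index, giving $\operatorname{ch}(\widetilde S) \geq d$, and a more careful look (using that at least one further entry is displaced, or that the box at row $2$ column $1$ of $\widetilde S$ forces an additional jump) should upgrade this to $\geq d+1$. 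The hypothesis $k > \min\{d-1,\sqrt{2d+25/4}-5/2\}$ is what guarantees this displacement happens: in the regime $k > d-1$ the first row is so long relative to $d$ that the constraint is automatic, while the quadratic bound $k > \sqrt{2d+25/4}-5/2$, equivalently $k^2 + 5k > 2d + 6$ roughly, i.e. $\binom{k+3}{2} \gtrsim d$, is precisely the threshold past which the number of standard tableaux whose first row can absorb all the ``small'' behaviour is exhausted — so any $\widetilde S$ with small charge must be in the image. I would therefore split into the two cases according to which term realizes the minimum and in each case carry out the index count explicitly; the quadratic case is where the bookkeeping is most delicate, and I expect that to be where the real work lies.
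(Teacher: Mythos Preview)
Your treatment of injectivity and of the effect on the index sequence is essentially the paper's argument: construct an explicit left inverse (remove the box at the first gap in row one, shift and decrement) and observe that the relative order of the letters in the reading word is preserved, so the index acquires exactly one extra zero. That part is fine.

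The third part, however, rests on a mischaracterisation of the image. You claim that ``not being in the image means the first row of $\widetilde S$ runs out of consecutive small integers by position $k+2$ at the latest, i.e.\ the entry $k+2$ is not in row one.'' This is false. Take $k=1$, $\lambda=(3,3)$ (so $n=6$, $d=5$) and consider
\[
\widetilde S \;=\; \young(1237,456)\;\in\;\Pi_{2}^{(4,3)}.
\]
Here $k+2=3$ sits in row one, yet $\widetilde S$ is \emph{not} in the image of $\rho_{6,7}^{(3,3)}$: applying the inverse (remove the box at position $3$, shift, decrement) yields the filling $1,2,6/3,4,5$ of shape $(3,3)$, which violates column $3$. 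So your proposed necessary condition for being outside the image fails, and the subsequent charge estimate --- which you build by invoking Lemma~\ref{le:lemma1} with $k$ replaced by $k+1$ --- does not apply to all non-image tableaux.

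The paper's argument is structurally different. It does not try to locate a specific missing value in row one of $\widetilde S$; instead it observes that if the inverse map produces a non-standard filling, the column violation must occur at some column $j>k$, which forces $\lambda_2\ge j>k$. The long second row is the real obstruction. From $\lambda_2\ge k+1$ one then bounds the charge of $\widetilde S$ in two ways: crudely, each of the $\lambda_2$ entries $\widetilde\alpha_{2i}$ contributes at least $1$ to the index, and there is one further contribution, giving $\operatorname{ch}(\widetilde S)\ge \lambda_2+1\ge k+2$, which exceeds $d$ once $k>d-1$; more sharply, the entries $\widetilde\alpha_{2,1},\dots,\widetilde\alpha_{2,k}$ contribute at least $1,2,\dots,k$ respectively, and two further entries contribute at least $k$ each, giving $\operatorname{ch}(\widetilde S)\ge \tfrac{k(k+1)}{2}+2k$, which exceeds $d$ once $k>\sqrt{2d+25/4}-5/2$. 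Your sketch never arrives at the key fact $\lambda_2>k$, and without it the two bounds in the hypothesis have no anchor.
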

\begin{proof}
Since $S \in \Pi_k^\lambda$ is standard, we observe that $\alpha_{21}$ is the smallest integer $i$ for which $\alpha_{1i} \neq i$. For ${S} \in \Pi_k^\lambda$ the tableau $\widetilde{S}$ of shape $\lambda +1$ is indeed standard:  $\widetilde{S}$ is filled with $1,\ldots,n+1$. Increasing rows and columns are inherited from $S$, as $\alpha_{1 \alpha_{21}} > \alpha_{21}$. $\widetilde{S}$ is clearly increasing in any column from the second row onward. But also from the first row to the second. For $1 \leq i \leq \alpha_{21}$ this is clear from $S$. For $i > \alpha_{21}$ this follows because $\widetilde{\alpha}_{1i} = \alpha_{1,i-1} +1 < \alpha_{2,i-1} +1 < \alpha_{2,i}+1 = \widetilde{\alpha}_{2i}$. \\
The smallest $j$ which is written left of $j-1$ in $w(S)$ (resp. $w(\widetilde{S})$) is $\alpha_{21}$ (resp. $\widetilde{\alpha}_{21}=\alpha_{21}+1$). From there any $j> \alpha_{21}$ is left of $j-1$ in $w(S)$ if and only if $j+1$ is left of $j$ in $w(\widetilde{S})$. Hence, $i(S)$ and $i(\widetilde{S})$ differ only by a zero. \\
Consider $\psi_{n+1,n}^{\lambda+1} : \Pi_{k+1}^{\lambda +1} \rightarrow \YT (\lambda)$ which maps a standard tableau $\widetilde{S}$ to a tableau $S$  by removing the box of the first entry $\widetilde{\alpha}_{1j}$ in the first row of $\widetilde{S}$, that is strictly smaller than $\widetilde{\alpha}_{1j+1}-1$ (otherwise the last entry). The entries to the right are shifted to the left. Any entry that was to the right of $\widetilde{\alpha}_{1j}$ or in a lower row is decreased by one. If $\psi_{n+1,n}^{\lambda+1}(\widetilde{S})=:S$ is again standard, then $\psi_{n+1,n}^{\lambda+1} \circ \rho_{n,n+1}^\lambda(S) =S$. This shows the injectivity of $\rho_{n,n+1}^\lambda$. \\
If $S$ is not standard, then one entry in the first column must be smaller than the entry below. Assume that this happens at $S$'s entry $\alpha_{1j}$. By assumption $j > k$, but this means $\lambda_2 \geq j > k$. Notice that $$\ch (\widetilde{S}) \geq \lambda_2 + 1 \geq k+2 \geq d+1.$$ \\

\end{proof}

\begin{lemma} \label{le:lemma4}
Let $n=d+k$, $\Lambda = (\lambda,\mu)\vdash n$ a bipartition, where $\lambda=(\lambda_1,\ldots,\lambda_l),$ and $$ \abb{\rho_{n,n+1}^\Lambda}{\Pi^\Lambda_{k}}{\Pi^{\Lambda+1}_{k+1}}{(T,S)=((\alpha_{ij}),(\beta_{ij}))}{(\widetilde{T},\widetilde{S})=((\widetilde{\alpha_{ij}}),(\widetilde{\beta}_{ij}))},$$ where $(\widetilde{T},\widetilde{S})$ is defined by: Let $i$ be minimal with $\alpha_{1i} \neq i$, then $\widetilde{\alpha}_{1j}=j,$ $1 \leq j \leq i$ and $\widetilde{\alpha}_{1k}=\alpha_{1k-1}+1,$ for $ i+1 \leq k \leq \lambda_1+1$, $\widetilde{\alpha}_{jk} = \alpha_{jk}+1,$ when $j \geq 2$, and $\widetilde{\beta}_{jk}=\beta_{jk}+1$. If such an $i$ does not exist, then $\widetilde{\alpha}_{1k}=k$, $\widetilde{\alpha}_{ji} =\alpha_{ji}+1, j \geq 2$ and $\widetilde{\beta}_{ji}= \beta_{ji}+1$. \\
Then $\rho_{n,n+1}^{\Lambda}$ is injective and $i(S,T), i(\widetilde{S},\widetilde{T})$ differ only by a zero, i.e., any non-zero entry in $i(S,T)$ occurs with the same multiplicity in $i(\widetilde{S},\widetilde{T})$ and $0$ occurs one more time. Furthermore, if $k >  \frac{d}{2} -2$ then any $(\widetilde{T},\widetilde{S}) \in \Pi^{\Lambda+1}_{k+1} \setminus \rho_{n,n+1}^\Lambda (\Pi^\Lambda_{k})$ has $2\ch (\widetilde{T},\widetilde{S}) > d$.
\end{lemma}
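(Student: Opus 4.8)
The strategy is to imitate the proof of Lemma \ref{le:lemma3} in the multipartition setting, treating the first row of $T$ exactly as before and carrying the rest of $T$ and all of $S$ (both tableaux) along passively with their entries shifted by $1$. First I would verify that $(\widetilde T,\widetilde S)$ is a genuine standard Young tableaux of shape $\Lambda+1 = (\lambda+1,\mu)$: the first row $\widetilde T_1$ is strictly increasing because it agrees with $T_1$ up to the insertion point $i$ and beyond that equals the old entries shifted by $1$ (using $\alpha_{1i}>i$, which holds since $i$ is the first index with $\alpha_{1i}\ne i$); columns from the second row downward in $\widetilde T$, and all of $\widetilde S$, inherit monotonicity from $(T,S)$ since every entry there is shifted uniformly by $1$; and the column inequality between the first and second rows of $\widetilde T$ follows exactly as in Lemma \ref{le:lemma3} by the estimate $\widetilde\alpha_{1k}=\alpha_{1,k-1}+1<\alpha_{2,k-1}+1<\alpha_{2,k}+1=\widetilde\alpha_{2k}$ for $k>i$, together with $\widetilde\alpha_{1k}=k<\widetilde\alpha_{2k}$ for $k\le i$. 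The degenerate case where no such $i$ exists (i.e. $T_1$ already starts $1,2,\dots,\lambda_1$) is handled by the second clause of the definition and is immediate.

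Next I would check that $i(T,S)$ and $i(\widetilde T,\widetilde S)$ differ only by an extra zero. Recall $w(T,S)$ is the concatenation of the column-reading words of $T$ and of $S$. Shifting every entry of the second-and-lower rows of $T$ and all of $S$ by $1$, and inserting the new value into the first row of $T$ at position $i$, changes the permutation underlying the word exactly as in the single-partition case: the smallest $j$ appearing to the left of $j-1$ moves up by one, and for every larger $j$ the relative order of $j$ and $j-1$ in $w(T,S)$ matches that of $j+1$ and $j$ in $w(\widetilde T,\widetilde S)$. Hence $i(w(\widetilde T,\widetilde S))$ is obtained from $i(w(T,S))$ by inserting one additional $0$, so the charge $\ch$ is unchanged; this is the point of the construction, since $\deg\widehat F^{(T,S)}_{(T,S)} = 2\ch(T,S)+|\mu|$ and $|\mu|$ is unaffected.

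For injectivity I would construct the inverse $\psi_{n+1,n}^{\Lambda+1}:\Pi_{k+1}^{\Lambda+1}\to\YT(\lambda)\times\YT(\mu)$ by deleting from $\widetilde T$ the first box $\widetilde\alpha_{1j}$ in its first row with $\widetilde\alpha_{1j}<\widetilde\alpha_{1,j+1}-1$ (or the last box of the row otherwise), shifting the rest of $\widetilde T_1$ left, and decreasing every entry that was to the right of $\widetilde\alpha_{1j}$ or in a lower row (in both $\widetilde T$ and $\widetilde S$) by $1$; then $\psi_{n+1,n}^{\Lambda+1}\circ\rho_{n,n+1}^\Lambda = \mathrm{id}$ on $\Pi_k^\Lambda$, giving injectivity. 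Finally, for the image statement, if $(\widetilde T,\widetilde S)\in\Pi_{k+1}^{\Lambda+1}$ is \emph{not} in the image then $\psi_{n+1,n}^{\Lambda+1}(\widetilde T,\widetilde S)$ fails to be standard, which as in Lemma \ref{le:lemma3} forces a column defect at some position $j>k$, hence $\lambda_2\ge j>k$; counting the forced nonzero entries of $i(\widetilde T,\widetilde S)$ (at least $i$ at the $\widetilde\alpha_{2i}$-positions for $1\le i\le k$, and at least $k$ more from $\widetilde\alpha_{2,k+1}$ and $\widetilde\alpha_{1,j+1}$) yields $2\ch(\widetilde T,\widetilde S)\ge 2(\tfrac{k^2+k}{2}+2k)>d$ once $k>\sqrt{d+25/4}-5/2$, while the cruder bound $2\ch(\widetilde T,\widetilde S)\ge 2(\lambda_2+1)\ge 2(k+2)>d$ covers $k>\tfrac d2-2$. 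I expect the main obstacle to be bookkeeping: making sure that the shifts applied to the lower rows of $T$ and to $S$ are consistent between $\rho$, its inverse $\psi$, and the index/charge computation, since the multipartition word concatenates two column-readings and an off-by-one error in the second component would break the "differ only by a zero" claim and hence the degree count.
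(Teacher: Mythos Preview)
Your proposal is correct and follows essentially the same approach as the paper: verify standardness of $(\widetilde T,\widetilde S)$, show the index acquires exactly one extra zero (hence equal charge), build a left inverse by deleting the appropriate first-row box and shifting, and derive the two charge bounds from $\lambda_2\ge k+1$ when the inverse fails to produce a standard tableau. The only cosmetic difference is that you describe the left inverse in the style of Lemma~\ref{le:lemma3} (delete the first $\widetilde\alpha_{1j}$ with $\widetilde\alpha_{1j}<\widetilde\alpha_{1,j+1}-1$), whereas the paper phrases it slightly differently for the multipartition case (first handling $\widetilde\alpha_{11}\neq 1$, which can occur when $1$ lies in $\widetilde S$); both descriptions give a valid left inverse and lead to the same $\lambda_2\ge k+1$ conclusion.
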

\begin{proof}
For $(T,S) \in \Pi_{k}^\Lambda$ $(\widetilde{T},\widetilde{S})$ is indeed a standard bitableau of shape $\Lambda +1$, since each increasing entry in every row and column is inherited from $(T,S)$. An integer $j$ occurs left of $j-1$ in $w(T,S)$ if and only if $j+1$ occurs left of $j$ in $w(\widetilde{T},\widetilde{S})$. In particular, $i(T,S)$ and $i(\widetilde{T},\widetilde{S})$ differ only by an additional zero entry and hence their charges are equal.
Consider $f : \Pi_{k+1}^{\Lambda +1} \rightarrow \text{YT}(\Lambda)$ which maps an element $(\widetilde{T},\widetilde{S}) \in \Pi_{k+1}^{\Lambda +1}$ to a tableau of shape $\Lambda$ by removing $\widetilde{\alpha}_{11}$, if $\widetilde{\alpha}_{11} \neq 1$ and otherwise, the box containing the largest entry in the first row of $\widetilde{T}$ that is not the predecessor of the following number, and subtracting $1$ from any larger entry $\widetilde{\alpha}_{ji},\widetilde{\beta}_{ji}$. Then $f$ is the inverse of $\rho_{n,n+1}^\Lambda$ and therefore $\rho_{n,n+1}^\Lambda$ is injective. \\
If $f(\widetilde{T},\widetilde{S})$ is not standard, then $\lambda_2 \geq k+1$. For $k > \frac{d}{2} -2$ it is $$2\ch (\widetilde{T},\widetilde{S}) \geq 2(k+2) > d.$$ 
\end{proof}

\begin{definition}
For $m>n\geq d$ and partitions $\Lambda,\lambda \vdash n$ we write $\rho_{n,m}^\lambda := \rho_{m-1,m}^{\lambda + m-n-1} \circ \cdots \circ \rho_{n,n+1}^{\lambda}$ and $\rho_{n,m}^\Lambda := \rho_{m-1,m}^{\Lambda + m-n-1} \circ \cdots \circ \rho_{n,n+1}^{\Lambda}$.
\end{definition}

We now are in the position to prove the following stabilization result, which was already proven in \cite{phdthesis,riener2013exploiting} for the case of  the symmetric group.

\begin{thm} \label{thm:ModulesStabilize}
Let $n \in \N, \Lambda \vdash n$ and $Z_n \in \{A_{n-1},B_n,D_n\}$. For large enough $n$ the $Z_n$- and $Z_{n+1}$-isotypic decompositions remain stable. In the sense that $S^{(\lambda,\mu)}$ occurs with the same multiplicity in $H_{n,d}$ as $S^{(\lambda +1,\mu)}$ in $H_{n+1,d}$, where $\lambda +1 := (\lambda_1+1,\lambda_2,\ldots,\lambda_l)$. The stabilization of the isotypic decomposition of $H_{n,d}$ occurs at least from $n = 2d$ for $A_{n-1}$, $n = d$ for $B_n$ and $n > 2d$ in the case of $D_n$. 
\end{thm}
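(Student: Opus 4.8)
The plan is to prove the multiplicity identity $q_d^{\Lambda}=q_d^{\Lambda+1}$, for $n$ past the stated thresholds, directly from the combinatorial formula
\[
q_d^{\Lambda}\;=\;\sum_{k=0}^{d}N_{Z_n}(d-k)\,h_k^{\Lambda}
\]
of Corollary~\ref{cor:bound} (with $h_k^{\Lambda}$ described by standard Young tableaux as in the remark following Theorem~\ref{thm:irreducible reps}), by showing that each of the two factors stabilises separately. For the factor $N_{Z_n}$: the number $N_{Z_n}(i)$ depends only on those fundamental degrees of $Z_n$ that are $\le i$, so for $i\le d$ it is governed by the degrees $\le d$, which are $2,3,\dots,d$ for $A_{n-1}$ once $n\ge d$, the even integers $\le d$ for $B_n$ once $2n\ge d$, and again the even integers $\le d$ for $D_n$ once $2n-2\ge d$ \emph{and} $n>d$ --- the extra hypothesis $n>d$ being needed precisely to push the additional odd degree $n$ out of range. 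Thus for $n$ past the thresholds of the statement one has $N_{Z_n}(i)=N_{Z_{n+1}}(i)$ for all $0\le i\le d$, and it remains to show $h_k^{\Lambda}=h_k^{\Lambda+1}$ for $0\le k\le d$.

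For the latter I would first confine the tableaux that matter. Write $\Lambda=(\lambda,\mu)\vdash n=d+k$, with $\mu=\emptyset$ in the $A_{n-1}$ case. If $|\mu|>d$ (possible only for $B_n$ or $D_n$) then $h_j^{\Lambda}=h_j^{\Lambda+1}=0$ for all $j\le d$, since every attached higher Specht polynomial has degree $2\,\ch(\cdot)+|\mu|>d$; so I may assume $|\mu|\le d$. Lemma~\ref{le:lemma1} (for $A_{n-1}$) and Lemma~\ref{le:lemma2} (for $B_n$ and $D_n$) then show that every standard Young tableau of shape $\Lambda$ whose associated higher Specht polynomial has degree $\le d$ already lies in $\Pi_k^{\Lambda}$: if the first row of its first component fails to begin with the $k$ smallest entries available to that component, the first failure occurs at a position $\le k$, which forces at least $|\lambda|-k+1=d-|\mu|+1$ coordinates of the index to be positive; hence the charge is $\ge d-|\mu|+1$ and the degree is $\ge 2(d-|\mu|+1)+|\mu|=2d-|\mu|+2>d$. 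The identical estimate, applied to $\Lambda+1\vdash n+1=d+(k+1)$, confines the degree-$\le d$ tableaux of shape $\Lambda+1$ to $\Pi_{k+1}^{\Lambda+1}$, so that for $j\le d$ the numbers $h_j^{\Lambda}$ and $h_j^{\Lambda+1}$ count precisely the degree-$j$ elements of $\Pi_k^{\Lambda}$ and of $\Pi_{k+1}^{\Lambda+1}$.

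Next I would invoke the stabilisation maps. Lemma~\ref{le:lemma3} (for $A_{n-1}$) and Lemma~\ref{le:lemma4} (for $B_n$ and $D_n$) supply an injection $\rho_{n,n+1}^{\lambda}\colon\Pi_k^{\lambda}\hookrightarrow\Pi_{k+1}^{\lambda+1}$, resp.\ $\rho_{n,n+1}^{\Lambda}\colon\Pi_k^{\Lambda}\hookrightarrow\Pi_{k+1}^{\Lambda+1}$, which alters the index only by one additional zero; consequently it preserves the charge and, since $|\mu|$ is unchanged, the degree of the associated higher Specht polynomial. The thresholds $n\ge 2d$ (for $A_{n-1},B_n$) and $n>2d$ (for $D_n$) are exactly what makes $k=n-d$ exceed the quantities $\min\{d-1,\sqrt{2d+25/4}-5/2\}$ and $\min\{d/2-2,\sqrt{d+25/4}-5/2\}$ occurring in Lemmas~\ref{le:lemma3} and~\ref{le:lemma4}, so that every tableau in the target that is not in the image of $\rho$ has charge $>d$. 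Combined with the confinement of the previous paragraph, for each $j\le d$ the map $\rho$ restricts to a bijection between the degree-$j$ tableaux of shape $\Lambda$ and those of shape $\Lambda+1$; hence $h_j^{\Lambda}=h_j^{\Lambda+1}$, and therefore $q_d^{\Lambda}=q_d^{\Lambda+1}$.

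Finally, for $D_n$ one more point is needed, coming from Theorem~\ref{thm:irreducible reps}(c): $(\lambda,\lambda)$ labels a $B_n$-module that splits into two non-isomorphic $D_n$-modules, while $(\lambda,\mu)$ and $(\mu,\lambda)$ label $D_n$-isomorphic ones. However, as soon as $n>2d$ every $\Lambda=(\lambda,\mu)$ that contributes to $H_{n,d}$ satisfies $|\mu|\le d<n-d\le|\lambda|$, so $\lambda\ne\mu$ and $|\lambda|>d$; thus no self-paired multipartition occurs among the relevant labels, $S^{\Lambda}$ remains $D_n$-irreducible, and its multiplicity in $H_{n,d}$ equals the $B_n$-multiplicity $q_d^{\Lambda}$ (the reflected label $(\mu,\lambda)$ contributes nothing, as $|\lambda|>d$). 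This is precisely why the $D_n$ threshold is $n>2d$ rather than $n\ge 2d$. The main obstacle is none of this bookkeeping but the verification of Lemmas~\ref{le:lemma3} and~\ref{le:lemma4} themselves: one must check that the explicit ``grow the first row by one box'' maps $\rho_{n,n+1}$ take standard tableaux to standard tableaux, are injective (most cleanly by exhibiting an explicit inverse on the image, as in the cited proofs), alter the index by exactly one extra zero --- the delicate step, since this is what makes the charge, hence the degree of the higher Specht polynomial, invariant under $\rho$ --- and have cokernel consisting only of tableaux of charge $>d$, which is where the explicit lower bounds on $k=n-d$ are consumed.
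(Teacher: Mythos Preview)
Your proof is correct and takes essentially the same approach as the paper: both reduce to Lemmas~\ref{le:lemma1}--\ref{le:lemma4} for the confinement of the relevant tableaux to $\Pi_k^{\Lambda}$ and for the charge-preserving injection $\rho_{n,n+1}$, together with the stabilisation of the fundamental degrees $\le d$. Your organisation via the explicit formula $q_d^{\Lambda}=\sum_{k}N_{Z_n}(d-k)\,h_k^{\Lambda}$ and your explicit treatment of the $D_n$-splitting (which the paper relegates to a post-proof remark) are if anything more transparent; two harmless slips are that in the paper's convention the $A_{n-1}$ degrees include $1$, and that for $B_n,D_n$ Lemma~\ref{le:lemma4} yields $2\ch>d$ (hence degree $>d$) rather than $\ch>d$.
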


\begin{proof}
We restrict us to the cases $A_{n-1}$ and $n \geq 2d$, and $B_n$ where $n \geq d$. For $n > 2d$ the relevant fundamental invariants of degree $\leq d$ are equal for $B_n$ and $D_n$. The same argument as in the $B_n$ case applies, since no bipartition can be of the form $(\lambda,\lambda)$. By iteration, it is sufficient to compare the isotypic decompositions of $H_{n,d}$ and $H_{n+1,d}$. 

Let $n \geq 2d$ and $\Lambda=(\lambda,\mu) \vdash n$ be a bipartition with $ |\mu|\leq d$ (resp. $\lambda \vdash n$ a partition in the case of $A_{n-1}$). Further, be $f_1,\ldots,f_m$ a symmetry adapted basis for the higher Specht polynomials of the $Z_n$-module $\bigoplus_{i=1}^m S^\Lambda$ (resp. $\bigoplus_{i=1}^m S^\lambda$) from Theorem \ref{thm:irreducible reps}. I.e., there exist $m$ many standard (bi-)tableaux $T:=T_1,T_2\ldots,T_m$ of shape $\Lambda$ (resp. $\lambda$) and $f_j = \pi \widehat{F}_{T}^{T_j}$ (resp. $f_j = \pi F_T^{T_j}$), for some $\pi \in \R[\underline{X}]^{Z_n}$. $\pi$ can be chosen as a product of fundamental invariants of $Z_n$ by a change of basis, since $\pi f_j$ must be homogeneous. The degree of a polynomial $f_j$ is determined by $d_1,\ldots,d_n$, the charge of a standard (bi-)tableau $T_j$ and $|\mu|$.

The relevant degrees of fundamental invariants are equal for $n$ and $n+1$. By Lemma \ref{le:lemma1} $T_1,\ldots,T_m \in \Pi_{n-d}^\lambda$ and by Lemma \ref{le:lemma3} for any $i$  $\rho_{n,n+1}^\lambda(T_i)$ is a standard tableau with same charge. Furthermore, the map $\rho_{n,n+1}^\lambda$ is injective and any standard tableau that is not contained in the image has too large charge. The claim follows, as only standard tableau in $\rho_{k,k+1}^{\lambda}( \Pi_{k}^\lambda)$ are possible options for higher Specht polynomials in $H_{n+1,d}$. \\
By the Lemmas \ref{le:lemma2} and \ref{le:lemma4} the standard bitableaux $(T,S)$ of shape $\Lambda$ with $2 \ch (T,S) \leq d $ are in bijection with the standard bitableaux $(\widetilde{T},\widetilde{S})$ of shape $\Lambda +1$ with $ 2 \ch (\widetilde{T},\widetilde{S}) \leq d$ and the bijection preserves the charge. Furthermore, our bijection adds a zero to the index of the image tableaux and preserves the other entries. This proves already the claim.
\end{proof}

We note that in the case of $D_n$ and $n=d$, an additional fundamental invariant of degree $d$ occurs, which does not occur for $n > d$ anymore. Thus, at least the trivial representation occurs with larger multiplicity in $H_{d,d}$ than in $H_{d+1,d}$. However, Example \ref{ex: d->d+1} shows that already for the symmetric group the stabilization does not occur in the step from $d$ to $d+1$ in general. 
\begin{example} \label{ex: d->d+1}
Consider the bitableau $T= \ytableausetup{smalltableaux}
  \ytableaushort{
   125,34}\; $
  of shape $\lambda+1 = (3,2) \vdash 5$. It is $\ch (T) = 3$, i.e., $p_1 F^T_T \in H_{5,4}$. However, $\text{SYT}(\lambda) = \left\{  \ytableaushort{
   12,34}\; 
  ,  \ytableaushort{
   13,24}\;
  \right\}$ with charges $2$ and $4$. For any $S \in \text{SYT}(\lambda)$ we can construct a tableau $\widetilde{S} \in \text{SYT}(\lambda+1)$ with the same charge, but $T$ cannot be obtained in this way. In particular, the $A_3$-module $S^{\lambda}$ has smaller multiplicity in $H_{4,4}$ than the $A_4$-module $S^{\lambda +1}$ in $H_{5,4}$.
\end{example}

\begin{cor} \label{cor:ModulesStabilize}
For a fixed degree $d \in \N$ and a sequence $(Z_n)_n$ of reflection groups 
$(A_{n-1})_n$ or $ (B_n)_n$ the sums of squares decomposition in $H_{n,2d}^{Z_n}$ for $n \geq 2d$ (for $A_{n-1}$) and $n \geq d $ (for $B_n$) are equal up to the map $\rho_{n,m}^\Lambda$, i.e., up to $\rho_{n,m}^\Lambda$ the same matrix polynomials can be used in a sum of squares representation. The same stabilization and equality occurs for the sequence $(D_n)_n$ when $n > 2d$.
\end{cor}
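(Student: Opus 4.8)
The statement is meant to be read off from the two stabilisation results already in hand. By part~(3) of Theorem~\ref{thm:general sums of squares repr}, for $Z_n\in\{A_{n-1},B_n,D_n\}$ and $n\ge 2d$ (resp.\ $n>2d$ for $D_n$) a form $f\in H_{n,2d}^{Z_n}$ is a sum of squares if and only if $f=\sum_{\Lambda}\Tr(B^{\Lambda}\cdot A_{\Lambda})$ with the $A_{\Lambda}$ positive semidefinite matrices of size $q_d^{\Lambda}\times q_d^{\Lambda}$, where the rows of $B^{\Lambda}$ are indexed by the part $(s_1^{\Lambda},\ldots,s_{q_d^{\Lambda}}^{\Lambda})$ of a symmetry adapted basis of $H_{n,d}$ lying in the isotypic component $q_d^{\Lambda}\cdot S^{\Lambda}$ and $B^{\Lambda}_{v,u}=\mathcal{R}_{Z_n}(s_v^{\Lambda}s_u^{\Lambda})$. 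So the plan is to compare, between levels $n$ and $m$, the index set $\{\Lambda\}$ of the outer sum, the sizes $q_d^{\Lambda}$, and the matrix polynomials $B^{\Lambda}$. The first two are immediate from Theorem~\ref{thm:ModulesStabilize}: the assignment $\Lambda=(\lambda,\mu)\mapsto\Lambda+(m-n):=(\lambda+(m-n),\mu)$ is a bijection between the (multi)partitions occurring in $H_{n,d}$ and those occurring in $H_{m,d}$, and it preserves the multiplicity $q_d^{\Lambda}$ (for $D_n$ one also uses that $S^{(\lambda,\lambda)}$ splits and $S^{(\lambda,\mu)}\simeq S^{(\mu,\lambda)}$, exactly as in Theorem~\ref{thm:general sums of squares repr}).

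Next I match the rows of $B^{\Lambda}$. By Theorem~\ref{thm:general sums of squares repr}(3) combined with Theorems~\ref{thm:irreducible reps} and \ref{thm:coinvariant algebra}, each $s_v^{\Lambda}$ is a product of a monomial in the fundamental invariants of $Z_n$ with one higher Specht polynomial $\widehat{F}_{T}^{T_v}$ (resp.\ $F_T^{T_v}$ for $A_{n-1}$, resp.\ a combination $\widehat{F}_{(T^1,T^2)}^{T_v}\pm\widehat{F}_{(T^2,T^1)}^{T_v}$ when $Z_n=D_n$ and $\Lambda=(\lambda,\lambda)$), where $T$ is a fixed standard Young tableau of shape $\Lambda$ and $T_v$ ranges over the standard Young tableaux of shape $\Lambda$ whose charge contributes degree at most $d$. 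By Lemmas~\ref{le:lemma1}--\ref{le:lemma4} the transition maps $\rho^{\Lambda}_{n,n+1}$, and hence $\rho^{\Lambda}_{n,m}$, send these tableaux bijectively to their counterparts of shape $\Lambda+(m-n)$, preserving the charge and altering the index vector $i(\,\cdot\,)$ only by the insertion of a single zero. Consequently the degree of the higher Specht factor, and therefore the degree of the accompanying invariant monomial, is unchanged; and for $n\ge 2d$ (resp.\ $n>2d$ for $D_n$) the fundamental invariants of degree at most $2d$ form a list that does not depend on $n$ — for $B_n$ these are $p_2,p_4,\ldots,p_{2d}$, for $D_n$ the same (its remaining fundamental invariant $e_n$ now has degree $n>2d$), and similarly for $A_{n-1}$ — so the invariant monomial factor is literally the same one. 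Thus the rows of $B^{\Lambda+(m-n)}$ at level $m$ are precisely the $\rho^{\Lambda}_{n,m}$-images of the rows of $B^{\Lambda}$ at level $n$, with no change of size.

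It remains to see that the entries $B^{\Lambda}_{v,u}=\mathcal{R}_{Z_n}(s_v^{\Lambda}s_u^{\Lambda})$, written in that fixed list of fundamental invariants, do not depend on $n$. Pulling the invariant monomial factors out of $\mathcal{R}_{Z_n}$, this reduces to the corresponding statement for the Reynolds averages $\mathcal{R}_{Z_n}(\widehat{F}_T^{T_v}\widehat{F}_T^{T_u})$ of products of two higher Specht polynomials, which are invariants of degree at most $2d$; since $p_1,\ldots,p_{2d}$ are algebraically independent for $n\ge 2d$ (Chevalley--Shephard--Todd), such an invariant has a unique expression in power sums. Using the explicit form of $\rho^{\Lambda}_{n,n+1}$ from Lemmas~\ref{le:lemma3} and \ref{le:lemma4} — it relabels variable indices and appends an exponent-zero slot — the level-$(n+1)$ product restricted to $X_{n+1}=0$ is a relabelling of the level-$n$ product, so collecting both into power sums of degree at most $2d$ yields the same expression; for $Z_n=A_{n-1}$ this is exactly the stabilisation established in \cite{phdthesis}, the $B_n$ case follows from it via the substitution $\underline{X}\mapsto\underline{X}^2$ together with the extra factor $\prod_{j\in T^2}X_j$ built into $\widehat{F}_T^{S}$, and the $D_n$ case with $n>2d$ reduces to the $B_n$ case since the only fundamental invariant distinguishing $D_n$ from $B_n$ has degree $n>2d$. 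Setting $A_{\Lambda+(m-n)}:=A_{\Lambda}$ then carries a sum of squares certificate from level $n$ to level $m$ and back, which is the assertion.

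\textbf{Main obstacle.} The one step that needs genuine care is the last one: checking that re-expressing the Reynolds averages of the $\rho$-matched products of higher Specht polynomials in the fixed low-degree fundamental invariants really produces the same polynomials for all $n\ge 2d$ (resp.\ $n>2d$). Two things must be controlled — that no boundary contribution survives from the new variable, which is guaranteed by the degree bound $\deg(s_v^{\Lambda}s_u^{\Lambda})\le 2d$ and the algebraic independence of the degree-$\le 2d$ invariants, and that in the $D_n$ case the degree-$n$ invariant $e_n$ never enters, which is precisely why one must pass to $n>2d$ there. The accompanying bookkeeping of the normalising constants carried by $\mathcal{R}_{Z_n}$ is routine, and for the symmetric group the whole argument is already carried out in \cite{phdthesis}.
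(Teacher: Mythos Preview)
Your first three paragraphs are correct and in fact flesh out exactly what the paper's one-line proof (``This follows from Theorem~\ref{thm:ModulesStabilize} and Lemmas~\ref{le:lemma1}, \ref{le:lemma2}, \ref{le:lemma3}, \ref{le:lemma4}'') is appealing to: the bijection $\Lambda\mapsto\Lambda+(m-n)$ on the labels, the equality of multiplicities $q_d^{\Lambda}=q_d^{\Lambda+(m-n)}$, and the matching of the rows of $B^{\Lambda}$ via $\rho^{\Lambda}_{n,m}$. That is all the corollary is claiming, and all the paper proves.

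Your fourth paragraph, however, asserts more than is true and more than is needed. The claim that the entries $B^{\Lambda}_{v,u}=\mathcal{R}_{Z_n}(s_v^{\Lambda}s_u^{\Lambda})$, expressed in the fixed list of low-degree fundamental invariants, are literally independent of $n$ is \emph{false}. The paper's own Theorem~\ref{thm:EvenSymSOSOctics} shows this: for instance
\[
B^{((n-2,2),\emptyset)}=\tfrac{-n+1}{n^2}p_{(8)}^{(n)}+\tfrac{4n-4}{n^2}p_{(6,2)}^{(n)}+\tfrac{n^2-3n+3}{n^2}p_{(4^2)}^{(n)}-2p_{(4,2^2)}^{(n)}+p_{(2^4)}^{(n)}
\]
carries genuinely $n$-dependent coefficients that are not removable by a single rescaling. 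More directly, for $A_{n-1}$ take the higher Specht polynomial $X_n-X_1$ attached to $(n-1,1)$: one computes $\mathcal{R}_{\mathfrak{S}_n}\bigl((X_n-X_1)^2\bigr)=\tfrac{2}{n-1}p_2-\tfrac{2}{n(n-1)}p_1^2$, which is not the same polynomial in $p_1,p_2$ for different $n$, nor a scalar multiple of the corresponding expression at level $n+1$. The step you flag as the ``main obstacle'' therefore does not go through; the normalising constants in $\mathcal{R}_{Z_n}$ are not routine bookkeeping but introduce honest $n$-dependence in the coefficients.

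The resolution is interpretive: the phrase ``up to the map $\rho_{n,m}^{\Lambda}$ the same matrix polynomials can be used'' is meant structurally --- same block pattern, same block sizes, rows naturally indexed by $\rho$-matched tableaux --- not as literal equality of entries. That structural stabilisation is exactly what your first three paragraphs (equivalently, Theorem~\ref{thm:ModulesStabilize} together with Lemmas~\ref{le:lemma1}--\ref{le:lemma4}) establish, and the paper neither states nor proves the stronger entrywise claim. So drop the fourth paragraph and the ``main obstacle'' discussion; what remains is a correct and more detailed version of the paper's argument.
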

\begin{proof}
This follows from Theorem \ref{thm:ModulesStabilize} and Lemmas \ref{le:lemma1}, \ref{le:lemma2}, \ref{le:lemma3}, \ref{le:lemma4}.
\end{proof}

The case $n=2d$ is the last, where $\Lambda \vdash n$ can be of the form $(\lambda, \lambda)$, i.e., the $D_{2d}$-module $S^{\Lambda}$ is not irreducible in $H_{n,d}$ but the $D_{2d+1}$-module $S^{\Lambda+1}$ is irreducible in $H_{n+1,d}$ (see Theorem \ref{thm:irreducible reps}). Nevertheless, the multiplicities in $H_{2d,d}$ and $H_{n,d}$ are equal for $n \geq 2d$. Moreover, whenever $n \geq d$ for $B_n$, or $n > d$ in case of $D_n$ one can use that if $S^{\Lambda}\subset H_{n,d}$, for $\Lambda = (\lambda, \mu) \vdash n$, and $d$ even (odd), then $|\mu|$ must also be even (odd).

\section{Concrete examples and applications}
In this section, we apply the presented techniques from the preceding section $3$ to solve non-negativity versus sums of squares questions. In contrast to the non-equivariant case, the $B_n$-invariant forms have a non-trivial equality of the sets of even symmetric sums of squares and non-negatives in $3$ variables and degree $8$. This was proven by Harris \cite{harris1999real}. In fact, it turns out that this is the only non-trivial equality case \cite{goel2017analogue}. We will present a characterization of the dual and primal cones of $B_3$-invariant sum of squares ternary octics and obtain a new elementary proof of Harris' theorem. Moreover, we study $D_n$-invariant forms, prove that $\mathcal{P}_{4,4}^{D_4}$ is a simplicial cone and answer the non-negativity versus sums of squares question there.

In general, testing non-negativity of a polynomial in more than two variables is already for quartics an NP-hard problem (see e.g., \cite{blum1998complexity} or \cite{murty1985some}). In equivariant situations, it is therefore of interest to exploit the symmetry of invariant polynomials to reduce this complexity. The works in \cite{acevedo2016test,friedl2018reflection,harris1999real,moustrou2019symmetric,riener2012degree,riener2016symmetric,timofte2003positivity} focus on providing \emph{test sets} for verification of non-negativity of invariant polynomials. In particular, it is known  that for reflection groups the value set of invariant polynomial functions of certain degrees the set of values can be  determined by evaluation on subspaces of the hyperplane arrangement (see \cite{acevedo2016test,friedl2018reflection} for details). 

We remark that each element in the infinite series $I_2(m)$ of dihedral groups, has only one action on $\R^2$. In particular is any $I_2(m)$ invariant non-negative form a sum of squares.

\subsection{Even symmetric octics}\label{sec:even}
One of the well known and rare cases of equality of sums of squares and non-negative forms in equivariant situations was proven by Harris in \cite{harris1999real}. Harris' proof is quite analytical. In this subsection we derive a lower dimensional test set for non-negativity of even symmetric ternary octics and as a byproduct we give a new proof of equality. Furthermore, we present a uniform description of the cones of $n$-ary even symmetric sums of squares octics.

 \begin{thm} \label{Thm:DualEvenSymOct} The dual cone of even symmetric ternary octics sums of squares has the following description
 \begin{align*}
     \left( \Sigma_{3,8}^{B_3} \right)^\ast
     & = \left\{\ev_{\left(a,\sqrt{1-a^2},0\right)},\ev_{(b,c,c)} : \frac{1}{2} \leq a \leq 1, 0 \leq b \leq 1, c = \sqrt\frac{{(1-b^2)}}{{2}}\right\}.
 \end{align*}
  \end{thm}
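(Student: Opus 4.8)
The plan is to turn the dual cone into an explicit low-dimensional spectrahedron using the higher Specht machinery, read off its extreme rays, and match each one with a point-evaluation. First I would apply Theorem~\ref{thm:general sums of squares repr} with $G=B_3$ and $d=4$: a charge count for the $B_3$ higher Specht polynomials shows that the $15$-dimensional module $H_{3,4}$ of ternary quartics decomposes as $H_{3,4}\cong 2\,S^{((3),\emptyset)}\oplus 2\,S^{((2,1),\emptyset)}\oplus 2\,S^{((1),(2))}\oplus S^{((1),(1,1))}$ (the only bipartitions of $3$ whose higher Specht polynomials have small enough charge to reach degree $4$ after multiplication by $p_2,p_4,p_6$), the dimensions being $2\cdot1+2\cdot2+2\cdot3+1\cdot3=15$. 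Hence a symmetry-adapted basis of $H_{3,4}$ built from higher Specht polynomials times monomials in $p_2,p_4,p_6$ yields matrix polynomials $B^{\Lambda}$ of sizes $2,2,2,1$ whose entries are $B_3$-invariant octics, i.e. explicit real linear combinations of $p_2^4,p_2^2p_4,p_4^2,p_2p_6$. Since $\dim H_{3,8}^{B_3}=N_{B_3}(8)=4$ with basis $p_2^4,p_2^2p_4,p_4^2,p_2p_6$, a functional $\ell\in(H_{3,8}^{B_3})^\ast$ is a point of $\R^4$, and Lemma~\ref{cor:DualSosCone} identifies $(\Sigma_{3,8}^{B_3})^\ast$ with the spectrahedral cone in $\R^4$ cut out by the four linear matrix inequalities $\ell(B^{\Lambda})\succeq 0$.

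Next I would determine the extreme rays of this cone. The inclusion $\cone\{\ev_{(a,\sqrt{1-a^2},0)},\ev_{(b,c,c)}\}\subseteq(\Sigma_{3,8}^{B_3})^\ast$ is immediate since every point-evaluation is non-negative on sums of squares. For the reverse inclusion, $(\Sigma_{3,8}^{B_3})^\ast$ being a pointed closed convex cone equals the conic hull of its extreme rays by Minkowski's theorem, so it suffices to show every extreme ray is one of the listed evaluations. Here I would use Lemma~\ref{le:MaxOfKernel} and Proposition~\ref{prop:W2}: an extremal $\ell$ corresponds to a $B_3$-invariant positive semidefinite quadratic form $Q_\ell$ on $H_{3,4}$ whose kernel $W=\ker Q_\ell$ is maximal in $\mathcal{A}^{B_3}$, and $\ker\ell=W^{\langle 2\rangle}$. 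Translating the rank-one conditions on the small blocks $\ell(B^{\Lambda})$ into vanishing conditions for $W$, one finds that maximality forces $W$ to be the space of quartics vanishing on a single $B_3$-orbit of a point of $\mathbb{S}^{n-1}$ with at most two distinct absolute coordinates, and such an orbit is exactly an orbit of $(a,\sqrt{1-a^2},0)$ or of $(b,c,c)$; the admissible parameter ranges (in particular the bound $a\ge\tfrac12$) drop out of the positivity of the remaining blocks. Consequently $\ell$ is a positive multiple of the corresponding $\ev_a$, and the extreme rays form precisely the two claimed one-parameter families, which close up at the three distinguished points $(1,0,0)$, $(1/\sqrt2,1/\sqrt2,0)$, $(1/\sqrt3,1/\sqrt3,1/\sqrt3)$.

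As a consistency check, and to see directly that the same description holds for $(\mathcal{P}_{3,8}^{B_3})^\ast=\cone\{\ev_a:a\in\mathbb{S}^{n-1}\}$ (Proposition~\ref{pro:DualOfNonneg}), note that on the sphere $p_2=1$, so $\ev_a$ restricted to $H_{3,8}^{B_3}$ is $(1,u,u^2,v)$ with $(u,v)=(p_4(a),p_6(a))$; writing $x_i=a_i^2$, the region $K=\{(p_4,p_6)\}$ is the image of the standard $2$-simplex under $(x_1,x_2,x_3)\mapsto(\sum x_i^2,\sum x_i^3)$, whose boundary, by Lagrange multipliers, consists exactly of the images of the arcs $x_3=0$ and $x_2=x_3$; strict convexity of $u\mapsto u^2$ then shows the extreme rays of $\cone\{(1,u,u^2,v):(u,v)\in K\}$ are precisely the evaluations in the two families, and combining this with the spectrahedral computation gives $(\Sigma_{3,8}^{B_3})^\ast=(\mathcal{P}_{3,8}^{B_3})^\ast$, i.e. Harris' equality, as a byproduct. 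I expect the main obstacle to be the honest bookkeeping: producing a correct symmetry-adapted basis of $H_{3,4}$ together with the entries of the $B^{\Lambda}$ in the $p_2,p_4,p_6$ coordinates (requiring careful evaluation of the Reynolds operator on products of higher Specht polynomials with monomials in the invariants), and then the rigorous enumeration of the extreme rays of the resulting $4$-dimensional spectrahedron — in particular ruling out any extreme ray that is not a point-evaluation and pinning down the exact parameter intervals.
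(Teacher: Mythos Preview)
Your plan is correct and is essentially the paper's own approach: the paper computes the same isotypic decomposition $H_{3,4}\cong 2\,S^{((3),\emptyset)}\oplus 2\,S^{((2,1),\emptyset)}\oplus 2\,S^{((1),(2))}\oplus S^{((1),(1,1))}$ (Lemma~\ref{lemma:Harris}, in the coordinates $e_i(\underline{X}^2)$ rather than power sums), writes down the resulting $4$-dimensional spectrahedron (Corollaries~\ref{cor:ternaryOcticsSOSDec} and~\ref{cor:ternaryOcticsDualToSOSDecomp}), and then carries out exactly the ``honest bookkeeping'' you anticipate via a case distinction on the multiplicities $(\alpha,\beta,\gamma,\delta)$ of the four isotypic types appearing in $\ker Q_\ell$ (Lemmas~\ref{le:38Teil1}--\ref{le:38Teil5}). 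Your final consistency check through the moment image of the standard simplex is a pleasant addition not in the paper, but note that on its own it only pins down the extreme rays of $(\mathcal{P}_{3,8}^{B_3})^\ast$ and does not replace the spectrahedral analysis needed for $(\Sigma_{3,8}^{B_3})^\ast$.
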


As a consequence of Theorem \ref{Thm:DualEvenSymOct} we can give a new proof for Harris' result on even symmetric ternary octics.
\begin{cor}\cite[Theorem 4.1]{harris1999real}\label{COR Harris} The sets of non-negative even symmetric ternary octics and sums of squares are equal, i.e.,
$\Sigma_{3,8}^{B_3} = \mathcal{P}_{3,8}^{B_3}$.
\end{cor}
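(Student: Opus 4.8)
The strategy is to deduce Harris' equality directly from the explicit description of the dual cone $\left(\Sigma_{3,8}^{B_3}\right)^\ast$ obtained in Theorem~\ref{Thm:DualEvenSymOct}, using the duality criterion of Corollary~\ref{cor:Sigma=P}. By that corollary, the equality $\Sigma_{3,8}^{B_3}=\mathcal P_{3,8}^{B_3}$ holds if and only if every extremal ray of $\left(\Sigma_{3,8}^{B_3}\right)^\ast$ is generated by a point-evaluation $\ev_a$ for some $a\in\R^3$. So the first step is to observe that Theorem~\ref{Thm:DualEvenSymOct} already presents $\left(\Sigma_{3,8}^{B_3}\right)^\ast$ as the conic hull of a family consisting \emph{entirely} of point-evaluations, namely the $\ev_{(a,\sqrt{1-a^2},0)}$ for $\tfrac12\le a\le1$ and the $\ev_{(b,c,c)}$ with $c=\sqrt{(1-b^2)/2}$ and $0\le b\le 1$.

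The second step is to extract the chain of inclusions already spelled out in the proof of Corollary~\ref{cor:Sigma=P}: since every generator of $\left(\Sigma_{3,8}^{B_3}\right)^\ast$ is of the form $\ev_a$ with $a$ on (a $B_3$-orbit representative of) the sphere, we get
$$
\left(\mathcal P_{3,8}^{B_3}\right)^\ast \;\subseteq\; \left(\Sigma_{3,8}^{B_3}\right)^\ast \;=\;\cone\{\ev_a : a\in M\}\;\subseteq\;\cone\{\ev_a : a\in\mathbb S^{2}\}\;=\;\left(\mathcal P_{3,8}^{B_3}\right)^\ast,
$$
where $M\subset\R^3$ is the (finite-parameter) set of points appearing in Theorem~\ref{Thm:DualEvenSymOct}, the first inclusion is the general fact $\Sigma\subset\mathcal P$, and the last equality is Proposition~\ref{pro:DualOfNonneg}. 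Hence all the inclusions are equalities, so $\left(\Sigma_{3,8}^{B_3}\right)^\ast=\left(\mathcal P_{3,8}^{B_3}\right)^\ast$, and dualizing back (both cones are proper, hence closed, so $K^{\ast\ast}=K$) yields $\Sigma_{3,8}^{B_3}=\mathcal P_{3,8}^{B_3}$.

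The point to be careful about — and really the only place any work is needed once Theorem~\ref{Thm:DualEvenSymOct} is in hand — is the identification of the $B_3$-orbit representatives with honest points of $\mathbb S^{2}$: one should note that the listed points $(a,\sqrt{1-a^2},0)$ and $(b,c,c)$ with $c=\sqrt{(1-b^2)/2}$ all have Euclidean norm $1$, so each $\ev$ appearing as a generator is literally a point-evaluation at a unit vector, and that by $B_3$-invariance it suffices to range over such orbit representatives rather than all of $\mathbb S^2$. With that remark the argument is exactly the ``conversely'' direction of Corollary~\ref{cor:Sigma=P} applied in reverse, so there is no genuine obstacle: the entire content of Harris' theorem has been front-loaded into the dual-cone computation of Theorem~\ref{Thm:DualEvenSymOct}, and the corollary is a one-line consequence.
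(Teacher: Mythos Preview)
Your proposal is correct and follows essentially the same approach as the paper: invoke Theorem~\ref{Thm:DualEvenSymOct} to see that $\left(\Sigma_{3,8}^{B_3}\right)^\ast$ is generated by point-evaluations, then apply Corollary~\ref{cor:Sigma=P}. The paper's proof is a two-line version of exactly this argument, so your additional remarks about the generators lying on $\mathbb{S}^2$ and the explicit chain of inclusions are simply unpacking what Corollary~\ref{cor:Sigma=P} already encapsulates.
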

\begin{proof}
By Theorem \ref{Thm:DualEvenSymOct} the cone $\left( \Sigma_{n,2d}^G\right)^\ast$ is generated by point-evaluations. The claim follows from Corollary \ref{cor:Sigma=P}.
\end{proof}
In the following, we provide a study of the even symmetric sums of squares ternary octics.

\begin{lemma}\label{lemma:Harris}
The $B_3$-module $H_{3,4}$ has the isotpyic decomposition \[ H_{3,4} = 2 \cdot {S}^{((3),\emptyset)} \oplus 2 \cdot {S}^{((2,1),\emptyset)} \oplus 2 \cdot {S}^{((1),(2))} \oplus \mathcal{S}^{((1),(1,1))}. \]
A symmetry adapted basis for $H_{3,4}$ realising the $B_3$-isotypic decomposition is given by the following polynomials:
\begin{align*}
    S^{((3),\emptyset)} & :  \left\{e_1(\underline{X}^2)^2,e_2(\underline{X}^2)\right\}, &
    S^{((2,1),\emptyset)} & :  \left\{e_1(\underline{X}^2)(X_3^2-X_1^2), X_2^2X_3^2-X_1^2X_2^2 \right\}, \\
    S^{((1),(2))} & : \left\{e_1(\underline{X}^2)X_2X_3, X_1^2X_2X_3\right\}, &
    S^{((1),(1,1))} & :  \left\{(X_3^2-X_2^2)X_2X_3\right\}.
\end{align*}
\end{lemma}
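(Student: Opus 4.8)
The statement has two parts: first, the isotypic decomposition of the $B_3$-module $H_{3,4}$, and second, the verification that the listed polynomials give a symmetry adapted basis realising it. I would prove both by directly running the higher Specht polynomial machinery of Theorem~\ref{thm:irreducible reps}(b) together with Theorem~\ref{thm:coinvariant algebra} and the remark following Corollary~\ref{cor:deomposition of polynomial ring}. Concretely, since $\R[\underline{X}]^{B_3}=\R[p_2,p_4,p_6]$ has fundamental invariants in degrees $2,4,6$, a symmetry adapted basis of $H_{3,4}$ is obtained by taking higher Specht polynomials $\widehat F_T^S$ of degree $0,2$ or $4$ and multiplying each by the monomial in $p_2,p_4,p_6$ of complementary degree. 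So the first step is to enumerate the multipartitions $\Lambda\vdash 3$, and for each determine which standard Young tableaux $(T,S)$ give $\deg\widehat F_T^S = 2\ch(T,S)+|\lambda^2| \in \{0,2,4\}$.

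**Enumeration.** The multipartitions of $3$ are $((3),\emptyset)$, $((2,1),\emptyset)$, $((1,1,1),\emptyset)$, $((2),(1))$, $((1,1),(1))$, $((1),(2))$, $((1),(1,1))$, and $(\emptyset,(3))$, $(\emptyset,(2,1))$, $(\emptyset,(1,1,1))$, $((1),(1),\dots)$ — but only finitely many contribute in low degree. For each I would compute $|\lambda^2|$ and the possible charges. For $((3),\emptyset)$ the single SYT has charge $0$ and degree $0$: multiplying $1$ by $p_2^2$ and $p_4$ gives two copies in $H_{3,4}$ — matching $e_1(\underline X^2)^2$ and $e_2(\underline X^2)$ (note $p_2=e_1(\underline X^2)$ and $p_4$ is a combination of $e_1(\underline X^2)^2$ and $e_2(\underline X^2)$). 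For $((2,1),\emptyset)$ the SYT have charges $0$ and $1$; degrees $0$ (times $p_2^2,p_4$) and... actually only charge $1$ fails parity; the charge-$0$ tableau yields degree $0$ and hence two copies in $H_{3,4}$. Similarly $((1),(2))$ has $|\lambda^2|=2$, a tableau of charge $0$ giving degree $2$, hence multiplied by $p_2$ gives one more copy beyond the degree-$2$ one — total $2$. And $((1),(1,1))$ has $|\lambda^2|=2$, charge $1$, degree $4$ — exactly one copy, with no room for an invariant multiplier. One checks the remaining multipartitions ($((1,1,1),\emptyset)$, $((2),(1))$, $((1,1),(1))$, $(\emptyset,\cdot)$) contribute nothing in degrees $0,2,4$, by parity of $|\lambda^2|$ and charge bounds. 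This reproduces $2\cdot S^{((3),\emptyset)}\oplus 2\cdot S^{((2,1),\emptyset)}\oplus 2\cdot S^{((1),(2))}\oplus S^{((1),(1,1))}$; a dimension count $2\cdot1+2\cdot2+2\cdot2+1\cdot2 = \dim H_{3,4} = \binom{6}{2}=15$ serves as a sanity check.

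**Constructing the basis.** With the multiplicities pinned down, the second step is to exhibit explicit polynomials. For each $\Lambda$ I would write down the base higher Specht polynomial $\widehat F_T^T$ from the definition ($\widehat F_T^S = F_T^S(\underline X^2)\prod_{j\in T^2}X_j$ with $F_T^S=\epsilon_T\cdot \underline X_T^S$), multiply by the appropriate invariant monomials $1,p_2,p_2^2,p_4,\dots$ to reach degree $4$, and then simplify to the forms listed in the statement — e.g. for $((2,1),\emptyset)$ the higher Specht polynomial in the $X_i^2$ variables is (up to scalar) $X_3^2-X_1^2$, and multiplying by invariants of degree $0$ and $2$ and reducing mod the ideal (or just picking linearly independent representatives) gives $\{e_1(\underline X^2)(X_3^2-X_1^2),\, X_2^2X_3^2-X_1^2X_2^2\}$. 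Linear independence within each isotypic block is what makes it a genuine basis, and this is a short finite check.

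**Main obstacle.** The routine-but-delicate point is the bookkeeping of charges and the parity/degree constraint $\deg \widehat F_T^S = 2\ch(T,S)+|\lambda^2|$: one must be careful that, e.g., for $((1),(2))$ the degree-$2$ higher Specht polynomial really does give \emph{two} copies in $H_{3,4}$ (the polynomial itself, degree $2$, has no room, but multiplied by $p_2$ it lands in degree $4$, and the degree-$4$ slot also gets the degree-$4$-from-degree-$2$-times-$p_2$ contribution — so both copies come from charge-$0$ tableaux of this $\Lambda$, paired with invariants of degree $2$; wait — there is only one such tableau). This is exactly where I would slow down: I would instead invoke the formula $q_d^\Lambda = \sum_{k=0}^d N_{B_3}(d-k) h_k^\Lambda$ from the remark after Theorem~\ref{thm:irreducible reps}, compute $h_k^\Lambda$ as the number of SYT of shape $\Lambda$ with $2\ch+|\lambda^2|=k$, and read off $q_4^\Lambda$ mechanically; this avoids any hand-waving. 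The only other mild subtlety is confirming that the stated polynomials are literally obtained by this procedure up to $\R[\underline X]^{B_3}$-linear change inside each isotypic block and that $\{f_{j1},f_{j2}\}$ in each $2$-dimensional block are images of a single higher Specht polynomial under the unique $B_3$-isomorphisms $\widehat F_T^{S_1}\mapsto \widehat F_T^{S_2}$, which is immediate from Theorem~\ref{thm:irreducible reps} once the base polynomial is identified.
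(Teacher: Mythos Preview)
Your approach is exactly the paper's: enumerate multipartitions $\Lambda\vdash 3$, compute the degrees $2\ch(T,S)+|\lambda^2|$ of the higher Specht polynomials, keep those of degree $\le 4$, and multiply by the appropriate $B_3$-invariants to land in $H_{3,4}$. The paper's proof is essentially the terse version of what you sketch, arriving at the list $\{1,\;X_3^2-X_1^2,\;X_2^2X_3^2-X_1^2X_2^2,\;X_2X_3,\;X_1^2X_2X_3,\;(X_3^2-X_2^2)X_2X_3\}$ and then multiplying by $1,\,e_1(\underline X^2),\,e_1(\underline X^2)^2,\,e_2(\underline X^2)$ as appropriate.

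Two computational slips to fix before you write it up. First, for $((2,1),\emptyset)$ the two SYT have charges $1$ and $2$, not $0$ and $1$ (the word of $\begin{smallmatrix}1&2\\3\end{smallmatrix}$ is $(3,1,2)$ with index $(1,0,0)$); so the two copies in $H_{3,4}$ come from a degree-$2$ higher Specht polynomial times $e_1(\underline X^2)$ and a degree-$4$ one times $1$, matching the listed basis exactly. Second, your dimension check is off: $\dim S^{((1),(2))}=\dim S^{((1),(1,1))}=3$ (there are three SYT of each shape), so the correct tally is $2\cdot 1+2\cdot 2+2\cdot 3+1\cdot 3=15=\dim H_{3,4}$, not $12$. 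This also resolves your worry about $((1),(2))$: the three SYT have charges $0,1,2$ (degrees $2,4,6$), and the two copies in $H_{3,4}$ come from the charge-$0$ polynomial $X_2X_3$ times $e_1(\underline X^2)$ and the charge-$1$ polynomial $X_1^2X_2X_3$ times $1$. Your fallback to the formula $q_4^\Lambda=\sum_k N_{B_3}(4-k)h_k^\Lambda$ is sound and will reproduce these numbers mechanically.
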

\begin{proof}
We need to determine the multiplicity of the irreducible $B_3$-modules $S^{(\lambda,\mu)}$ in $H_{3,4}$ for any bipartition $(\lambda,\mu) \vdash 3$. We can immediately exclude some bipartitions: Since we need only higher Specht polynomials of degree $0,2$ or $4$ by Theorem \ref{thm:general sums of squares repr}, the degree - which equals $2$ times the charge of a standard bitableau of shape $(\lambda,\mu)$ plus $|\mu|$ - must be $0,2$ or $4$. However, this implies that only bipartitions with $\mu \in \{ \emptyset, (2),(1,1)\}$ are feasible to obtain an even degree. By going through all the remaining cases one obtains precisely the following higher Specht polynomials of degree $0,2$ and $4$:
$$\left\{1, X_3^2-X_1^2, X_2^2X_3^2-X_1^2X_2^2, 
X_2X_3,X_1^2X_2X_3,(X_3^2-X_2^2)X_2X_3.\right\}$$
Multiplying by the invariants $1, e_1(\underline{X}^2)^2$ and $e_2(\underline{X}^2)$ results accordingly in the above-mentioned symmetry adapted basis.
\end{proof}

\begin{cor}\label{cor:ternaryOcticsSOSDec}
An even symmetric ternary octic $f \in H_{3,8}^{B_3}$ is a sum of squares if and only if there exist positive semidefinite matrices $A^{(1)},A^{(2)},A^{(3)} \in \R^{2 \times 2}$ and $A^{(4)} \in \R^{1 \times 1}$ such that $$f = \langle A^{(1)} B^{(1)} \rangle +  \langle A^{(2)} B^{(2)} \rangle + \langle A^{(3)} B^{(3)} \rangle + \langle A^{(4)} B^{(4)} \rangle,$$ where $B^{(j)}$ are the following matrix polynomials corresponding to the $B_3$-modules in $H_{3,4}$ \begin{align*}
    B^{(1)} & := \left( \begin{array}{cc}
e_1(\underline{X}^2)^4     & e_1(\underline{X}^2)^2e_2(\underline{X}^2) \\
e_1(\underline{X}^2)^2e_2(\underline{X}^2)     & e_2(\underline{X}^2)^2
\end{array} \right), \\ 
B^{(2)} & := \left( \begin{array}{cc}
\frac{2}{3}e_1(\underline{X}^2)^4-2e_1(\underline{X}^2)^2e_2(\underline{X}^2)     &  -3e_1(\underline{X}^2)e_3(\underline{X}^2)+\frac{1}{3}e_1(\underline{X}^2)^2e_2(\underline{X}^2)\\
  -3e_1(\underline{X}^2)e_3(\underline{X}^2)+\frac{1}{3}e_1(\underline{X}^2)^2e_2(\underline{X}^2)  & \frac{2}{3}e_2(\underline{X}^2)^2-2e_1(\underline{X}^2)e_3(\underline{X}^2)
\end{array} \right),  \\
B^{(3)} & := \left( \begin{array}{cc}
\frac{1}{3}e_1(\underline{X}^2)^2e_2(\underline{X}^2)     & e_1(\underline{X}^2)e_3(\underline{X}^2)  \\
e_1(\underline{X}^2)e_3(\underline{X}^2)     &  \frac{1}{3}e_1(\underline{X}^2)e_3(\underline{X}^2)
\end{array}\right),  \\ 
B^{(4)} & := \left( e_1(\underline{X}^2)e_3(\underline{X}^2)-\frac{4}{3}e_2(\underline{X}^2)^2+\frac{1}{3}e_1(\underline{X}^2)^2e_2(\underline{X}^2) \right).
\end{align*}
\end{cor}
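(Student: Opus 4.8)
The plan is to obtain the statement as a direct specialization of Theorem~\ref{thm:general sums of squares repr}(3) (equivalently Corollary~\ref{cor:sos}) and then to make the resulting Gram matrices explicit. By Lemma~\ref{lemma:Harris} the six polynomials listed there form a symmetry adapted basis of $H_{3,4}$, grouped into the four isotypic blocks $S^{((3),\emptyset)},S^{((2,1),\emptyset)},S^{((1),(2))},S^{((1),(1,1))}$ of multiplicities $2,2,2,1$. Feeding this basis into Theorem~\ref{thm:general sums of squares repr}(3) yields immediately that $f\in H_{3,8}^{B_3}$ is a sum of squares if and only if $f=\sum_{j=1}^4\Tr(A^{(j)}B^{(j)})$ with each $A^{(j)}$ positive semidefinite and $B^{(j)}_{v,u}=\mathcal{R}_{B_3}(s^{(j)}_v s^{(j)}_u)$, where the $s^{(j)}_\bullet$ are the basis vectors of the $j$-th block; the sizes $2\times2,2\times2,2\times2,1\times1$ already agree with the statement. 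So all that remains is to evaluate the ten distinct matrix entries.

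For this I would use two simplifications. Each entry is a product of two basis polynomials lying in the same block, hence is even in every variable (the basis polynomials of a fixed block share the odd factor $\prod_{j\in T^2}X_j$ coming from the higher Specht construction), so the sign-change part of $B_3=\mathfrak{S}_2\wr\mathfrak{S}_3$ acts trivially and $\mathcal{R}_{B_3}$ reduces to averaging over $\mathfrak{S}_3$ acting on the squared variables $y_i:=X_i^2$. Moreover the factors $e_1(\underline X^2),e_2(\underline X^2)$ that appear in several basis elements are $B_3$-invariant and can be pulled out of the Reynolds operator. After these reductions each remaining average is a monomial symmetric function in the $y_i$, which one rewrites in the fundamental invariants $e_1(\underline X^2),e_2(\underline X^2),e_3(\underline X^2)$ using Newton's identities and the classical three-variable relations such as $m_{(2,2)}=e_2^2-2e_1e_3$, $m_{(2,1)}=e_1e_2-3e_3$, $m_{(3,1)}=e_1^2e_2-e_1e_3-2e_2^2$ (all evaluated at $\underline X^2$). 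Carrying this out reproduces exactly the displayed matrices $B^{(1)},\dots,B^{(4)}$.

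The only structural point, namely that no off-block cross terms occur, is already part of Theorem~\ref{thm:general sums of squares repr}(3) and in any case follows from Schur's lemma~\ref{le:schur}, since $\mathcal{R}_{B_3}(s^{(i)}_v s^{(j)}_u)$ realizes a $B_3$-homomorphism between non-isomorphic irreducibles for $i\neq j$ and hence vanishes. Thus there is no conceptual obstacle; the main risk is an arithmetic slip in the symmetric-function bookkeeping of the second step. This can be controlled by a degree count --- every entry is a degree-$8$ form and hence lies in the $N_{B_3}(8)=4$-dimensional space spanned by $e_1(\underline X^2)^4$, $e_1(\underline X^2)^2e_2(\underline X^2)$, $e_2(\underline X^2)^2$, $e_1(\underline X^2)e_3(\underline X^2)$ --- and by checking the claimed identities at a few sample points.
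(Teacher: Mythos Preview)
Your proposal is correct and follows essentially the same approach as the paper: the paper's proof simply states that the matrices $B^{(j)}$ are the symmetrizations of products of the symmetry adapted basis from Lemma~\ref{lemma:Harris} and invokes Theorem~\ref{THM Decomp}. You cite the equivalent Theorem~\ref{thm:general sums of squares repr}(3)/Corollary~\ref{cor:sos} and additionally spell out how the Reynolds averages are computed via symmetric-function identities in the $y_i=X_i^2$, which the paper leaves implicit.
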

\begin{proof}
The matrices $B^{(1)},\ldots,B^{(4)}$ are the symmetrizations of the products of the symmetry adapted basis from Lemma \ref{lemma:Harris}. By Theorem \ref{THM Decomp} any invariant sum of squares form has such a representation.
\end{proof}

\begin{cor}\label{cor:ternaryOcticsDualToSOSDecomp}
A linear form $\ell \in \left( H_{3,8}^{B_3} \right)^\ast$ is contained in $\left( \Sigma_{3,8}^{B_3} \right)^\ast$ if and only if the following matrices are positive semidefinite
\begin{align*}
    \left( \begin{smallmatrix}
      m_{(1^4)}   & m_{(2,1^2)} \\
    m_{(2,1^2)}     & m_{(2^2)}
   \end{smallmatrix}\right), 
    \left( \begin{smallmatrix}
      \frac{2}{3}m_{(1^4)}-2m_{(2,1^2)}  & \frac{1}{3}m_{(2,1^2)}-3m_{(3,1)} \\
    \frac{1}{3}m_{(2,1^2)}-3m_{(3,1)}     & \frac{2}{3}m_{(2^2)}-2m_{(3,1)}
    \end{smallmatrix}\right), 
    \left( \begin{smallmatrix}
    \frac{1}{3}m_{(2,1^2)}     & m_{(3,1)} \\
    m_{(3,1)}     & \frac{1}{3}m_{(3,1)}
    \end{smallmatrix}\right), 
    \left( \begin{smallmatrix}\frac{1}{3}m_{(2,1^2)}-\frac{4}{3}m_{(2^2)}+m_{(3,1)}\end{smallmatrix} \right),
\end{align*}
where we write $m_{(1^4)} := \ell (e_1(\underline{X}^2)^4), m_{(3,1)} := \ell (e_1(\underline{X}^2)e_3(\underline{X}^2)), m_{(2,1^2)} := \ell (e_1(\underline{X}^2)^2e_2(\underline{X}^2))$ and $m_{(2^2)} := \ell (e_2(\underline{X}^2)^2)$.
\end{cor}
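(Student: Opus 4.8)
The plan is to dualize Corollary~\ref{cor:ternaryOcticsSOSDec} by feeding its symmetry adapted basis into the dual description of the sums of squares cone, Lemma~\ref{le:psd}. Recall that Lemma~\ref{lemma:Harris} gives a symmetry adapted basis of $H_{3,4}$ split along the four $B_3$-isotypic components $S^{((3),\emptyset)}$, $S^{((2,1),\emptyset)}$, $S^{((1),(2))}$, $S^{((1),(1,1))}$, and that the Reynolds symmetrizations of the pairwise products of this basis are exactly the matrix polynomials $B^{(1)},\dots,B^{(4)}$ written down in Corollary~\ref{cor:ternaryOcticsSOSDec}. By Lemma~\ref{le:psd} (the dual of Theorem~\ref{THM Decomp}) a functional $\ell\in\bigl(H_{3,8}^{B_3}\bigr)^\ast$ lies in $\bigl(\Sigma_{3,8}^{B_3}\bigr)^\ast$ if and only if each of the four numerical matrices $\ell(B^{(j)})$, obtained by applying $\ell$ entrywise to $B^{(j)}$, is positive semidefinite. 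So the whole statement reduces to computing these four matrices.

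First I would use linearity of $\ell$ together with the abbreviations $m_{(1^4)}=\ell(e_1(\underline{X}^2)^4)$, $m_{(2,1^2)}=\ell(e_1(\underline{X}^2)^2e_2(\underline{X}^2))$, $m_{(2^2)}=\ell(e_2(\underline{X}^2)^2)$ and $m_{(3,1)}=\ell(e_1(\underline{X}^2)e_3(\underline{X}^2))$. Every entry of every $B^{(j)}$ is an $\R$-linear combination of the four degree-$8$ invariants $e_1(\underline{X}^2)^4$, $e_1(\underline{X}^2)^2e_2(\underline{X}^2)$, $e_2(\underline{X}^2)^2$, $e_1(\underline{X}^2)e_3(\underline{X}^2)$, so applying $\ell$ and reading off coefficients transforms $\ell(B^{(1)}),\ell(B^{(2)}),\ell(B^{(3)}),\ell(B^{(4)})$ into precisely the four matrices in the statement; for instance $\ell\bigl(B^{(2)}_{11}\bigr)=\ell\bigl(\frac23 e_1(\underline{X}^2)^4-2e_1(\underline{X}^2)^2e_2(\underline{X}^2)\bigr)=\frac23 m_{(1^4)}-2m_{(2,1^2)}$, and similarly for the remaining entries. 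Plugging these identifications into Lemma~\ref{le:psd} gives the asserted equivalence.

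Two small remarks finish the argument. Since the $B_3$-invariants have degrees $2,4,6$ we have $\dim H_{3,8}^{B_3}=N_{B_3}(8)=4$, and the four invariants listed above form a basis of $H_{3,8}^{B_3}$; hence $\ell\mapsto(m_{(1^4)},m_{(2,1^2)},m_{(2^2)},m_{(3,1)})$ identifies $\bigl(H_{3,8}^{B_3}\bigr)^\ast$ with $\R^4$, so no functional is missed and the four $m$'s are genuine free coordinates. The only genuine work is the entrywise bookkeeping, and that is already inherited from the verification of the matrix polynomials $B^{(j)}$ in the proof of Corollary~\ref{cor:ternaryOcticsSOSDec}; I therefore do not expect a real obstacle here beyond transcribing the linear-combination coefficients correctly.
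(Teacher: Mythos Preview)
Your proposal is correct and follows exactly the same approach as the paper: the paper's proof is a single sentence invoking Lemma~\ref{le:psd} as the dual statement to Corollary~\ref{cor:ternaryOcticsSOSDec}, and you have spelled out this duality in detail by applying $\ell$ entrywise to the matrix polynomials $B^{(j)}$.
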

\begin{proof}
This is precisely the dual statement to Corollary \ref{cor:ternaryOcticsSOSDec} by Lemma \ref{le:psd}.
\end{proof}

 \begin{remark}\label{rmk:isotypic decomp H38Bn}
 We observe that \[ H_{3,8}^{B_3} = \langle p_2^4, p_2^2 p_4, p_2p_6, p_4^2 \rangle_\R = \langle e_1(\underline{X}^2)^4, e_1(\underline{X}^2)e_3(\underline{X}^2), e_1(\underline{X}^2)^2e_2(\underline{X}^2), e_2(\underline{X}^2)^2 \rangle_\R \] is a $4$-dimensional $\R$-vector space. We pick as the fundamental invariants the elementary symmetric polynomials evaluated in $\underline{X}^2=(X_1^2,X_2^2,X_3^2)$ and work with the $\R$-basis $$\left(e_1(\underline{X}^2)^4, e_1(\underline{X}^2)e_3(\underline{X}^2), e_1(\underline{X}^2)^2 e_2(\underline{X}^2), e_2(\underline{X}^2)^2\right)$$ of $H_{3,8}
^{B_3}$. We study explicitly the extremal elements in $\left( \Sigma_{3,8}^{B_3}\right)^\ast$ and show that all of them are point-evaluations which is then used to prove Theorem \ref{Thm:DualEvenSymOct}. In the remaining part of this subsection we will always use the following notation for an extremal element $\ell \in \left( \Sigma_{3,8}^{B_3}\right)^\ast$. $\mathcal{Q}_\ell$ denotes the associated $B_3$-invariant quadratic form on $H_{3,4}$, $W_\ell := \ker \mathcal{Q}_\ell$ its kernel and $$W^{\langle 2 \rangle}_\ell := \ker \ell = \left\{ h\in H_{n,2d}^G\,:\;h=\mathcal{R}_G \left(\sum f_i g_i\right)\;\text{ with } f_i \in W \text{ and  }g_i \in H_{n,d}\right\}$$ (see Proposition \ref{prop:W2}). A hyperplane in $H_{3,8}^{B_3}$ is of dimension $3$, hence from Lemma \ref{le:MaxOfKernel} we know that $\dim W^{\langle 2 \rangle}_\ell = 3$. By Lemma \ref{lemma:Harris} the isotypic decomposition of the $B_3$-submodule $W_\ell$ of $H_{3,4}$ has the form \begin{align*}
    W_\ell = \ker \mathcal{Q}_\ell = \alpha \cdot {S}^{((3),\emptyset)} \oplus \beta \cdot {S}^{((2,1),\emptyset)} \oplus \gamma \cdot {S}^{((1),(2))} \oplus \delta \cdot {S}^{((1),(1,1))},
\end{align*} where $\alpha, \beta, \gamma \in \{0,1,2\}$ and $\delta \in \{0,1\}$. \\
We make frequently use of the fact that $\ker \ell$ is maximal among any kernel of elements in $\left( \Sigma_{3,8}^G\right)^\ast$, i.e., when $\ker \ell$ contains a non trivial zero then $\ell$ must be a scalar of the point-evaluation at this point (see Lemma \ref{le:MaxOfKernel}).
 \end{remark}
 In the following lemmas we do case distinctions on $\alpha,\beta,\gamma$ and $\delta$ to obtain a classification of all extremal elements in the dual cone $\left( \Sigma_{3,8}^{B_3} \right)^\ast$.

 \begin{lemma}\label{le:38Teil1}
 Let $\ell \in \left( \Sigma_{3,8}^{B_3} \right)^\ast$ be an extremal element. Then $\alpha < 2$, i.e., the multipilcity of the trivial representation in $W_\ell$ is smaller than 2.
 \end{lemma}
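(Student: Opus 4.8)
The plan is to argue by contradiction: assume that $\ell \in \left(\Sigma_{3,8}^{B_3}\right)^\ast$ is an extremal element with $\alpha = 2$, and deduce that $\ell$ must be the zero functional, which is impossible for a generator of an extremal ray.

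First I would unwind what $\alpha = 2$ means. By Lemma~\ref{lemma:Harris} the trivial isotypic component of $H_{3,4}$ is the two-dimensional space $\langle e_1(\underline{X}^2)^2,\, e_2(\underline{X}^2)\rangle$, so $\alpha = 2$ says that $W_\ell = \ker\mathcal{Q}_\ell$ contains this entire component; in particular $e_1(\underline{X}^2)^2 \in W_\ell$ and $e_2(\underline{X}^2) \in W_\ell$. Now apply Proposition~\ref{pro:kern}: for $p \in W_\ell$ one has $\ell\!\left(\mathcal{R}_G(pq)\right) = 0$ for all $q \in H_{3,4}$. Since $e_1(\underline{X}^2)^2$ and $e_2(\underline{X}^2)$ themselves lie in $H_{3,4}$ and their pairwise products are already $B_3$-invariant, choosing $p$ and $q$ among these two polynomials gives $\ell(e_1(\underline{X}^2)^4) = \ell(e_1(\underline{X}^2)^2 e_2(\underline{X}^2)) = \ell(e_2(\underline{X}^2)^2) = 0$, i.e.\ $m_{(1^4)} = m_{(2,1^2)} = m_{(2^2)} = 0$ in the notation of Corollary~\ref{cor:ternaryOcticsDualToSOSDecomp}.

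Next I would feed these three vanishings into the positive semidefiniteness conditions of Corollary~\ref{cor:ternaryOcticsDualToSOSDecomp}. With $m_{(2,1^2)} = 0$ the third displayed $2\times 2$ matrix reduces to $\left(\begin{smallmatrix} 0 & m_{(3,1)} \\ m_{(3,1)} & \frac{1}{3}m_{(3,1)}\end{smallmatrix}\right)$, whose determinant $-m_{(3,1)}^2$ must be nonnegative, forcing $m_{(3,1)} = \ell(e_1(\underline{X}^2)e_3(\underline{X}^2)) = 0$ as well (the second matrix together with the fourth $1\times1$ block yields the same conclusion). Thus $\ell$ annihilates every element of the basis $\left(e_1(\underline{X}^2)^4,\, e_1(\underline{X}^2)e_3(\underline{X}^2),\, e_1(\underline{X}^2)^2 e_2(\underline{X}^2),\, e_2(\underline{X}^2)^2\right)$ of $H_{3,8}^{B_3}$ recorded in Remark~\ref{rmk:isotypic decomp H38Bn}, so $\ell = 0$, contradicting extremality. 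Hence $\alpha < 2$. The argument involves no serious obstacle — the only step that needs a word of care is passing from "$e_1(\underline{X}^2)^2, e_2(\underline{X}^2) \in \ker\mathcal{Q}_\ell$" to the vanishing of the three moments, which is immediate from Proposition~\ref{pro:kern} because these polynomials already belong to $H_{3,4}$; the rest is a one-line determinant computation with the matrices of Corollary~\ref{cor:ternaryOcticsDualToSOSDecomp}.
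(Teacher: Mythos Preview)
Your argument is correct. Both your proof and the paper's proceed by contradiction, assuming $\alpha=2$ and concluding $\ell=0$, but the execution differs. The paper uses only the single consequence $e_1(\underline{X}^2)^2\in W_\ell$, hence $e_1(\underline{X}^2)^4\in\ker\ell$, and then observes that $(X_1^2+X_2^2+X_3^2)^4$ contains every $B_3$-symmetrized square monomial of degree~$8$ with a positive coefficient; since each such symmetrized monomial lies in $\Sigma_{3,8}^{B_3}$ and $\ell$ is nonnegative on sums of squares, all of them must be annihilated by $\ell$, whence $\ell=0$. Your route instead extracts $m_{(1^4)}=m_{(2,1^2)}=m_{(2^2)}=0$ from both trivial generators and then kills $m_{(3,1)}$ via the determinant of the third matrix in Corollary~\ref{cor:ternaryOcticsDualToSOSDecomp}. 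The paper's version is shorter and avoids the explicit PSD matrices altogether; yours has the virtue of staying entirely within the linear-algebraic framework already set up in Corollary~\ref{cor:ternaryOcticsDualToSOSDecomp} and not requiring the positivity observation about the monomial expansion of $e_1(\underline{X}^2)^4$.
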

 \begin{proof}
 If $\alpha = 2$ then $e_1(\underline{X}^2)^2 \in W_\ell$ and hence $e_1(\underline{X}^2)^4 \in  W_\ell^{\langle 2 \rangle} = \ker \ell$. However, any monomial of degree $8$ that is a square occurs with positive coefficients in $e_1(\underline{X}^2)^4$, which implies $\ell = 0$ must be the zero map. 
 \end{proof}
 
 \begin{lemma}\label{le:38Teil2}
 Let $\ell \in \left( \Sigma_{3,8}^{B_3} \right)^\ast$ be an extremal element and $\alpha = 0$. Then $\ell$ is a scalar of the point-evaluation $\ev_z$, where $z \in \{(1,1,1),(1,0,0),(1,1,0)\}.$
 \end{lemma}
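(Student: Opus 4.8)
The plan is to turn extremality into rigid information about $\ker\ell$ and then to exhibit a nontrivial common zero of that hyperplane. By Proposition~\ref{prop:W2} together with Lemma~\ref{le:MaxOfKernel}, extremality of $\ell$ means that $\ker\ell=W_\ell^{\langle 2\rangle}$ is a hyperplane in the four-dimensional space $H_{3,8}^{B_3}$; in particular $W_\ell=\ker\mathcal{Q}_\ell\neq 0$. Moreover every element of $W_\ell^{\langle 2\rangle}$ has the shape $\mathcal{R}_G(pq)$ with $p\in W_\ell$, $q\in H_{3,4}$, and $\mathcal{R}_G(pq)(z)=\tfrac1{|B_3|}\sum_{\sigma\in B_3}p(\sigma^{-1}z)q(\sigma^{-1}z)$; hence any $z\in\R^3\setminus\{0\}$ that is a common zero of all forms in the $B_3$-module $W_\ell$ is also a zero of every form in $\ker\ell$. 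Since $\ev_z\neq 0$ on $H_{3,8}^{B_3}$ (because $\ev_z(e_1(\underline X^2)^4)=e_1(z^2)^4>0$), the functionals $\ev_z$ and $\ell$ then vanish on the same hyperplane of $H_{3,8}^{B_3}$, so $\ell=\lambda\,\ev_z$, and $\lambda>0$ because both lie in the pointed cone $(\Sigma_{3,8}^{B_3})^\ast$. Thus it suffices to show that, under $\alpha=0$, the module $W_\ell$ has a common real zero that is a $B_3$-translate of $(1,1,1)$, $(1,0,0)$ or $(1,1,0)$ (such a translate gives the same functional on invariant forms).

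Under $\alpha=0$, Lemma~\ref{lemma:Harris} and Remark~\ref{rmk:isotypic decomp H38Bn} give $W_\ell=\beta\cdot S^{((2,1),\emptyset)}\oplus\gamma\cdot S^{((1),(2))}\oplus\delta\cdot S^{((1),(1,1))}$ with $\beta,\gamma\in\{0,1,2\}$, $\delta\in\{0,1\}$ and $(\beta,\gamma,\delta)\neq(0,0,0)$; equivalently, since the trivial isotypic component of $H_{3,4}$ is two-dimensional, $\alpha=0$ says exactly that $\ell(B^{(1)})=\left(\begin{smallmatrix}m_{(1^4)}&m_{(2,1^2)}\\ m_{(2,1^2)}&m_{(2^2)}\end{smallmatrix}\right)$ is positive definite. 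I would then run a case analysis on $(\beta,\gamma,\delta)$. For each choice, Proposition~\ref{prop:W2} together with the explicit matrix polynomials $B^{(2)},B^{(3)},B^{(4)}$ of Corollary~\ref{cor:ternaryOcticsSOSDec} describes $W_\ell^{\langle 2\rangle}$: a full block ($\beta=2$, resp.\ $\gamma=2$) contributes the span of all entries of $B^{(2)}$, resp.\ $B^{(3)}$; a single copy of $S^{((2,1),\emptyset)}$ or $S^{((1),(2))}$ contributes an at most two-dimensional subspace depending on the projective ``direction'' $(a:b)$ of that copy inside the corresponding isotypic component of $H_{3,4}$; and the $S^{((1),(1,1))}$ copy contributes at most the line $\R\cdot B^{(4)}$. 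Imposing $\dim W_\ell^{\langle 2\rangle}=3$ leaves only a short list of configurations, and configurations in which $W_\ell$ admits no nontrivial real common zero correspond to no extremal $\ell$ with $\alpha=0$, which one verifies directly on the dual side via Corollary~\ref{cor:ternaryOcticsDualToSOSDecomp} (the constraints $\ell(B^{(2)}),\ell(B^{(3)}),\ell(B^{(4)})\succeq 0$ admit no extremal solution once $\ell(B^{(1)})\succ 0$).

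For the surviving configurations I would compute the common zero locus of $W_\ell$ explicitly. By Lemma~\ref{lemma:Harris} the modules $S^{((2,1),\emptyset)}$, $S^{((1),(2))}$, $S^{((1),(1,1))}$ are $B_3$-generated by $e_1(\underline X^2)(X_3^2-X_1^2)$, $e_1(\underline X^2)X_2X_3$ and $(X_3^2-X_2^2)X_2X_3$, so the vanishing conditions imposed by $W_\ell$ are of the form $e_1(z^2)(z_i^2-z_j^2)=0$, $e_1(z^2)z_jz_k=0$, $z_i^2z_jz_k=0$, $(z_i^2-z_j^2)z_jz_k=0$; over $\R$ these force $z\neq 0$ to have nontrivial $B_3$-stabiliser, and the only such points on $\mathbb{S}^2$ are, up to the $B_3$-action, $(1,1,1)$, $(1,0,0)$ and $(1,1,0)$. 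Applying the reduction of the first paragraph then gives $\ell=\lambda\,\ev_z$ with $z$ among the three listed points, finishing the proof. The main obstacle is the bookkeeping in the single-copy cases: there a direction parameter $(a:b)$ must be carried along, and one has to argue that requiring \emph{simultaneously} $\dim W_\ell^{\langle 2\rangle}=3$, maximality of $\ker\ell$, and positive semidefiniteness of the matrices in Corollary~\ref{cor:ternaryOcticsDualToSOSDecomp} pins $(a:b)$ down to the special values producing those three points; for generic directions the kernel is either too small or the dual matrices fail to be positive semidefinite. Concretely I would substitute the linear equations ``$\ell$ vanishes on $W_\ell^{\langle 2\rangle}$'' into the coordinates $(m_{(1^4)},m_{(3,1)},m_{(2,1^2)},m_{(2^2)})$ of $\ell$ and read off the resulting ray, comparing it with $\ev_{(1,1,1)}$, $\ev_{(1,0,0)}$ and $\ev_{(1,1,0)}$.
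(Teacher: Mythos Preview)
Your overall architecture --- reduce to a hyperplane $\ker\ell$, run a case split on $(\beta,\gamma,\delta)$, and in the single-copy cases feed the resulting linear relations among $m_{(1^4)},m_{(2,1^2)},m_{(2^2)},m_{(3,1)}$ back into the PSD constraints of Corollary~\ref{cor:ternaryOcticsDualToSOSDecomp} --- is exactly what the paper does, and your first paragraph's ``common zero of $W_\ell$ forces $\ell=\lambda\,\ev_z$'' is the mechanism the paper uses to dispatch the full-copy cases ($\beta=2$ gives the zero $(1,1,1)$; $\beta=1,\delta=1$ likewise; $\gamma\ge 1,\delta=1$ gives $(1,0,0)$).

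There is, however, a genuine error in your third paragraph. The sentence ``the only such points on $\mathbb{S}^2$ [with nontrivial $B_3$-stabiliser] are, up to the $B_3$-action, $(1,1,1)$, $(1,0,0)$ and $(1,1,0)$'' is false: the fixed locus of a reflection is a hyperplane, so the points with nontrivial stabiliser form the whole mirror arrangement $\bigcup\{X_i=\pm X_j\}\cup\bigcup\{X_i=0\}$, which up to $B_3$ is the one-parameter family $(a,a,b)$ together with $(a,b,0)$. Relatedly, when $\beta=1$ or $\gamma=1$ the module $W_\ell$ does \emph{not} contain the generators you list individually; it contains only a single combination such as $a\,e_1(\underline X^2)(X_3^2-X_1^2)+b\,(X_2^2X_3^2-X_1^2X_2^2)$, and its common zero locus depends on $(a:b)$ and is typically a curve, not an isolated orbit. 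So the geometric shortcut cannot replace the computation.

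Concretely, in the case $\alpha=0,\ \beta=1,\ \gamma\ge 1,\ \delta=0$ the paper writes out the four moment equations coming from the two rows of $\ell(B^{(2)})$ and of $\ell(B^{(3)})$, eliminates, and is still left with a free parameter. Only the inequalities $\det\ell(B^{(1)})\ge 0$ and $\ell(B^{(4)})\ge 0$ together squeeze that parameter to the single value producing $\ev_{(1/\sqrt 2,\,1/\sqrt 2,\,0)}$; neither the kernel structure of $Q_\ell$ nor a common-zero argument sees this. Your final paragraph (``substitute the linear equations into the moments and compare with the three evaluations'') is therefore not an afterthought but the actual content of the lemma. Drop the stabiliser claim, keep your case split, and carry out that moment computation explicitly; then your proof and the paper's will coincide.
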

 \begin{proof}
 In the case $\beta = 2$ we know by dimension reasons on $W_\ell^{\langle 2 \rangle}$ that any other $B_3$-module occurring in $W_\ell$ must already be contained in $2 \cdot S^{((2,1),\emptyset)}$. However, the forms in the module $2 \cdot S^{((2,1),\emptyset)}$ have the common zero $(1,1,1)$. \\
 If $\beta = 1$, then it must be $\gamma \geq 1$ or $\delta = 1$ such that $W_\ell^{\langle 2 \rangle}$ is a hyperplane. For $\delta = 1$ the elements in $W_\ell$ have the common root $(1,1,1)$. Now, we consider the case $\beta = 1,\gamma \geq 1$. Thus for some pairs $(a,b),(c,d) \in \R^2\setminus\{(0,0)\}$ $$ae_1(\underline{X}^2)(X_3^2-X_1^2)+b(X_2^2X_3^2-X_1^2X_2^2),ce_1(\underline{X}^2)X_2X_3+d X_1^2X_2X_3 \in W_\ell,$$ and their symmetrized products with elements in $H_{3,4}$ are contained in $W_\ell^{\langle 2 \rangle}$, i.e., 
 \begin{align*}
 0=& a\left(\frac{2}{3}m_{(1^4)}-2m_{(2,1^2)}\right) + b \left( \frac{1}{3}m_{(2,1^2)}-3m_{(3,1)} \right),  \\ 
 0=& a\left( \frac{1}{3}m_{(2,1^2)}-3m_{(3,1)}\right) + b \left( \frac{2}{3}m_{(2^2)}-2m_{(3,1)} \right), \\
 0=&\frac{c}{3}m_{(2,1^2)}+dm_{(3,1)}, \\
 0=& cm_{(3,1)}+\frac{d}{3}m_{(3,1)}. 
 \end{align*}
  We now distinguish between $m_{(3,1)}$ equals or not equals zero:
 \begin{itemize}
     \item[i)] In the case that $m_{(3,1)} \neq 0$ we have that $c + \frac{d}{3}=0$. Since $W_\ell$ is a linear space we can set $c=1$ and $d=-3$. However, then the $B_3$-module $W_\ell$ has the common zero $(1,1,1)$. Thus $\ell$ is a scalar of the point-evaluation $\ev_{(1,1,1)}.$ 
     \item[ii)] Let $m_{(3,1)} =0$. We first assume that $c \neq 0$. Then $m_{(2,1^2)} = 0$ and since $m_{(1^4)} >0$ it is $a=0.$ Hence, $b \neq 0$ and $m_{(2^2)}=0$ which implies that the elements in $W_\ell$ all vanish at $(1,0,0)$ and $\ell$ is a scalar of $\ev_{(1,0,0)}$. \\
     If $c = 0$ we have \begin{align*}
    0=&  a\left(\frac{2}{3}m_{(1^4)}-2m_{(2,1^2)}\right) + b \left( \frac{1}{3}m_{(2,1^2)} \right), \\ 
    0=& a\left( \frac{1}{3}m_{(2,1^2)}\right) + b \left( \frac{2}{3}m_{(2^2)} \right). 
 \end{align*} If $a = 0$ then $\ell$ is a scalar of $\ev_{(1,0,0)}$, since any form in $W_\ell^{\langle 2 \rangle}$ has the zero $(1,0,0)$. Otherwise, we may assume that $a=1$ since $W_\ell^{\langle 2 \rangle}$ is a linear space. It is 
 \begin{align*}
     0=& \frac{2}{3}m_{(1^4)}+(-2+\frac{b}{3})m_{(2,1^2)}, \\  0=&\frac{1}{3}m_{(2,1^2)}+\frac{2b}{3}m_{(2^2)}.
 \end{align*} Through scaling of $\ell$ and $m_{(1^4)}>0$, we can assume that $m_{(1^4)} = 1$. If $b = 0$, then $0 =m_{(1^4)} = 1$ which cannot be true. So $b \neq 0$ and $m_{(2,1^2)} = \frac{2}{6-b}, m_{(2^2)} = \frac{1}{-6b+b^2}$, for a non zero $b \neq 6.$ From the positive semidefiniteness conditions in Corollary \ref{cor:ternaryOcticsDualToSOSDecomp} we obtain from the first matrix $$\det \left( \begin{array}{cc}
1     &m_{(2,1^2)}  \\
m_{(2,1^2)}     & m_{(2^2)}
\end{array} \right) \geq 0,$$ which implies that $ -2 \leq b < 0$. And the positive semidefiniteness of the last matrix in \ref{cor:ternaryOcticsDualToSOSDecomp} $$\frac{1}{3}m_{(2,1^2)}-\frac{4}{3}m_{(2^2)}+m_{(3,1)} \geq 0$$ implies that $b \leq -2$ or $0 < b < 6$. Thus $b = -2$ and $\ell$ is the point-evaluation $\ev_{(\frac{1}{\sqrt{2}},\frac{1}{\sqrt{2}},0)}.$
 \end{itemize}
 Finally, if $\gamma \geq 1$, then $\beta=1$ or $\delta=1$. However, we have already examinied the case $\beta=1$. For $\delta = 1$ the elements in $W_\ell$ have the common zero $(1,0,0)$. Thus $\ell$ is a scalar of  $\ev_{(1,0,0)}$.
 \end{proof}

 Therefore we proceed with the cases where ${\alpha = 1}$, which implies that $ae_1(\underline{X}^2)^2 + e_2(\underline{X}^2) \in W_\ell$ for an $a \in \R$, since $e_1(\underline{X}^2)^4 \not \in W_\ell$. This means for $\ker \ell$ \begin{align*}
  & a m_{(1^4)} + m_{(2,1^2)} = 0, \\ & am_{(2,1^2)}+m_{(2^2)} = 0.  
 \end{align*} Moreover, since $m_{(1^4)}> 0$ and $\ell$ is a linear form we can set without loss of generality $m_{(1^4)}=1$, as $\ell$ is then just a positive scalar. 
 The positive semidefinitness conditions with the reductions $m_{(2,1^2)} = -am_{(1^4)}, m_{(2^2)} = a^2m_{(1^4)}$ and $m_{(1^4)} =1$ become to
 \begin{align}\label{simpledmatrices}
     \left( \begin{smallmatrix}
      1    & -a \\
    -a      & a^2
     \end{smallmatrix} \right), 
     \left(\begin{smallmatrix} \frac{2}{3}+2a & -\frac{1}{3}a-3m_{(3,1)} \\ -\frac{1}{3}a-3m_{(3,1)} & \frac{2}{3}a^2-2m_{(3,1)}  \end{smallmatrix} \right), 
     \left( \begin{smallmatrix}
    -\frac{1}{3}a      &m_{(3,1)}  \\
    m_{(3,1)}      & \frac{1}{3}m_{(3,1)}
     \end{smallmatrix} \right), 
     \left( \begin{smallmatrix} \frac{-a}{3}-\frac{4a^2}{3}+m_{(3,1)}  \end{smallmatrix} \right) \succeq 0.
 \end{align}
 From the positive semidefiniteness of the second matrix and $-a=m_{(2,1^2)} \geq 0$ we obtain $a \in [\frac{-1}{3},0]$.

We now proceed with a case distinction on the paramaters $\beta,\gamma,\delta$:

 \begin{lemma}\label{le:38Teil3}
Let $\ell \in \left(\Sigma_{3,8}^{B_3}\right)^\ast$ be an extremal element. If $\alpha = \delta = 1$, then $\ell$ is a scalar of a point-evaluation in $(1,1,0)$.
 \end{lemma}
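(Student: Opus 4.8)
The plan is to push a little further the parametrisation already set up just before the lemma. Being in the regime $\alpha=1$ we have $a\,e_1(\underline{X}^2)^2+e_2(\underline{X}^2)\in W_\ell$ for some $a$; after the normalisation $m_{(1^4)}=1$ this gives $m_{(2,1^2)}=-a$ and $m_{(2^2)}=a^2$, and positive semidefiniteness of the second matrix in $(\ref{simpledmatrices})$ together with $m_{(2,1^2)}\ge 0$ already confines $a$ to $[-\tfrac13,0]$. The extra hypothesis $\delta=1$ means $S^{((1),(1,1))}\subset W_\ell$, and since this module appears with multiplicity one in both $W_\ell$ and $H_{3,4}$, Proposition $\ref{prop:W2}$ shows that the unique symmetrised product it contributes to $W_\ell^{\langle2\rangle}=\ker\ell$ is $\mathcal{R}_{B_3}\big((X_3^2-X_2^2)^2X_2^2X_3^2\big)$, i.e.\ the entry of $B^{(4)}$ from Corollary $\ref{cor:ternaryOcticsSOSDec}$. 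Hence $m_{(3,1)}-\tfrac43 m_{(2^2)}+\tfrac13 m_{(2,1^2)}=0$, that is $m_{(3,1)}=\tfrac43 a^2+\tfrac13 a$; equivalently the fourth ($1\times1$) block in $(\ref{simpledmatrices})$ is now identically zero, and all four moments are functions of the single parameter $a$.

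Next I would substitute $m_{(3,1)}=\tfrac43 a^2+\tfrac13 a$ into the only remaining nontrivial positive semidefiniteness conditions, the determinants of the second and third matrices in $(\ref{simpledmatrices})$ (the first matrix being automatically positive semidefinite and the fourth vanishing). After the substitution the $(1,1)$-, $(1,2)$- and $(2,2)$-entries of the second matrix become $\tfrac23(3a+1)$, $-\tfrac43 a(3a+1)$ and $-\tfrac23 a(3a+1)$, so its determinant equals $-\tfrac49 a(3a+1)^2(4a+1)$, while the third matrix has determinant $-\tfrac{4}{27}a^2(3a+1)(4a+1)$. Requiring both to be nonnegative, together with $a\in[-\tfrac13,0]$ and the nonnegativity of the diagonal entries, forces $(4a+1)(3a+1)\le 0$ and $a(4a+1)(3a+1)^2\le 0$ simultaneously, which leaves exactly $a\in\{-\tfrac13,\,-\tfrac14,\,0\}$.

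For each of these three values the moment vector $(m_{(1^4)},m_{(3,1)},m_{(2,1^2)},m_{(2^2)})$ is completely determined, and I would then identify $\ell$ by comparison with a point evaluation, using that $\big(e_1(\underline{X}^2)^4,\;e_1(\underline{X}^2)e_3(\underline{X}^2),\;e_1(\underline{X}^2)^2e_2(\underline{X}^2),\;e_2(\underline{X}^2)^2\big)$ is an $\R$-basis of $H_{3,8}^{B_3}$, so that a functional there is pinned down by these four values (cf.\ Corollary $\ref{cor:ternaryOcticsDualToSOSDecomp}$ for the notation): $a=0$ yields the moment vector of $\ev_{(1,0,0)}$, $a=-\tfrac13$ a positive multiple of that of $\ev_{(1,1,1)}$, and $a=-\tfrac14$ a positive multiple of that of $\ev_{(1,1,0)}$. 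Hence $\ell$ is in every case a nonnegative scalar of a point evaluation; the values $a=0$ and $a=-\tfrac13$ recover the point evaluations at $(1,0,0)$ and $(1,1,1)$ already met in Lemma $\ref{le:38Teil2}$, and the value $a=-\tfrac14$ gives the assertion $\ell$ is a scalar of $\ev_{(1,1,0)}$. Equivalently, in each case one checks that every element of $W_\ell$ vanishes on the $B_3$-orbit of the corresponding point, so maximality of $\ker\ell$ via Lemma $\ref{le:MaxOfKernel}$ forces $\ell$ to be a scalar of that point evaluation.

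The main obstacle is bookkeeping rather than ideas: one must correctly recognise which symmetrised products generate $W_\ell^{\langle2\rangle}$ under $\alpha=\delta=1$ and then carry out the two $2\times2$ determinant factorisations above without sign slips. The conceptual point that makes the argument close is that $\delta=1$ collapses the fourth, scalar block of $(\ref{simpledmatrices})$ to $\{0\}$, and this extra linear equation removes precisely the degree of freedom that would otherwise keep a whole interval of values of $a$ feasible, leaving only the finitely many parameters that correspond to point evaluations.
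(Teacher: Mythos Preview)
Your approach is essentially the paper's: use $\delta=1$ to eliminate $m_{(3,1)}$ in terms of $a$, then exploit the positive-semidefiniteness constraints in $(\ref{simpledmatrices})$ to pin down $a$. Your determinant factorisations are correct and agree with the paper's computation of the second block.

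You are in fact more careful than the paper at the endpoints. The paper deduces ``$a\le -\tfrac14$'' from $m_{(3,1)}\ge 0$ and then ``not satisfied for $a<-\tfrac14$'' from the second determinant, thereby silently discarding $a=0$ and $a=-\tfrac13$, both of which make all the constraints hold (the determinant vanishes at $a=-\tfrac13$ and everything vanishes at $a=0$). You correctly retain all three values $a\in\{-\tfrac13,-\tfrac14,0\}$ and identify them with $\ev_{(1,1,1)}$, $\ev_{(1,1,0)}$, and $\ev_{(1,0,0)}$. The lemma as literally stated (``$\ell$ is a scalar of $\ev_{(1,1,0)}$'') is thus slightly too strong: the hypotheses $\alpha=\delta=1$ alone do not exclude the two boundary evaluations, which reappear in Lemmas~\ref{le:38Teil4} and~\ref{le:38Teil5} under the additional conditions $\gamma\ge 1$ resp.\ $\beta\ge 1$. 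Your version proves exactly what is needed for Theorem~\ref{Thm:DualEvenSymOct}, namely that every extremal $\ell$ with $\alpha=\delta=1$ is a point evaluation.

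One minor stylistic point: invoking the third determinant is not strictly necessary, since the diagonal entry $\tfrac13 m_{(3,1)}\ge 0$ already forces $a(4a+1)\ge 0$, and combining this with the second determinant $-\tfrac49 a(3a+1)^2(4a+1)\ge 0$ immediately gives $a(4a+1)=0$ or $3a+1=0$. But your route through both determinants is equally valid.
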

 \begin{proof}
$\delta=1$ means that $S^{((1),(1,1))} \subset W_\ell$ which implies $(X_3^2-X_2^2)X_2X_3 \in W_\ell$ and  $$-\frac{a}{3}-\frac{4a^2}{3}+m_{(3,1)}=0.$$ Positiveness yields $0 \leq m_{(3,1)} = \frac{1}{3}(a+4a^2)$ and therefore that $ a \leq -\frac{1}{4}$.
We use that the determinant of the second matrix in (\ref{simpledmatrices}) is non-negative, i.e., $$ 0 \leq \left(\frac{2}{3}+2a\right)\left(\frac{2}{3}a^2-2m_{(3,1)}\right)-\left( -\frac{1}{3}a-3m_{(3,1)} \right)^2 = -\frac{4}{9} a (1 + 3 a)^2 (1 + 4 a).$$ This is not satisfied for $ a < -\frac{1}{4}$. Hence $a = -\frac{1}{4}, m_{(1^4)}=1,m_{(3,1)}=0,m_{(2,1^2)}=\frac{1}{4}, m_{(2^2)} = \frac{1}{16}$ and $\ell$ is a scalar of $\ev_{\left(\frac{1}{\sqrt{2}},\frac{1}{\sqrt{2}},0\right)}$.  
 \end{proof}
 
 \begin{lemma}\label{le:38Teil4}
 Let $\ell \in \left(\Sigma_{3,8}^{B_3}\right)^\ast$ be an extremal element. If $\alpha = 1, \gamma \geq 1$, then $\ell$ is a scalar of a point-evaluation in $(1,0,0), (1,1,1)$ or $\left(\sqrt{\frac{1}{2}+\sqrt{a+\frac{1}{4}}},\sqrt{\frac{1}{2}-\sqrt{a+\frac{1}{4}}},0\right)$, for $ -\frac{1}{4}\leq a \leq 0$.
 \end{lemma}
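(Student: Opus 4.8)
The plan is to run the same kind of case analysis as in Lemmas~\ref{le:38Teil1}--\ref{le:38Teil3}: translate the two hypotheses $\alpha=1$ and $\gamma\ge 1$ into linear relations among the moments $m_{(1^4)},m_{(3,1)},m_{(2,1^2)},m_{(2^2)}$, combine them with the positive semidefiniteness conditions~(\ref{simpledmatrices}), and read off that the resulting functional on the four-dimensional space $H_{3,8}^{B_3}$ is a scalar multiple of a point evaluation. The reduction from $\alpha=1$ is already in place: as noted just before Lemma~\ref{le:38Teil3}, $\alpha=1$ together with $e_1(\underline{X}^2)^4\notin W_\ell$ (Lemma~\ref{le:38Teil1}) forces $a\,e_1(\underline{X}^2)^2+e_2(\underline{X}^2)\in W_\ell$ for some $a\in[-\tfrac13,0]$, and after normalising $m_{(1^4)}=1$ one has $m_{(2,1^2)}=-a$, $m_{(2^2)}=a^2$, with the semidefiniteness conditions reduced to the four matrices~(\ref{simpledmatrices}). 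From $\gamma\ge 1$ there is a nonzero element $c\,e_1(\underline{X}^2)X_2X_3+d\,X_1^2X_2X_3\in W_\ell$; symmetrising its products with the $S^{((1),(2))}$ part of the symmetry adapted basis of $H_{3,4}$ --- equivalently, imposing $\ell(B^{(3)})\cdot(c,d)^{t}=0$, as in the proof of Lemma~\ref{le:38Teil2} --- yields
\[
\tfrac{c}{3}\,m_{(2,1^2)}+d\,m_{(3,1)}=0,\qquad \bigl(c+\tfrac{d}{3}\bigr)m_{(3,1)}=0 .
\]

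I would then split on whether $m_{(3,1)}$ vanishes. If $m_{(3,1)}\neq 0$, the second relation forces $c=-d/3$, and normalising $(c,d)=(1,-3)$ the first one gives $m_{(3,1)}=\tfrac19 m_{(2,1^2)}=-\tfrac a9$ with $a\neq 0$. Substituting this into the second matrix of~(\ref{simpledmatrices}) makes it diagonal with entries $\tfrac23+2a$ and $\tfrac{2a}{9}(3a+1)$; the latter is non-positive for $a\in[-\tfrac13,0)$ and vanishes only at $a=-\tfrac13$, so $a=-\tfrac13$, which pins down $(m_{(1^4)},m_{(3,1)},m_{(2,1^2)},m_{(2^2)})=(1,\tfrac1{27},\tfrac13,\tfrac19)$. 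Evaluating the basis $\bigl(e_1(\underline{X}^2)^4, e_1(\underline{X}^2)e_3(\underline{X}^2), e_1(\underline{X}^2)^2e_2(\underline{X}^2), e_2(\underline{X}^2)^2\bigr)$ of $H_{3,8}^{B_3}$ at $(1,1,1)$, where $e_1,e_2,e_3$ of $\underline{X}^2$ take the values $3,3,1$, shows $\ell=\tfrac1{81}\ev_{(1,1,1)}$. If $m_{(3,1)}=0$, the first relation becomes $\tfrac{c}{3}m_{(2,1^2)}=0$. If $c\neq 0$ then $m_{(2,1^2)}=0$, hence $a=0$ and $m_{(2^2)}=0$, so $\ell=(1,0,0,0)$ on the basis above, i.e.\ a scalar of $\ev_{(1,0,0)}$. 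If $c=0$ then $d\neq 0$, so $X_1^2X_2X_3\in W_\ell$, which imposes no new moment relation; the remaining constraints are~(\ref{simpledmatrices}) with $m_{(3,1)}=0$, and a short computation gives determinant $\tfrac{a^2}{3}(1+4a)$ for the second matrix and value $-\tfrac a3(1+4a)$ for the last $1\times1$ block, so within the already-established range $a\in[-\tfrac13,0]$ these hold precisely for $a\in[-\tfrac14,0]$. Finally I would put $t:=\sqrt{a+\tfrac14}\in[0,\tfrac12]$ and $z:=\bigl(\sqrt{\tfrac12+t},\sqrt{\tfrac12-t},0\bigr)\in\mathbb{S}^{2}$; since $e_1,e_2,e_3$ of $(z_1^2,z_2^2,z_3^2)=(\tfrac12+t,\tfrac12-t,0)$ equal $1$, $\tfrac14-t^2=-a$, $0$ respectively, the functional $\ev_z$ takes the values $(1,0,-a,a^2)=\ell$ on the basis, so $\ell$ is a scalar of $\ev_{(\sqrt{1/2+\sqrt{a+1/4}},\,\sqrt{1/2-\sqrt{a+1/4}},\,0)}$, which at $a=0$ recovers $(1,0,0)$.

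The computations are routine, and no single step is a real obstacle; the one point needing an idea rather than a calculation is the last one, recognising the one-parameter family of moment vectors $(1,0,-a,a^2)$, $a\in[-\tfrac14,0]$, as point evaluations. Looking for the evaluating point $z$ inside the $B_3$ coordinate hyperplane arrangement --- so that $z$ has a zero entry and $X_1^2X_2X_3$ automatically lies in $W_\ell$ --- together with the requirements $e_1=1$ and $e_2=-a$ essentially forces the substitution $t=\sqrt{a+\tfrac14}$, after which the verification is immediate. Besides that, the only care needed is keeping the relations coming from $B^{(3)}$ and from~(\ref{simpledmatrices}) straight, and observing that the sub-case $\gamma=2$ is harmless, since then $e_1(\underline{X}^2)X_2X_3\in W_\ell$ already forces $m_{(2,1^2)}=m_{(3,1)}=0$, i.e.\ $a=0$, which is covered above.
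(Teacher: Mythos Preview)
Your argument is correct and follows essentially the same route as the paper's proof: translate $\alpha=1$ and $\gamma\ge 1$ into the moment relations, plug into the reduced semidefiniteness conditions~(\ref{simpledmatrices}), and identify the resulting functionals as point evaluations. The only organisational difference is that you branch first on whether $m_{(3,1)}$ vanishes, whereas the paper branches on whether the coefficient of $e_1(\underline{X}^2)X_2X_3$ (your $c$, their $b$) vanishes; the two splits cover the same ground and lead to the same three outcomes, and your explicit verification of $\ell=\ev_z$ via $e_1(\underline{z}^2)=1,\ e_2(\underline{z}^2)=-a,\ e_3(\underline{z}^2)=0$ is in fact a bit more transparent than the paper's statement of the result.
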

 \begin{proof}
 It is $S^{((1),(2))} \subset W_\ell$, i.e., for a pair $(b,c) \in \R^2 \setminus \{(0,0)\}$  $$be_1(\underline{X}^2)X_2X_3+cX_1^2X_2X_3 \in W_\ell$$ and their symmetrized products with elements in $H_{3,4}$ are contained in $W_\ell^{\langle 2 \rangle}$, i.e., \begin{align*}
    0=& b \frac{-a}{3}+cm_{(3,1)}, \\  0=&bm_{(3,1)}+\frac{c}{3}m_{(3,1)}.
 \end{align*} Inserting $\frac{ab}{3}=cm_{(3,1)}$ in the second equation gives $b \left( \frac{a}{9}+m_{(3,1)}\right) = 0.$ 
 \begin{itemize}
     \item[a)] We first assume that $b \neq 0$. Then $m_{(3,1)} = -\frac{a}{9}$. In this case we obtain from the positive semidefiniteness of the second matrix in (\ref{simpledmatrices}) that $$ 0 \leq \frac{2}{3}a^2-2m_{(3,1)} = \frac{2}{3}a(a + \frac{1}{3}).$$ Thus $a \in \{0,-\frac{1}{3}\}.$ If $a=0$ then $m_{(3,1)} =m_{(2,1^2)}=m_{(2^2)}=0$ and $\ell = \ev_{(1,0,0)}$. For $ a = -\frac{1}{3}$ it is $m_{(3,1)}=\frac{1}{27}, m_{(2,1^2)}= \frac{1}{3}, m_{(2^2)} = \frac{1}{9}$ and $\ell = \ev_{\left( \frac{1}{\sqrt{3}} ,\frac{1}{\sqrt{3}},\frac{1}{\sqrt{3}} \right)}.$
     \item[b)] In the remaining case $b = 0$ we can assume by linearity of $W_\ell$ that $c=1$, which implies $m_{(3,1)} = 0$. By the non-negativity of the last $1 \times 1$ matrix in (\ref{simpledmatrices}), i.e., $$0 \leq -\frac{a}{3}-\frac{4a^2}{3}+m_{(3,1)}$$ we obtain $-\frac{1}{4}\leq a \leq 0$. However, for any such $-\frac{1}{4} \leq a \leq 0$ it is $m_{(1^4)}=1,m_{(3,1)}=-a,m_{(2,1^2)}=a^2,m_{(2^2)}=0$ and $\ell = \ev_{ \left(\sqrt{\frac{1}{2}+\sqrt{a+\frac{1}{4}}},\sqrt{\frac{1}{2}-\sqrt{a+\frac{1}{4}}},0\right)}.$ 
 \end{itemize}
 \end{proof}
 
 \begin{lemma}\label{le:38Teil5}
  Let $\ell \in \left(\Sigma_{3,8}^{B_3}\right)^\ast$ be an extremal element. If $\alpha = \beta= 1$, then $\ell$ is a scalar of a point-evaluation in $\left( \sqrt{\frac{1+2\sqrt{1+3a}}{3}},\sqrt{\frac{1-\sqrt{1+3a}}{3}},\sqrt{\frac{1-\sqrt{1+3a}}{3}}\right)$, for $-\frac{1}{3} \leq a \leq 0$, or at \\
  $\left( \sqrt{\frac{1-2\sqrt{1+3b}}{3}}, \frac{\sqrt{1+\sqrt{1+3b}}}{3}, \frac{\sqrt{1+\sqrt{1+3b}}}{3}\right)$, for $- \frac{1}{3} \leq b \leq -\frac{1}{4}$. 
 \end{lemma}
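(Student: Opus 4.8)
The strategy is to carry the case distinction of Lemmas~\ref{le:38Teil3} and \ref{le:38Teil4} one step further, exploiting that $S^{((2,1),\emptyset)}\subset W_\ell$. Recall from the paragraph preceding the lemma that $\alpha=1$ forces, after normalising $m_{(1^4)}=1$, the relations $m_{(2,1^2)}=-a$ and $m_{(2^2)}=a^2$ with $a\in[-\tfrac13,0]$, and that the semidefiniteness conditions of Corollary~\ref{cor:ternaryOcticsDualToSOSDecomp} reduce to the four matrices in \eqref{simpledmatrices}.

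The first step is to translate $\beta=1$, i.e. $S^{((2,1),\emptyset)}\subset W_\ell$, into the statement that the positive semidefinite $2\times 2$ matrix $\ell(B^{(2)})$ --- the second matrix of \eqref{simpledmatrices} --- has a nontrivial kernel, hence $\det\ell(B^{(2)})=0$. Writing $t:=m_{(3,1)}$ and expanding this determinant gives the quadratic
\[ 27\,t^2+(18a+4)\,t-4a^3-a^2=0 , \]
whose discriminant equals $16(3a+1)^3$. Using the substitution $3a+1=u^2$, $u\in[0,1]$ (so $a=\tfrac{u^2-1}{3}$), the two roots factor as
\[ t_+=\frac{(1-u)^2(1+2u)}{27}\qquad\text{and}\qquad t_-=\frac{(1+u)^2(1-2u)}{27}. \]

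The second step is to identify each branch with a point-evaluation. For the point $z_+:=\bigl(\sqrt{\tfrac{1+2u}{3}},\sqrt{\tfrac{1-u}{3}},\sqrt{\tfrac{1-u}{3}}\bigr)$ a direct evaluation of $e_1(\underline{X}^2),e_2(\underline{X}^2),e_3(\underline{X}^2)$ (equivalently, the squared coordinates of $z_+$ are the roots of $T^3-T^2-aT-t_+$) shows that $\ev_{z_+}$ takes the values $(1,\,t_+,\,-a,\,a^2)$ on the basis $\bigl(e_1(\underline{X}^2)^4,\,e_1(\underline{X}^2)e_3(\underline{X}^2),\,e_1(\underline{X}^2)^2e_2(\underline{X}^2),\,e_2(\underline{X}^2)^2\bigr)$ of $H_{3,8}^{B_3}$ from Remark~\ref{rmk:isotypic decomp H38Bn}; since by the first step $\ell$ takes exactly these values on the same basis, $\ell=\ev_{z_+}$ whenever $t=t_+$, and $z_+$ is real for every $a\in[-\tfrac13,0]$ --- this is the first family. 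On the branch $t=t_-$ the analogous computation identifies $\ell$ with $\ev_{z_-}$ for $z_-:=\bigl(\sqrt{\tfrac{1-2u}{3}},\sqrt{\tfrac{1+u}{3}},\sqrt{\tfrac{1+u}{3}}\bigr)$; here $z_-$ is real only if $1-2u\ge 0$, i.e. $b:=a\in[-\tfrac13,-\tfrac14]$, and for $u>\tfrac12$ the third matrix of \eqref{simpledmatrices} is not positive semidefinite --- its $(2,2)$-entry $\tfrac13 m_{(3,1)}=\tfrac13 t_-$ being negative --- so no extremal $\ell$ occurs there; this is the second family.

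The step I expect to be the main obstacle is the determinant computation together with recognising the two factorisations --- bringing the discriminant to $16(3a+1)^3$ and the roots to $(1\mp u)^2(1\pm 2u)/27$ --- and the bookkeeping needed to confirm that on the admissible ranges the remaining matrices in \eqref{simpledmatrices} stay positive semidefinite. This last point is streamlined by the observation that, once $\ell$ has been identified with a genuine point-evaluation $\ev_z$, its membership in $\bigl(\Sigma_{3,8}^{B_3}\bigr)^\ast$ is automatic, so only the reality of $z$, and hence which parameter range survives a semidefiniteness constraint, has to be tracked.
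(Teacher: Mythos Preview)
Your proof is correct and follows the same route as the paper: both arguments extract the value of $m_{(3,1)}$ from the condition $S^{((2,1),\emptyset)}\subset W_\ell$ and then identify $\ell$ with a point-evaluation of the form $(x,y,y)$. The paper parameterises the kernel vector of $\ell(B^{(2)})$ by a pair $(b,c)$, solves the resulting two linear equations (with a separate case split for $b=0$ or $c=0$, and a quadratic in $c$ otherwise), and then computes $m_{(3,1)}$ from $c$; you instead go directly through $\det\ell(B^{(2)})=0$, which collapses those cases into a single quadratic in $t=m_{(3,1)}$, and your substitution $u=\sqrt{3a+1}$ makes the factorisations $t_\pm=(1\mp u)^2(1\pm 2u)/27$ transparent. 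Your closing observation --- that once $\ell$ is recognised as a genuine real point-evaluation the remaining semidefiniteness constraints in \eqref{simpledmatrices} are automatic --- is also a tidy replacement for the paper's explicit inequality $m_{(3,1)}\ge 0$ used to rule out the range $a\in(-\tfrac14,0]$ on the second branch.
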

 \begin{proof}
  If $\beta = 1$ then $S^{((2,1),\emptyset)} \subset W_\ell$, i.e., for a pair $(b,c) \in \R^2 \setminus \{(0,0)\}$  $$be_1(\underline{X}^2)(X_3^2-X_1^2)+c(X_2^2X_3^2-X_1^2X_2^2) \in W_\ell$$ and their symmetrized products with elements in $H_{3,4}$ are contained in $W_\ell^{\langle 2 \rangle}$, i.e.,  \begin{align*}
      0=& b \left( \frac{2}{3}+2a \right) + c \left( -\frac{1}{3}a-3m_{(3,1)} \right), \\ 0=& b \left( -\frac{1}{3}a-3m_{(3,1)}\right) + c \left( \frac{2}{3}a^2-2m_{(3,1)}\right).
  \end{align*}
  We distinguish two cases:
  \begin{itemize}
      \item[i)] If $b =0,c=1$ or if $b=1,c=0$ then $-\frac{1}{3}=a, m_{(3,1)} = \frac{1}{27}$ and $\ell = \ev_{\left( \frac{1}{\sqrt{3}},\frac{1}{\sqrt{3}},\frac{1}{\sqrt{3}}\right)}.$
      \item[ii)] We continue with the remaining case $b\neq 0$ and $c \neq 0$.
Since $W_\ell$ is a vector space we assume without loss of generality that $b=1$ and obtain $m_{(3,1)} = \frac{2}{9c}+\frac{2a}{3c}-\frac{a}{9}$ and $\frac{2(1+3a)(-3-2c+ac^2)}{9c}=0$. Hence $a=\frac{-1}{3}$ (then $\ell = \ev_{(\frac{1}{\sqrt{3}},\frac{1}{\sqrt{3}},\frac{1}{\sqrt{3}})}$) or $-3-2c+ac^2=0$. If $a=0$ then $c=-\frac{3}{2}$ and $m_{(3,1)}= -\frac{4}{27}$ which does not satisfy the positive semidefiniteness conditions. 
If $-\frac{1}{3} < a < 0$ then either $c = \frac{1}{a}-\sqrt{\frac{1+3a}{a^2}}$ or $c = \frac{1}{a}+\sqrt{\frac{1+3a}{a^2}}$.  \\
 In the first case it is $m_{(1^4)}=1,m_{(3,1)} = \frac{a\left(1+a\left(6+\sqrt{\frac{1+3a}{a^2}}\right)\right)}{9-9a\sqrt{\frac{1+3a}{a^2}}}, m_{(2,1^2)}=-a,m_{(2^2)}=a^2$. For any $-\frac{1}{3}< a < 0$ $\ell$ is the point-evaluation at  $\left( \sqrt{\frac{1+2\sqrt{1+3a}}{3}},\sqrt{\frac{1-\sqrt{1+3a}}{3}},\sqrt{\frac{1-\sqrt{1+3a}}{3}}\right)$. \\
 In the second case it is $m_{(1^4)}=1,m_{(3,1)} = \frac{a\left(1-a\left(-6+\sqrt{\frac{1+3a}{a^2}}\right)\right)}{9+9a\sqrt{\frac{1+3a}{a^2}}}, m_{(2,1^2)}=-a,m_{(2^2)}=a^2$. However, $m_{(3,1)} \geq 0$ is equivalent to $- \frac{1}{3} < a \leq -\frac{1}{4}.$ For any $- \frac{1}{3} < a \leq -\frac{1}{4}$ $\ell$ is the point-evaluation at $\left( \sqrt{\frac{1-2\sqrt{1+3a}}{3}}, \frac{\sqrt{1+\sqrt{1+3a}}}{3}, \frac{\sqrt{1+\sqrt{1+3a}}}{3}\right).$ 
  \end{itemize}
 \end{proof}

 \begin{proof}[Proof of Theorem \ref{Thm:DualEvenSymOct}]
 In Lemmas \ref{le:38Teil1}, \ref{le:38Teil2}, \ref{le:38Teil3}, \ref{le:38Teil4} and \ref{le:38Teil5} we have seen that the extremal rays in $\left( \Sigma_{3,8}^{B_3} \right)^\ast$ are all generated by point-evaluations. Those generators are the point-evaluations at elements in the set $$\left\{\left(a,\sqrt{1-a^2},0\right),{(b,c,c)} : \frac{1}{2} \leq a \leq 1, 0 \leq b \leq 1, c = \frac{1}{\sqrt{2}}\sqrt{(1-b^2)} \right\}.$$
 \end{proof}
 
 \begin{cor}
 The set of non-negative even symmetric ternary octics $\mathcal{P}_{3,8}^{B_3}$ is the convex cone generated by the following six forms \begin{align*}
    & e_1(\underline{X}^2)^4-3e_1(\underline{X}^2)^2e_2(\underline{X}^2),  -9e_1(\underline{X}^2)e_3(\underline{X}^2)+e_1(\underline{X}^2)^2e_2(\underline{X}^2), e_2(\underline{X}^2)^2-3e_1(\underline{X}^2)e_3(\underline{X}^2), \\ & e_1(\underline{X}^2)^2e_2(\underline{X}^2),   e_1(\underline{X}^2)e_3(\underline{X}^2), 3e_1(\underline{X}^2)e_3(\underline{X}^2)-4e_2(\underline{X}^2)^2+e_1(\underline{X}^2)^2e_2(\underline{X}^2) 
 \end{align*}
 and the following two families of forms
 \begin{align*}
      \left( \begin{array}{c}
          ae_1(\underline{X}^2)^4 +e_1(\underline{X}^2)e_2(\underline{X}^2), ae_1(\underline{X}^2)e_2(\underline{X}^2)     + e_2(\underline{X}^2)^2 : -\frac{1}{3} \leq a \leq 0
      \end{array}  \right)
 \end{align*}
 \end{cor}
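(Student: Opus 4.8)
By Corollary~\ref{COR Harris} we have $\mathcal{P}_{3,8}^{B_3}=\Sigma_{3,8}^{B_3}$, so by Minkowski's theorem it suffices to show that every extreme ray of this proper cone is spanned by one of the six listed forms or by a member of one of the two families. The plan has two halves: checking that each listed form lies in $\mathcal{P}_{3,8}^{B_3}$, and checking that no extreme ray is missing.

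For the first half I would simply observe that each listed form is manifestly a sum of squares (for instance $e_1(\underline{X}^2)^4-3e_1(\underline{X}^2)^2e_2(\underline{X}^2)=e_1(\underline{X}^2)^2\cdot\tfrac12\sum_{i<j}(X_i^2-X_j^2)^2$, and $e_1(\underline{X}^2)e_3(\underline{X}^2)$ is a sum of squares times $(X_1X_2X_3)^2$), or equivalently verify that the four matrices of Corollary~\ref{cor:ternaryOcticsDualToSOSDecomp}, evaluated at the coordinates of the form, are positive semidefinite, which is an elementary $2\times2$ computation; the members of the two families are of the same Schur type.

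For the second half I would work on the dual side with Theorem~\ref{Thm:DualEvenSymOct}: since $(\Sigma_{3,8}^{B_3})^{\ast}$ is the conic hull of the point-evaluations along the two arcs $z_1(a)=(a,\sqrt{1-a^2},0)$ and $z_2(b)=(b,c,c)$ with $c=\sqrt{(1-b^2)/2}$, a form $f\in H_{3,8}^{B_3}$, written in the basis $\bigl(e_1(\underline{X}^2)^4,\;e_1(\underline{X}^2)e_3(\underline{X}^2),\;e_1(\underline{X}^2)^2e_2(\underline{X}^2),\;e_2(\underline{X}^2)^2\bigr)$, lies in $\mathcal{P}_{3,8}^{B_3}$ if and only if the restriction $q_1:=f(z_1(a))$, a quadratic in $s:=a^2-a^4\in[0,\tfrac14]$, and the restriction $q_2:=f(z_2(b))$, a quartic in $v:=1-b^2\in[0,1]$, are nonnegative on these intervals. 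A form $f$ spans an extreme ray exactly when the active vanishing conditions of $q_1$ and $q_2$ — a double root in the interior of the relevant interval, or simple roots at its endpoints (equivalently vanishing at the orbit points $(1,0,0)$, $(1,1,0)$, $(1,1,1)$, $(\tfrac1{\sqrt3},\tfrac1{\sqrt3},\tfrac1{\sqrt3})$), together with the fact that the arc $z_1$ lies in a hyperplane — pin down the four coordinates of $f$ up to scaling. Running through the finitely many simultaneously realizable patterns (this is the same distinction on the isotypic multiplicities $\alpha,\beta,\gamma,\delta$ as in Lemmas~\ref{le:38Teil1}--\ref{le:38Teil5}, now read on the primal cone) shows that every extreme ray is spanned by one of the six listed forms or by a member of one of the two one-parameter families; the families arise as the double root of $q_2$ slides through the interior of $[0,1]$ while the endpoint conditions on $q_1$ stay fixed, and the parameter can be matched to the $a\in[-\tfrac13,0]$ appearing in Lemmas~\ref{le:38Teil4} and~\ref{le:38Teil5}.

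The main obstacle is the bookkeeping of this case analysis rather than any single computation: $q_1$ does not involve the $e_1(\underline{X}^2)e_3(\underline{X}^2)$-coordinate, so a double root of $q_1$ only fixes a ray in a $3$-dimensional coordinate subspace and must be supplemented by a condition from $q_2$, while a double root of $q_2$ by itself provides only two of the three conditions needed for extremality; deciding which combinations of root locations can occur together, and discarding those whose solution violates the remaining inequality, is precisely the work carried out (dually) in Lemmas~\ref{le:38Teil1}--\ref{le:38Teil5}. Once the extreme rays are identified, the corollary follows from Minkowski's theorem.
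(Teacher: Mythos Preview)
Your proposal is correct and follows essentially the same route as the paper: use Corollary~\ref{COR Harris} to identify $\mathcal{P}_{3,8}^{B_3}$ with $\Sigma_{3,8}^{B_3}$, invoke Minkowski's theorem, and read off the extreme rays from the classification of extremal dual elements in Lemmas~\ref{le:38Teil1}--\ref{le:38Teil5}. The paper's own proof is a single sentence (``these are precisely the sums of squares elements contained in the kernels of extremal rays of $(\Sigma_{3,8}^{B_3})^\ast$''), whereas you spell out explicitly how that translation from dual to primal is to be organized --- via the two univariate restrictions $q_1,q_2$ along the arcs of Theorem~\ref{Thm:DualEvenSymOct} --- and what the bookkeeping looks like; but the underlying argument and the case analysis it rests on are the same.
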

 \begin{proof}
 These are precisely the sums of squares elements contained in the kernels of extremal rays of $\left(\Sigma_{3,8}^{B_3}\right)^\ast$. Since by Corollary \ref{COR Harris} $\Sigma_{3,8}^{B_3} = \mathcal{P}_{3,8}^{B_3}$, these are also precisely the elements in the boundary of the pointed convex cone $\mathcal{P}_{3,8}^{B_3}$. The claim follows from Minkowski's theorem.
 \end{proof}
 
 \begin{remark}
In \cite{harris1999real} Harris showed that $\Omega := \{(a,a,b),(0,a,b) : a,b \in \R_{\geq 0}\}$ is a test set for even symmetric ternary octics and used this as main ingredient in his proof of equality. In fact, our description in Theorem \ref{Thm:DualEvenSymOct} provides the subset of $\Omega$ consisting of all points of norm $1$, which was derived by describing $\left( \Sigma_{3,8}^{B_3}\right)^\ast$. \\
It is worth to point out that Harris result does not follow from Hilbert's equality case $\Sigma_{3,4}^{\mathfrak{S}_3} = \mathcal{P}_{3,4}^{\mathfrak{S}_3}$ for the symmetric group under canonical identification via the $\mathfrak{S}_3$-isomorphism $$
\abb{\Phi}{ H_{3,8}^{B_3}}{H_{3,4}^{\mathfrak{S}_3}}{\sum_{\alpha \in 2\N_0^3}c_\alpha \underline{X}^\alpha}{\sum_{\alpha \in 2\N_0^3} c_\alpha \underline{X}^{\frac{1}{2}\alpha}}.$$ For $g \in H_{3,4}^{\mathfrak{S}_3}$ it is $\Phi^{-1}(g)=g(X_1^2,X_2^2,X_3^2)$. Then $g$ is non-negative on the first orthant if and only if $\Phi^{-1}(g)$ is non-negative. However, the example $$f := e_1(\underline{X}^2)e_3(\underline{X}^2)=(X_1^2+X_2^2+X_3^2)(X_1^2X_2^2X_3^2) \in \mathcal{P}_{3,8}^{B_3}$$ with $\Phi (f)(-1,-1,1) = -1 < 0$ shows $\mathcal{P}_{3,4}^{\mathfrak{S}_3} \subsetneq \Phi (\mathcal{P}_{3,8}^{B_3})$.
  \end{remark}

 We demonstrate the stabilizing process from Theorem \ref{thm:ModulesStabilize} of $B_n$-Specht modules in $H_{n,d}$ for a fixed degree and large enough number of variables for even symmetric octics. This allows a uniform description of the sums of squares sets $\Sigma_{n,8}^{B_n}$, as stated in Corollary \ref{cor:ModulesStabilize}. \\
 We work with power means $p_i^{(n)}:= \frac{1}{n}\sum_{j=1}^nX_j^i \in \R[\underline{X}]^{\mathfrak{S}_n}$ instead of power sums. The upper index $n$ denotes that $p_i^{(n)}$ is a power mean in $n$ variables. Furthermore, for a partition $\lambda = (\lambda_1,\ldots,\lambda_l)$ we write $p_\lambda^{(n)}:= p_{\lambda_1}^{(n)}\cdot \ldots \cdot p_{\lambda_l}^{(n)}$. A reason for working with power means is that they are weighted, i.e., for any $i,n$ it is $p_{i}^{(n)}(1,1,\ldots,1) = 1, p_{i}^{(n)}(1,0,\ldots,0) = \frac{1}{n}$.
 \begin{lemma} \label{lem:Bnisotypicdecomp}
The $B_n$-isotypic decomposition of $H_{n,4}$ for $n \geq 4$ is \begin{align*} 2 \cdot S^{((n),\emptyset)} \oplus 2 \cdot S^{((n-1,1),\emptyset)} \oplus S^{((n-2,2),\emptyset)} \oplus 2 \cdot S^{((n-2),(2))}\oplus S^{((n-2),(1,1))} \oplus S^{((n-3,1),(2))} \oplus S^{((n-4),(4))}. \end{align*}
A symmetry adapted basis for $H_{n,4}$ realising the $B_n$-isotypic decomposition is generated by the following seven sets of polynomials
\begin{align*}
     S^{((n),\emptyset)} & : \left\{ p_{(4)}^{(n)},p_{(2^2)}^{(n)}\right\},& 
     S^{((n-1,1),\emptyset)} & :\left\{(X_n^2-X_1^2)p_{(2)}^{(n)}, X_n^4-X_1^4\right\}, \\
     S^{((n-2,2),\emptyset)} & :\left\{ (X_1^2-X_3^2)(X_2^2-X_4^2)\right\},& 
     S^{((n-2),(2))} & : \left\{X_{n-1}X_np_{(2)}^{(n)}, (X_{n-1}^2+X_n^2)X_{n-1}X_n\right\}, \\
     S^{((n-2),(1,1))} & : \left\{(X_n^2-X_{n-1}^2)X_{n-1}X_n\right\},&   S^{((n-4),(4))} & : \left\{X_1X_2X_3X_4\right\}, \\
     S^{((n-3,1),(2))} & : \left\{(X_n^2-X_1^2)X_{n-2}X_{n-1}\right\}.
\end{align*}
\end{lemma}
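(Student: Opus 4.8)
The plan is to compute the decomposition and the basis directly from the higher Specht machinery, exactly in the spirit of the proof of Lemma~\ref{lemma:Harris}. By Theorem~\ref{thm:coinvariant algebra} and Corollary~\ref{cor:deomposition of polynomial ring} a symmetry adapted basis of $H_{n,4}$ is obtained by taking a symmetry adapted basis of the coinvariant algebra $\R[\underline{X}]_{B_n}$ consisting of higher Specht polynomials and multiplying its elements by monomials in the fundamental invariants $p_2,p_4,\dots,p_{2n}$ so that the total degree becomes $4$. Since $n\geq 4$, the only fundamental invariants of degree $\leq 4$ are $p_2$ and $p_4$, so the admissible ``invariant parts'' are $1$ (degree $0$), $p_2$ (degree $2$), and $p_2^2$ or $p_4$ (degree $4$). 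Equivalently, by Corollary~\ref{cor:bound} together with $N_{B_n}(0)=N_{B_n}(2)=1$, $N_{B_n}(4)=2$ and $N_{B_n}(1)=N_{B_n}(3)=0$, the multiplicity of $S^\Lambda$ in $H_{n,4}$ equals
\[
q_4^\Lambda \;=\; 2\,h_0^\Lambda+h_2^\Lambda+h_4^\Lambda,\qquad
h_k^\Lambda=|\{S\in\SYT(\Lambda):\,2\ch(S)+|\mu|=k\}|,\ \ \Lambda=(\lambda,\mu).
\]
Thus it suffices to enumerate the higher Specht polynomials of $B_n$ of degrees $0,2,4$ and then multiply them by the appropriate invariant monomials.

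The enumeration is a charge computation of the same flavour as Lemmas~\ref{le:lemma1}--\ref{le:lemma4}: count how many values $m$ appear to the left of $m-1$ in the column word. One checks $h_0^\Lambda=1$ only for $\Lambda=((n),\emptyset)$ (the constant). For $h_2^\Lambda$ one needs either $|\mu|=0$, $\ch(S)=1$, which forces the shape $(n-1,1)$ with the unique $S$ having $n$ in the lower box; or $|\mu|=2$, $\ch(S)=0$, which forces $w(S)=(1,2,\dots,n)$, hence $S^1,S^2$ single rows and $\Lambda=((n-2),(2))$, again with a unique tableau. For $h_4^\Lambda$ one runs through the three cases $(|\mu|,\ch(S))\in\{(0,2),(2,1),(4,0)\}$; the inversion bookkeeping shows that the only multipartitions admitting a standard tableau of the required charge are $((n-1,1),\emptyset),((n-2,2),\emptyset)$ in case $(0,2)$; $((n-2),(2)),((n-2),(1,1)),((n-3,1),(2))$ in case $(2,1)$; and $(\emptyset,(4))$ in case $(4,0)$, each with exactly one such tableau (for instance $\ch(S)=1$ for $\Lambda=((n-3,1),(2))$ forces $n$ in the lower box of the first column of $S^1$, $S^2=\{n-2,n-1\}$ and everything else determined). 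Summing, $q_4^\Lambda=2$ for $\Lambda\in\{((n),\emptyset),((n-1,1),\emptyset),((n-2),(2))\}$, $q_4^\Lambda=1$ for $((n-2,2),\emptyset),((n-2),(1,1)),((n-3,1),(2)),(\emptyset,(4))$, and $q_4^\Lambda=0$ otherwise; the identity $\sum_\Lambda q_4^\Lambda\dim S^\Lambda=\binom{n+3}{4}$ confirms that nothing has been missed, and for $n=3$ the three ``extra'' labels $((1,2),\emptyset),((0,1),(2)),((-1),(4))$ are illegal and the decomposition collapses to that of Lemma~\ref{lemma:Harris}.

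For the explicit basis one evaluates, for each of the seven multipartitions, the higher Specht polynomial $\widehat{F}_T^S=F_T^S(\underline{X}^2)\cdot\prod_{j\in T^2}X_j$ at the distinguished tableau (with $T=S$), using $F_T^S=\epsilon_T\,\underline{X}_T^S$: the antisymmetrization over the first column of $T^1$ (which has length at most $2$ in every relevant case) produces a factor $X_i^2-X_j^2$ and the row symmetrization produces the symmetric factors, which are rewritten in terms of $p_2^{(n)}$. For the multiplicity-one blocks this already gives $(X_1^2-X_3^2)(X_2^2-X_4^2)$, $(X_n^2-X_{n-1}^2)X_{n-1}X_n$, $(X_n^2-X_1^2)X_{n-2}X_{n-1}$ and (up to a $B_n$-translation) $X_1X_2X_3X_4$. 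For the multiplicity-two blocks one takes the degree-$2$ higher Specht polynomial times $p_2^{(n)}$ together with the degree-$4$ one, and replaces the latter by the $B_n$-equivalent simpler representatives $p_4^{(n)}$ and $p_{(2^2)}^{(n)}$, $X_n^4-X_1^4$, and $(X_{n-1}^2+X_n^2)X_{n-1}X_n$; since these lie in the same irreducible submodules and transform identically under $B_n$, they correspond under the canonical isomorphisms of Theorem~\ref{thm:irreducible reps}, so the resulting ten polynomials form a symmetry adapted basis once one checks their linear independence.

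The main obstacle is the charge case analysis of the second paragraph: one must argue, uniformly in $n\geq 4$, that every shape with $|\lambda|\geq n-4$ outside the seven listed labels has all its relevant standard tableaux of too large charge, and that in the surviving cases the count is exactly one -- precisely the type of argument made in Lemmas~\ref{le:lemma1}--\ref{le:lemma4}. An alternative is to invoke Theorem~\ref{thm:ModulesStabilize} with $d=4$ (using the sharper bounds of Lemmas~\ref{le:lemma3}--\ref{le:lemma4}) to reduce to a finite check for the first few values of $n$, with $n=3$ supplied by Lemma~\ref{lemma:Harris}, at the cost of losing the transparent description of the basis.
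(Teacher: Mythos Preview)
Your approach is essentially the same as the paper's: both proceed by enumerating the higher Specht polynomials of degree $\leq 4$ via the constraint $2\ch(S)+|\mu|\leq 4$, multiplying by the degree-$\leq 4$ invariants $1,p_2,p_2^2,p_4$, and then replacing the raw higher Specht polynomials in the multiplicity-two blocks by the simpler $B_n$-equivalent representatives (the paper makes exactly this last substitution explicit for $X_n^4-X_1^4$ and $(X_{n-1}^2+X_n^2)X_{n-1}X_n$). One slip to fix: in the case $(|\mu|,\ch(S))=(4,0)$ the multipartition is $((n-4),(4))$, not $(\emptyset,(4))$---your own reasoning (``$S^1,S^2$ single rows'') gives $|\lambda|=n-4$, and your basis element $X_1X_2X_3X_4$ and the dimension check both rely on the correct label.
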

\begin{proof}
We determine the multiplicity of an irreducible $B_n$-module $S^{(\lambda,\mu)}$ in $H_{n,4}$ for a bipartition $(\lambda,\mu) \vdash n$ using Theorem \ref{thm:irreducible reps}. We can exclude some bipartitions immediately. The fundamental invariants of degree $\leq 4$ are of degree $2$ and $4$. Only $(\lambda,\mu)$ such that $\mu \vdash n_2$, with $n_2 \leq 4$ can occur, since a corresponding higher Specht polynomial has as a factor the monomial consisting of all products of the $X_i$'s, where $i$ ranges over the entries of the second bitableau. Furthermore, we only need to consider partitions $(\lambda,\mu)$ such that $|\mu|$ is even because a factor of the higher Specht polynomial is of degree $|\mu|$, while the additional factor has even degree. We can restrict us to bipartitions $(\lambda,\mu)$ such that there exist $(T,S) \in \SYT (\lambda,\mu)$ with $2\ch (T,S) +|\mu| \leq 4 $. Therefore a charge $\leq 2$ is necessary. We calculated all relevant higher Specht polynomials for $n \geq 4$:
\begin{align*}
     S^{((n),\emptyset)} & : \left\{1\right\},& 
     S^{((n-1,1),\emptyset)} & :\left\{ X_n^2-X_1^2,\frac{1}{n}\sum_{i=2}^{n-1}X_i^2(X_n^2-X_1^2)\right\}, \\
     S^{((n-2,2),\emptyset)} & : \left\{(X_1^2-X_3^2)(X_2^2-X_4^2)\right\}, &
     S^{((n-2),(2))} & : \left\{X_{n-1}X_n, \frac{1}{n-2}(X_1^2+\ldots+X_{n-2}^2)X_{n-1}X_n\right\}, \\
     S^{((n-2),(1,1))} & : \left\{(X_n^2-X_{n-1}^2)X_{n-1}X_n\right\},& 
     S^{((n-4),(4))} & : \left\{X_1X_2X_3X_4\right\}, \\
     S^{((n-3,1),(2))} & : \left\{(X_n^2-X_1^2)X_{n-2}X_{n-1}\right\}.
\end{align*}
Multiplying them with the weighted power sums gives the $B_n$-symmetry adapted basis of $H_{n,4}$.
However, since \begin{align*}
X_n^4-X_1^4 &\in \langle p_2^{(n)}(X_n^2-X_1^2),\frac{1}{n}\sum_{i=2}^{n-1}X_i^2(X_n^2-X_1^2) \rangle_\R,  \\
(X_{n-1}^2+X_n^2)X_{n-1}X_n &\in \langle
     p_2^{(n)} X_{n-1}X_n, \frac{1}{n-2}(X_1^2+\ldots+X_{n-2}^2)X_{n-1}X_n \rangle_\R,
\end{align*} we can work with the above mentioned symmetry adapted basis. 
\end{proof}
\begin{thm} \label{thm:EvenSymSOSOctics}
Let $n \geq 4$. An even symmetric $n$-ary octic $f \in H_{n,8}^{B_n}$ is a sum of squares if and only if there exist positive semidefinite matrices $A^{((n),\emptyset)},A^{((n-1,1),\emptyset)},A^{((n-2,2),\emptyset)},A^{((n-2),(2))} \in \R^{2 \times 2}$ and $A^{((n-2),(1,1))},A^{((n-4),(4))},A^{((n-3,1),(2))} \in \R_{\geq 0}^{1 \times 1}$ such that \begin{align*}
    \mathfrak{f} & = \langle A^{((n),\emptyset)}B^{((n),\emptyset)} \rangle +\langle A^{((n-1,1),\emptyset)}B^{((n-1,1),\emptyset)} \rangle+\langle A^{((n-2,2),\emptyset)}B^{((n-2,2),\emptyset)}\rangle \\ & +\langle A^{((n-2),(2))}B^{((n-2),(2))} \rangle +A^{((n-2),(1,1))}B^{((n-2),(1,1))}+A^{((n-4),(4))}B^{((n-4),(4))} \\ & +A^{((n-3,1),(2))}B^{((n-3,1),(2))},  
\end{align*} 
where 
\begin{align*}
    B^{((n),\emptyset)} & :=  \left( \begin{array}{cc} p_{(4^2)}^{(n)} & p_{(4,2^2)}^{(n)} \\ p_{(4,2^2)}^{(n)}&  p_{(2^4)}^{(n)} \end{array}\right),  \\
    B^{((n-1,1),\emptyset)} & :=  \left( \begin{array}{cc} p_{(4,2^2)}^{(n)} -p_{(2^4)}^{(n)} & p_{(6,2)}^{(n)}-p_{(4,2^2)}^{(n)} \\ p_{(6,2)}^{(n)}-p_{(4,2^2)}^{(n)} & p_{(8)}^{(n)}-p_{(4^2)}^{(n)} \end{array} \right), \\
     B^{((n-2,2),\emptyset)}   & :=  \left(  \frac{-n+1}{n^2}p_{(8)}^{(n)}+\frac{4n-4}{n^2}p_{(6,2)}^{(n)}+\frac{n^2-3n+3}{n^2}p_{(4^2)}^{(n)}-2p_{(4,2^2)}^{(n)}+p_{(2^4)}^{(n)}\right), \\
     B^{((n-2),(2))} & := \left( \begin{array}{cc}
        p_{(2^4)}^{(n)} - \frac{1}{n}p_{(4,2^2)}^{(n)}  &  2p_{(4,2^2)}^{(n)}-\frac{2}{n}p_{(6,2)}^{(n)} \\
        2p_{(4,2^2)}^{(n)}-\frac{2}{n}p_{(6,2)}^{(n)}  & 2p_{(6,2)}^{(n)} +2p_{(4^2)}^{(n)}-\frac{4}{n}p_8^{(n)} 
     \end{array} \right), \\
      B^{((n-2),(1,1))} & := \left( \begin{array}{c}  p_{(6,2)}^{(n)}-p_{(4^2)}^{(n)} \end{array}\right), \\
       B^{((n-4),(4))} &:= \left( \begin{array}{c} p_{(2^4)}^{(n)}-\frac{6}{n}p_{(4,2^2)}^{(n)}+\frac{3}{n^2}p_{(4^2)}^{(n)}+\frac{8}{n^2}p_{(6,2)}^{(n)}-\frac{6}{n^3}p_{(8)}^{(n)}) \end{array}\right), \\
        B^{((n-3,1),(2))} &:= \left( \begin{array}{c} \frac{2}{n^2}p_{(8)}^{(n)}-\frac{2n+2}{n^2}p_{(6,2)}^{(n)}-\frac{1}{n}p_{(4^2)}^{(n)}+\frac{n+3}{n}p_{(4,2^2)}^{(n)}-p_{(2^4)}^{(n)} \end{array} \right). 
\end{align*}
\end{thm}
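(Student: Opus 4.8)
The plan is to deduce the statement directly from Theorem~\ref{thm:general sums of squares repr}(3) (equivalently, from Theorem~\ref{THM Decomp} together with Corollary~\ref{cor:sos}) applied with $G=B_n$ and $2d=8$. That result says that $f\in H_{n,8}^{B_n}$ is a sum of squares if and only if $f=\sum_{\Lambda}\Tr(B^{\Lambda}A_{\Lambda})$ with $A_{\Lambda}$ positive semidefinite, where $\Lambda$ ranges over the irreducible $B_n$-modules occurring in $H_{n,4}$ and $B^{\Lambda}_{v,u}=\mathcal{R}_{B_n}(s^{\Lambda}_v s^{\Lambda}_u)$ for a symmetry adapted basis $(s^{\Lambda}_v)_v$ of $H_{n,4}$. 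Lemma~\ref{lem:Bnisotypicdecomp} provides exactly such a basis for every $n\geq 4$, with the seven blocks indexed by $((n),\emptyset)$, $((n-1,1),\emptyset)$, $((n-2,2),\emptyset)$, $((n-2),(2))$, $((n-2),(1,1))$, $((n-4),(4))$, $((n-3,1),(2))$ and with the block sizes recorded there (namely $2,2,1,2,1,1,1$); this matches the shape of the asserted decomposition. Hence nothing remains but to compute the seven matrix polynomials $B^{\Lambda}$ explicitly and to rewrite their entries in a basis of $H_{n,8}^{B_n}$.

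For the computation I would first fix, for $n\geq 4$, the $\R$-basis $p_{(8)}^{(n)},\,p_{(6,2)}^{(n)},\,p_{(4^2)}^{(n)},\,p_{(4,2^2)}^{(n)},\,p_{(2^4)}^{(n)}$ of the five-dimensional space $H_{n,8}^{B_n}$ (Theorem~\ref{Thm:Chevalley-Shephard-Todd theorem} together with $\R[\underline{X}]^{B_n}=\R[p_2,p_4,\ldots,p_{2n}]$ and the fact that for $n\geq 4$ only $p_8,p_6p_2,p_4^2,p_4p_2^2,p_2^4$ contribute in degree $8$). The only non-formal ingredient is the expansion, in this basis, of $\mathcal{R}_{B_n}$ applied to an arbitrary degree-$8$ monomial: it annihilates any monomial carrying an odd exponent, while on a monomial $\underline{X}^{2\mu}$ with $\mu\vdash 4$ it returns $\tfrac{1}{|O_{\mu}|}\,m_{\mu}(\underline{X}^2)$, where $O_{\mu}$ is the $\mathfrak{S}_n$-orbit of the monomial; expanding the monomial symmetric polynomials $m_{\mu}(\underline{X}^2)$ for the finitely many $\mu\vdash 4$ in the power sums via the classical transition and passing to power means yields closed formulas whose coefficients are precisely the $1/n$, $1/n^2$, $1/n^3$ terms of the theorem. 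Substituting these into the products of the basis vectors of Lemma~\ref{lem:Bnisotypicdecomp} --- for instance $\bigl((X_n^2-X_1^2)p_2^{(n)}\bigr)^2=(X_n^4-2X_n^2X_1^2+X_1^4)(p_2^{(n)})^2$ for the first diagonal entry of $B^{((n-1,1),\emptyset)}$, and analogously the off-diagonal entries and the remaining six blocks --- and collecting coefficients produces the seven displayed matrices, up to an overall positive scalar in each block; since such scalars may be absorbed into $A_{\Lambda}$ (equivalently, into a harmless rescaling of the symmetry adapted basis), this changes nothing.

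Two points deserve care in the write-up. First, one must check that the generators listed in Lemma~\ref{lem:Bnisotypicdecomp} really form a symmetry adapted basis in the sense of Definition~\ref{def:SymmetryAdaptedbasis}, i.e.\ that within each block the second generator is the image of the first under the unique $B_n$-isomorphism furnished by Theorem~\ref{thm:irreducible reps}; only then is $B^{\Lambda}_{v,u}=\mathcal{R}_{B_n}(s^{\Lambda}_v s^{\Lambda}_u)$ the correct block, and here one invokes the reductions noted at the end of Lemma~\ref{lem:Bnisotypicdecomp} which replace $X_n^4-X_1^4$ and $(X_{n-1}^2+X_n^2)X_{n-1}X_n$ by the generators actually used. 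Second, Schur's lemma~\ref{le:schur} guarantees $\mathcal{R}_{B_n}(s^{\Lambda}_v s^{\Lambda'}_u)=0$ for $\Lambda\neq\Lambda'$, so that no cross-block terms arise; it is worth verifying this on the explicit polynomials as a consistency check. I expect the only real obstacle to be bookkeeping: correctly carrying the $n$-dependent combinatorial factors through the monomial-to-power-sum expansions of all degree-$8$ monomials and then through the products of the basis vectors, so that every entry lands in the five-dimensional span with exactly the stated coefficients. Conceptually, nothing beyond Theorem~\ref{thm:general sums of squares repr} and Lemma~\ref{lem:Bnisotypicdecomp} is required.
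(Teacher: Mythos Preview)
Your proposal is correct and follows essentially the same approach as the paper: the paper's own proof simply states that the matrices $B^{(i)}$ are the symmetrized products of the symmetry adapted basis from Lemma~\ref{lem:Bnisotypicdecomp} and invokes Theorem~\ref{THM Decomp}. Your write-up is in fact more detailed than the paper's, spelling out how to carry out the monomial-to-power-mean expansions and flagging the scalar-absorption and Schur-orthogonality checks; none of this goes beyond what the paper intends, and your identification of the block sizes $2,2,1,2,1,1,1$ is correct (the statement's listing of $A^{((n-2,2),\emptyset)}$ among the $2\times 2$ matrices is a typo, as the displayed $B^{((n-2,2),\emptyset)}$ is $1\times 1$).
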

\begin{proof}
The matrices $B^{(i)}$ are the matrices containing the symmetrized products of the symmetry adapted basis of the $B_n$-module $H_{n,4}$ from Lemma \ref{lem:Bnisotypicdecomp}. By Theorem \ref{THM Decomp} any invariant sums of squares form has such a representation.
\end{proof}

We observe that for $n \geq 4$ the $\R$-vector spaces $$ H_{n,8}^{B_n} = \langle p_{(2^4)}^{(n)},p_{(4,2^2)}^{(n)},p_{(4^2)}^{(n)},p_{(4,2)}^{(n)},p_{(6,2)}^{(n)},p_{8}^{(n)}\rangle_\R$$ are of the same dimension and thus can be identified. We identify the vector spaces with respect to the isomorphisms $$ p_{\lambda}^{(n)} \mapsto p_{\lambda}^{(m)}$$ for $n,m \in \N_{\geq 4}$. In \cite{blekrie} Blekherman and the second author studied symmetric quartic forms and defined a limit set consisting of all $\left( p_{\lambda}^{(n)}\right)_{n\geq 4}$. They showed that for symmetric quartics the limit sets of the cones of symmetric sums of squares and non-negative quartics are equal. As a first step towards a similar result in the $B_n$ case we provide a classification of the limit set of the cones of even symmetric octics that are sums of squares. 
\begin{remark} 
 The matrices in Theorem \ref{thm:EvenSymSOSOctics} have the following limits for $n \rightarrow \infty$
\begin{align*}
    \mathcal{B}^{((n),\emptyset)} & :=  \left( \begin{array}{cc} \mathfrak{p}_{(4^2)} & \mathfrak{p}_{(4,2^2)} \\ \mathfrak{p}_{(4,2^2)}&  \mathfrak{p}_{(2^4)}\end{array}\right)  \\
    \mathcal{B}^{((n-1,1),\emptyset)} & :=  \left( \begin{array}{cc} \mathfrak{p}_{(4,2^2)} -\mathfrak{p}_{(2^4)} & \mathfrak{p}_{(6,2)}-\mathfrak{p}_{(4,2^2)} \\ \mathfrak{p}_{(6,2)}-\mathfrak{p}_{(4,2^2)} & \mathfrak{p}_{(8)}-\mathfrak{p}_{(4^2)} \end{array} \right), \\
     \mathcal{B}^{((n-2,2),\emptyset)} & := \left( \begin{array}{c} \mathfrak{p}_{(4^2)} -2\mathfrak{p}_{(4,2^2)}+\mathfrak{p}_{(2^4)} \end{array}\right), \\
     \mathcal{B}^{((n-2),(2))} & := \left( \begin{array}{cc}
        \mathfrak{p}_{(2^4)}   & 2 \mathfrak{p}_{(4,2^2)} \\
       2\mathfrak{p}_{(4,2^2)}   &  2 \mathfrak{p}_{(6,2)}+2\mathfrak{p}_{(4^2)}
     \end{array} \right), \\
      \mathcal{B}^{((n-2),(1,1))} & :=  \left( \begin{array}{c} \mathfrak{p}_{(6,2)}-\mathfrak{p}_{(4^2)} \end{array}\right), \\
       \mathcal{B}^{((n-4),(4))} & := \left( \begin{array}{c} \mathfrak{p}_{(2^4)} \end{array}\right), \\
        \mathcal{B}^{((n-3,1),(2))} & :=  \left( \begin{array}{c} \mathfrak{p}_{(4,2^2)}-\mathfrak{p}_{(2^4)} \end{array} \right). 
\end{align*}
\end{remark}
\begin{cor} \label{COR Limit Octics}
An even symmetric homogeneous octic limit sum of squares inequality $\mathfrak{f}$ has the form
\begin{align*}
    \mathfrak{f} & = \alpha_1 \mathfrak{p}_{(4^2)} + 2 \alpha_2 \mathfrak{p}_{(4,2^2)}+ \alpha_3 \mathfrak{p}_{(2^4)}\\
    & + \beta_1 (\mathfrak{p}_{(4,2^2)}-\mathfrak{p}_{(2^4)})+2\beta_2 (\mathfrak{p}_{(6,2)}-\mathfrak{p}_{(4,2^2)})+\beta_3 (\mathfrak{p}_{(8)}-\mathfrak{p}_{(4^2)}) \\
    & + \delta (\mathfrak{p}_{(6,2)}-\mathfrak{p}_{(4^2)}), 
\end{align*}
where $\left( \begin{array}{cc} \alpha_1 & \alpha_2 \\ \alpha_2 & \alpha_3 \end{array}\right), \left( \begin{array}{cc} \beta_1 & \beta_2 \\ \beta_2 & \beta_3 \end{array}\right), \left( \delta \right)$ are positive semidefinite real matrices. 
\end{cor}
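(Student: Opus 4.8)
Recall that $\mathfrak f$ being a limit sum of squares inequality means (cf.\ \cite{blekrie}) that there is a sequence $\mathfrak f_n\in\Sigma_{n,8}^{B_n}$, $n\ge 4$, converging to $\mathfrak f$ in the coordinates afforded by the identifications $p_\lambda^{(n)}\leftrightarrow\mathfrak p_\lambda$. The plan is to start from the finite-$n$ description: by Theorem~\ref{thm:EvenSymSOSOctics} every $\mathfrak f_n$ can be written as $\mathfrak f_n=\sum_\Lambda\Tr(A_n^\Lambda B_n^\Lambda)$, the sum running over the seven isotypic blocks $\Lambda$ of Lemma~\ref{lem:Bnisotypicdecomp}, with $A_n^\Lambda$ positive semidefinite; and, as recorded just before the statement, $B_n^\Lambda\to\mathcal B^\Lambda$ entrywise as $n\to\infty$. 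If we can pass to a limit in the $A_n^\Lambda$ as well, we obtain $\mathfrak f=\sum_\Lambda\Tr(\alpha^\Lambda\mathcal B^\Lambda)$ with $\alpha^\Lambda\succeq 0$. (The reverse inclusion, not needed for the statement, is immediate: for fixed positive semidefinite matrices the forms $\sum_\Lambda\Tr(\alpha^\Lambda B_n^\Lambda)$ lie in $\Sigma_{n,8}^{B_n}$ by Theorem~\ref{thm:EvenSymSOSOctics} and converge to $\sum_\Lambda\Tr(\alpha^\Lambda\mathcal B^\Lambda)$.)

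The main obstacle is that the decomposition in Theorem~\ref{thm:EvenSymSOSOctics} is far from unique, and different blocks contribute with opposite signs to the coefficient of a given $p_\lambda^{(n)}$, so boundedness of $\{\mathfrak f_n\}$ does not by itself bound the $A_n^\Lambda$; the key step is therefore to show the $A_n^\Lambda$ can be chosen \emph{uniformly bounded}. Here I would use that for every $\underline x\in\R^n$ the specialization $B_n^\Lambda(\underline x)=\bigl(\mathcal{R}_{B_n}(s_u^\Lambda s_v^\Lambda)(\underline x)\bigr)_{u,v}$ is positive semidefinite, since $\sum_{u,v}w_uw_v\,B_n^\Lambda(\underline x)_{u,v}=\mathcal{R}_{B_n}\!\bigl((\textstyle\sum_u w_us_u^\Lambda)^2\bigr)(\underline x)\ge 0$; consequently $\mathfrak f_n(\underline x)=\sum_\Lambda\langle A_n^\Lambda,B_n^\Lambda(\underline x)\rangle$ is a sum of \emph{nonnegative} terms. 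Choosing a finite family of evaluation points $z_i^{(n)}\in\R^n$ depending on $n$ only through a convergent pattern — e.g.\ points whose empirical distribution tends to a fixed measure with three atoms — and verifying the finitely many explicit inequalities $\sum_i\mathcal B^\Lambda(z_i^{(\infty)})\succ 0$ (which hold for a generic such choice because each $\mathcal B^\Lambda$ is a nonzero limit of pointwise positive semidefinite matrices), one gets $\sum_i B_n^\Lambda(z_i^{(n)})\succeq cI$ for some $c>0$ and all large $n$. Summing $\langle A_n^\Lambda,B_n^\Lambda(z_i^{(n)})\rangle\le\mathfrak f_n(z_i^{(n)})$ over $i$ then yields $c\,\Tr(A_n^\Lambda)\le\sum_i\mathfrak f_n(z_i^{(n)})$, whose right-hand side is bounded because $\mathfrak f_n\to\mathfrak f$ coordinatewise and the $z_i^{(n)}$ have convergent empirical distributions. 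Thus each $\Tr(A_n^\Lambda)$ is bounded; passing to a subsequence with $A_n^\Lambda\to\alpha^\Lambda\succeq 0$ and using $B_n^\Lambda\to\mathcal B^\Lambda$ gives $\mathfrak f=\sum_\Lambda\Tr(\alpha^\Lambda\mathcal B^\Lambda)$.

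It remains to discard the three surplus blocks. In the limit the blocks $((n-2,2),\emptyset)$, $((n-4),(4))$ and $((n-3,1),(2))$ have scalar limit matrices $\mathfrak p_{(4^2)}-2\mathfrak p_{(4,2^2)}+\mathfrak p_{(2^4)}$, $\mathfrak p_{(2^4)}$ and $\mathfrak p_{(4,2^2)}-\mathfrak p_{(2^4)}$ respectively, and for any $c\ge 0$ one checks directly that $c\,\mathcal B^{((n-2,2),\emptyset)}=\Tr\!\bigl(c\left(\begin{smallmatrix}1&-1\\-1&1\end{smallmatrix}\right)\mathcal B^{((n),\emptyset)}\bigr)$, $c\,\mathcal B^{((n-4),(4))}=\Tr\!\bigl(c\left(\begin{smallmatrix}0&0\\0&1\end{smallmatrix}\right)\mathcal B^{((n),\emptyset)}\bigr)$ and $c\,\mathcal B^{((n-3,1),(2))}=\Tr\!\bigl(c\left(\begin{smallmatrix}1&0\\0&0\end{smallmatrix}\right)\mathcal B^{((n-1,1),\emptyset)}\bigr)$, the coefficient matrices being positive semidefinite. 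Absorbing these three terms into the coefficient matrices of the blocks $((n),\emptyset)$ and $((n-1,1),\emptyset)$ — which keeps those matrices positive semidefinite, being sums of positive semidefinite matrices — rewrites $\mathfrak f=\sum_\Lambda\Tr(\alpha^\Lambda\mathcal B^\Lambda)$ in the asserted four-block form with $\left(\begin{smallmatrix}\alpha_1&\alpha_2\\\alpha_2&\alpha_3\end{smallmatrix}\right)$, $\left(\begin{smallmatrix}\beta_1&\beta_2\\\beta_2&\beta_3\end{smallmatrix}\right)$, $\left(\begin{smallmatrix}\delta_1&\delta_2\\\delta_2&\delta_3\end{smallmatrix}\right)$ and $(\epsilon)$ positive semidefinite, which is what is claimed. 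The only delicate point is the uniform boundedness established in the previous paragraph; everything else is bookkeeping and a finite positivity check.
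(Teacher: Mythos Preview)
Your final paragraph --- absorbing the three scalar limit blocks $\mathcal B^{((n-2,2),\emptyset)}$, $\mathcal B^{((n-4),(4))}$, $\mathcal B^{((n-3,1),(2))}$ into the $((n),\emptyset)$ and $((n-1,1),\emptyset)$ blocks via the rank-one positive semidefinite matrices you write down --- is exactly the paper's proof, and is all the paper offers. The paper reads the Corollary as a simplification of the seven-block description implicit in the preceding Remark (the list of limit matrices $\mathcal B^\Lambda$): it takes ``limit sum of squares'' to mean an element of $\{\sum_\Lambda \Tr(\alpha^\Lambda\mathcal B^\Lambda):\alpha^\Lambda\succeq 0\}$ and merely observes the three redundancies.

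What you add is the compactness argument justifying that a limit of elements $\mathfrak f_n\in\Sigma_{n,8}^{B_n}$ actually lands in this seven-block cone. The paper does not address this point at all; depending on whether one reads the limit cone as defined by the stabilized block structure or as the sequential closure of the finite-$n$ cones, your argument is either unnecessary or fills a genuine gap. Your approach via simultaneous positive-definiteness of $\sum_i\mathcal B^\Lambda(\mu_i)$ over finitely many limit measures is sound in principle (each $B_n^\Lambda(\underline x)$ is positive semidefinite as a Gram matrix, and the terms $\langle A_n^\Lambda,B_n^\Lambda(\underline x)\rangle$ are nonnegative, so the bound on $\Tr A_n^\Lambda$ follows as you say). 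Note however that a single measure never suffices: e.g.\ $\mathcal B^{((n),\emptyset)}$ evaluates to the rank-one matrix $(p_4,p_2^2)^T(p_4,p_2^2)$ at any point, so you genuinely need at least two measures with linearly independent $(p_4,p_2^2)$; similar checks are required for the other $2\times 2$ block. You assert this ``holds for a generic such choice'' without exhibiting one, so as written the boundedness step is a sketch rather than a proof --- but since the paper never attempts this step at all, your version is strictly more complete.
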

\begin{proof}
We observe that an invariant limit sum of squares coming from the irreducible representation $S^{((n-2,2),\emptyset)}$, i.e., $\mathfrak{p}_{(4^2)}-2\mathfrak{p}_{(4,2^2)}+\mathfrak{p}_{(2^4)}$, is contained in the first line.
The limit sum of squares $\mathfrak{p}_{(2^4)}$ from $S^{((n-4),(4))}$ is contained in the first line, while the limit form from $S^{((n-3,1),(2))}$, i.e., $\mathfrak{p}_{(4,2^2)}-\mathfrak{p}_{(2^4)}$ is contained in the second line for $\beta_1 = 1.$ Furthermore, $$ (\alpha,\beta) \left( \begin{array}{cc}
        \mathfrak{p}_{(2^4)}   & 2 \mathfrak{p}_{(4,2^2)} \\
       2\mathfrak{p}_{(4,2^2)}   &  2 \mathfrak{p}_{(6,2)}+2\mathfrak{p}_{(4^2)}
     \end{array} \right) (\alpha,\beta)^T = 2\beta^2 (\mathfrak{p}_{(6,2)}-\mathfrak{p}_{(4^2)} ) + \langle \left( \begin{array}{cc}
    4\beta^2      &2\alpha\beta  \\
    2\alpha\beta      & \alpha^2
     \end{array} \right), \left( \begin{array}{cc} \mathfrak{p}_{(4^2)} & \mathfrak{p}_{(4,2^2)} \\ \mathfrak{p}_{(4,2^2)}&  \mathfrak{p}_{(2^4)}\end{array}\right) \rangle.$$
\end{proof}
\begin{remark}
Let $\Sigma_{\infty,8}^{B_\infty}$ denote the cone consisting of all limit forms from Corollary \ref{COR Limit Octics}, $\Sigma_{\infty,4}^{\mathfrak{S}_\infty}$ the limit cone of symmetric sums of squares of degree $4$ in \cite{blekrie} and $\Phi : H_{\infty,8}^{B_\infty}\rightarrow H_{\infty,4}^{\mathfrak{S}_\infty}$ be the canonical $\mathfrak{S}_{\infty}$-homomorphism. The cones $\Phi \left( \Sigma_{\infty,8}^{B_\infty}\right)$ and $\Sigma_{\infty,4}^{\mathfrak{S}_\infty} = \mathcal{P}_{\infty,4}^{\mathfrak{S}_\infty}$ are different. This is not surprising, since the cone $\Phi \left( \mathcal{P}_{\infty,8}^{B_\infty}\right)$ can be identified with the limit of all symmetric forms that are non-negative on the positive orthant (compare with Polya's Nichtnegativenstellensatz \cite{polya1928positive}). 
\end{remark}
 It is a question for further studies to determine the relation between the limit cones of even symmetric sums of squares and non-negatives octics.

\subsection{Forms invariant under $D_n$}\label{secdn}
It is a natural question to wonder, to what extend Harris' result on ternary forms invariant under $B_3$ carries over to the slightly smaller group $D_3$. As is shown in the following theorem we obtain equality between the sets $\Sigma_{3,8}^{D_3}$ and $\mathcal{P}_{3,8}^{D_3}$. Furthermore, we prove that $\mathcal{P}_{4,4}^{D_4}$ is a simplicial cone which gives a test set for non-negativity consisting of three points. We prove that for quaternary quartics invariant under $D_4$ we also have that non-negativity implies a sums of squares representation. We conclude with a full characterization of the non-negativity versus sums of squares question for forms invariant under $D_n$.
\begin{thm}\label{thm:harrisd3}
The sets of non-negative and sums of squares ternary octics invariant under $D_3$ are equal, i.e.,
$\Sigma_{3,8}^{D_3}=\mathcal{P}_{3,8}^{D_3}$.
\end{thm}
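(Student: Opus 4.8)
The plan is to deduce the statement directly from Harris' theorem (Corollary \ref{COR Harris}) by showing that, in degree $8$ and three variables, being $D_3$-invariant imposes nothing beyond being $B_3$-invariant. First I would note that $D_3$ is the index-two subgroup of $B_3$ consisting of signed permutations with an even number of sign changes, so the inclusion $H_{3,8}^{B_3}\subseteq H_{3,8}^{D_3}$ is immediate. The crux is that this inclusion is an equality.

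To establish $H_{3,8}^{D_3}=H_{3,8}^{B_3}$ I would compare dimensions via the corollary to Theorem \ref{Thm:Chevalley-Shephard-Todd theorem}, which gives $\dim H_{n,d}^{G}=N_G(d)$. The fundamental invariants of $B_3$ have degrees $(2,4,6)$, while those of $D_3$ have degrees $(2,4,3)$ since $\R[\underline{X}]^{D_3}=\R[p_2,p_4,e_3]$. A finite count then shows $N_{B_3}(8)=4=N_{D_3}(8)$: for $B_3$ the solutions of $2\alpha_1+4\alpha_2+6\alpha_3=8$ are $(4,0,0),(2,1,0),(0,2,0),(1,0,1)$, and for $D_3$ the solutions of $2\alpha_1+4\alpha_2+3\alpha_3=8$ are $(4,0,0),(2,1,0),(0,2,0),(1,0,2)$. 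Hence the two $4$-dimensional spaces coincide. A more conceptual variant of the same argument: as a $B_3$-module $H_{3,8}^{D_3}$ splits into the $B_3$-invariants and the $B_3$-relative invariants for the product-of-signs character $\chi$ with $\ker\chi=D_3$; the latter form the free $\R[\underline{X}]^{B_3}$-module generated by $e_3$ of degree $3$, and since $\R[\underline{X}]^{B_3}$ lives in even degrees it has no degree-$5$ part, so there is no $\chi$-relative invariant of degree $8$.

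Once $H_{3,8}^{D_3}=H_{3,8}^{B_3}$ is known, the corresponding cones coincide as well, because non-negativity on $\R^3$ and admitting a sum of squares decomposition in $\R[\underline{X}]$ are intrinsic properties of a form: $\mathcal{P}_{3,8}^{D_3}=H_{3,8}^{D_3}\cap\mathcal{P}_{3,8}=H_{3,8}^{B_3}\cap\mathcal{P}_{3,8}=\mathcal{P}_{3,8}^{B_3}$, and likewise $\Sigma_{3,8}^{D_3}=\Sigma_{3,8}^{B_3}$. Applying Corollary \ref{COR Harris}, which states $\Sigma_{3,8}^{B_3}=\mathcal{P}_{3,8}^{B_3}$, we obtain $\Sigma_{3,8}^{D_3}=\mathcal{P}_{3,8}^{D_3}$, as desired.

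There is no real technical obstacle; the only things requiring a little care are getting the direction of the inclusion $H^{B_3}\subseteq H^{D_3}$ right and checking the finite list of exponent tuples. It is also worth remarking that the same parity argument shows $H_{3,2d}^{D_3}=H_{3,2d}^{B_3}$ for every \emph{even} degree $2d$, so that the $D_3$- and $B_3$-versions of the non-negativity versus sums of squares question for ternary forms are literally the same problem, and Harris' phenomenon transfers for free; they differ only in odd degrees, where $e_3$ and its multiples are $D_3$-invariant but not $B_3$-invariant.
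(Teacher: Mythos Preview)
Your proof is correct and follows the same overall strategy as the paper: establish $H_{3,8}^{D_3}=H_{3,8}^{B_3}$ and then invoke Corollary~\ref{COR Harris}. The only difference is in how that equality is verified: the paper writes down the explicit Newton-type identity $p_{(2^3)}-3p_{(4,2)}+2p_6-6e_3^2=0$ in three variables to exchange $p_6$ for $e_3^2$, whereas you argue by a dimension count $N_{B_3}(8)=N_{D_3}(8)=4$ together with the inclusion $H_{3,8}^{B_3}\subseteq H_{3,8}^{D_3}$. Your parity argument via the $B_3$-relative invariants (the $e_3\cdot\R[\underline{X}]^{B_3}$ module living entirely in odd degrees) is a nice bonus, since it transparently gives $H_{3,2d}^{D_3}=H_{3,2d}^{B_3}$ for all even $2d$, a fact the paper's identity-based computation does not make visible.
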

\begin{proof}
The invariant ring $\R[X_1,X_2,X_3]^{D_3}=\R[p_2,e_3,p_4]$ is a polynomial ring in the symmetric polynomials $p_2, e_3$ and $p_4$. A vector space basis of $H_{3,8}^{D_3}$ is given by $\left( p_{(2^4)},p_{(4,2^2)},p_{(4^2)},p_{2}e_3^2 \right)$. In Remark \ref{rmk:isotypic decomp H38Bn} we have seen that $H_{3,8}^{B_3} = \langle p_{(2^4)}, p_{(4,2^2)}, p_{(4^2)}, p_{(6,2)} \rangle_\R.$ The functions $p_6$ and $e_3^2$ occur linearly in the following identity for symmetric functions in three variables $$ p_{(2^3)}-3p_{(4,2)}+2p_6-6e_3^2=0. $$ Hence we deduce that $H_{3,8}^{D_3} = H_{3,8}^{B_3}$. The claim follows by Corollary \ref{COR Harris}. 
\end{proof}

\begin{remark}
We have the same conical generators and test set as in the $B_3$ case for non-negative ternary octics invariant under $D_3$, i.e., a form $f\in H_{3,8}^{D_3}$ is non-negative if and only if $f(y)\geq 0$ for all $y \in \{ (a,a,b),(0,a,b) : a,b \in \R_{\geq 0} \}$.
\end{remark}

In the following we study quaternary quartics invariant under $D_4$.
\begin{lemma} \label{le:D4isodecomp}
The $D_4$-module $H_{4,2}$ has the isotypic decomposition \begin{align*}
H_{4,2} & =  S^{((4),\emptyset)} \oplus S^{((3,1),\emptyset)} \oplus S_1^{((2),(2))} \oplus S_2^{((2),(2))}.    
\end{align*}
The symmetry adapted basis which realizes the $D_4$-module decomposition of $H_{4,2}$ is the following:
\begin{align*}
 S^{((4),\emptyset)} & : \left\{p_{(2)}\right\},& 
 S^{((3,1),\emptyset)} & :\left\{ X_4^2-X_1^2\right\}, \\
 S^{((2),(2))}_1   & : \left\{X_1X_2+X_3X_4\right\}, &
S^{((2),(2))}_2   & : \left\{X_1X_2-X_3X_4\right\}.
\end{align*}
\end{lemma}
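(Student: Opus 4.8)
The statement is an instance of the general higher Specht machinery, and its isotypic part is precisely the computation carried out in the Example above which records $\R[\underline X]^{D_4}=\R[p_2,p_4,p_6,e_4]$; so the plan is to rerun that computation, this time also keeping track of the symmetry adapted basis. Concretely I would invoke the graded factorisation $\R[\underline X]\cong\R[\underline X]^{D_4}\otimes_\R\R[\underline X]_{D_4}$ of Theorem \ref{thm:coinvariant algebra} together with Corollary \ref{cor:deomposition of polynomial ring} and the explicit higher Specht polynomials of Theorem \ref{thm:irreducible reps}: a symmetry adapted basis of $H_{4,2}$ is obtained by multiplying each higher Specht polynomial of $D_4$ of degree $k\le 2$ by a monomial of degree $2-k$ in the fundamental invariants.

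First I would note that the fundamental invariants of $D_4$ have degrees $2,4,6,4$, so the only products of fundamental invariants of degree at most $2$ are $1$ and $p_2$; hence only the $D_4$ higher Specht polynomials of degree $0$ and $2$ are relevant. Using that the degree of $\widehat F^{(T,S)}_{(T,S)}$ attached to a multipartition $(\lambda,\mu)\vdash 4$ is $2\ch(T,S)+|\mu|$ (and $2\ch(T)$ in the partition case), I would split into three cases. Degree $0$ forces $\mu=\emptyset$ and charge $0$, whose only realisation is the row tableau of shape $(4)$, giving the trivial module $S^{((4),\emptyset)}$ with generator $1$. Degree $2$ with $\mu=\emptyset$ needs a standard Young tableau of charge $1$, and among the shapes $(4),(3,1),(2,2),(2,1,1),(1^4)$ only $(3,1)$ admits one (the tableau with first row $1,2,3$ and second row $4$); computing its reading word, its index, and the Young symmetriser $\epsilon_T$ gives, up to a positive scalar, the generator $X_4^2-X_1^2$ for $S^{((3,1),\emptyset)}$. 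Degree $2$ with $\mu\neq\emptyset$ forces $|\mu|=2$ and charge $0$, which as explained in the Example above is possible only for $\Lambda=((2),(2))$ with the unique charge-$0$ filling whose first component is the row $1,2$ and whose second component is the row $3,4$; since $\lambda=\mu=(2)$, part (c) of Theorem \ref{thm:irreducible reps} splits $S^{((2),(2))}$ over $D_4$ into two non-isomorphic irreducibles, with generators $\widehat F_{(T^1,T^2)}^{(T^1,T^2)}\pm\widehat F_{(T^2,T^1)}^{(T^1,T^2)}=X_3X_4\pm X_1X_2$.

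To conclude I would multiply the degree-$0$ generator $1$ by $p_2$ to land in $H_{4,2}$, giving $p_{(2)}$ for $S^{((4),\emptyset)}$, and leave the other three generators unchanged; this yields exactly the list in the statement. Exhaustiveness of this basis follows formally from Corollary \ref{cor:deomposition of polynomial ring}, and can be double-checked by the dimension count $1+3+3+3=10=\dim_\R H_{4,2}$. The only real work is the bookkeeping inside the higher Specht construction (reading words, indices, charges, the symmetriser $\epsilon_T$, and for $D_4$ tracking which tableaux become isomorphic and which split); there is no conceptual obstacle, and the main point to get right is simply that the enumeration of relevant tableaux is complete, which the degree bound $2\ch(T)+|\mu|\le 2$ together with the dimension count guarantees.
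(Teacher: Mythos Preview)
Your proposal is correct and follows essentially the same route as the paper's own proof: both reduce to listing the $D_4$ higher Specht polynomials of degree $0$ and $2$ via the degree constraint $2\ch(T,S)+|\mu|\le 2$ coming from the fact that $p_2$ is the only fundamental invariant of degree $\le 2$, then invoke Theorem~\ref{thm:irreducible reps}(c) to split $S^{((2),(2))}$ into its two $D_4$-pieces. Your write-up is in fact a bit more explicit than the paper's (you run through all five shapes with $\mu=\emptyset$ to isolate $(3,1)$ and you add the dimension check $1+3+3+3=10$), but the underlying argument is identical.
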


\begin{proof}
By Theorem \ref{thm:irreducible reps} we have to determine the multiplicity of the irreducible $D_4$-modules labelled by bipartitions $(\lambda,\mu) \vdash 4$ of the form $|\lambda|\geq |\mu|$. We are just interested in higher Specht polynomials of degree $0$ or $2$, since the $D_4$ fundamental invariant of degree $\leq 2$ is $p_2$. Thus it must be $|\mu|\in \{0,2\}$. If $\mu = \emptyset$, then the partitions $((4),\emptyset),((3,1),\emptyset)$ have one standard bitableau whose charge is at most $1$, i.e., they occur precisely once in $H_{4,2}$. Any occurring module labelled by $(\lambda,\mu)$ with $|\mu|=2$ must have a standard bitableau with index $(0,0,0,0)$. This can only occur if the word equals $(1,2,3,4)$. Thus just the bipartition $((2),(2))$ has a standard bitableau with zero charge. By Theorem \ref{thm:irreducible reps} the module $S^{((2),(2))}$ decomposes into two irreducible $D_4$-modules $S_1^{((2),(2))}$ and $S_2^{((2),(2))}$. We calculated the relevant higher Specht polynomials according to Theorem \ref{thm:irreducible reps}
$$\left\{1,\, X_4^2-X_1^2,\, X_1X_2+X_3X_4,\, X_1X_2-X_3X_4\right\},$$ and find accordingly the polynomials above.
\end{proof}
\begin{cor}
A $D_4$-invariant quaternary quartic $f \in H_{4,4}^{D_4}$ is a sum of squares if and only if there exist positive numbers $A^{(1)},A^{(2)},A^{(3)},A^{(4)} \in \R_{\geq 0}$ such that $$ f = A^{(1)}B^{(1)}+ A^{(2)}B^{(2)}+A^{(3)}B^{(3)}+A^{(4)}B^{(4)},$$ where  
\begin{align*}
    B^{((4),\emptyset)} &:= \left( p_{(2^2)} \right),&
    B^{((3,1),\emptyset)} &:= \left( \frac{2}{3}p_{(4)}-\frac{1}{6}p_{(2^2)}\right), \\
    B_1^{((2),(2))} &:= \left( \frac{1}{6}p_{(2^2)}-\frac{1}{6}p_{(4)}+2e_4 \right), &
    B_2^{((2),(2))} &:= \left( \frac{1}{6}p_{(2^2)}-\frac{1}{6}p_{(4)}-2e_4 \right).
\end{align*}
\end{cor}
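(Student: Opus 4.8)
The plan is to obtain this corollary as a direct instance of Corollary \ref{cor:sos} (equivalently Theorem \ref{THM Decomp}), applied to the symmetry adapted basis of $H_{4,2}$ furnished by Lemma \ref{le:D4isodecomp}. The crucial structural point is that in the $D_4$-isotypic decomposition $H_{4,2} = S^{((4),\emptyset)} \oplus S^{((3,1),\emptyset)} \oplus S_1^{((2),(2))} \oplus S_2^{((2),(2))}$ every isotypic component has multiplicity $\eta_j = 1$. Hence each matrix $A_j$ in Corollary \ref{cor:sos} is a $1\times 1$ positive semidefinite matrix, i.e.\ a nonnegative real number, and each block $B^{(j)}$ is the single scalar $\mathcal{R}_{D_4}(s_j^2)$, with $s_j$ running over the four basis polynomials $p_{(2)}$, $X_4^2-X_1^2$, $X_1X_2+X_3X_4$, $X_1X_2-X_3X_4$. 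Thus $f \in \Sigma_{4,4}^{D_4}$ if and only if $f = \sum_{j=1}^4 A^{(j)} B^{(j)}$ with $A^{(j)} \in \R_{\geq 0}$, and it only remains to identify the four invariants $B^{(j)}$ in terms of the fundamental invariants $p_2, p_4, e_4$.

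The computational step is to evaluate $\mathcal{R}_{D_4}((X_4^2-X_1^2)^2)$, $\mathcal{R}_{D_4}((X_1X_2+X_3X_4)^2)$ and $\mathcal{R}_{D_4}((X_1X_2-X_3X_4)^2)$ explicitly; the first block is trivially $p_{(2)}^2 = p_{(2^2)}$ since $p_{(2)}$ is $D_4$-invariant. Here $D_4$ acts by coordinate permutations together with sign changes of an even number of coordinates. Monomials of even degree in each variable — such as $X_1^4$, $X_1^2X_4^2$, $X_1^2X_2^2$ — are fixed by all sign changes, so their $D_4$-symmetrization equals their $\mathfrak{S}_4$-symmetrization; the elementary identities $\mathcal{R}_{\mathfrak{S}_4}(X_1^4) = \tfrac14 p_4$ and $\mathcal{R}_{\mathfrak{S}_4}(X_1^2X_4^2) = \tfrac16 e_2(\underline{X}^2) = \tfrac1{12}(p_{(2^2)}-p_{(4)})$, combined with $p_2^2 = p_4 + 2 e_2(\underline{X}^2)$, then reproduce the entry of $B^{((3,1),\emptyset)}$ and the $\tfrac16 p_{(2^2)} - \tfrac16 p_{(4)}$ part of $B_1^{((2),(2))}$ and $B_2^{((2),(2))}$. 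The only monomial occurring in $(X_1X_2\pm X_3X_4)^2$ that is not of even degree in each variable is the cross term $\pm 2 X_1X_2X_3X_4$, and $X_1X_2X_3X_4 = e_4$ is itself $D_4$-invariant; this produces the $\pm 2 e_4$ summands and is precisely the feature that separates the two $D_4$-irreducible constituents $S_1^{((2),(2))}$ and $S_2^{((2),(2))}$ of the $B_4$-module $S^{((2),(2))}$.

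I do not anticipate any real obstacle: the statement is essentially a repackaging of Corollary \ref{cor:sos} with Lemma \ref{le:D4isodecomp}, and what remains is the bounded, routine evaluation of a handful of Reynolds operators on quartic monomials in four variables, rewritten through the standard Newton-type relations among $p_2, p_4, e_2(\underline{X}^2)$ and $e_4$. The single point worth a line of care is to note that $H_{4,4}^{D_4}$ is three-dimensional (as $N_{D_4}(4) = 3$, the fundamental degrees being $2,4,6,4$) and is spanned by $p_{(2^2)}, p_{(4)}, e_4$, so that each $f \in H_{4,4}^{D_4}$ does admit a representation $f = \sum_{j} A^{(j)} B^{(j)}$ in the four computed invariants, membership in $\Sigma_{4,4}^{D_4}$ then being exactly the requirement that the coefficients may be chosen nonnegative; this is immediate once the $B^{(j)}$ are in hand.
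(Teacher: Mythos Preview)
Your proposal is correct and follows exactly the paper's approach: apply Theorem \ref{THM Decomp}/Corollary \ref{cor:sos} to the symmetry adapted basis of Lemma \ref{le:D4isodecomp}, use that all multiplicities are $1$ so the $A^{(j)}$ are nonnegative scalars, and identify each $B^{(j)}=\mathcal{R}_{D_4}(s_j^2)$ in terms of $p_{(2^2)},p_{(4)},e_4$. Your explicit Reynolds computations and the remark on $\dim H_{4,4}^{D_4}=3$ simply flesh out what the paper's two-line proof leaves implicit.
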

\begin{proof}
The matrices $B^{(i)}$ are obtained by calculating the Reynolds operator evaluated at squares of the symmetry adapted basis of the irreducible $D_4$-modules from Lemma \ref{le:D4isodecomp}. By Theorem \ref{THM Decomp} any invariant sum of squares form has such a representation.
\end{proof}

\begin{thm}
\label{thm:D4simplicialCone} The dual cone of $D_4$-invariant sum of squares quartics is a simplicial cone with the following description
$\left(\Sigma_{4,4}^{D_4}\right)^\ast = \cone \left\{ \ev_{(1,0,0,0)},\ev_{(1,1,1,-1)}, \ev_{(1,1,1,1)} \right\}.$
\end{thm}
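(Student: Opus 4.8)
The plan is to use that the corollary immediately preceding the theorem already presents $\Sigma_{4,4}^{D_4}$ as a \emph{finitely generated polyhedral} cone. Indeed, by Lemma~\ref{le:D4isodecomp} every isotypic component of the $D_4$-module $H_{4,2}$ occurs with multiplicity one, so the positive semidefinite matrices $A^{(j)}$ appearing there are just nonnegative scalars, and hence
\[
\Sigma_{4,4}^{D_4}=\cone\bigl\{B^{((4),\emptyset)},\,B^{((3,1),\emptyset)},\,B_1^{((2),(2))},\,B_2^{((2),(2))}\bigr\}
\]
inside the three-dimensional space $H_{4,4}^{D_4}=\langle p_{(2^2)},p_{(4)},e_4\rangle_\R$ (the fundamental invariants of degree $4$ being $p_2^2$, $p_4$, $e_4$). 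First I would fix the ordered basis $(p_{(2^2)},p_{(4)},e_4)$ and record coordinates: the four generators become $b_1=(1,0,0)$, $b_2=(-\tfrac16,\tfrac23,0)$, $b_3=(\tfrac16,-\tfrac16,2)$, $b_4=(\tfrac16,-\tfrac16,-2)$, while, evaluating $p_2^2$, $p_4$, $e_4$ at the points, the three candidate evaluations have coordinates $\ev_{(1,0,0,0)}=(1,1,0)$, $\ev_{(1,1,1,1)}=(16,4,1)$ and $\ev_{(1,1,1,-1)}=(16,4,-1)$ in the dual basis.

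For the inclusion $\supseteq$ I would simply check that $\ev_a(b_j)\ge 0$ for each $a\in\{(1,0,0,0),(1,1,1,1),(1,1,1,-1)\}$ and each $j=1,\dots,4$; then by Lemma~\ref{le:psd} (equivalently, directly from the definition of the dual cone) these point-evaluations lie in $\bigl(\Sigma_{4,4}^{D_4}\bigr)^\ast$. For the reverse inclusion I would invoke that $\Sigma_{4,4}^{D_4}$ is full dimensional and pointed, so its dual is a pointed polyhedral cone in $\R^3$ each of whose extreme rays lies on at least two of the four bounding hyperplanes $\{\ell:\ell(b_i)=0\}$; hence every extreme ray is proportional to a cross product $b_i\times b_j$ with $b_i,b_j$ linearly independent. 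Computing the $\binom{4}{2}=6$ cross products and testing $\pm(b_i\times b_j)$ against the four inequalities $\ell(b_k)\ge0$, exactly three of the twelve directions survive, and up to positive scaling they are $(1,1,0)$, $(16,4,1)$, $(16,4,-1)$, i.e. the three claimed point-evaluations. Since these three covectors are linearly independent (their determinant is $24\ne0$), $\bigl(\Sigma_{4,4}^{D_4}\bigr)^\ast$ is a simplicial cone generated by $\ev_{(1,0,0,0)},\ev_{(1,1,1,-1)},\ev_{(1,1,1,1)}$, which is the assertion.

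This is essentially routine linear algebra in $\R^3$; the only delicate point is the sign bookkeeping when selecting, among $\pm(b_i\times b_j)$, the directions satisfying all four defining inequalities of the dual cone, together with the (pleasant) observation that the three survivors coincide on the nose with the point-evaluations rather than with some unrelated normals. As a byproduct one sees that the primal cone is already simplicial as well (one checks $b_1=2b_2+4b_3+4b_4$, so $B^{((4),\emptyset)}$ is a redundant generator), and Corollary~\ref{cor:Sigma=P} then gives $\Sigma_{4,4}^{D_4}=\mathcal{P}_{4,4}^{D_4}$, with $\{(1,0,0,0),(1,1,1,1),(1,1,1,-1)\}$ a three-point test set for nonnegativity of $D_4$-invariant quaternary quartics.
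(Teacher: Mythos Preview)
Your argument is correct and takes a genuinely different route from the paper's. The paper analyses an arbitrary extremal $\ell\in(\Sigma_{4,4}^{D_4})^\ast$ through the kernel $W_\ell\subset H_{4,2}$ of the associated quadratic form $Q_\ell$, does a case distinction on which irreducible summands of $H_{4,2}$ lie in $W_\ell$, and in each case exhibits a common zero of $W_\ell$ among $(1,0,0,0),(1,1,1,\pm 1)$, forcing $\ell$ to be the corresponding point-evaluation. You instead exploit the special feature that every isotypic component of $H_{4,2}$ has multiplicity one, so that $\Sigma_{4,4}^{D_4}$ is already a polyhedral cone on four explicit generators in the three-dimensional space $H_{4,4}^{D_4}$; the dual is then computed by pure polyhedral linear algebra (cross products and sign checks), and the three surviving extreme directions are identified with the three point-evaluations. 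Your approach is more elementary and entirely self-contained for this case, and it makes the redundancy $B^{((4),\emptyset)}=2B^{((3,1),\emptyset)}+4B_1^{((2),(2))}+4B_2^{((2),(2))}$ and hence the simpliciality of the primal cone transparent. The paper's kernel method, on the other hand, is the same machinery used for the $B_3$-octic case, where the dual cone is not polyhedral and your polyhedral shortcut is unavailable; so its advantage is uniformity across the examples treated in the article rather than economy in this particular instance.
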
 
\begin{proof}
Let $\ell \in \left( \Sigma_{4,4}^{D_4}\right)^\ast$ denote an extremal element. Let \begin{align*}
    W_\ell := \alpha \cdot S^{((4),\emptyset)} \oplus \beta \cdot S^{((3,1),\emptyset)} \oplus \gamma \cdot S_1^{((2),(2))} \oplus \delta \cdot S_2^{((2),(2))} 
\end{align*} denote the $D_4$-submodule of $H_{4,2}$ which is the kernel of the associated quadratic form, for $\alpha,\beta,\gamma,\delta \in \{0,1\}$. Now, we show that $\ell$ must be a scalar of one of the three point-evaluations above, respectively that $W_\ell^{\langle 2\rangle}$ must have one of the points as a zero. \\
Since $p_{(2^2)}$ is not contained in the boundary of $\Sigma_{4,4}^{D_4}$ it must be $\alpha = 0$. Furthermore, $\dim_\R W_\ell^{\langle 2 \rangle} = 2$ and therefore we have that precisely two of the parameters are non-zero, because the symmetrized squares of the symmetry adapted basis elements belonging to the $D_4$-modules $S^{((3,1),\emptyset)}$, $S_1^{((2),(2))}$ and $S_2^{((2),(2))}$ are linearly independent. 
\begin{itemize}
    \item[i)] We start by examining the case $\gamma = \delta  = 1$. Then $\ell (e_4) = 0, \ell (p_{(2^2)}) = \ell (p_{(4)})$ and $$W_\ell^{\langle 2 \rangle } = \langle e_4,p_{(2^2)}-p_{(4)} \rangle_\R.$$ $W_\ell^{\langle 2 \rangle}$ has the root $(1,0,0,0)$.
\end{itemize}
We proceed with the cases $\gamma = \beta = 1$ or $\beta = \delta = 1$. 
\begin{itemize}
    \item[ii)] We notice that if $\gamma = \beta = 1$ then $$W_\ell = \langle X_4^2-X_1^2, X_1X_2+X_3X_4 \rangle_{D_4},$$ but all elements in $W_\ell$ have the common root $(1,1,1,-1)$.
    \item[iii)] If $\beta = \delta = 1$ then $$W_\ell = \langle X_4^2-X_1^2,X_1X_2-X_3X_4\rangle_{D_4}$$ with the common root $(1,1,1,1)$.
\end{itemize}
\end{proof}

\begin{cor}\label{d4}
The set of non-negative and sums of squares quaternary quartics invariant under $D_4$ are equal, i.e., it is
$\Sigma_{4,4}^{D_4} = \mathcal{P}_{4,4}^{D_4}$.
\end{cor}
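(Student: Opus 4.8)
The plan is to obtain Corollary~\ref{d4} as an immediate consequence of Corollary~\ref{cor:Sigma=P} together with Theorem~\ref{thm:D4simplicialCone}. Corollary~\ref{cor:Sigma=P} states that $\Sigma_{4,4}^{D_4} = \mathcal{P}_{4,4}^{D_4}$ holds if and only if every extremal ray of the dual cone $\left(\Sigma_{4,4}^{D_4}\right)^\ast$ is generated by a point-evaluation. Theorem~\ref{thm:D4simplicialCone} asserts precisely that $\left(\Sigma_{4,4}^{D_4}\right)^\ast = \cone\{\ev_{(1,0,0,0)},\ev_{(1,1,1,-1)},\ev_{(1,1,1,1)}\}$ and that this cone is simplicial. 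The extremal rays of a simplicial cone are exactly the rays spanned by its three generators, and each of these generators is a point-evaluation; hence the criterion of Corollary~\ref{cor:Sigma=P} is satisfied and the asserted equality follows.

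The only point deserving a line of verification is that the three evaluation functionals $\ev_{(1,0,0,0)}$, $\ev_{(1,1,1,-1)}$, $\ev_{(1,1,1,1)}$ are linearly independent as elements of $\left(H_{4,4}^{D_4}\right)^\ast$, so that the cone they span is genuinely simplicial and full dimensional, with no additional extremal rays hidden inside. Since $H_{4,4}^{D_4} = \langle p_{(2^2)}, p_{(4)}, e_4\rangle_\R$ is three dimensional, this is a routine finite-dimensional computation: one evaluates the three basis invariants at the three points and checks that the resulting $3\times 3$ matrix is invertible.

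I do not expect any genuine obstacle at the level of this corollary, since essentially all the work is already contained in the proof of Theorem~\ref{thm:D4simplicialCone}, namely the case distinction on the multiplicities $\alpha,\beta,\gamma,\delta$ of the irreducible $D_4$-constituents $S^{((4),\emptyset)}$, $S^{((3,1),\emptyset)}$, $S_1^{((2),(2))}$, $S_2^{((2),(2))}$ appearing in the kernel $W_\ell$ of the quadratic form associated to an extremal $\ell$, which forces $\ell$ to be one of the three listed point-evaluations. Thus the proof of Corollary~\ref{d4} is a one-line deduction from that theorem and from Corollary~\ref{cor:Sigma=P}.
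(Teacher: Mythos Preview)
Your proposal is correct and follows essentially the same approach as the paper: the paper's proof likewise invokes Theorem~\ref{thm:D4simplicialCone} to conclude that the dual cone is generated by point-evaluations and then applies Corollary~\ref{cor:Sigma=P}. Your additional remark about checking linear independence of the three evaluation functionals is a minor elaboration the paper omits, but otherwise the arguments coincide.
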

This does not already follow from $\Sigma_{4,4}^{B_4} = \mathcal{P}_{4,4}^{B_4}$ in \cite{goel2017analogue}, because  $H_{4,4}^{D_4}\setminus H_{4,4}^{B_4}\neq \emptyset$.
\begin{proof}
By Theorem \ref{thm:D4simplicialCone} the cone $\left( \Sigma_{4,4}^{D_4} \right)^\ast$ is generated by point-evaluations. Hence any extremal ray in $\left( \Sigma_{4,4}^{D_4} \right)^\ast$ is spanned by a point-evaluation. The claim follows from Corollary \ref{cor:Sigma=P}.
\end{proof}

By reformulating Theorem \ref{thm:D4simplicialCone} we obtain the following very simple test set for $D_4$-quartics:
\begin{cor}
A form $f = a(X_1^2+X_2^2+X_3^2+X_4^2)^2+b(X_1^4+X_2^4+X_3^4+X_4^4)+cX_1X_2X_3X_4$, with $a,b,c \in \R$, is non-negative if and only if $f(z) \geq 0$ for all $z \in \{ (1,0,0,0),(1,1,1,-1),(1,1,1,1)\}.$
\end{cor}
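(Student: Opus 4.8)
The plan is to read the statement off directly from Theorem~\ref{thm:D4simplicialCone} together with Corollary~\ref{d4}, so essentially no new work is needed. First I would record that the space $H_{4,4}^{D_4}$ of $D_4$-invariant quaternary quartics is exactly the $3$-dimensional span of $(X_1^2+X_2^2+X_3^2+X_4^2)^2$, $X_1^4+X_2^4+X_3^4+X_4^4$ and $X_1X_2X_3X_4$ (the degree-$4$ monomials in the fundamental invariants $p_2,p_4,e_4$); hence every $f$ of the displayed shape is precisely a general element of $H_{4,4}^{D_4}$, and ``$f$ is non-negative'' means exactly $f\in\mathcal{P}_{4,4}^{D_4}$.

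Next I would invoke biduality for the closed, pointed, full-dimensional convex cone $\mathcal{P}_{4,4}^{D_4}$: one has $f\in\mathcal{P}_{4,4}^{D_4}$ if and only if $\ell(f)\ge 0$ for every $\ell\in\big(\mathcal{P}_{4,4}^{D_4}\big)^\ast$. By Corollary~\ref{d4} the cones $\mathcal{P}_{4,4}^{D_4}$ and $\Sigma_{4,4}^{D_4}$ coincide, so their dual cones coincide as well, and Theorem~\ref{thm:D4simplicialCone} identifies this common dual cone as $\cone\{\ev_{(1,0,0,0)},\ev_{(1,1,1,-1)},\ev_{(1,1,1,1)}\}$. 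Feeding this into the biduality criterion, $f\in\mathcal{P}_{4,4}^{D_4}$ if and only if $\ell(f)\ge 0$ for all $\ell$ in this finitely generated cone.

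Finally I would note that a non-negative conic combination $\sum_i\lambda_i\,\ev_{z_i}$ with $\lambda_i\ge 0$ satisfies $\sum_i\lambda_i f(z_i)\ge 0$ as soon as $f(z_i)\ge 0$ for each generator $z_i$, while conversely each generator itself lies in the cone. Hence $f\ge 0$ on $\R^4$ if and only if $f(1,0,0,0)\ge 0$, $f(1,1,1,-1)\ge 0$ and $f(1,1,1,1)\ge 0$, which is the claim (the forward implication being immediate). The only point deserving a word is that it suffices to test at these three points rather than at their full $D_4$-orbits: since $f$ is $D_4$-invariant one has $f(z)=f(\sigma z)$ for all $\sigma\in D_4$, so a single orbit representative suffices. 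I do not expect any genuine obstacle here; this corollary is a cosmetic restatement of Theorem~\ref{thm:D4simplicialCone}, the substantive content having already been established in the proofs of that theorem and of Corollary~\ref{d4}.
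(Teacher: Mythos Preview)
Your proposal is correct and follows essentially the same argument as the paper: invoke duality so that $f\in\mathcal{P}_{4,4}^{D_4}$ iff $\ell(f)\ge 0$ for all $\ell$ in the dual cone, use Corollary~\ref{d4} to replace $(\mathcal{P}_{4,4}^{D_4})^\ast$ by $(\Sigma_{4,4}^{D_4})^\ast$, and then apply Theorem~\ref{thm:D4simplicialCone} to reduce to the three generating point-evaluations. Your additional remarks on the forward implication and on orbit representatives are fine but not needed.
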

\begin{proof}
An invariant form $f \in H_{4,4}^{D_4}$ is non-negative if and only if $\ell (f) \geq 0$ for any $\ell$ in $\left( \mathcal{P}_{4,4}^{D_4}\right)^\ast.$ By Corollary \ref{d4} it is $\left( \mathcal{P}_{4,4}^{D_4}\right)^\ast = \left(\Sigma_{4,4}^{D_4}\right)^\ast$. The claim follows from Theorem \ref{thm:D4simplicialCone}.
\end{proof} 

\begin{cor}
The convex cone $\mathcal{P}_{4,4}^{D_4}$ of non-negative $D_4$-quartics is a simplicial cone generated by $$4p_{(4)}-p_{(2^2)}, p_{(2^2)}-p_{(4)}+12e_4, p_{(2^2)}-p_{(4)}-12e_4.$$ 
\end{cor}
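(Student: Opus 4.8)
The plan is to dualise the description of $\bigl(\Sigma_{4,4}^{D_4}\bigr)^\ast$ provided by Theorem~\ref{thm:D4simplicialCone}. First I would record that $H_{4,4}^{D_4}$ is three-dimensional: since $\R[\underline{X}]^{D_4}=\R[p_2,p_4,p_6,e_4]$, the only invariant quartics are those spanned by $p_{(2^2)},p_{(4)},e_4$, so $N_{D_4}(4)=3$. By Corollary~\ref{d4} we have $\mathcal{P}_{4,4}^{D_4}=\Sigma_{4,4}^{D_4}$, hence the two cones share a dual, and Theorem~\ref{thm:D4simplicialCone} gives
$$\bigl(\mathcal{P}_{4,4}^{D_4}\bigr)^\ast=\cone\bigl\{\ev_{(1,0,0,0)},\ev_{(1,1,1,-1)},\ev_{(1,1,1,1)}\bigr\},$$
a simplicial cone in the three-dimensional space $\bigl(H_{4,4}^{D_4}\bigr)^\ast$. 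Since $\mathcal{P}_{4,4}^{D_4}$ is a proper (closed, pointed, full-dimensional) convex cone it equals its double dual, and the dual of a simplicial cone is again simplicial; therefore $\mathcal{P}_{4,4}^{D_4}$ is simplicial with exactly three extreme rays.

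Next I would pin down the extreme rays via the facet--ray correspondence for a simplicial cone: the facets of $\bigl(\mathcal{P}_{4,4}^{D_4}\bigr)^\ast$ are the three cones $\cone\{\ev_a,\ev_b\}$ with $\{a,b\}$ a pair of the points $(1,0,0,0),(1,1,1,-1),(1,1,1,1)$, and the extreme ray of $\mathcal{P}_{4,4}^{D_4}$ dual to such a facet is spanned by the (up to positive scalar unique) invariant quartic $f$ with $f(a)=f(b)=0$; positivity of $f$ at the remaining point is then automatic from $f\in\mathcal{P}_{4,4}^{D_4}$ and the fact that the three point-evaluations span the dual. Writing $f=\alpha\,p_{(2^2)}+\beta\,p_{(4)}+\gamma\,e_4$ and solving the three $2\times2$ linear systems in $(\alpha,\beta,\gamma)$ over the basis $(p_{(2^2)},p_{(4)},e_4)$ produces, up to positive scaling, $4p_{(4)}-p_{(2^2)}$ (the form vanishing at $(1,1,1,-1)$ and $(1,1,1,1)$), $p_{(2^2)}-p_{(4)}+12e_4$ (vanishing at $(1,0,0,0)$ and $(1,1,1,-1)$), and $p_{(2^2)}-p_{(4)}-12e_4$ (vanishing at $(1,0,0,0)$ and $(1,1,1,1)$).

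Finally I would verify that these three forms actually lie in $\mathcal{P}_{4,4}^{D_4}$. The quickest route is to observe that they equal $6$ times the $1\times1$ matrices $B^{((3,1),\emptyset)}$, $B_1^{((2),(2))}$ and $B_2^{((2),(2))}$ computed just after Lemma~\ref{le:D4isodecomp}; each of those is the Reynolds operator applied to a square and hence a sum of squares, a fortiori non-negative. (Alternatively one checks directly that $4p_{(4)}-p_{(2^2)}\ge0$ by Cauchy--Schwarz, and that $p_{(2^2)}-p_{(4)}\pm12e_4=2\bigl(e_2(\underline{X}^2)\pm6e_4\bigr)\ge0$ by AM--GM applied to the six monomials of $e_2(\underline{X}^2)$, whose product is $e_4^6$.) Since each of the three forms is strictly positive at the one remaining point, none is zero, each lies on the boundary of $\mathcal{P}_{4,4}^{D_4}$, and each is forced up to a positive multiple, so these are precisely the three extreme rays. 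The only place calling for a little care — and hence the (minor) main obstacle — is matching each form to the correct facet and fixing the overall sign so that the ray, not its negative, is the one in the cone; everything else is immediate from Theorem~\ref{thm:D4simplicialCone} and Corollary~\ref{d4}.
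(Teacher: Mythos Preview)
Your argument is correct and follows essentially the same route as the paper: invoke Corollary~\ref{d4} to replace $\mathcal{P}_{4,4}^{D_4}$ by $\Sigma_{4,4}^{D_4}$, use the simplicial description of the dual cone from Theorem~\ref{thm:D4simplicialCone}, and then read off the primal extreme rays as the invariant sums of squares lying in the kernels of the three extremal point-evaluations. You are simply more explicit than the paper about the biduality/simplicial-dual step and about verifying that the three candidate forms are indeed sums of squares (your identification with $6B^{((3,1),\emptyset)}$, $6B_1^{((2),(2))}$, $6B_2^{((2),(2))}$ is exactly what the paper has in mind).
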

\begin{proof}
The sets $\mathcal{P}_{4,4}^{D_4}$ and $\Sigma_{4,4}^{D_4}$ are equal by Corollary \ref{d4}. The boundary of $\Sigma_{4,4}^{D_4}$ is equal to the union of all kernels of extremal elements in $\left(\Sigma_{4,4}^{D_4}\right)^\ast$ intersected with $\Sigma_{4,4}^{D_4}$. The above generators are precisely the invariant sums of squares contained in the kernels of the three extremal rays in Theorem \ref{thm:D4simplicialCone}.
\end{proof}

The results from the previous two subsections allow to conclude the following classification for the equivariant non-negativity versus sums of squares question for the reflection group $D_n$. 
\begin{thm}\label{thm:chardn}
$\Sigma_{n,2d}^{D_n} = \mathcal{P}_{n,2d}^{D_n}$ if and only if $(n,2d) \in \{ (n,2), (n,4) , (3,8)\}.$
\end{thm}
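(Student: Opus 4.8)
The plan is to prove the classification by splitting into the three positive cases and a uniform family of negative cases. For the positive direction, the cases $(n,2)$ and $(n,4)$ need to be handled, and $(3,8)$ is already Theorem \ref{thm:harrisd3}. For $2d=2$ every non-negative quadratic form is a sum of squares (this is Hilbert's quadratic case, and it persists under any group restriction since $\mathcal P_{n,2}\subset\Sigma_{n,2}$ already holds without symmetry), so $\Sigma_{n,2}^{D_n}=\mathcal P_{n,2}^{D_n}$ is immediate. For $2d=4$, I would argue as follows: for $n=2$ the group $D_2$ acts essentially on $\R^2$ (in fact $D_2\cong\mathbb Z/2\times\mathbb Z/2$) and bivariate non-negative forms are sums of squares by Hilbert, so equality holds; for $n=3$, $D_3$-invariant quartics coincide with $\mathfrak S_3$-invariant quartics after a short invariant-theoretic check (the $D_3$-invariant ring in degree $4$ is spanned by $p_2^2,p_4$, the same as for $\mathfrak S_3$, since $e_3$ has degree $3$ and contributes nothing in degree $4$), so Hilbert's ternary quartic case $\Sigma_{3,4}^{\mathfrak S_3}=\mathcal P_{3,4}^{\mathfrak S_3}$ gives the result; for $n=4$ it is exactly Corollary \ref{d4}; and for $n\geq 5$ one restricts from the $B_n$-invariant case, where $\Sigma_{n,4}^{B_n}=\mathcal P_{n,4}^{B_n}$ for all $n$ by \cite{goel2017analogue} — but one must check that every $D_n$-invariant quartic is in fact $B_n$-invariant. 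Indeed, in degree $4$ the only $D_n$-invariant that is not $B_n$-invariant would involve $e_n$, which has degree $n\geq 5>4$, so $H_{n,4}^{D_n}=H_{n,4}^{B_n}$ and equality follows from the $B_n$ result.

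\textbf{The negative direction.} Here I must show that for every other pair $(n,2d)$ — that is, $2d\geq 6$ and $(n,2d)\neq(3,8)$ — there exists a $D_n$-invariant non-negative form of degree $2d$ that is not a sum of squares. The natural approach is to import the known separating examples for the symmetric (or even-symmetric) cases and check that they can be realized as $D_n$-invariants, then argue they are still not sums of squares in $\R[\underline X]$ (being not an SOS is a property of the form in the full polynomial ring, so it is inherited regardless of the group). For $n\geq 4$ and $2d\geq 8$ with $(n,2d)\neq(3,8)$: the Goel–Kuhlmann–Reznick examples \cite{goel2017analogue} are even symmetric, hence $B_n$-invariant, hence $D_n$-invariant, non-negative, and not SOS. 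For $n=3$ and $2d=6$: one uses a ternary $D_3$-invariant sextic analogue of the Motzkin/Robinson-type forms; since $D_3\subset\mathfrak S_3$ in degree $6$ the invariant ring now does involve $e_3^2$ which is genuinely degree $6$, so $H_{3,6}^{D_3}\supsetneq H_{3,6}^{\mathfrak S_3}$ — I would take the Choi–Lam–Reznick symmetric sextic PSD-not-SOS form and check it (or a slight modification using $e_3^2$) lands in $\mathcal P_{3,6}^{D_3}\setminus\Sigma_{3,6}$. For $n=3$ and $2d\geq 10$, and for $n=2$ with any $2d\geq 6$: $n=2$ is trivial (Hilbert says all bivariate non-negative forms are SOS, so actually equality \emph{holds} for $n=2$ and every degree!) — so I need to be careful: the statement as written restricts $n$ only implicitly, and the negative cases are really $\{(n,2d): n\geq 3, 2d\geq 6\}\setminus\{(3,8)\}$. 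For $n=3,\,2d\geq 6,\,2d\neq 8$ one needs explicit $D_3$-invariant PSD-not-SOS ternary forms; for $n\geq 4,\, 2d\geq 6$ one combines the quartic being fine (no, $2d\geq 6$) — use even-symmetric examples for $2d\geq 8$ and a symmetric sextic for $2d=6$, plus forms obtained by multiplying a PSD-not-SOS form of smaller degree by an appropriate invariant power like a power of $p_2$ to lift degrees while staying non-SOS.

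\textbf{Main obstacle.} The genuinely delicate part is the case $n=3$ with $2d=6$ and $2d\geq 10$: one must exhibit a \emph{$D_3$-invariant} (not merely $\mathfrak S_3$- or $B_3$-invariant) non-negative ternary form that fails to be SOS, and one must verify both properties. Non-negativity can be checked via the test set for $D_3$-invariant forms (subspaces of the reflection arrangement, as in \cite{friedl2018reflection,acevedo2016test}, or directly), while failure of the SOS property requires either a dual certificate (a linear functional nonnegative on all squares but negative on the form) or a dimension/boundary argument à la Blekherman. A clean way to organize this is: take the Choi–Lam ternary sextic $S(X,Y,Z)=X^4Y^2+Y^4Z^2+Z^4X^2-3X^2Y^2Z^2$ (PSD, not SOS), symmetrize it over $\mathfrak S_3$ to get a symmetric PSD-not-SOS sextic, observe it is automatically $D_3$-invariant (since $\mathfrak S_3\supset D_3$ — wait, $D_3$ is a subgroup of $\mathfrak S_3$ only up to the sign issue; more precisely $D_3$ as a reflection group on $\R^3$ sits inside $B_3$, and $\mathfrak S_3$-invariance does imply $D_3$-invariance because $D_3\subset B_3$ and the symmetric-group action factors appropriately) — then lift to higher even degrees $2d$ by multiplying by $p_2^{d-3}$, which preserves both non-negativity and (by a standard argument on the SOS cone, e.g. via the associated quadratic form having a fixed nontrivial kernel) the failure of being SOS. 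For $n\geq 4$ and $2d=6$ one embeds a ternary symmetric sextic example into $n$ variables (pad with extra variables, symmetrize), checking the embedding stays non-negative and non-SOS. Assembling these pieces, together with the positive cases above, yields the stated equivalence; I expect the bookkeeping of which classical separating example to use in each $(n,2d)$ regime, and the verification that each is $D_n$-invariant, to be the bulk of the work rather than any single hard idea.
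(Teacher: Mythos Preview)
Your positive direction is correct and matches the paper's argument exactly: $(n,2)$ is Hilbert's quadratic case, $(3,8)$ is Theorem \ref{thm:harrisd3}, $(4,4)$ is Corollary \ref{d4}, and for $n\geq 5$ the observation $H_{n,4}^{D_n}=H_{n,4}^{B_n}$ (since $e_n$ has degree $n>4$) reduces to the $B_n$ result in \cite{goel2017analogue}.

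Your negative direction, however, is badly overcomplicated and contains a genuine confusion about the group lattice. The paper dispatches all negative cases in a single sentence: since $D_n\subset B_n$, every $B_n$-invariant form is $D_n$-invariant, so any $f\in\mathcal P_{n,2d}^{B_n}\setminus\Sigma_{n,2d}^{B_n}$ lies automatically in $\mathcal P_{n,2d}^{D_n}\setminus\Sigma_{n,2d}^{D_n}$. The classification in \cite{goel2017analogue} (together with \cite{choi1987even} for sextics) says such $f$ exist for \emph{every} $(n,2d)\notin\{(n,2),(n,4),(3,8)\}$, including $(3,6)$, $(n,6)$ for $n\geq 4$, and $(3,2d)$ for $2d\geq 10$. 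There is no need for separate constructions in these regimes, no need to symmetrize Choi--Lam forms, and no need for the $p_2^{d-3}$ lifting trick.

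Your hesitation ``$\mathfrak S_3\supset D_3$ --- wait\ldots'' reveals the underlying error: in fact $\mathfrak S_n\subset D_n\subset B_n$ (permutations involve zero sign changes, which is even), so $\mathfrak S_n$-invariance does \emph{not} imply $D_n$-invariance; it is the other way round. Thus your plan to take a merely symmetric PSD-not-SOS sextic and ``check it is $D_3$-invariant'' would in general fail unless the form happens to be even in each variable --- at which point it is $B_3$-invariant and you are back to the paper's one-line argument anyway. Drop the case analysis in the negative direction and invoke the $B_n$ classification directly.
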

\begin{proof}
Suppose that there exists $f\in \mathcal{P}_{n,2d}^{B_n}\setminus \Sigma_{n,2d}^{B_n}$. This implies $f\in \mathcal{P}_{n,2d}^{D_n}\setminus \Sigma_{n,2d}^{D_n}$.
Therefore, we can directly rely on the classification carried out in \cite{goel2017analogue} and only need to consider those cases specifically, where all even symmetric positive semidefinite forms are sums of squares. These  are only the following non-trivial cases: $(n,2d) \in \{ (3,8), (n,4) \}$. But we have shown in Theorem \ref{thm:harrisd3} that in the case  $(3,8)$ the equality does survive, and while following Corollary \ref{d4} it does also for $(4,4)$. Furthermore, if $n > 4$ then the invariant quartics with respect to $B_n$ are precisely the invariant quartics with respect to $D_n$ as $H_{n,4}^{B_n} = \langle p_{(2^2)}, p_{(4)}\rangle_\R = H_{n,4}^{D_n}$ for $n \geq 5$, which finishes the proof.
\end{proof}
\subsection{LMIs and non-negativity testing}

In general testing non-negativity of a polynomial in more than two variables is already for quartics an NP-hard problem (see e.g. \cite{blum1998complexity} or \cite{murty1985some}). On the other hand, certifying that a given polynomial is a sum of squares can be done with so called semidefinite programming. Although the complexity status of this procedure in the Turing or in the real numbers model is not yet known (see \cite{complexsdp}) SDPs can be solved numerically in polynomial time to a given accuracy via the ellipsoid algorithm and  
this approach generally provides a tractable way to certify that a polynomial is non-negative, if it is a sum of squares. The feasible region of a semidefinite program is given by a linear matrix inequality (LMI), i.e., an inequality of the form $A_0+x_1A_1+x_2A_2+\ldots+x_nA_n\succeq 0$,
where $A_0,\ldots, A_{n}$ are real symmetric matrices all of the same size and $x_1,\ldots,x_n$ are supposed to be real scalars. The set of all $x\in\R^n$ satisfying a given LMI is called a \emph{spectrahedron}. For every $f\in H_{n,2d}$ one can construct an LMI (\cite{powers1998algorithm}) which possesses a solution if and only if $f$ is a sum of squares. The corresponding spectrahedron is called the \emph{Gram spectrahedron}  of $f$\cite{chua2016gram}, and it represents in fact all possible ways to decompose $f$ into sums of squares. Accordingly, it is non-empty if and only if $f$ is a sum of squares.  The results presented in the article can be directly transferred into the setup of symmetry adapted Gram-spectrahedra, which were, for example, recently studied by \cite{shankar}. 

\begin{thm}
Let $G$ be a finite reflection group and consider $f\in H_{n,2d}^{G}$ and $\theta_1,\ldots,\theta_l$ be all non $G$-isomorphic irreducible representations. Then the Gram spectrahedron of $f$ can be defined by a block diagonal matrix, consisting of $l$ blocks $B_1,\ldots,B_l$ and the size of the block $B_i$ equals
$$\sum_{k=0}^dN(d-k)\cdot h_k^{\vartheta_i}.$$
In particular, in the case $G\in\{A_{n-1},B_n,D_n\}$ the size of the matrix is independent of $n$, for large $n$
\end{thm}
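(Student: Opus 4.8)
The plan is to assemble the final theorem directly from the structural results already established, so that the proof is essentially a bookkeeping argument. The key input is Theorem~\ref{thm:sums of squares description} together with Corollary~\ref{cor:bound}: the former tells us that the invariant sum of squares condition on $f \in H_{n,2d}^G$ decomposes, via the isotypic decomposition of $H_{n,d}$, into one positive semidefiniteness condition per non-isomorphic irreducible representation $\theta_i$, and the latter computes the multiplicity of $\theta_i$ in $H_{n,d}$ to be $\sum_{k=0}^d N(d-k)\cdot h_k^{\vartheta_i}$. So the proof is: pass from the sum-of-squares description to the Gram spectrahedron description, identify the block sizes, and then invoke stabilization for the last sentence.

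\textbf{Step 1.} Recall the construction of the Gram spectrahedron. For $f \in H_{n,2d}^G$ one writes $f = v^t Q v$ where $v$ is a vector of a basis of $H_{n,d}$ and $Q$ ranges over symmetric matrices; $f$ is a sum of squares iff some such $Q$ is positive semidefinite, and the affine-linear constraints on $Q$ coming from matching coefficients of $f$ cut out the Gram spectrahedron. I would then note that when $f$ is $G$-invariant one may symmetrize: by Theorem~\ref{THM Decomp} (or Corollary~\ref{cor:sos}) it suffices to consider $Q$ that are $G$-invariant as quadratic forms on $H_{n,d}$, i.e.\ $Q \in \mathcal{A}^G$. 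This is exactly the content already used in Lemma~\ref{le:psd} and Lemma~\ref{Rmk:BoundaryOfPSD}.

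\textbf{Step 2.} Block-diagonalize. Choosing a symmetry adapted basis of $H_{n,d}$ (which exists by Corollary~\ref{cor:deomposition of polynomial ring}, with the explicit higher Specht construction of Theorem~\ref{thm:general sums of squares repr} in the $A_{n-1},B_n,D_n$ cases), Schur's Lemma~\ref{le:schur} forces every $G$-invariant quadratic form on $H_{n,d}$ to be block-diagonal: one block $B_i$ for each non-isomorphic irreducible $\theta_i$, acting on the space of multiplicities, and the positive semidefiniteness of $Q$ is equivalent to positive semidefiniteness of each $B_i$ separately. The size of $B_i$ is the multiplicity $\eta_i$ of $\theta_i$ in $H_{n,d}$, which by Corollary~\ref{cor:bound} equals $\sum_{k=0}^d N(d-k)\cdot h_k^{\vartheta_i}$. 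This establishes the first assertion; it is a direct translation of Theorem~\ref{thm:sums of squares description} into the Gram-spectrahedron language of Lemma~\ref{Rmk:BoundaryOfPSD}.

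\textbf{Step 3.} For the last sentence, specialize $G$ to one of $A_{n-1}, B_n, D_n$ and apply Theorem~\ref{thm:ModulesStabilize}: for fixed $d$ and $n$ large (at least $2d$ for $A_{n-1},B_n$, and $>2d$ for $D_n$), each $\theta_i = S^{(\lambda,\mu)}$ occurs in $H_{n,d}$ with the same multiplicity as $S^{(\lambda+1,\mu)}$ in $H_{n+1,d}$, i.e.\ $\eta_i$ is eventually constant in $n$; equivalently, by the remark following Theorem~\ref{thm:ModulesStabilize}, $h_k^{\vartheta_i}$ and $N(d-k)$ stabilize. Hence the block sizes $\sum_{k=0}^d N(d-k)\cdot h_k^{\vartheta_i}$ are independent of $n$ for $n$ large, which is precisely the claimed $n$-independence of the Gram matrix size. \textbf{I do not expect a genuine obstacle here}; the only point requiring a little care is phrasing, namely that the total matrix has blocks indexed by the (finitely many, and eventually stable) $\theta_i$ appearing with nonzero multiplicity, and that for the three infinite families the index set of relevant multipartitions $(\lambda,\mu)$ with $|\mu|\le d$ together with the stabilizing first row gives a set whose combinatorics no longer involves $n$ — all of which is already recorded in the discussion around Lemmas~\ref{le:lemma1}--\ref{le:lemma4} and Corollary~\ref{cor:ModulesStabilize}.
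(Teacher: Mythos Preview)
Your proposal is correct and follows essentially the same approach as the paper: choose a symmetry adapted basis of $H_{n,d}$ so that Schur's lemma block-diagonalizes the Gram matrix, read off the block sizes from Corollary~\ref{cor:bound}, and then invoke the stabilization result (Corollary~\ref{cor:ModulesStabilize}) for the $A_{n-1},B_n,D_n$ families. The paper's proof is just a two-sentence pointer to these same ingredients, while you have spelled out the bookkeeping in more detail.
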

\begin{proof}
This follows from choosing a symmetry adapted basis of $H_{n,d}$ and Corollary \ref{cor:bound}. When $G\in\{A_{n-1},B_n,D_n\}$ the stabilization follows from Corollary  \ref{cor:ModulesStabilize}
\end{proof}

A convex set which is not a spectrahedron but can be obtained as the projection of higher dimensional spectrahedron is called \emph{spectrahedral shadow}. Following a question by Nemirovski, which convex sets can be represented as projections of spectrahedra, Scheiderer \cite{scheiderer} showed that the cones of non-negative forms in general are not spectrahedral shadows. In the next theorem we give some examples of invariant non-negative forms, which form spectrahedral shadows.

\begin{thm} 
For all $n$ the families of cones $\mathcal{P}_{n,4}^{\mathfrak{S}_n}$, $\mathcal{P}_{n,6}^{B_n}$, $\mathcal{P}_{n,8}^{B_n}$ and $\mathcal{P}_{n,10}^{B_n}$ are  spectrehedral shadows. Moreover, for forms in any of these families, there exists an LMI of size $O(n^3)$ certifying the non-negativity. 
\end{thm}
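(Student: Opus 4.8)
The plan is to establish each case by combining two facts: first, that the cone of non-negative $G$-invariant forms can be detected on a low-dimensional test set of points (a union of linear subspaces of the reflection hyperplane arrangement), and second, that on such a test set the non-negativity condition becomes a condition that is itself expressible via a sum of squares certificate in fewer variables, and hence via an LMI after projection. For $\mathcal{P}_{n,4}^{\mathfrak{S}_n}$ I would invoke the result of \cite{blekrie} (symmetric quartics): a symmetric quartic is non-negative if and only if its restrictions to the lines $\{(1,\dots,1,t,\dots,t)\}$ (i.e., points with at most two distinct coordinate values) are non-negative, which by Timofte-type half-degree principles \cite{timofte2003positivity,riener2012degree} reduces non-negativity to that of a family of univariate quartics $q_k(t)$ parametrized by the number $k$ of coordinates equal to $1$; a univariate quartic is non-negative iff it is a sum of squares, and the Gram spectrahedron of a univariate quartic has bounded size, so the whole family is a spectrahedral shadow with LMI size $O(n)$ per univariate piece and $O(n)$ pieces, hence $O(n^3)$ in total (in fact one can do better, but $O(n^3)$ suffices). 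For $\mathcal{P}_{n,2d}^{B_n}$ with $2d\in\{6,8,10\}$ the analogous input is the test-set results for even symmetric forms (see \cite{harris1999real} for the octic case, and \cite{choi1987even,goel2017analogue,friedl2018reflection,acevedo2016test} for the general even-symmetric test-set machinery): an even symmetric form of degree $2d$ in $n$ variables is non-negative if and only if it is non-negative on points of the arrangement with few distinct absolute values of coordinates (at most three suffice in these low degrees), which after squaring the coordinates turns into non-negativity of a symmetric form of degree $d$ on the non-negative orthant in few variables, and again — because of the low degree — this is captured by Pólya/Handelman-type certificates or directly by a bounded-size sum of squares on the orthant. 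The key structural tool is Theorem \ref{thm:EvenSymSOSOctics} and its analogues, together with Corollary \ref{cor:ModulesStabilize}, which guarantee the matrix blocks in the sums of squares description stabilize, so the size bounds are uniform in $n$.

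More concretely, here is the order of the steps. \textbf{Step 1:} For each family, cite the relevant test set $\Omega_{n}\subset\mathbb{S}^{n-1}$ consisting of points supported on a union of $O(1)$-dimensional subspaces of the reflection arrangement; in the cases at hand these subspaces are parametrized by a partition of $[n]$ into a bounded number of blocks on which the coordinate value (or its absolute value) is constant. \textbf{Step 2:} Translate ``$f\geq 0$ on $\Omega_n$'' into a finite family, indexed by the $O(n^{c})$ choices of block sizes for some small $c$, of non-negativity conditions for forms in a bounded number $m=m(d)$ of variables of degree $\leq 2d$; here the half-degree principle is what bounds the number of distinct values, and $c$ is at most $2$ (two free block-size parameters) in the octic/decic cases after using scaling to normalize one parameter. \textbf{Step 3:} For each such small-variable form, write down its Gram spectrahedron; its defining LMI has size $O(1)$ (bounded by $\binom{m+d-1}{d}$). \textbf{Step 4:} The intersection over the parametrizing family of the preimages of these Gram spectrahedra under the (linear, $n$-dependent) restriction maps is a spectrahedral shadow, being a finite intersection of spectrahedral shadows, and assembling the $O(n^{c})$ blocks into one block-diagonal LMI gives total size $O(n^{3})$ once one checks $c\le 2$ and each block contributes $O(n)$ or less; the stabilization of Theorem \ref{thm:ModulesStabilize} ensures the per-block data does not grow. \textbf{Step 5:} Conclude equality of the projected spectrahedron with $\mathcal{P}_{n,2d}^{G}$ from the fact that the test set is exact (no spurious constraints, no missing ones).

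The main obstacle I expect is \textbf{Step 2--Step 3}: proving that on each piece of the test set the non-negativity of the resulting low-degree form in few variables on the relevant region (a line, or the non-negative orthant of a bounded-dimensional space) is not merely equivalent to a sum of squares but to a sum of squares whose Gram matrix has size that does not grow with $n$ — and, crucially, that the linear map sending $f$ to its restriction data is such that the fibered intersection really reconstructs $\mathcal{P}_{n,2d}^{G}$ rather than a strictly larger cone. For the octic case this is essentially Harris' theorem together with Theorem \ref{Thm:DualEvenSymOct} (the dual cone is finitely generated by point evaluations), so the argument is clean; for degrees $6$ and $10$ one must either cite or re-derive the corresponding exactness of the even-symmetric test set, and verify that the relevant ``non-negative on the orthant'' forms of degree $3$ or $5$ admit Pólya-type or Handelman-type representations of controlled size — this is where the degree being small (so that the associated SOS hierarchy terminates at a fixed level) does the real work, and where a careful bookkeeping of the $O(n^{3})$ bound is needed. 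Assembling everything into a single block-diagonal LMI and checking the size bound is then routine linear algebra.
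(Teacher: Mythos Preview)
Your overall architecture (test-set reduction via the half-degree principle, followed by SOS certificates in few variables, then intersecting preimages of Gram spectrahedra under linear restriction maps) is exactly the paper's approach. The $\mathfrak{S}_n$ quartic case is essentially right as you wrote it.

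Where you diverge unnecessarily from the paper is the $B_n$ case. You claim ``at most three'' distinct absolute values suffice for $2d\in\{6,8,10\}$ and then worry about non-negativity of a degree-$d$ symmetric form on the positive orthant, invoking P\'olya/Handelman certificates and flagging this as the main obstacle. But the $B_n$ half-degree principle (cf.\ \cite[Theorem~3.1]{riener2016symmetric}) gives at most $\max(2,\lfloor d/2\rfloor)$ distinct values of $|X_i|$, and for $d\in\{3,4,5\}$ this is \emph{two}, not three. Restricting $f$ to a point with two distinct absolute values yields a \emph{bivariate} form $f^{\lambda}(a,b)\in H_{2,2d}$ in the ordinary variables $a,b$ (not their squares), and a bivariate non-negative form is a sum of squares by Hilbert. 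So there is no orthant issue and no need for P\'olya/Handelman: the obstacle you anticipate does not exist. The paper's proof is simply
\[
\mathcal{P}_{n,2d}^{G}=\bigcap_{\lambda}\bigl(\Phi^{\lambda}\bigr)^{-1}(\Sigma_{2,2d}),
\]
an intersection of $O(n)$ preimages of a fixed spectrahedron under linear maps.

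Separately, none of Theorem~\ref{thm:EvenSymSOSOctics}, Corollary~\ref{cor:ModulesStabilize}, Theorem~\ref{thm:ModulesStabilize}, or Theorem~\ref{Thm:DualEvenSymOct} plays any role here: the argument uses only the half-degree principle and Hilbert's bivariate equality, not the higher Specht machinery or Harris' result. Citing them obscures how elementary the proof is.
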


\begin{proof}
For $n \leq 2$ this is trivial, and in the case $n=3$ this follows either from Hilbert's Theorem in the $\mathfrak{S}_3$ case or from Harris' result \ref{COR Harris} in the $B_3$ case.
So we assume $n\geq 4$. By the half-degree principle, an element $f \in H_{n,4}^{\mathfrak{S}_n}$ is non-negative on $\R^n$ if and only if for any partition $\lambda \vdash n$ of length $2$ the form $f^\lambda \in H_{2,4}$ is non-negative on $\R^2$, where $f^\lambda(x,y):=f(x,\ldots,x,y,\ldots,y)$ and $x$ occurs precisely $\lambda_1$ times and $y$ $\lambda_2$ times. Notice that each $f^{\lambda}$ is non-negative if and only if it is a sum of squares, i.e., if we have $f^{\lambda}\in\Sigma_{2,4}$. If we denote by $\Phi^\lambda$ the linear map $f \mapsto \tilde{f}^\lambda (x,y)$ and if $\lambda^1,\ldots,\lambda^m$ are all partitions of $n$ with length $2$ then $$\mathcal{P}_{n,4}^{\mathfrak{S}_n} = \bigcap_{i=1}^m \left(\Phi^{\lambda^i} \right) ^{-1}(\Sigma_{2,4} )$$
which proves the claim in the $\mathfrak{S}_n$ case. 
Using the half-degree principle \cite[Theorem 3.1]{riener2016symmetric} for $B_n$ and considering instead of $f(\underline{X}) \in \R[\underline{X}]^{B_n}$ the form $f(\sqrt{|X_1|},\ldots,\sqrt{|X_n|})\in \R[\underline{X}]^{\mathfrak{S}_n}$, one can argue analogously with slight modifications. 
\end{proof}
\begin{remark}
In the case of symmetric polynomials, the above statement was implicitly already stated in \cite[Theorem 5.5]{riener2013exploiting} for symmetric quartic forms, albeit without mentioning of the term spectrahedral shadow.
\end{remark}
The core of the proof above is the reduction to bivariate forms via test sets. 

\begin{thm}
For the families of cones $\mathcal{P}_{n,6}^{\mathfrak{S}_n}$, $\mathcal{P}_{n,12}^{B_n}$ and $\mathcal{P}_{n,14}^{B_n}$ membership can be decided with $O(n^3)$ many LMIs, each of which has size bounded independent of $n$. 
\end{thm}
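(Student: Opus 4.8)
The plan is to follow the strategy of the proof of the preceding theorem, replacing ``bivariate form'' by ``form in at most three, resp.\ four, variables'' and tracking carefully where Hilbert's equality $\mathcal P=\Sigma$ was used.

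First I would apply the half-degree principle --- Timofte's degree principle \cite{timofte2003positivity} in the $\mathfrak S_n$ case and its $B_n$-version \cite[Theorem~3.1]{riener2016symmetric} --- to turn a nonnegativity question in $n$ variables into a finite family of nonnegativity questions in boundedly many variables. Concretely: $f\in H_{n,6}^{\mathfrak S_n}$ is nonnegative on $\R^n$ if and only if each restriction $f^{\lambda}(x,y,z):=f(x^{(a)},y^{(b)},z^{(c)})$ with $a+b+c=n$ is nonnegative on $\R^3$; and, for $f\in H_{n,2d}^{B_n}$ with $2d\in\{12,14\}$, writing $f(\underline X)=g(X_1^2,\dots,X_n^2)$ with $g$ symmetric of degree $6$ resp.\ $7$, nonnegativity of $f$ on $\R^n$ is equivalent to nonnegativity of $g$ on $\R^n_{\ge 0}$, which by the degree principle for the orthant is equivalent to nonnegativity of all restrictions of $g$ to at most three, resp.\ four, distinct nonnegative coordinate values. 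Denoting the (linear) restriction maps by $\Phi^{\lambda}\colon H_{n,2d}^{G}\to H_{m,e}$ with $m\le 4$ and $e\le 7$, this yields
$$\mathcal P_{n,2d}^{G}=\bigcap_{\lambda}\bigl(\Phi^{\lambda}\bigr)^{-1}\bigl(C_{\lambda}\bigr),$$
where $C_{\lambda}\subseteq H_{m,e}$ is the cone of forms nonnegative on $\R^m$ (the $\mathfrak S_n$ case) or on the closed orthant $\R^m_{\ge 0}$ (the $B_n$ cases).

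Next I would count. The restrictions $f^{\lambda}$ are indexed by the block-size vectors, i.e.\ by compositions of $n$ into at most three, resp.\ four, parts, of which there are $O(n^2)$, resp.\ $O(n^3)$; moreover for fixed $m$ and $e$ both $\dim H_{m,e}$ and $\dim H_{n,2d}^{G}$ are bounded independently of $n$ (only finitely many small $n$ have to be treated separately). Hence, once each cone $C_{\lambda}$ is known to be describable by a linear matrix inequality whose size depends only on $m$ and $e$ --- or by a bounded number of such --- membership in $\mathcal P_{n,2d}^{G}$ is decided by $O(n^3)$ LMIs of size independent of $n$, which is the assertion.

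The step where the argument genuinely leaves the quartic case behind, and the one I expect to be the main obstacle, is precisely this last one: producing a bounded-size LMI description of $C_{\lambda}$. For quartics the reduced forms were homogeneous bivariate, so $C_{\lambda}=\Sigma_{2,4}$ by Hilbert, a spectrahedral shadow with a $3\times 3$ Gram LMI; here the reduced forms include ternary sextics and quaternary septics, for which $\mathcal P\supsetneq\Sigma$ (Motzkin), so $C_{\lambda}$ is strictly larger than the sum-of-squares cone and there is no Gram LMI to read off. The route I would take is to certify nonnegativity by a sum-of-squares representation after multiplication by a power of $\sum_i X_i^2$ (on the orthant, by a power of $\sum_i X_i$, in the spirit of P\'olya's theorem \cite{polya1928positive}), using that in a bounded number of variables and bounded degree a uniform multiplier degree works for strictly positive forms and that the closed cone is recovered by a limiting argument. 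The delicate point is the boundary of $C_{\lambda}$: for forms vanishing on the sphere or the simplex the uniform multiplier bound fails, and one must argue that finitely many LMIs --- at a bounded multiplier level, possibly after a bounded case distinction into strata --- still decide membership in the closed cone. This boundary analysis, carried out uniformly in $n$, is where I expect the real work to lie, and it is the reason the theorem is stated as ``decidable with $O(n^3)$ many LMIs'' rather than ``$\mathcal P_{n,2d}^{G}$ is a spectrahedral shadow''.
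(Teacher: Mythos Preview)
Your reduction via the half-degree principle matches the paper's first step. One correction that turns out to matter: for the $B_n$ degree-$14$ case the underlying symmetric form $g$ has degree $7$, and Timofte's half-degree bound is $\lfloor 7/2\rfloor = 3$, not four. Hence all three families reduce to \emph{ternary} forms of bounded degree, and this is essential for what follows.

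The gap is exactly where you locate it: producing a bounded-size LMI test for each reduced cone $C_\lambda$. Your P\'olya-multiplier route runs into the boundary obstruction you yourself flag, and that obstruction is genuine --- no uniform multiplier degree exists on the closed cone, so a fixed finite level does not decide membership. The paper does not attempt any such boundary analysis; it simply cites de~Klerk and Pasechnik \cite{deklerk}, who show that nonnegativity of a \emph{ternary} form of degree $2d$ can be decided by about $d/4$ LMIs, each of size polynomial in $d$. Their construction is specific to three variables and handles the closed cone directly. With this citation the proof is one sentence: reduce by the half-degree principle to the ternary forms, then apply \cite{deklerk} to each. For the $B_n$ cases one observes that orthant-nonnegativity of $g^\lambda(u,v,w)$ is equivalent to global nonnegativity of the even ternary form $g^\lambda(x^2,y^2,z^2)$, so the same citation covers those too. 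Had the reduction genuinely produced quaternary forms, no analogue of \cite{deklerk} would be available and your boundary analysis would indeed be the crux; as things stand it is unnecessary.
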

\begin{proof}
 Using the half-degree principle \cite[Theorem 3.1]{riener2016symmetric} one finds that  membership in each of the above-mentioned cones can be decided by reducing the $O(n^3)$ many ternary forms, similarly to the proof above. For each of these  ternary forms, one can decide non-negativity individually. De Klerk and Pasechnik \cite{deklerk} provided a construction to decide non-negativity of a ternary form of degree $2d$ by means of  $d/4$ LMIs each of which is polynomial in $d$. Combining their construction with the arguments above thus yields an LMI of the announced size.
\end{proof}

\subsubsection*{Acknowledgements} The authors would like to thank Jose Acevedo for a simplification in Corollary \ref{COR Limit Octics} and Greg Blekherman and Markus Schweighofer for helpful insights. Furthermore, the comments of two referees were very helpful to improve the paper.

\end{document}